\theoremstyle{plain}
\DeclareMathOperator{\GL}{GL}
\DeclareMathOperator{\SL}{SL}
\DeclareMathOperator\ord{ord}
\DeclareMathOperator\vol{vol}
\DeclareMathOperator\Char{char}
\DeclareMathOperator\Gal{Gal}
\DeclareMathOperator\red{red}
\DeclareMathOperator\Ind{Ind}
\DeclareMathOperator\cInd{c-Ind}
\DeclareMathOperator\diag{diag}
\DeclareMathOperator\Nm{Nm}
\DeclareMathOperator\pr{pr}
\DeclareMathOperator\Fr{Fr}
\DeclareMathOperator\Hom{Hom}
\DeclareMathOperator\Tr{Tr}
\newcommand{\sm}{{\,\smallsetminus\,}}
\newcommand\adm{\mathrm{adm}}
\newcommand\rat{\mathrm{rat}}
\newcommand\from{\colon}
\newcommand\bG{\mathbb G}
\newcommand\bU{\mathbb U}
\newcommand\QQ{\mathbb Q}
\newcommand\bT{\mathbb T}
\newcommand\rd{\mathrm{red}}
\newcommand\bW{\mathbb W}
\newcommand\FF{\mathbb F}
\newcommand\cA{\mathcal A}
\newcommand\cB{\mathcal B}
\newcommand\cI{\mathcal I}
\newcommand\bZ{\mathbb Z}
\newcommand\bK{\mathbb K}
\newcommand\cT{\mathcal T}
\newcommand\sX{\mathscr{X}}
\newcommand\cG{\mathcal{G}}
\newcommand\cW{\mathcal{W}}
\newcommand\sG{\mathscr G}
\DeclareMathOperator\JL{JL}
\newcommand\bN{\mathbb N}
\newcommand\bx{\mathbf{x}}
\newcommand\e{\mathbf{e}}
\newcommand\cO{\mathcal O}
\newcommand\sL{\mathscr L}
\newcommand\bA{\mathbb A}
\newcommand\cU{\mathcal U}
\newcommand\trans{\intercal}
\newtheorem*{rep@theorem}{\rep@title}
\newcommand{\newreptheorem}[2]{%
\newenvironment{rep#1}[1]{%
 \def\rep@title{#2 \ref{##1}}%
 \begin{rep@theorem}}%
 {\end{rep@theorem}}}
\newtheorem{thm}{Theorem}[section]
\newtheorem*{thm*}{Theorem}
\newtheorem{prop}[thm]{Proposition}
\newtheorem{cor}[thm]{Corollary}
\newtheorem*{cor*}{Corollary}
\newtheorem{lm}[thm]{Lemma}
\newtheorem{theorem}[thm]{Theorem}
\newtheorem{lemma}[thm]{Lemma}
\newtheorem{proposition}[thm]{Proposition}
\newtheorem{proposition-definition}[thm]{Proposition-Definition}
\newtheorem*{conj*}{Conjecture}
\theoremstyle{remark}
\theoremstyle{remark}\newtheorem*{claim*}{Claim}
\theoremstyle{definition}
\newtheorem{Def}[thm]{Definition}
\newtheorem{ex}[thm]{Example}
\newtheorem{definition}[thm]{Definition}
\theoremstyle{remark}
\newtheorem{rem}[thm]{Remark}
\newtheorem{remark}[thm]{Remark}
\newenvironment{pro*}[1][Proof]{{\it{#1:}} }{}
\newcounter{absatzcounter}[section]
\numberwithin{equation}{section}
\begin{document}

\title{Affine Deligne--Lusztig varieties at infinite level}
\author{Charlotte Chan and Alexander Ivanov}
\address{Department of Mathematics \\
Princeton University \\
Fine Hall, Washington Road \\
Princeton, NJ 08544-1000 USA}
\email{charchan@mit.edu}
\address{Mathematisches Institut \\ Universit\"at Bonn \\ Endenicher Allee 60 \\ 53115 Bonn, Germany}
\email{ivanov@math.uni-bonn.de}

\maketitle

\begin{abstract}
We initiate the study of affine Deligne--Lusztig varieties with arbitrarily deep level structure for general reductive groups over local fields. We prove that for $\GL_n$ and its inner forms, Lusztig's semi-infinite Deligne--Lusztig construction is isomorphic to an affine Deligne--Lusztig variety at infinite level. We prove that their homology groups give geometric realizations of the local Langlands and Jacquet--Langlands correspondences in the setting that the Weil parameter is induced from a character of an unramified field extension. In particular, we resolve Lusztig's 1979 conjecture in this setting for minimal admissible characters.
\end{abstract}

\tableofcontents

\newpage

\section{Introduction}

In their fundamental paper \cite{DeligneL_76}, Deligne and Lusztig gave a powerful geometric approach to the construction of representations of finite reductive groups. To a reductive group $G$ over a finite field $\mathbb{F}_q$ and a maximal $\mathbb{F}_q$-torus $T \subseteq G$, they attach a variety given by the set of Borel subgroups of $G$ lying in a fixed relative position (depending on $T$) to their Frobenius translate. This variety has a $T$-torsor called the \textit{Deligne--Lusztig variety}. The Deligne--Lusztig variety has commuting actions of $G$ and $T$, and its $\ell$-adic \'etale cohomology realizes a natural correspondence between characters of $T(\mathbb{F}_q)$ and representations of $G(\mathbb{F}_q)$. 

Two possible ways of generalizing this construction to reductive groups over local fields are to consider subsets cut out by Deligne--Lusztig conditions in the semi-infinite flag manifold (in the sense of Feigin--Frenkel \cite{FeiginF_90}) or in affine flag manifolds of increasing level. The first approach is driven by an outstanding conjecture of Lusztig \cite{Lusztig_79} that the semi-infinite Deligne--Lusztig set has an algebro-geometric structure, one can define its $\ell$-adic homology groups, and the resulting representations should be irreducible supercuspidal. This conjecture was studied in detail in the case of division algebras by Boyarchenko and the first named author in \cite{Boyarchenko_12, Chan_DLI, Chan_DLII}, and ultimately resolved in this setting in \cite{Chan_siDL}. Prior to the present paper, Lusztig's conjecture was completely open outside the setting of division algebras.

The second approach is based on Rapoport's affine Deligne-Lusztig varieties \cite{Rapoport_05}, which are closely related to the reduction of (integral models of) Shimura varieties. Affine Deligne--Lusztig varieties for arbitrarily deep level structure were introduced and then studied in detail for $\GL_2$ by the second named author in \cite{Ivanov_15_ADLV_GL2_unram, Ivanov_15_ADLV_GL2_ram, Ivanov_18_wild}, where it was shown that their $\ell$-adic cohomology realizes many irreducible supercuspidal representations for this group.

The goals of the present paper are to show that these constructions
\begin{enumerate}[label=(\Alph*),leftmargin=*]
\item
are isomorphic for all inner forms of $\GL_n$ and their maximal unramified elliptic torus
\item
realize the local Langlands and Jacquet--Langlands correspondences for supercuspidal representations coming from unramified field extensions
\end{enumerate}

The first goal is achieved by computing both sides and defining an explicit isomorphism between Lusztig's semi-infinite construction and an inverse limit of coverings of affine Deligne-Lusztig varieties. In particular, this allows us to use the known scheme structure of affine Deligne--Lusztig varieties to define a natural scheme structure on the semi-infinite side, which was previously only known in the case of division algebras. This resolves the algebro-geometric conjectures of \cite{Lusztig_79} for all inner forms of $\GL_n$. 

To attain the second goal, we study the cohomology of this infinite-dimensional variety using a wide range of techniques.  To show irreducibility of certain eigenspaces under the torus action, we generalize a method of Lusztig \cite{Lusztig_04,Stasinski_09} to quotients of parahoric subgroups which do not come from reductive groups over finite rings. We study the geometry and its behavior under certain group actions to prove an analogue of cuspidality for representations of such quotients. To obtain a comparison to the local Langlands correspondence, we use the Deligne--Lusztig fixed-point formula to determine the character on the maximal unramified elliptic torus and use characterizations of automorphic induction due to Henniart \cite{Henniart_92,Henniart_93}. In particular, for minimal admissible characters, we resolve the remaining part of Lusztig's conjecture (supercuspidality) for all inner forms of $\GL_n$.

We now give a more detailed overview. Let $K$ be a non-archimedean local field with finite residue field $\FF_q$, let $\breve K$ be the completion of the maximal unramified extension of $K$ and let $\sigma$ denote the Frobenius automorphism of $\breve K/K$. For any algebro-geometric object $X$ over $K$, we write $\breve{X} \colonequals X(\breve K)$ for the set of its $\breve K$-points. 
Let $\mathcal G$ be a connected reductive group over $K$. For simplicity assume that $\mathcal G$ is split. For $b \in \breve{\mathcal G}$, let $J_b$ be the $\sigma$-stabilizer of $b$
\[ J_b(R) \colonequals \{ g \in \mathcal G(R \otimes_K \breve K) \colon g^{-1}b\sigma(g) = b \}  
\]
for any $K$-algebra $R$. Then $J_b$ is an inner form of a Levi subgroup of $\mathcal G$, and if $b$ is basic, $J_b$ is an inner form of $\mathcal G$. Let $\cT$ be a maximal split torus in $\mathcal G$. For an element $w$ in the Weyl group of $(\mathcal G, \cT)$, let 
\[ T_w(R) \colonequals \{ t \in \cT(R \otimes_K \breve K) \colon t^{-1}\dot{w}\sigma(t) = \dot{w} \}  \]
\noindent for any $K$-algebra $R$, where $\dot{w}$ is a lift of $w$ to $\breve{\mathcal G}$. 

The semi-infinite Deligne--Lusztig set $X_{\dot w}^{DL}(b)$ is the set of all Borel subgroups of $\breve{\mathcal{G}}$ in relative position $w$ to their $b \sigma$-translate. It has a cover
\begin{equation*}
\dot X_{\dot w}^{DL}(b) \colonequals \{g \breve U \in \breve{\mathcal{G}}/\breve U : g^{-1} b \sigma(g) \in \breve U \dot w \breve U\} \subseteq \breve{\mathcal{G}}/\breve U
\end{equation*}
with a natural action by $J_b(K) \times T_w(K)$, and this set coincides with Lusztig's construction \cite{Lusztig_79}. On the other hand, for arbitrarily deep congruence subgroups $J \subseteq \breve{\mathcal{G}}$, one can define affine Deligne--Lusztig sets of higher level $J$,
\[
X_x^J(b) \colonequals \{gJ \in \breve{\mathcal{G}}/J \colon g^{-1}b\sigma(g) \in JxJ \} \subseteq \breve{\mathcal{G}}/J,
\]
where $x$ is a $J$-double coset in $\breve{\mathcal{G}}$. 
Under some technical conditions on $x$, we prove that these sets can be endowed with a structure of an $\FF_q$-scheme (Theorem \ref{prop:loc_closed_ADLV_covers}). We remark that when $K$ has mixed characteristic, $\breve{\mathcal{G}}/J$ is a ind-(perfect scheme), so $X_x^J(b)$ will also carry the structure of a perfect scheme. 

We now specialize to the following setting. Consider $\breve{\mathcal{G}} = \GL_n(\breve K)$ and $G = J_b(K)$ for some basic $b \in \GL_n(\breve K)$ so that $G$ is an inner form of $\GL_n(K)$. Let $w$ be a Coxeter element so that $T \colonequals T_w(K) \cong L^\times$ for the degree-$n$ unramified extension $L$ of $K$. Let $G_{\cO}$ be a maximal compact subgroup of $G$ and let $T_{\cO} = T \cap G_{\cO} \cong \cO_L^{\times}$. We consider a particular tower of affine Deligne--Lusztig varieties $\dot X_{\dot w_r}^m(b)$ for congruence subgroups of $\breve{\mathcal{G}}$ indexed by $m$, where the image of each $\dot w_r$ in the Weyl group is $w$. We form the inverse limit $\dot X_w^\infty(b) = \varprojlim_{r>m\geq 0} \dot X_{\dot w_r}^m(b)$, which carries a natural action of $G \times T$. 

\begin{thm*}[\ref{cor:infty_level_adlv}]
There is a $(G \times T)$-equivariant map of sets
\begin{equation*}
\dot X_w^{DL}(b) \stackrel{\sim}{\longrightarrow} \dot X_w^\infty(b).
\end{equation*}
In particular, this gives $\dot X_w^{DL}(b)$ the structure of a scheme over $\overline \FF_q$.
%
\end{thm*}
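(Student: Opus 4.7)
The plan is to construct the $(G \times T)$-equivariant bijection by matching cosets at each finite level and then passing to the limit. The foundation is a coherent choice of lifts $\dot w_r \in \breve{\mathcal G}$ of the Coxeter element $w$, designed so that the finite-level double cosets $J_m \dot w_r J_m$ (for the appropriate pairs $r > m \geq 0$) exhaust the semi-infinite double coset $\breve U \dot w \breve U$ in a controlled way. Concretely, one wants that for every $h \in \breve U \dot w \breve U$ and every $m$ there is some $r > m$ with $h \in J_m \dot w_r J_m$, and conversely that taking the inverse limit of these double cosets along $(r,m)$ recovers $\breve U \dot w \breve U$. This is a bookkeeping exercise on the Iwahori factorizations of $J_m$ with respect to the affine root system, made tractable by $w$ being Coxeter so that $w \sigma$ acts on affine roots in a well-understood way.

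Given this, the forward map sends $g \breve U \in \dot X_w^{DL}(b)$ to the compatible system $(g J_m)_{r > m \geq 0}$ obtained by first adjusting $g$ within its $\breve U$-coset so that $g^{-1} b \sigma(g)$ lies in each $J_m \dot w_r J_m$; the existence of such an adjustment comes from the matching of double cosets above. Well-definedness amounts to checking that two representatives $g$ and $g u$ with $u \in \breve U$ produce the same compatible system after normalization via a canonical form coming from the Iwahori factorization of $\breve U$. Injectivity follows because two representatives yielding the same system differ by an element of $\breve U \cdot \bigcap_m J_m$, which the coset matching forces into $\breve U$. Surjectivity is then obtained by extracting from a compatible system $(g_m J_m)$ a limit $g \in \breve{\mathcal G}$ via a Cauchy argument in the $\breve K$-adic topology, and verifying that its class lies in $\dot X_w^{DL}(b)$ since each finite-level condition is satisfied and these cut out the semi-infinite condition in the limit.

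Finally, $(G \times T)$-equivariance is automatic from naturality: $G = J_b(K)$ acts by left multiplication preserving the $b$-twisted conjugation relation, while $T = T_w(K)$ acts by right multiplication and its elements $t$ satisfy $t \dot w = \dot w \sigma(t)$, so the right $T$-action preserves both $\breve U \dot w \breve U$ and $J_m \dot w_r J_m$ up to conjugation. The scheme structure on $\dot X_w^{DL}(b)$ is then transported from the inverse limit of schemes on the other side, using Theorem \ref{prop:loc_closed_ADLV_covers}. The main obstacle will be the precise choice of the lifts $\dot w_r$ and the corresponding Iwahori-factorization analysis needed to identify the semi-infinite double coset with the inverse limit of the finite-level ones; this is the technical heart of the argument, and it is exactly here that the Coxeter structure of $w$ plays its essential role in keeping the combinatorics of affine roots under $w\sigma$ under control.
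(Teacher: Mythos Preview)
Your proposal takes a fundamentally different route from the paper, and there is a genuine gap in the construction of the forward map.

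The paper does not compare the double cosets $\breve U \dot w \breve U$ and $\dot I^m \dot w_r \dot I^m$ directly at all. Instead it introduces a common parametrizing set coming from the isocrystal $(V,b\sigma)$: one sets $g_b(x)$ to be the matrix with columns $x, b\sigma(x), \dots, (b\sigma)^{n-1}(x)$, and $V_b^{\adm,\rat} = \{x \in V : \det g_b(x) \in K^\times\}$. Theorem \ref{thm:structure_result} then shows that $x \mapsto g_b(x)\breve U$ gives a bijection $V_b^{\adm,\rat} \stackrel{\sim}{\to} \dot X_{\dot w}^{DL}(b)$, while $x \mapsto g_{b,r}(x)\dot I^m$ (where $g_{b,r}(x)$ has $i$th column $\varpi^{r(i-1)}(b\sigma)^{i-1}(x)$) gives bijections $V_b^{\adm,\rat}/\dot\sim_{b,m,r} \stackrel{\sim}{\to} \dot X_{\dot w_r}^m(b)$. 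The composite map from the semi-infinite set to the finite-level varieties is thus $g_b(x)\breve U \mapsto g_{b,r}(x)\dot I^m$. The essential technical input is a Newton polygon argument (Lemma \ref{lm:sigma_cyclic_implies_stable}) showing that the $\cO$-lattice spanned by $\{(b\sigma)^i(x)\}_{i=0}^{n-1}$ is $b\sigma$-stable; this is what forces $g_{b,r}(x)^{-1}b\sigma(g_{b,r}(x))$ to land in $\dot I^m \dot w_r \dot I^m$ once $r>m$ (Lemma \ref{lm:computation_for_gbrx}).

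Your approach of ``adjusting $g$ within its $\breve U$-coset'' cannot produce this map: the representatives $g_b(x)$ and $g_{b,r}(x)$ differ by the diagonal matrix $\mathrm{diag}(1,\varpi^r,\dots,\varpi^{r(n-1)})$, which lies in the torus, not in $\breve U$. Relatedly, the elements $\dot w_r = b_0 \varpi^{(-r,\dots,-r,\kappa+(n-1)r)}$ have translation parts running off to infinity, so the double cosets $\dot I^m \dot w_r \dot I^m$ do not ``exhaust'' or approximate $\breve U \dot w \breve U$ in any direct sense; they live in entirely different Schubert strata of $\breve G$. The identification really goes through the isocrystal, not through a comparison of double cosets. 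A smaller point: on the inverse-limit side only $T_w(\cO_K)$ acts a priori, and the full $T_w(K)$-action is \emph{defined} by transport of structure across the bijection with $\dot X_{\dot w}^{DL}(b)$, so the equivariance you want for the full $T$ is not automatic on the affine side.
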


We completely determine the higher level affine Deligne--Lusztig varieties $\dot X_{\dot w_r}^m(b)$. They are $(\cO_L/\mathfrak{p}_L^{m+1})^{\times}$-torsors over the schemes $X_{\dot w_r}^m(b)$, which are interesting in their own right. In particular, $X_{\dot w_r}^0(b)$ provide examples of explicitly described Iwahori-level affine Deligne--Lusztig varieties. We prove the following.

\begin{thm*}[\ref{thm:scheme_structure}]
The scheme $X_{\dot w_r}^m(b)$ is a disjoint union, indexed by $G/G_\cO$, of classical Deligne--Lusztig varieties for the reductive quotient of $G_{\cO} \times T_{\cO}$ times finite-dimensional affine space. 
\end{thm*}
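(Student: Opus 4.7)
The plan is to exploit the left $G$-action on $X_{\dot w_r}^m(b)$ in order to reduce the problem to the analysis of a single $G_\cO$-stable piece. The condition $g^{-1} b \sigma(g) \in J \dot w_r J$ defining $X_{\dot w_r}^m(b)$ is invariant under $g \mapsto hg$ for $h \in G = J_b(K)$, so $G$ acts on $X_{\dot w_r}^m(b)$ by left multiplication. First I would identify an open and closed subscheme $X_0 \subseteq X_{\dot w_r}^m(b)$, stable under $G_\cO$, with the property that the translates $\{g \cdot X_0 : g G_\cO \in G/G_\cO\}$ are pairwise disjoint and exhaust $X_{\dot w_r}^m(b)$. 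This would immediately yield the disjoint-union decomposition indexed by $G/G_\cO$ and reduce the theorem to identifying $X_0$ with the product of a classical Deligne--Lusztig variety for the reductive quotient of $G_\cO \times T_\cO$ and a finite-dimensional affine space.

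To analyze $X_0$, I would fix an Iwahori--Iwasawa decomposition of $\breve{\mathcal G}$ adapted to the hyperspecial point stabilized by $G_\cO$ and to an Iwahori containing a lift of $\dot w_r$. Since $w$ is a Coxeter element and the image of $\dot w_r$ at level zero is $w$, every point $gJ$ in the slice should admit a unique normal form with coordinates living in the successive graded pieces of the congruence filtration of $\breve{\mathcal G}$ at the chosen Iwahori. Rewriting the Deligne--Lusztig condition $g^{-1} b \sigma(g) \in J \dot w_r J$ in these coordinates produces a system of equations filtered by congruence depth.

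The key step is to solve these equations layer by layer. At the bottom layer the equation involves only the image of $g$ in the reductive quotient of $G_\cO$ and is precisely the classical Deligne--Lusztig equation associated to the Coxeter element $w$ and the Frobenius induced by $b \sigma$; combining this with the torsor structure coming from $T_\cO$ identifies this layer with the Coxeter Deligne--Lusztig variety for the reductive quotient of the pair $(G_\cO, T_\cO)$. At each deeper layer, once the lower-layer data is fixed, the equation becomes affine linear in the new coordinates, so the layer is a torsor under a finite-dimensional $\mathbb F_q$-vector space. Iterating through the finitely many layers up to depth $m+1$ and using triviality of affine-space torsors over affine schemes assembles $X_0$ into the claimed product.

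The main obstacle will be executing this layer-by-layer analysis uniformly for all inner forms of $\GL_n$. For a non-split $G$, the reductive quotient of $G_\cO$ is a product of general linear groups over extensions of $\mathbb F_q$, and the Frobenius acting on the filtration layers is twisted by $b$, so the combinatorics of the chosen lift $\dot w_r$ must be matched carefully with this twisted Frobenius in order to identify the bottom-layer equation with the Coxeter Deligne--Lusztig condition on the nose and to verify that the higher-layer linear equations have the correct rank so that the affine-space fibers have the expected dimension. Once the normal form is correctly pinned down, each higher-layer equation is a routine affine linear system whose solution space can be read off directly.
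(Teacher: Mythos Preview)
Your outline has the right shape, but the two halves require inputs that you either do not supply or handle differently from the paper.

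\textbf{The disjoint-union step.} You posit an open-and-closed $G_\cO$-stable slice $X_0$ whose $G$-translates partition $X_{\dot w_r}^m(b)$, but give no mechanism for producing it or proving the partition. This is not formal. In the paper it is deduced from Viehmann's determination of the connected components of hyperspecial-level affine Deligne--Lusztig varieties: one shows that the components of $X_{\dot w_r}^{\mathrm{Stab}(\sL_0)}(b)$ correspond to the $b\sigma$-stable lattices, hence to $G/G_\cO$, using that the volume function $\ord\det g_b(v)$ is constant on components; one then pulls back along the projection to level $I^m$. Your proposal is missing this ingredient.

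\textbf{The structure of each piece.} Here your route and the paper's diverge. You propose to expand the condition $g^{-1}b\sigma(g)\in J\dot w_r J$ in congruence-filtration coordinates and solve layer by layer, expecting the bottom layer to give the Coxeter Deligne--Lusztig equation and each deeper layer to be affine-linear. The paper instead bypasses any inductive solving via the isocrystal parametrization $x\mapsto g_{b,r}(x)$ (the matrix with columns $\varpi^{r(i-1)}(b\sigma)^{i-1}(x)$): Theorem~\ref{thm:structure_result} already identifies $X_0$ with $\sL_{0,b}^{\adm}/\!\sim$, and the remaining work is to show that the image lies in a \emph{single} Iwahori Schubert cell (Proposition~\ref{prop:ADLV_contained_in_Schubert_cell}). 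That proposition is the technical heart and comes down to the non-vanishing of the successive upper-left minors of $\overline{g_b}(\bar x)$ (Lemma~\ref{lm:nonvanishing_minors}). Once established, the Schubert-cell coordinates directly exhibit $X_0\cong\Omega_{\FF_{q^{n_0}}}^{\,n'-1}\times\mathbb{A}$; the only constraint is the bottom-layer open condition, and there are no higher-layer equations at all. Your layer-by-layer approach could in principle succeed, but the rank computation you would need at each level unwinds to the same minor non-vanishing; the paper's isocrystal route packages this into one algorithmic reduction to the Schubert cell and is considerably cleaner.
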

%
%

The disjoint union decomposition is deduced from Viehmann \cite{Viehmann_08}. We point out the similarity between the Iwahori level varieties $X_{\dot w_r}^0(b)$ and those considered by G\"ortz and He \cite[e.g.\ Proposition 2.2.1]{GoertzH_15}, though in our setting, the elements $\dot w_r$ can have arbitrarily large length in the extended affine Weyl group.

One of the key insights throughout our paper is the flexibility of working with different representatives $b$ of a $\sigma$-conjugacy class. For example, when $G = \GL_n(K)$, switching between $b = 1$ and $b$ being a Coxeter element allows us to use techniques that are otherwise inaccessible.

Having established the isomorphism $\dot X_w^{DL}(b) \stackrel{\sim}{\longrightarrow} \dot X_w^\infty(b)$, the main objective in the rest of the paper is to study the virtual $G$-representation
\begin{equation*}
R_T^G(\theta) \colonequals \textstyle\sum\limits_i (-1)^i H_i(\dot X_w^\infty(b), \overline \QQ_\ell)[\theta]
\end{equation*}
for smooth characters $\theta \from T \to \overline \QQ_\ell^\times$, where $[\theta]$ denotes the subspace where $T$ acts by $\theta$. We write $|R_T^G(\theta)|$ to denote the genuine representation when one of $\pm R_T^G(\theta)$ is genuine.

One could try to calculate $R_T^G(\theta)$ by calculating the cohomology of the affine Deligne--Lusztig varieties $\dot X_{\dot w_r}^m(b)$. These finite-level varieties have somewhat strange descriptions (see the equivalence relation $\dot \sim_{b,m,r}$ in Section \ref{sec:Comparison_unif_DLV_ADLV}), though it is conceivable that one could use the results of Part \ref{part:aut ind and JL} to study the cohomology of these higher-level affine Deligne--Lusztig varieties. 

Instead of passing through affine Deligne--Lusztig varieties, we approximate our infinite-level object $\dot X_w^{DL}(b)$ by using an analogue of Deligne--Lusztig varieties for parahoric subgroups, which are easier to explicitly describe than affine Deligne--Lusztig varieties. Using the decomposition of $\dot{X}_w^{\infty}(b)$ into $G$-translates of $G_{\cO}$-stable components (as in Theorem \ref{thm:scheme_structure}), the computation of the cohomology of $\dot{X}_b^{\infty}(b)$ reduces to the computation for one such component, which can in turn be written as an inverse limit $\varprojlim_h X_h$ of finite-dimensional varieties $X_h$, each endowed with an action of level-$h$ quotients $G_h \times T_h$ of $G_\cO \times T_\cO$. We write $R_{T_h}^{G_h}(\theta)$ for the virtual $G_h$-representation corresponding to $\theta \from T_h \to \overline \QQ_\ell^\times$. We note that $X_1$ is a classical Deligne-Lusztig variety for the reductive subquotient of $T_\cO$ in the reductive quotient of $G_{\cO}$. 

However, the infinite-level object $\dot X_w^\infty(b)$ has a very natural description, so we proceed by defining another tower of finite-dimensional objects $X_h$, which are analogues of Deligne--Lusztig varieties for parahoric subgroups. Using the Deligne--Lusztig fixed-point formula, we compute (part of) the character of $R_{T_h}^{G_h}(\theta)$ on $T_h$, which when combined with Henniart's characterizations \cite{Henniart_92,Henniart_93} of automorphic induction yields:

\begin{thm*}[\ref{t:LLC JLC gen}]
Let $\theta \from T \to \overline \QQ_\ell^\times$ be a smooth character. If $|R_T^G(\theta)|$ is irreducible supercuspidal, then the assignment $\theta \mapsto |R_T^G(\theta)|$ is a geometric realization of automorphic induction and the Jacquet--Langlands correspondence.
\end{thm*}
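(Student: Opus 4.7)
The plan is to identify $|R_T^G(\theta)|$ with the automorphic induction of $\theta$ together with its Jacquet--Langlands transfer by matching characters on the common torus $T \cong L^\times$, where $L/K$ is the unramified degree-$n$ extension. By \cite{Henniart_92, Henniart_93}, once an irreducible supercuspidal representation $\pi$ of $\GL_n(K)$ (or of an inner form) is fixed, the property that $\pi$ realizes $\JL \circ \mathrm{AI}(\theta)$ is uniquely characterized by a character identity of the form
\begin{equation*}
\Tr \pi(t) \;=\; \pm \sum_{\gamma \in \Gal(L/K)} \theta^\gamma(t)
\end{equation*}
on the regular elliptic elements $t \in L^\times$, together with compatibility of central characters. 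Thus under the hypothesis that $|R_T^G(\theta)|$ is irreducible supercuspidal, it suffices to verify this identity (and the central character compatibility) for the virtual representation $R_T^G(\theta)$.

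To compute $\Tr(t \mid R_T^G(\theta))$ I would first reduce to finite level. By Theorem \ref{cor:infty_level_adlv} and Theorem \ref{thm:scheme_structure}, $\dot X_w^\infty(b)$ is a disjoint union, indexed by $G/G_\cO$, of $G_\cO$-stable components, each an inverse limit $\varprojlim_h X_h$ of finite-dimensional schemes carrying an action of a finite-type quotient $G_h \times T_h$ of $G_\cO \times T_\cO$. Once $\theta$ is trivial on the level-$h$ congruence subgroup of $T_\cO$, the character of $R_T^G(\theta)$ at $t \in T_\cO$ coincides with that of $R_{T_h}^{G_h}(\theta)$. I would then apply the Deligne--Lusztig--style Lefschetz fixed-point formula to the action of $t$ on $X_h$: using the Coxeter structure of $\dot w_r$ and the description of $X_h$ as a product of a classical Deligne--Lusztig variety for the reductive quotient with an affine space, the $t$-fixed locus is cut out by the Galois conjugates of $t$, and the resulting alternating sum of traces on cohomology reproduces the sum of $\theta^\gamma(t)$ with the correct sign.

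Summing the resulting contributions over the $G$-translates, and verifying compatibility of the central character via the action of $Z(G) \subseteq T$ on each component, I would then invoke Henniart's uniqueness theorem to conclude that $|R_T^G(\theta)|$ equals $\JL(\mathrm{AI}(\theta))$, which simultaneously realizes automorphic induction (when $G = \GL_n(K)$) and the Jacquet--Langlands correspondence (by transport along $\JL$). The main obstacle will be the fixed-point calculation itself: one must show that for a regular element $t \in T$, the fixed-point scheme $X_h^t$ is concentrated on the loci indexed by $\Gal(L/K)$-translates of $t$, rule out spurious contributions from non-elliptic strata, and track the sign and normalization carefully enough that the trace matches Henniart's identity on the nose. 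The flexibility in the choice of the $\sigma$-conjugacy representative $b$, together with the explicit geometric description in Theorem \ref{thm:scheme_structure}, should reduce this to a computation closely parallel to the classical Deligne--Lusztig character formula, with the affine-space factor contributing only a power of $q$.
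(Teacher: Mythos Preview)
Your approach is essentially the paper's: reduce to Henniart's characterization (Proposition~\ref{p:very regular determines}) via the character identity on very regular elements of $L^\times$, and establish that identity by a Deligne--Lusztig fixed-point computation on $X_h$ (Proposition~\ref{p:trace alt sum Xh}) combined with the compact induction description of $R_T^G(\theta)$ (Theorem~\ref{t:RTG desc}).

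Two points you omit that the paper does address. First, Henniart's criterion also requires $\pi \cong \pi \otimes (\epsilon \circ \det)$; this is checked by observing that $\epsilon \circ \det$ is trivial on $ZG_\cO$, hence on anything compactly induced from there. Second, and more delicate: in the exceptional case $n=2$, $q=3$, $\theta \in \sX^0$, the identity $\Tr \pi(x) = \pm\sum_\gamma \theta^\gamma(x)$ with an undetermined sign does \emph{not} determine $\pi$ (Lemma~\ref{l:theta theta' gen} needs the extra hypothesis $c=c'$), so one must pin down the sign exactly. The paper handles this by invoking the depth-zero computation (Theorem~\ref{t:depth zero}), where the cohomology is concentrated in degree $n'-1$ and the sign is visibly $(-1)^{n'-1}$. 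Your proposal treats the sign as something to be ``tracked carefully'' but does not supply the mechanism to actually determine it in this edge case.

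One refinement you will discover upon executing the fixed-point step: on $X_h$ the computation (for $x$ with trivial Galois stabilizer in the residue field---this ``very regular'' condition is what you need, not merely regular elliptic) yields only the partial sum $\sum_{\gamma \in \Gal(L/K)[n']} \theta^\gamma(x)$. The remaining Galois conjugates appear when you pass from $G_\cO$ to $G$, via the cosets of $N_G(G_\cO)/G_\cO \cong \bZ/n_0\bZ$; this is how Theorem~\ref{t:RTG desc} assembles the full sum.
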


Proving that $|R_T^G(\theta)|$ is irreducible supercuspidal involves two main steps: proving that $|R_{T_h}^{G_h}(\theta)|$ is irreducible and proving its induction to $G$ (after extending by the center) is irreducible. In  \cite{Lusztig_04}, Lusztig studies the irreducibility of $R_{T_h}^{G_h}(\theta)$ for reductive groups over finite rings under a regularity assumption on $\theta$. In our setting, this regularity assumption corresponds to $\theta$ being \textit{minimal admissible}. We extend Lusztig's arguments to the non-reductive setting to handle the non-quasi-split inner forms of $\GL_n(K)$ and prove that $R_{T_h}^{G_h}(\theta)$ is irreducible under the same regularity assumption on $\theta$ (Theorem \ref{t:alt sum Xh}). In this context, we prove a cuspidality result (Theorem \ref{t:Gh cuspidal}) for $|R_{T_h}^{G_h}(\theta)|$, which allows us to emulate the arguments from \cite[Proposition 6.6]{MoyP_96} that inducing classical Deligne--Lusztig representations gives (depth zero) irreducible supercuspidal representations of $p$-adic groups. This approach was carried out in the $\GL_2$ case for arbitrary depth in \cite[Propositions 4.10, 4.22]{Ivanov_15_ADLV_GL2_unram}. Note that the $|R_T^G(\theta)|$ can have arbitrarily large depth, depending on the level of the smooth character $\theta$.

\begin{thm*}[\ref{t:RTG irred}]
If $\theta \from T \to \overline \QQ_\ell^\times$ is minimal admissible, then $|R_T^G(\theta)|$ is irreducible supercuspidal.
\end{thm*}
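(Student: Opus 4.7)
The plan is to realize $|R_T^G(\theta)|$ as a compact induction from the compact-mod-center subgroup $ZG_\cO$ of the finite-level representation $|R_{T_h}^{G_h}(\theta)|$, and then combine the irreducibility assertion of Theorem \ref{t:alt sum Xh} with the cuspidality assertion of Theorem \ref{t:Gh cuspidal} to deduce irreducibility, and hence supercuspidality, of the induction via a Moy--Prasad style intertwining argument.

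The first step is to use the geometric decomposition of Theorem \ref{thm:scheme_structure} to split $\dot X_w^\infty(b)$ as a disjoint union indexed by $G/G_\cO$ of $G$-translates of a single $G_\cO$-stable component $X^\infty = \varprojlim_h X_h$. Taking $\theta$-isotypic parts of the homology and noting that $\theta|_Z$ provides a central character allowing the $G_\cO$-action on $H_*(X^\infty,\overline \QQ_\ell)[\theta]$ to extend to $ZG_\cO$, this produces an identification of virtual $G$-representations
\[
R_T^G(\theta) \;\cong\; \cInd_{ZG_\cO}^G \bigl(|R_{T_h}^{G_h}(\theta)|\bigr)
\]
valid for every $h$ such that $\theta$ factors through $T_h$. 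By Theorem \ref{t:alt sum Xh}, the inducing representation on the right-hand side is genuinely irreducible when $\theta$ is minimal admissible.

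The second step is to show that this compact induction is itself irreducible by verifying the Mackey intertwining criterion: for every $g \in G \setminus ZG_\cO$, the representation $|R_{T_h}^{G_h}(\theta)|$ and its $g$-conjugate share no irreducible constituent upon restriction to $ZG_\cO \cap gZG_\cO g^{-1}$. Using the Cartan decomposition of $G$, I would reduce to checking this for $g$ in a set of double coset representatives $t$, for which $ZG_\cO \cap tZG_\cO t^{-1}$ contains the unipotent radical $U_h$ of a proper parabolic of $G_h$; then Theorem \ref{t:Gh cuspidal} (vanishing of $U_h$-coinvariants) rules out any nonzero intertwiner. Supercuspidality of the resulting $G$-representation is then automatic from compact induction from a compact-mod-center subgroup, and its identification with $|R_T^G(\theta)|$ is the one established in the first step.

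The main obstacle is the intertwining step. The classical Moy--Prasad argument \cite[Prop.\ 6.6]{MoyP_96} treats depth-zero representations, where $G_h = G_1$ is a reductive group over $\FF_q$ and the intertwining vanishes by classical cuspidality of Deligne--Lusztig representations. In our setting, by contrast, $|R_{T_h}^{G_h}(\theta)|$ may have arbitrarily large depth depending on the level of $\theta$, and for non-quasi-split inner forms of $\GL_n$ the quotient $G_h$ is not the group of points of a reductive group scheme. The role played in the classical setup by parabolic cuspidality must therefore be taken up by Theorem \ref{t:Gh cuspidal}, whose statement is tailored to this non-reductive setting; carrying out the reduction along the Cartan decomposition so that exactly this vanishing input suffices is the technical heart of the argument, with the $\GL_2$ case \cite[Prop.\ 4.22]{Ivanov_15_ADLV_GL2_unram} serving as a template.
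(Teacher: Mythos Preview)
Your overall strategy is the paper's, but there are two genuine gaps.

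First, you invoke Theorem~\ref{t:alt sum Xh} and Theorem~\ref{t:Gh cuspidal} directly for a minimal admissible $\theta$ of level $h$, but both theorems are stated and proved only for \emph{primitive} $\theta$. A minimal admissible character has the form $\theta' \cdot (\chi\circ\Nm_{L/K})$ with $\theta'$ primitive of some level $h'\le h$; when $h'<h$ the restriction of $\theta$ to $\bW_h^{h-1}(\FF_{q^n})$ factors through a norm, so $\theta$ is not primitive as a character of $T_h$. The paper handles this by first using Lemma~\ref{l:twist compat} and Proposition~\ref{p:Wh fixed} to reduce to the primitive case via $R_{T_h}^{G_h}(\theta)\cong R_{T_{h'}}^{G_{h'}}(\theta')\otimes(\chi\circ\det)$; you should insert this reduction before applying either theorem.

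Second, and more seriously, the Mackey step fails as you have described it whenever $n_0>1$. For $g\in N_G(G_\cO)\smallsetminus ZG_\cO$ (a nontrivial coset exists precisely when $n_0>1$, since $N_G(G_\cO)/G_\cO\cong\bZ/n_0\bZ$), one has $gZG_\cO g^{-1}=ZG_\cO$, so the intersection is all of $ZG_\cO$ and there is no proper unipotent radical on which to invoke Theorem~\ref{t:Gh cuspidal}. In this case ${}^g|R_{T_h}^{G_h}(\theta)|$ is another irreducible $G_h$-representation with the same cuspidality behavior, and the vanishing of intertwiners requires a different argument. The paper treats this by first inducing to $Z\cdot N_G(G_\cO)$: using Lemma~\ref{l:lift of Gal} to realize the normalizer cosets as Galois conjugation on $T_h$, Proposition~\ref{p:trace alt sum Xh} to compute traces on very regular elements, and then Henniart's Lemma~\ref{l:theta theta' gen} to conclude ${}^g R_{T_h}^{G_h}(\theta)\not\cong R_{T_h}^{G_h}(\theta)$. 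Only for $g$ outside $Z\cdot N_G(G_\cO)$ does the Cartan decomposition produce a $\Pi_0^\nu$ with the property you need, and only there does Theorem~\ref{t:Gh cuspidal} finish the job. Your proposal collapses these two cases into one, and the normalizer case would not go through as written.
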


\subsection{Outline}

This paper is divided into four parts. The first part of the article is devoted to purely geometric properties of the Deligne--Lusztig constructions for arbitrary reductive groups over local fields. In Sections \ref{Def:DL_sets} and \ref{sec:adlv_and_covers}, we define and recall the two types of Deligne--Lusztig constructions. The main result of this part is Theorem \ref{prop:loc_closed_ADLV_covers}, where we prove that, under a technical hypothesis, affine Deligne--Lusztig sets of arbitrarily deep level can be endowed with a scheme structure. After Part \ref{part:DL}, we work only in the context of the inner forms of $\GL_n(K)$.

We begin Part \ref{part:GLn DL} with a discussion of the group-theoretic constructions we will use at length throughout the rest of the paper (Section \ref{sec:inner_forms}). We emphasize the importance of the seemingly innocuous Section \ref{sec:two b}, where we define two representatives $b$ for each basic $\sigma$-conjugacy class of $\GL_n(\breve K)$. In Section \ref{sec:comparison_DL_ADLV_isocrystal}, we define the affine Deligne--Lusztig varieties $\dot X_{\dot w_r}^m(b)$, construct an isomorphism between $\dot X_w^\infty(b)$ and $\dot X_{w}^{DL}(b)$ using the isocrystal $(\breve K^n, b\sigma)$, and explicate the scheme structure of $\dot X_w^\infty(b)$. In Section \ref{s:schemes_Xh}, we introduce a family of smooth finite-type schemes $X_h$ whose limit is a component of $\dot X_w^\infty(b)$ corresponding to $G_{\cO}$ and study its geometry. This plays the role of a Deligne--Lusztig variety for subquotients of $G$ (see Proposition \ref{p:Xh Ah-}). 

In Part \ref{part:cohomology}, we calculate the cohomology $R_{T_h}^{G_h}(\theta)$ under a certain regularity assumption on $\theta$. We prove irreducibility (Theorem \ref{t:alt sum Xh}) using a generalization of \cite{Lusztig_04, Stasinski_09} discussed in Section \ref{sec:lusztig_lemma}. We prove a result about the restriction of $R_{T_h}^{G_h}(\theta)$ to the ``deepest part'' of unipotent subgroups (Theorem \ref{t:Gh cuspidal}) which can be viewed as an analogue of cuspidality for $G_h$-representations. This is a long calculation using fixed-point formulas.

Finally, in Part \ref{part:aut ind and JL}, we combine the results of the preceding two parts to deduce our main theorems about $R_T^G(\theta)$, the homology of the affine Deligne--Lusztig variety at infinite level $\dot X_w^\infty(b)$. We review the methods of Henniart \cite{Henniart_92, Henniart_93} in Section \ref{s:henniart}, define and discuss some first properties of the homology of $\dot X_w^\infty(b)$ in Section \ref{s:inf ADLV}, and prove the irreducible supercuspidality of $R_T^G(\theta)$ for minimal admissible $\theta$ in Section \ref{s:minimal}.

\subsection*{Acknowledgements} 
The first author was partially supported by NSF grants DMS-0943832 and DMS-1160720, the ERC starting grant 277889, the DFG via P.\ Scholze's Leibniz Prize, and an NSF Postdoctoral Research Fellowship, Award No.\ 1802905. In addition, she would like to thank the Technische Universit\"at M\"unchen and Universit\"at Bonn for their hospitality during her visits in 2016 and 2018. The second author was partially supported by European Research Council Starting Grant 277889 ``Moduli spaces of local G-shtukas'', by a postdoctoral research grant of the DFG during his stay at University Paris 6 (Jussieu), and by the DFG via P.\ Scholze's Leibniz Prize. 
The authors thank Eva Viehmann for very enlightening discussions on this article, especially for the explanations concerning connected components, and also thank Laurent Fargues for his observation concerning the scheme structure on semi-infinite Deligne--Lusztig sets. Finally, the authors thank the anonymous referee for numerous careful and insightful comments.

%

\newpage

\section{Notation} \label{sec:notation}

Throughout the paper we will use the following notation. Let $K$ be a non-archimedean local field with residue field $\FF_q$ of prime characteristic $p$, and let $\breve{K}$ denote the completion of a maximal unramified extension of $K$. We denote by $\mathcal{O}_K$, $\mathfrak{p}_K$ (resp.\ $\cO$, $\mathfrak{p}$) the integers and the maximal ideal of $K$ (resp.\ of $\breve K$). The residue field of $\breve K$ is an algebraic closure $\overline{\FF}_q$ of $\FF_q$. We write $\sigma$ for the Frobenius automorphism of $\breve K$, which is the unique $K$-automorphism of $\breve K$, lifting the $\FF_q$-automorphism $x \mapsto x^q$ of $\overline{\FF}_q$. Finally, we denote by $\varpi$ a uniformizer of $K$ (and hence of $\breve K$) and by $\ord = \ord_{\breve K}$ the valuation of $\breve K$, normalized such that $\ord(\varpi) = 1$.

If $K$ has positive characteristic, we let $\bW$ denote the ring scheme over $\FF_q$ where for any $\FF_q$-algebra $A$, $\bW(A) = A[\![\pi]\!]$. If $K$ has mixed characteristic, we let $\bW$ denote the $K$-ramified Witt ring scheme over $\FF_q$ so that $\bW(\FF_q) = \cO_K$ and $\bW(\overline \FF_q) = \cO$. Let $\bW_h = \bW/V^h \bW$ be the truncated  ring scheme, where $V \from \bW \to \bW$ is the Verschiebung morphism. For any $1 \leq r \leq h$, we write $\bW_h^r$ to denote the kernel of the natural projection $\bW_h \to \bW_r$. As the Witt vectors are only well behaved on perfect $\FF_q$-algebras, algebro-geometric considerations when $K$ has mixed characteristic are taken up to perfection. We fix the following convention. 

\medskip

\noindent \textbf{Convention.} If $K$ has mixed characteristic, whenever we speak of a scheme (resp.\ ind-scheme) over its residue field $\FF_q$, we mean a \emph{perfect scheme} (resp.\ \emph{ind-(perfect scheme)}), that is a set-valued functor on perfect $\FF_q$-algebras, representable by the perfection of a scheme (resp. ind-scheme). 

\medskip

For results on perfect schemes we refer to \cite{Zhu_17, BhattS_17}. Note that passing to perfection does not affect the $\ell$-adic \'etale cohomology; thus for purposes of this paper, we could in principle pass to perfection in all cases. However, in the equal characteristic case working on non-perfect rings does not introduce complications, and we prefer to work in this slightly greater generality.

\medskip

Fix a prime $\ell \neq p$ and an algebraic closure $\overline \QQ_\ell$ of $\QQ_\ell$. The field of coefficients of all representations is assumed to be $\overline \QQ_\ell$ and all cohomology groups throughout are compactly supported $\ell$-adic \'etale cohomology groups.

\subsection{List of terminology}

Our paper introduces some notions for a general group $G$ (Part \ref{part:DL}) and then studies these notions for $G$ an inner form of $\GL_n$ (Parts \ref{part:GLn DL} through \ref{part:aut ind and JL}). The investigations for $G$ an inner form of $\GL_n$ involve many different methods. For the reader's reference, we give a brief summary of the most important notation introduced and used in Parts \ref{part:GLn DL} through \ref{part:aut ind and JL}.

\begin{enumerate}[wide=0pt,labelsep=60pt,align=parleft,leftmargin=60pt]
\item[$L$] the degree-$n$ unramified extension of $K$. Its ring of integers $\cO_L$ has a unique maximal ideal $\frak p_L$ and its residue field is $\cO_L/\frak p_L \cong \FF_{q^n}$. For any $h \geq 1$, we write $U_L^h = 1 + \frak p_L^h$
\item[{$[b]$}] fixed basic $\sigma$-conjugacy class of $\GL_n(\breve K)$. Typically we take representatives $b$ of $[b]$ to be either the Coxeter-type or special representative (Section \ref{sec:two b})
\item[$\kappa$] $\kappa_{\GL_n}([b])$, where $\kappa_{\GL_n}$ is the Kottwitz map. We assume that $0 \leq \kappa \leq n-1$ and set $n' = \gcd(n, \kappa)$, $n_0 = n/n'$, $k_0 = \kappa/n'$
\item[$F$] twisted Frobenius morphism $F \from \GL_n(\breve K) \to \GL_n(\breve K)$ given by $F(g) = b \sigma(g) b^{-1}$ 
\item[$G$] $= J_b(K) = \GL_n(\breve K)^F \cong \GL_{n'}(D_{k_0/n_0})$, where $D_{k_0/n_0}$ is the division algebra with Hasse invariant $k_0/n_0$
\item[$T$] $= L^\times$, an unramified elliptic torus in $G$
\item[$g_b^\rd(x)$] $(n \times n)$-matrix whose $i$th column is $\varpi^{-\lfloor (i-1) k_0/n_0 \rfloor} (b\sigma)^{i-1}(x)$ with $x \in V$ (Equation \eqref{d:g_b red})
\item[$\dot X_{\dot w}^{DL}(b)$] a semi-infinite Deligne--Lusztig variety, with a natural action of $G \times T$ (Section \ref{sec:DL_sets})
\item[$\dot X_{\dot w_r}^m(b)$] an affine Deligne--Lusztig variety with a natural action of $G \times T$ (Section \ref{sec:Comparison_unif_DLV_ADLV})
\item[$\dot X_w^\infty(b)$] $= \varprojlim\limits_{r > m} \dot X_{\dot w_r}^m(b) = V_b^{\adm,\rat,\dot w_0} \cong V_b^{\adm,\rat} = \{x \in V_b^{\adm} : \det g_b(x) \in K^\times\}$ an affine Deligne--Lusztig variety at the infinite level, with a natural $G \times \cO_L^\times$-action (Corollary \ref{cor:algebraicity_of_gbrx_maps})
\item[$\dot X_w^\infty(b)_{\sL_0}$] $= \sL_{0,b}^{\adm,\rat,\dot w_0} \cong \sL_{0,b}^{\adm,\rat} = \{ x \in \sL_0 \colon \det g_b^{\rd}(x) \in \cO_K^\times\}$ is the union of connected components of $X_w^\infty(b)$ associated to the lattice $\sL_0$ (Definition \ref{d:conn comps})
\item[$G_h$] $= \bG_h(\FF_q) = (\breve G_{\bx,0}/\breve G_{\bx,(h-1)+})^F$ where $F(g) = b \sigma(g) b^{-1}$ for $b$ the Coxeter-type or special representative. $G_h$ is a subquotient of $G$ (Section \ref{sec:integral_models})
\item[$T_h$] $= \bT_h(\FF_q) \cong \cO_L^\times/U_L^h$
\item[$X_h$] a quotient of $\dot X_{\dot w_r}^m(b)_{\sL_0}$ for any $r > m \geq 0$ (Section \ref{s:Xh def}). It has a $(G_h \times T_h)$-action and is a finite-ring analogue of a Deligne--Lusztig variety (Proposition \ref{p:Xh Ah-})
\item[$R_{T_h}^{G_h}(\theta)$] $= \sum_i (-1)^i H_c^i(X_h, \overline \QQ_\ell)[\theta]$, where $H_c^i(X_h, \overline \QQ_\ell)[\theta] \subset H_c^i(X_h, \overline \QQ_\ell)$ is the subspace where $T_h$ acts by $\theta \from T_h \to \overline \QQ_\ell^\times$
\item[$R_T^G(\theta)$] $= \sum_i (-1)^i H_i(\dot X_w^\infty(b), \overline \QQ_\ell)[\theta] = \sum_i (-1)^i H_i(\dot X_w^{DL}(b), \overline \QQ_\ell)[\theta]$, where the homology groups of the scheme $\dot X_w^\infty(b)$ are defined in Section \ref{s:inf ADLV} and where $[\theta]$ denotes the subspace where $T$ acts by $\theta \from T \to \overline \QQ_\ell^\times$
\item[$\sX$] the set of all smooth characters of $L^\times$ that are in general position; i.e., they have trivial stabilizer in $\Gal(L/K)$ (Part \ref{part:aut ind and JL})
\item[$\sX^{\rm min}$] the set of all characters of $L^\times$ that are minimal admissible (Section \ref{s:minimal})
\end{enumerate}
The action of $G \times T$ on each of the schemes $\dot X_{\dot w_r}^m(b), \dot X_w^\infty(b), \dot X_w^{DL}(b)$ is given by $x \mapsto g x t$. These actions descend to an action of $G_h \times T_h$ on $X_h$.

\newpage



\part{Deligne--Lusztig constructions for $p$-adic groups}\label{part:DL}

In this part we discuss two analogues of Deligne--Lusztig constructions attached to a reductive group over $K$: semi-infinite Deligne--Lusztig sets and affine Deligne--Lusztig varieties at higher level. We begin by fixing some notation.

\mbox{}

Let $G$ be a connected reductive group over $K$. Let $S$ be a maximal $\breve K$-split torus in $G$. By \cite[5.1.12]{BruhatT_72} it can be chosen to be defined over $K$. Let $T = \mathscr{Z}_G(S)$ and $\mathscr N_G(S)$ be the centralizer and normalizer of $S$, respectively. By Steinberg's theorem, $G_{\breve K}$ is quasi-split, hence $T$ is a maximal torus. The Weyl group $W$ of $S$ in $G$ is the quotient $W = \mathscr{N}_G(S)/T$ of the normalizer of $S$ by its centralizer. By \cite[Theorem 21.2]{Borel_91}, every connected component of $\mathscr{N}_G(S)$ meets $\breve G$, so $W = \mathscr{N}_G(S)(\breve K)/\breve T$. In particular, the action of the absolute Galois group of $K$ on $W$ factors through a $\Gal(\breve K/K)$-action.

For a scheme $X$ over $K$, the loop space $LX$ of $X$ is the functor on $\FF_q$-algebras given by $LX(R) = X(\mathbb{W}(R)[\varpi^{-1}])$. For a scheme $\mathfrak{X}$ over $\mathcal{O}$, the space of positive loops $L^+\mathfrak{X}$ of $\mathfrak{X}$ is the functor on $\FF_q$-algebras given by $L^+\mathfrak{X}(R) = \mathfrak{X}(\mathbb{W}(R))$, and the functor $L^+_h$ of truncated positive loops is given by $L^+_h \mathfrak{X} (R) = \mathfrak{X}(\mathbb{W}_h(R))$.

For any algebro-geometric object $X$ over $K$, we write $\breve X$ for the set of its $\breve K$-rational points.


\section{Semi-infinite Deligne--Lusztig sets in $G/B$}\label{sec:DL_sets} 

Assume that $G$ is quasi-split. Pick a $K$-rational Borel $B \subseteq G$ containing $T$ and let $U$ be the unipotent radical of $B$. We have the following direct analogue of classical Deligne--Lusztig varieties  \cite{DeligneL_76}.

\begin{Def}\label{Def:DL_sets} 
Let $w \in W$, $\dot w \in \mathscr N_G(S)(\breve K)$ a lift of $w$, and $b \in \breve G$. The \emph{semi-infinite Deligne--Lusztig sets} $X_w^{DL}(b), \dot X_w^{DL}(b)$ are
\begin{align*} 
X_w^{DL}(b) &= \{ g \in \breve G/ \breve B : g^{-1}b\sigma(g) \in \breve B w \breve B\}, \\
\dot X_{\dot{w}}^{DL}(b) &= \{ g \in \breve G/\breve U : g^{-1}b\sigma(g) \in \breve U \dot w \breve U \}.  
\end{align*}
There is a natural map $\dot X_{\dot{w}}^{DL}(b) \rightarrow X_w^{DL}(b)$, $g\breve U \mapsto g\breve B$.
\end{Def}

For $b \in \breve{G}$, we denote by $J_b$ the $\sigma$-stabilizer of $b$, which is the $K$-group defined by
\[ 
J_b(R) \colonequals \{ g \in G(R \otimes_K \breve K) : g^{-1} b \sigma(g) = b \} 
\]
for any $K$-algebra $R$ (cf. \cite[1.12]{RapoportZ_96}). Then $J_b$ is an inner form of the centralizer of the Newton point $b$ (which is a Levi subgroup of $G$). In particular, if $b$ is \emph{basic}, i.e., the Newton point of $b$ is central, then $J_b$ is an inner form of $G$. Let $w \in W$ and let $\dot{w} \in \mathscr{N}_G(S)(\breve K)$ be a lift. We denote by $T_w$ the $\sigma$-stabilizer of $\dot w$ in $T$, which is the $K$-group defined by
\[ 
T_w(R) \colonequals \{ t \in T(R \otimes_K \breve K) : t^{-1}\dot{w}\sigma(t) = \dot{w} \}. 
\]
for any $K$-algebra $R$. As $T$ is commutative, this only depends on $w$, not on $\dot{w}$. 

\begin{lm}\label{lm:actions_on_DL_sets} Let $b\in \breve{G}$ and let $w \in W$ with lift $\dot{w} \in \mathscr{N}_G(S)(\breve K)$. 
\begin{enumerate}[label=(\roman*)]
\item Let $g \in \breve G$. The map $x\breve{B} \mapsto gx\breve{B}$ defines a bijection $X_w^{DL}(b) \stackrel{\sim}{\rightarrow} X_w^{DL}(g^{-1}b\sigma(g))$.
\item Let $g \in \breve G$ and $t \in \breve T$. The map $x\breve{U} \mapsto gxt\breve{U}$ defines a bijection $\dot X_{\dot{w}}^{DL}(b) \stackrel{\sim}{\rightarrow} \dot X_{t^{-1}\dot{w}\sigma(t)}^{DL}(g^{-1}b\sigma(g))$.
\item[(iii)] There are actions of $J_b(K)$ on $X_w^{DL}(b)$ given by $(g, x\breve{B}) \mapsto gx\breve{B}$ and of $J_b(K) \times T_w(K)$ on $\dot X_{\dot{w}}^{DL}(b)$ given by $(g,t,x\breve{U}) \mapsto gxt\breve{U}$. They are compatible with $\dot X_{\dot{w}}^{DL}(b) \rightarrow X_w^{DL}(b)$, and if this map is surjective, then $\dot X_{\dot{w}}^{DL}(b)$ is a right $T_w(K)$-torsor over $X_w^{DL}(b)$.
\end{enumerate}
\end{lm}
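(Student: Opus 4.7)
The plan is to reduce all three parts to direct substitution into the defining relation $x^{-1} b \sigma(x) \in \breve{B} w \breve{B}$, combined with the standard fact that $\breve{T}$ normalizes $\breve{U}$ and is commutative.

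For (i), I would compute
\[
(gx)^{-1} b \sigma(gx) \;=\; x^{-1} \bigl(g^{-1} b \sigma(g)\bigr) \sigma(x),
\]
from which the correspondence between the two Deligne--Lusztig conditions is immediate; bijectivity is witnessed by the explicit inverse $y \mapsto g^{-1}y$. For (ii), the same substitution applied to $gxt$ gives
\[
(gxt)^{-1} b \sigma(gxt) \;=\; t^{-1}\bigl( x^{-1} (g^{-1} b \sigma(g)) \sigma(x) \bigr)\sigma(t),
\]
and the essential identity $t^{-1}(\breve{U}\dot{w}\breve{U})\sigma(t) = \breve{U}\cdot (t^{-1}\dot{w}\sigma(t))\cdot \breve{U}$, valid because $\breve{T}$ normalizes $\breve{U}$, translates membership in $\breve{U}\dot{w}\breve{U}$ on one side to membership in $\breve{U}\cdot (t^{-1}\dot{w}\sigma(t))\cdot\breve{U}$ on the other. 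The inverse map is $y\mapsto g^{-1}yt^{-1}$.

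For (iii), the $J_b(K)$- and $T_w(K)$-actions are just the specializations of (i) and (ii) to those $g$ with $g^{-1}b\sigma(g)=b$ and those $t$ with $t^{-1}\dot{w}\sigma(t)=\dot{w}$; the group action axioms and compatibility with the projection $g\breve{U}\mapsto g\breve{B}$ are visible from the formulas. For the torsor claim, assume the projection is surjective and fix $x\breve{B}\in X_w^{DL}(b)$. Its fibre in $\breve{G}/\breve{U}$ is the free $\breve{T}$-orbit $\{xt\breve{U} : t\in\breve{T}\}$. Given two elements $xt_1\breve{U},\, xt_2\breve{U}$ lying in $\dot X_{\dot{w}}^{DL}(b)$, I set $t=t_1^{-1}t_2$ and, exactly as in the proof of (ii), deduce $t^{-1}\dot{w}\sigma(t)\in\breve{U}\dot{w}\breve{U}$. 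Since $t^{-1}\dot{w}\sigma(t) \in \dot{w}\breve{T}$ and the Bruhat decomposition forces $\dot{w}\breve{T}\cap \breve{U}\dot{w}\breve{U} = \{\dot{w}\}$ (using $\breve{T}\cap\breve{U}=\{1\}$ and uniqueness of the torus part in $\breve{B}w\breve{B}=\breve{U}\dot{w}\breve{T}\breve{U}$), we conclude $t^{-1}\dot{w}\sigma(t)=\dot{w}$, i.e.\ $t\in T_w(K)$. Freeness is inherited from the free right $\breve{T}$-action on $\breve{G}/\breve{U}$, so each non-empty fibre is a $T_w(K)$-torsor.

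The only step with any real content is this Bruhat-uniqueness argument in the torsor assertion; the rest of the lemma is bookkeeping, and no input beyond the definitions and standard structure of the Bruhat cell is needed.
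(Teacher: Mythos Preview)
Your proof is correct and follows the same approach as the paper, which simply records that ``(i) and (ii) follow from the definitions by immediate computations'' and that ``(iii) follows from (i) and (ii).'' You have spelled out exactly those immediate computations, and your explicit Bruhat-uniqueness argument for the torsor claim (showing $\dot w\breve T \cap \breve U\dot w\breve U = \{\dot w\}$ via the unique factorization in the Bruhat cell) makes precise what the paper leaves implicit.
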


\begin{proof}
(i) and (ii) follow from the definitions by immediate computations. (iii) follows from (i) and (ii). 
\end{proof}

\begin{rem}\label{rem:semiinfs_remarks} \mbox{}
\begin{enumerate}[label=(\roman*)]
\item Whereas the classical Deligne--Lusztig varieties are always non-empty, $X_w^{DL}(b)$ is non-empty if and only if the $\sigma$-conjugacy class $[b]$ of $b$ in $G(\breve K)$ intersects the double coset $\breve B w \breve B$. For example, if $G = \GL_n$ ($n \geq 2$) and $b$ is superbasic, then $X_1^{DL}(b) = \varnothing$, as was observed by E. Viehmann.
\item L. Fargues pointed out the following way to endow the semi-infinite Deligne--Lusztig set $X_w^{DL}(1)$ (and $\dot X_{\dot w}^{DL}(b)$ if $T_w$ is elliptic) with a scheme structure: assume that $G$ (and $B$) come from a reductive group over $\cO_K$ (again denoted $G$), such that $G/B$ is a projective $\mathcal{O}_K$-scheme. Then
\[
(G/B)(\breve K) = (G/B)(\cO) = \mathop{\underleftarrow{\lim} }_r (G/B)(\cO/\mathfrak{p}^r).
\]
Now $(G/B)(\cO/\mathfrak{p}^r) = L^+_r(G/B)(\overline{\FF}_q)$ is a finite dimensional $\FF_q$-scheme via $L^+_r$. For a given element $w$ in the finite Weyl group, the corresponding Deligne--Lusztig condition is given by a finite set of open and closed conditions in $G/B$ which involve $\sigma$. The closed conditions cut a closed, hence projective, subscheme of $G/B$, and replacing $G/B$ by this closed subscheme $Z$, we may assume that there are only open conditions. These define an open subscheme $Y_r$ in each $L^+_r Z$. Set $X_w^{DL}(1)_r \colonequals \pr_r^{-1}(Y_r)$, where $\pr_r \colon L^+Z \rightarrow L^+_rZ$ is the projection. This gives $X_w^{DL}(1)_r$ the structure of an open subscheme of $L^+Z$ and $X_w(1) = \bigcup_{r = 1}^{\infty} X_w^{DL}(1)_r$ is now an (ascending) union of open subschemes of $L^+Z$. Note that since the transition morphisms are not closed immersions, this union does not define an ind-scheme. Now if $w$ is such that $T_w$ is elliptic, then $T_w(K)$ is compact modulo $Z(K)$, where $Z$ is the center of $G$, and $\dot X^{DL}_w(1)$---being a $T_w(K)$-torsor over $X_w^{DL}(1)$---is a scheme.

However, this scheme structure appears to be the ``correct'' one only on the subscheme $X_w^{DL}(1)_1$, as the action of $G(K) = J_1(K)$ on $X_w^{DL}(1)$ cannot in general be an action by algebraic morphisms (whereas the action of $G(\cO_K)$ on $X_w^{DL}(1)_1$ is). This will become clear from the ${\rm SL}_2$-example discussed in Section \ref{sec:example_SL2} below.
\hfill $\Diamond$
\end{enumerate}
\end{rem}

Finally we investigate the relation of $\dot X_{\dot w}^{DL}(b)$ with Lusztig's constructions from \cite{Lusztig_79,Lusztig_04}. In fact, consider the map $F \colon \breve G \to \breve G, g \mapsto b \sigma(g) b^{-1}$. 
Assuming that $(w,b)$ satisfies $w\breve{B} = b\sigma(\breve{B})$, so that $w \breve B b^{-1} = F(\breve B)$,
\begin{align*} X_w^{DL}(b) &= \{ g\breve{B} \in \breve{G}/\breve{B} : g^{-1} b \sigma(g) \in \breve{B}w\breve{B} \} \\
&= \{ g\breve{B} \in \breve{G}/\breve{B} : g^{-1}F(g) \in \breve{B}F(\breve{B}) \} \\
&= \{ g \in \breve{G} : g^{-1}F(g) \in F(\breve{B})\} / (\breve{B} \cap F(\breve{B})) \\
\end{align*}
Similarly, assuming that $(\dot{w},b)$ satisfies $\dot{w}\breve{U} = b\sigma(\breve{U})$, so that $\dot{w}\breve{U}b^{-1} = F(\breve{U})$,
\begin{align*}
\dot X_{\dot{w}}^{DL}(b) = \{g \in \breve G : g^{-1}F(g) \in F(\breve{U})\} / (\breve{U} \cap F(\breve{U})).
\end{align*} 
This is precisely the definition of the semi-infinite Deligne--Lusztig set in \cite{Lusztig_79}. It was studied by Boyarchenko \cite{Boyarchenko_12} and the first named author \cite{Chan_DLI, Chan_DLII, Chan_siDL} in the case when $G = \GL_n$ and $b$ superbasic, i.e., $J_b(K)$ are the units of a division algebra over $K$, where it admits an ad hoc scheme structure.

In the setting of Part 2 of this paper (see Theorem \ref{thm:structure_result}), it will turn out that $X_w^{DL}(b) = \{g \in \breve G : g^{-1}F(g) \in F(\breve U)\}/(\breve T^F (\breve U \cap F(\breve U))) = \dot X_{\dot w}^{DL}(b)/\breve T^F$. This is quite nontrivial. In the finite field setting \cite[Definition 1.17(i)]{DeligneL_76}, this is true because the Lang map $g \mapsto g^{-1}F(g)$ is surjective. In the setting of $p$-adic groups (even in our $\GL_n$ setting), the Lang map is no longer surjective. However, a corollary of Theorem \ref{thm:structure_result} is that for any $x \in X_w^{DL}(b)$, there exists a representative $g \in \breve G$ such that $g^{-1}F(g) = tu \in F(\breve B)$ with $t$ in the image of the Lang map on $\breve T$.


\section{Affine Deligne--Lusztig varieties and covers} \label{sec:adlv_and_covers}

Let the notation be as in the beginning of part \ref{part:DL}. In this section we recall from \cite{Ivanov_15_ADLV_GL2_unram} the definition of affine Deligne--Lusztig varieties of higher level, and prove that they are locally closed in the affine Grassmannian (Theorem \ref{prop:loc_closed_ADLV_covers} and Corollary \ref{cor:lftschemes}).

\subsection{Affine Grassmannian}
We will use representability results on affine Grassmannians attached to $G$, which were proven by Pappas--Rapoport \cite{PappasR_08} in the equal characterisic case, and by Zhu \cite{Zhu_17} and Bhatt-Scholze \cite{BhattS_17} in the mixed characteristic case. Let $\cG$ be a smooth affine $\cO_K$-scheme with generic fiber $G$ and with connected special fiber. The functor $L^+\cG$ is represented by an (infinite-dimensional) affine group scheme over $\FF_q$. The functor $LG$ is represented by a strict ind-scheme of ind-finite type; that is, $LG$ can be written as a direct limit of schemes of finite type, with transition morphisms being closed immersions. 

The affine Grassmannian associated with $\cG$ is the fpqc-sheaf $LG/L^+\cG$, which is the sheafification for the fpqc-topology of the functor on $\FF_q$-algebras given by 
\[
R \mapsto LG(R)/L^+\cG(R).
\]
It possesses the following representability properties.

\begin{thm}(cf. \cite[Theorem 1.4]{PappasR_08}, \cite[Theorem 1.4]{Zhu_17} and \cite[Corollary 9.6]{BhattS_17})\label{thm:IndSchemeAffineGrassmannian}
The fpqc-sheaf $LG/L^+\cG$ on $\FF_q$-algebras is represented by a strict ind-scheme. The quotient morphism $LG \rightarrow LG/L^+\cG$ has sections locally for the \'etale topology (i.e., ${\rm Spec}(R) \times_{LG/L^+\cG} LG \cong {\rm Spec}(R) \times_{{\rm Spec}(\FF_q)} L^+\cG$ for each point of $LG/L^+\cG$ with values in a strictly henselian ring $R$).
\end{thm}

Moreover, if $\cG$ is parahoric, then $LG/L^+\cG$ is ind-proper, but we will not use this in the following. In general, the affine Grassmannian is not reduced. We have $LG/L^+\cG(\overline{\FF}_q) = \breve G / \cG(\cO)$.

\subsection{Level subgroups}

Let $\Phi = \Phi(G_{\breve K}, S)$ denote the set of roots of $S$ in $G_{\breve K}$ and let $U_{\alpha}$ denote the root subgroup for $\alpha \in \Phi$. Put $U_0 \colonequals T$. 

Let ${\bf x}$ be a point in the apartment of $S$ inside the Bruhat--Tits building of the adjoint group of $G$ over $\breve{K}$. Attached to it, there is a valuation of the root datum of $G$ in the sense of Bruhat--Tits \cite{BruhatT_72}. In particular, for each $\alpha \in \Phi$, it induces a descending filtration $\breve U_{\alpha, r}$ on $\breve U_{\alpha}$ with $r \in \widetilde{\mathbb{R}}$, where $\widetilde{\mathbb{R}} \colonequals \mathbb{R} \cup \{r+ \colon r \in \mathbb{R}\} \cup \{\infty\}$ is the ordered monoid as in \cite[6.4.1]{BruhatT_72}. Further, a choice of an admissible schematic filtration on tori (in the sense of \cite[\S4]{Yu_02}) also defines a descending filtration $\breve U_{0, r} := \breve U_{0,r}$ on $\breve U_0$. (If $G$ is either simply connected or adjoint, or split over a tamely ramified extension, this filtration coincides with the Moy--Prasad filtration, and hence is independent on the choice.) For any concave function $f \colon \Phi \cup \{0\} \rightarrow \widetilde{\mathbb{R}}_{\geq 0} \sm \{\infty \}$, let $\breve{G}_f$ denote the subgroup of $\breve{G}$ (depending on ${\bf x}$) generated by $U_{\alpha,f(\alpha)}$ ($\alpha \in \Phi \cup \{0\}$). In \cite[Theorem 8.3]{Yu_02} it is shown that there exists a smooth affine group scheme $\cG_f$ over $\cO$ with generic fiber $G$, satisfying $\cG_f(\cO) = \breve G_f$. Moreover, assume that ${\bf x}$ is stable under the action of $\sigma$ on the adjoint building. Then $\cG_f$ descends to a smooth affine group scheme over $\cO_K$, again denoted $\cG_f$ \cite[\S9.1]{Yu_02}. 

\begin{prop}\label{prop:representability}
Let $f,g  \colon \Phi \cup \{0\} \rightarrow \widetilde{\mathbb{R}}_{\geq 0} \sm \{\infty \}$ be two concave functions with $g \geq f$.
\begin{itemize}
\item[(i)] $L^+\cG_g$ is a closed subgroup scheme of $L^+\cG_f$.
\end{itemize}
Assume that $\cG_g$ is normal in $\cG_f$, and that $L^+\cG_g$ is pro-unipotent.
\begin{itemize}
\item[(ii)] The fpqc quotient sheaf $L^+\cG_f / L^+\cG_g$ is representable by a smooth affine $\FF_q$-group scheme. The morphism $L^+\cG_f \rightarrow L^+\cG_f / L^+\cG_g$ splits Zariski-locally on the target.
\item[(iii)] The fpqc sheaf morphism $LG/L^+\cG_g \rightarrow LG/L^+\cG_f$ is represented in the category of ind-schemes. It is thus an $L^+\cG_f / L^+\cG_g$-torsor in the category of ind-schemes. It admits sections locally for the \'etale topology on $LG/L^+\cG_f$.
\end{itemize}
\end{prop}
\begin{proof} When $\cG_g, \cG_f$ are parahoric models of $G$, part (i) is shown in \cite[Proposition 8.7(a)]{PappasR_08}. In the general case, (i) follows by the same argument. To see (ii), first observe that $L^+\cG_g \hookrightarrow L^+\cG_f$ is a monomorphism of sheaves (although $\cG_g \rightarrow \cG_f$ is not an immersion if $f \neq g$), as $\cG_g$ is obtained from $\cG_f$ by a series of dilatations (see \cite[\S3]{BoschLR_90}) of closed subschemes in the special fiber. Put $\cG_f^{(0)} = \cG_f$ and for $h \geq 1$, let $\cG_f^{(h)}$ be the dilatation of $\cG_f^{(h-1)}$ along the unit section of the special fiber. Then $L^+\cG_f^{(h)} = \ker(L^+\cG_f \rightarrow L^+_h\cG_f)$ (cf. \cite[p.\ 414]{Zhu_17}). We can find an $h \geq 1$, such that the natural morphism $\cG_f^{(h)} \rightarrow \cG_f$ factors through $\cG_g \rightarrow \cG_f$. This gives closed immersions $L^+\cG_f^{(h)} \hookrightarrow L^+\cG_g \hookrightarrow L^+\cG_f$. Applying the natural morphism of functors $L^+ \rightarrow L^+_h$ to the arrow $\cG_g \rightarrow \cG_f$, and using that $L^+ \rightarrow L^+_h$ is surjective when evaluated at a flat $\cO_K$-scheme, we thus obtain the following commutative diagram of fpqc-sheaves on $\FF_q$-algebras, with exact rows and columns:
\begin{equation*}
\begin{tikzcd}
&&L^+\cG_f^{(h)} \ar[hookrightarrow]{dl} \ar[hookrightarrow]{d} && \\
0 \ar{r} & L^+\cG_g \ar{r} \ar[twoheadrightarrow]{d} & L^+\cG_f \ar{r} \ar[twoheadrightarrow]{d} & L^+\cG_f/L^+\cG_{g} \ar{r} \ar[twoheadrightarrow]{d} & 0 \\ 
& L^+_h\cG_{g} \ar{r}{\alpha} & L^+_h\cG_f \ar{r} & L^+_h\cG_f/{\rm im}(\alpha) \ar{r} & 0 
\end{tikzcd}
\end{equation*}
Now a diagram chase shows that the right vertical map is a monomorphism. Hence it is an isomorphism. We have presented $L^+\cG_f/L^+\cG_g$ as a quotient of two finite dimensional smooth affine group schemes. The last claim of (ii) follows as in the proof of \cite[Proposition 8.7(b)]{PappasR_08}.

Finally, we prove (iii). It is clear that the morphism of fpqc sheaves $p \colon LG / L^+\cG_g \rightarrow LG/L^+\cG_f$ is an $L^+\cG_f/L^+\cG_g$-torsor. A (sheaf-)torsor under an affine group scheme is always relatively representable, so we deduce from (ii) that for any scheme $T$ and any morphism $t \colon T \rightarrow LG / L^+\cG_f$, the pullback $p_t \colon T \times_{LG/L^+\cG_f} LG / L^+\cG_g \rightarrow T$ is a morphism of schemes. This implies that $LG / L^+\cG_g \rightarrow LG/L^+\cG_f$ is a morphism of ind-schemes. The last claim follows from Theorem \ref{thm:IndSchemeAffineGrassmannian}.
\end{proof}

\subsection{Affine Deligne--Lusztig varieties of higher level}
Until the end of Section \ref{sec:adlv_and_covers}, we fix a $\sigma$-stable ${\bf x}$ as above, and a $\sigma$-stable Iwahori subgroup $I \subseteq \breve G$, whose corresponding alcove in the building contains ${\bf x}$. There is a function $f_I \colon \Phi \cup \{0\} \rightarrow \widetilde{\mathbb{R}}_{\geq 0} \sm \{\infty \}$ satisfying $\breve G_{{\bf x},f_I} = I$, and we have the corresponding integral model $\cI := \cG_{{\bf x}, f_I}$. The extended affine Weyl group of $S$ in $G$ is $\widetilde{W} = \mathscr{N}_G(S)(\breve F)/\mathscr{N}_G(S)(\breve F) \cap I$. 


In \cite{Rapoport_05} Rapoport introduced an \emph{affine Deligne--Lusztig variety} attached to elements $w \in \widetilde{W}$ and $b \in \breve{G}$,
\[ 
X_w(b) = \{ gI \in \breve G/I: g^{-1}b\sigma(g) \in IwI \} \subseteq \breve G/I = (LG/L^+\cI)(\overline{\FF}_q).
\]
It is a locally closed subset of $LG/L^+\cI$, hence it inherits the reduced induced sub-ind-scheme structure (see also Theorem \ref{prop:loc_closed_ADLV_covers} below). It is even a scheme locally of finite type over $\FF_q$. Covers of $X_w(b)$ were introduced by the second named author in \cite{Ivanov_15_ADLV_GL2_unram}. We recall the definition (cf. \cite[Sections 2.1-2.2]{Ivanov_15_ADLV_GL2_ram} for a discussion in a more general setup). 

\begin{Def}\label{d:ADLV}
Let $b \in \breve{G}$. Let $f \colon \Phi \cup \{0\} \rightarrow \widetilde{R}_{\geq 0} \sm \{\infty\}$ be a concave function such that $\breve G_f$ is $\sigma$-stable. Let $x \in \breve G_f \backslash \breve G /\breve G_f$ be a double coset. Then we define the corresponding \emph{affine Deligne--Lusztig set of level $f$},
\[
X_x^f(b) \colonequals \{g\breve G_f \in \breve G/\breve G_f: g^{-1}b\sigma(g) \in \breve G_f x \breve G_f \} \subseteq \breve G/\breve G_f = (LG/L^+\cG_f)(\overline{\FF}_q).
\]
\end{Def}

If $J = \breve G_f$ satisfies the assumptions in the definition, we sometimes also write $X_x^J(b)$ for $X_x^f(b)$. We will prove that $X_x^f(b)$ is in certain cases a locally closed subset of $LG/L^+\cG_f$ (Theorem \ref{prop:loc_closed_ADLV_covers}). There is a natural $J_b(K)$-action by left multiplication on $X_x^f(b)$ for all $f$ and all $x$. If $f' \geq f$ and $x^{\prime} \in \breve G_{f'}\backslash \breve G / \breve G_{f'}$ lies over $x \in \breve G_f \backslash \breve G/\breve G_f$, then the natural projection $\breve{G}/\breve G_{f'} \twoheadrightarrow \breve{G}/\breve G_f$ restricts to a map $X_{x^{\prime}}^{f'}(b) \rightarrow X_x^f(b)$. Concerning the right action, we have the following lemma.

\begin{lm}\label{lm:actions_on_ADLV}
Let $J' = \breve G_{f'}$ and $J = \breve G_f$ be two subgroups as in Definition \ref{d:ADLV}, such that $J'$ is a normal subgroup of $J$. Let $x' \in J'\backslash \breve G / J'$ lie over $x \in J\backslash \breve G / J$ and let $b \in \breve G$.
\begin{enumerate}[label=(\roman*)]
\item Any $i \in J$ defines an $X_x^J(b)$-isomorphism $X_{x'}^{J'}(b) \rightarrow X_{i^{-1}x'\sigma(i)}^{J'}(b)$ given by $g J' \mapsto g i J'$. 
\item If $X_{x'}^{J'}(b) \to X_x^J(b)$ is surjective, then $X_{x'}^{J'}(b)$ is set-theoretically a $(J/J')_{x'}$-torsor over $X_x^J(b)$, where
\[ 
(J/J')_{x'} \colonequals \{ i \in J : i^{-1} x' \sigma(i) = x' \}/J'. 
\]
\end{enumerate}
\end{lm}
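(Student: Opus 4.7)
The plan is to prove (i) by a direct computation with cosets, and then deduce (ii) from (i) by interpreting the resulting right $J$-action appropriately.

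For (i), suppose $gJ' \in X_{x'}^{J'}(b)$, so $g^{-1}b\sigma(g) \in J'x'J'$. Writing $g^{-1}b\sigma(g) = j_1 x' j_2$ with $j_1, j_2 \in J'$, I compute
\[
(gi)^{-1}b\sigma(gi) = i^{-1}j_1 x' j_2 \sigma(i) = (i^{-1}j_1 i)(i^{-1}x'\sigma(i))(\sigma(i)^{-1}j_2 \sigma(i)).
\]
Since $J' \lhd J$, the outer factors $i^{-1}j_1 i$ and $\sigma(i)^{-1}j_2\sigma(i)$ lie in $J'$ (for the second, note that $\sigma(i) \in J$ because $J$ is $\sigma$-stable). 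Therefore $(gi)^{-1}b\sigma(gi) \in J'(i^{-1}x'\sigma(i))J'$, so right-multiplication by $i$ sends $X_{x'}^{J'}(b)$ into $X_{i^{-1}x'\sigma(i)}^{J'}(b)$. The map for $i^{-1}$ is an explicit inverse, so this is a bijection. Compatibility with the projection to $X_x^J(b)$ is immediate because $i \in J$ implies $giJ = gJ$.

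For (ii), part (i) gives a right action of $J$ on the disjoint union of $X_{x''}^{J'}(b)$ as $x''$ runs over the $J'$-double cosets contained in $JxJ$. The subgroup preserving the summand $X_{x'}^{J'}(b)$ is exactly $\{i \in J : i^{-1}x'\sigma(i) = x' \text{ in } J'\backslash\breve G/J'\}$, which visibly contains $J'$ and (by the same computation as above, using normality of $J'$) is closed under multiplication and inversion. Its quotient by $J'$ is $(J/J')_{x'}$, and the induced right action on $X_{x'}^{J'}(b)$ is by $X_x^J(b)$-automorphisms. Now fix a point $gJ \in X_x^J(b)$; its preimage in $\breve G/J'$ consists of the cosets $giJ'$ for $i$ ranging over $J/J'$. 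Assuming surjectivity, there exists some such $i_0$ with $gi_0 J' \in X_{x'}^{J'}(b)$. By the computation of (i), $gi_0 i J' \in X_{x'}^{J'}(b)$ for $i \in J$ if and only if $i^{-1}(i_0^{-1}x'\sigma(i_0)i_{?})...$ — more cleanly, after replacing $g$ by $gi_0$ we may assume $g^{-1}b\sigma(g) \in J'x'J'$ itself, and then the fiber of $X_{x'}^{J'}(b) \to X_x^J(b)$ above $gJ$ consists of those cosets $giJ'$ with $i^{-1}x'\sigma(i)\in J'x'J'$, which is precisely an orbit under the free right action of $(J/J')_{x'}$.

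The routine points are the bookkeeping in (i); the only conceptual step is checking that the subgroup $\{i \in J : i^{-1}x'\sigma(i) = x'\}$ is well-defined modulo $J'$, which again reduces to the normality of $J'$ in $J$ combined with the $\sigma$-stability of both groups. No serious obstacle is expected.
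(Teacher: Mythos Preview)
Your proposal is correct and follows essentially the same approach as the paper: part (i) is the same direct normality computation (the paper compresses it to the single line $iJ'x'J'\sigma(i)^{-1} = J'ix'\sigma(i)^{-1}J'$), and part (ii) is argued in both cases by showing $(J/J')_{x'}$ acts simply transitively on each fiber of $X_{x'}^{J'}(b)\to X_x^J(b)$. Your write-up is simply more explicit; the only cosmetic issue is the aborted computation before ``more cleanly,'' which you should clean up.
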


\begin{proof}
Since $J'$ is normal in $J$, we see that $i J' x' J' \sigma(i)^{-1} = J' i x' \sigma(i)^{-1} J'$. This implies (i). For (ii) we need to show that $(J/J')_{x'}$ acts faithfully and transitively on the the fibers of $\varphi \from X_{x'}^{J'}(b) \to X_x^J(b)$. By definition, $\varphi^{-1}(g J) = \{gh J' : \text{$h \in J$ and $(gh)^{-1} b \sigma(gh) \in J' x' J'$}\}$. The claim follows from normality of $J'$ in $J$ and the definition of $(J/J')_{x'}$.
\end{proof}


\subsection{Scheme structure} \label{sec:scheme_structure_on_ADLV}
We need some notation. Write $\widehat{\Phi} \colonequals \Phi \cup \{0\}$. Let $\Phi_{\mathrm{aff}}$ denote the set of affine roots of $S$ in $G$ and let $\widehat \Phi_\mathrm{aff}$ be the disjoint union of $\Phi_\mathrm{aff}$ with the set of all pairs $(0,r)$ with $r \in \widetilde{\mathbb{R}}_{< \infty}$, for which the filtration step $\breve{U}_{0, r}/\breve{U}_{0, r+}$ is non-trivial. There is a natural projection $p \colon \widehat\Phi_{\mathrm{aff}} \twoheadrightarrow \widehat \Phi$, mapping an affine root to its vector part and $(0,r)$ to $0$. We extend the action of $\widetilde{W}$ on $\Phi, \Phi_\mathrm{aff}$ to an action on $\widehat{\Phi}, \widehat \Phi_\mathrm{aff}$ by letting it act trivially on $0$ and all $(0,r)$.

By \cite{Yu_02}, for any $\alpha \in \widehat{\Phi}$ and $r \in \tilde{\mathbb{R}}_{\geq 0} \sm \{\infty\}$, there is an $\cO$-scheme $\cU_{\alpha,r}$ satisfying $\cU_{\alpha,r}(\cO) = \breve U_{\alpha,r}$ whose generic fibre is $U_{\alpha, \breve K}$. If $f \colon \widehat\Phi \rightarrow \widetilde{\mathbb{R}}_{\geq 0} \sm \{\infty \}$ is concave, the schematic closure of $U_{\alpha}$ in $\cG_f$ is $\cU_{\alpha, f(\alpha)}$. If $r < s$ in $\widetilde{\mathbb{R}}_{< \infty}$, there is a unique morphism of group schemes $\cU_{\alpha,s} \rightarrow \cU_{\alpha,r}$ which induces the natural inclusion $\breve{U}_{\alpha, s} \hookrightarrow \breve{U}_{\alpha, r}$ on $\cO$-points. Let $L_{[r,s)}\cU_{\alpha}$ be the fpqc quotient sheaf
\[ 
L_{[r,s)}\cU_{\alpha} = L^+\cU_{\alpha,r}/L^+\cU_{\alpha,s}. 
\]
It is represented by a finite-dimensional group scheme over $\overline{\FF}_q$. 

\begin{lm} \label{lm:BT_lemma} Let $f \colon \widehat{\Phi} \rightarrow \widetilde{\mathbb{R}}_{\geq 0} \sm \{\infty\}$ be a concave function such that $\breve G_f \subseteq I$ is a normal subgroup. Then there is an isomorphism of $\overline{\FF}_q$-schemes
\[ 
\textstyle\prod\limits_{\alpha \in \widehat\Phi} L_{[f_I(\alpha),f(\alpha))}\cU_{\alpha} \rightarrow L^+\cI/L^+\cG_f,
\] 
which on geometric points is given by $(a_{\alpha})_{\alpha \in \widehat\Phi} \rightarrow \prod_{\alpha} \tilde{a}_{\alpha}$, where $\tilde{a}_{\alpha}$ is any lift of $a_{\alpha}$ to $\breve{U}_{\alpha,f_I(\alpha)}$ and the product can be taken in any order. 
\end{lm}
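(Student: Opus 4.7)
The plan is to deduce this from the standard Iwahori-type product decompositions in Bruhat--Tits theory. The basic input is that for any concave $g \colon \widehat\Phi \to \widetilde{\mathbb{R}}_{\geq 0} \sm \{\infty\}$ and any enumeration $\alpha_1, \ldots, \alpha_N$ of $\widehat\Phi$, the multiplication map
\[
\breve U_{\alpha_1, g(\alpha_1)} \times \cdots \times \breve U_{\alpha_N, g(\alpha_N)} \longrightarrow \breve G_g
\]
is a bijection of sets (cf.\ \cite[\S 6.4.48]{BruhatT_72} and \cite{Yu_02}). I apply this to both $g=f_I$ and $g=f$ to obtain product decompositions of $I$ and $\breve G_f$.

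Next, I fix an enumeration of $\widehat\Phi$ and define the map $\Psi$ by the formula in the statement. Well-definedness splits into two points. For independence of the choice of lifts: replacing each $\tilde a_\alpha$ by $\tilde a_\alpha u_\alpha$ with $u_\alpha \in \breve U_{\alpha,f(\alpha)} \subseteq \breve G_f$, I slide the $u_\alpha$'s to the right through the remaining $\tilde a_\beta$'s. Since $\breve G_f$ is normal in $I$ and each $\tilde a_\beta \in I$, each such conjugation lands back in $\breve G_f$, so the resulting product differs from $\prod_\alpha \tilde a_\alpha$ by an element of $\breve G_f$. For independence of the ordering, swapping $\tilde a_\alpha \tilde a_\beta$ to $\tilde a_\beta \tilde a_\alpha$ introduces the commutator $[\tilde a_\alpha, \tilde a_\beta]^{\pm 1}$, which by the Chevalley commutator relations for the filtered root subgroups (\cite[\S 6.2]{BruhatT_72}) lies in the subgroup generated by $\breve U_{i\alpha + j\beta,\, i f_I(\alpha) + j f_I(\beta)}$ for $i,j \geq 1$ with $i\alpha + j\beta \in \widehat\Phi$; concavity of $f_I$ places this inside $I$, and since $\breve G_f \lhd I$ and the ``spurious'' factors introduced by successive swaps are reshuffled root-group elements lying in $I$, one collects them on the right using normality exactly as above and observes that concavity of $f$ together with the decomposition of $\breve G_f$ forces their image in $I/\breve G_f$ to be trivial.

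Bijectivity is then immediate from the product decompositions. For injectivity: if $\prod_\alpha \tilde a_\alpha \in \breve G_f$, then by the uniqueness of the product decomposition of $\breve G_f$ (applied in the same ordering) each $\tilde a_\alpha$ already lies in $\breve U_{\alpha,f(\alpha)}$, hence $a_\alpha = 0$. For surjectivity: any $g \in I$ decomposes uniquely as $g = \prod_\alpha u_\alpha$ with $u_\alpha \in \breve U_{\alpha,f_I(\alpha)}$, and then $g \bmod \breve G_f = \Psi\bigl((u_\alpha \bmod \breve U_{\alpha,f(\alpha)})_\alpha\bigr)$.

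The only real obstacle is the bookkeeping in the order-independence argument: one must check that the commutator error terms, produced by Chevalley's formula with jumps in the affine filtration governed by $f_I$, can always be absorbed into $\breve G_f$. This is ultimately ensured by the concavity of $f$ combined with the normality hypothesis $\breve G_f \lhd I$, which is why both assumptions enter the statement.
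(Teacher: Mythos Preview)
Your approach is essentially the paper's: invoke the Bruhat--Tits product decomposition \cite[6.4.48]{BruhatT_72} for both $I$ and $\breve G_f$ (the paper notes this extends to $f_I$ via the Iwahori decomposition), then pass to the quotient using normality. Your arguments for lift-independence, injectivity, and surjectivity are correct and already give a complete proof of the lemma.

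Where you go astray is in over-reading the phrase ``the product can be taken in any order.'' The lemma asserts that \emph{for each fixed ordering of $\widehat\Phi$} the resulting product map is a well-defined bijection --- not that all orderings yield the same map. In general $I/\breve G_f$ is non-abelian (take for instance the Iwahori in $\GL_2$ modulo a deep principal congruence subgroup), so reordering the factors genuinely changes the image in $I/\breve G_f$. Your attempted justification of order-independence accordingly has a real gap: the commutator $[\tilde a_\alpha,\tilde a_\beta]$ lies in $I$ by concavity of $f_I$, but there is no reason it lies in $\breve G_f$; concavity of $f$ only controls $[\breve U_{\alpha,f(\alpha)},\breve U_{\beta,f(\beta)}]$, not $[\breve U_{\alpha,f_I(\alpha)},\breve U_{\beta,f_I(\beta)}]$, and normality of $\breve G_f$ does not bridge this. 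Simply delete that paragraph: the lemma, correctly read, does not require it, and the paper's own one-line proof makes no such claim. This is also consistent with how the lemma is used downstream (e.g.\ in Lemma~\ref{lm:description_of_DNK}), where different orderings are chosen at different steps precisely because the decomposition depends on the order.
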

\begin{proof} The conclusion of \cite[6.4.48]{BruhatT_72} also holds for the Iwahori subgroup, i.e., for the function $f_I$ (this follows from the Iwahori decomposition). Thus there is a bijection  
\[ 
\textstyle\prod\limits_{\alpha \in \widehat\Phi} L^+\cU_{\alpha,f_I(\alpha)}(\overline{\FF}_q) \rightarrow I, \quad (a_{\alpha})_{\alpha \in \Phi \cup \{0\}} \rightarrow \prod_{\alpha} a_{\alpha}, 
\]
given by multiplication in any order, and a similar statement for $I,f_I$ replaced by $\breve G_f,f$. The statement of the lemma on geometric points follows from these bijections by normality of $\breve G^f$ in $I$. Now, the map $(a_{\alpha})_{\alpha \in \widehat\Phi} \rightarrow \prod_{\alpha} \tilde{a}_{\alpha}$ in the lemma is an algebraic morphism between smooth varieties that is bijective on geometric points and hence an isomorphism. 
\end{proof}

Let $x \in \widetilde{W}$. We give an explicit parametrization of the set of double cosets $\breve G_f \backslash IxI / \breve G_f$ in certain cases. For simplicity, we abuse the notation in the following few lemmas and write $x$ again for any lift of $x$ to $\breve G$. We say also that $(\alpha,m) \in \widehat \Phi_\mathrm{aff}$ \emph{occurs} in a subgroup $J$ of $\breve G$, if $\breve{U}_{\alpha,m}$ is contained in $J$. Then $(\alpha,m)$ occurs in $\breve{G}_f$ if and only if $m \geq f(\alpha)$. Let $\widehat \Phi_\mathrm{aff}(J) \subseteq \widehat \Phi_\mathrm{aff}$ denote the set of all pairs $(\alpha,m)$ occurring in $J$. If $J^{\prime} \subseteq J$ is a normal subgroup, let $\widehat \Phi_\mathrm{aff}(J/J^{\prime}) \colonequals \widehat \Phi_\mathrm{aff}(J) \sm \widehat \Phi_\mathrm{aff}(J^{\prime})$. 

Let $f \colon \widehat\Phi \rightarrow \widetilde{\mathbb{R}}_{\geq 0} \sm \{\infty\}$ be a concave function such that $\breve G_f \subseteq I$ is a normal subgroup. For $x \in \widetilde{W}$, we can divide the set of all affine roots $\Phi_\mathrm{aff}(I/\breve G_f)$ into three disjoint parts $A_x$, $B_x$, $C_x$:
\begin{align}
\nonumber A_x &= \{(\alpha,m) \in \widehat \Phi_\mathrm{aff}(I/\breve G_f) \colon x.(\alpha,m) \not\in \widehat \Phi_\mathrm{aff}(I) \} \\
\label{eq:def_ABC_subsets} B_x &= \{ (\alpha,m) \in \widehat \Phi_\mathrm{aff}(I/\breve G_f) \colon x.(\alpha,m) \in \widehat \Phi_\mathrm{aff}(I/\breve G_f) \} \\
\nonumber C_x &= \{ (\alpha,m) \in \widehat \Phi_\mathrm{aff}(I/\breve G_f) \colon x.(\alpha,m) \in \widehat \Phi_\mathrm{aff}(\breve G_f) \}
\end{align}


\begin{lm} \label{lm:description_of_DNK} Let $f \colon \widehat\Phi \rightarrow\widetilde{\mathbb{R}}_{\geq 0} \sm \{\infty\}$ be a concave, such that $\breve G_f \subseteq I$ is a normal subgroup. Let $x \in \widetilde{W}$. Assume that $p(A_x)$, $p(B_x)$ and $p(C_x)$ are mutually disjoint, and that the same is true for $p(A_{x^{-1}}), p(B_{x^{-1}}), p(C_{x^{-1}})$. Then there is a well-defined bijective map
\[
\textstyle\prod\limits_{\alpha \in p(A_{x^{-1}})} L_{[f_I(\alpha),f(\alpha))} \cU_{\alpha}(\overline{\FF}_q) \times \prod\limits_{\alpha \in p(B_x)} L_{[f_I(\alpha),f(\alpha))} \cU_{\alpha}(\overline{\FF}_q) \times \prod\limits_{\alpha \in p(A_x)} L_{[f_I(\alpha),f(\alpha))} \cU_{\alpha}(\overline{\FF}_q) \rightarrow \breve G_f \backslash IxI / \breve G_f \\
\]
given by $((a_{\alpha})_{\alpha \in p(A_{x^{-1}})}, (b_{\alpha})_{\alpha \in p(B_x)}, (a_{\alpha})_{\alpha \in p(A_x)}) \mapsto \prod_{\alpha \in p(A_{x^{-1}})} \tilde{a}_{\alpha} \cdot x \cdot \prod_{\alpha \in p(B_x)} \tilde{b}_{\alpha} \cdot \prod_{\alpha \in p(A_x)} \tilde{a}_{\alpha}$, where $\tilde{a}_{\alpha}$ is any lift of $a_{\alpha}$ to an element of $\breve{U}_{\alpha,f_I(\alpha)}$, and similarly for $\tilde b_{\alpha}, b_{\alpha}$. This endows the set $\breve G_f \backslash IxI/\breve G_f$ with the structure of a reduced $\overline{\FF}_q$-scheme of finite type.
\end{lm}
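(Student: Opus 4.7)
The plan is to reduce an arbitrary element of $IxI$ to the asserted canonical form using Lemma \ref{lm:BT_lemma}, and to use the uniqueness part of that lemma to conclude injectivity. First, well-definedness of the map in the statement: two choices of lifts $\tilde a_\alpha$ of $a_\alpha$ differ by an element of $\breve U_{\alpha, f(\alpha)} \subseteq \breve G_f$, and by normality of $\breve G_f$ in $I$ such a discrepancy can be conjugated to the outermost positions of the product and absorbed into the outer $\breve G_f$-cosets.

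For surjectivity, take $ixj \in IxI$ with $i, j \in I$. The disjointness hypothesis implies that $p(\widehat\Phi_{\mathrm{aff}}(I/\breve G_f))$ splits as $p(A_{x^{-1}}) \sqcup p(B_{x^{-1}}) \sqcup p(C_{x^{-1}})$ and also as $p(A_x) \sqcup p(B_x) \sqcup p(C_x)$. By Lemma \ref{lm:BT_lemma}, together with its freedom in the ordering of factors, we write $i \equiv i^A \cdot i^B \cdot i^C \pmod{\breve G_f}$ and $j \equiv j^C \cdot j^B \cdot j^A \pmod{\breve G_f}$, grouping root-group factors according to the first and second partitions respectively. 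The definitions of $A$, $B$, $C$ provide three key identities. For $(\alpha,m) \in C_{x^{-1}}$, $x^{-1}\breve U_{\alpha,m} x \subseteq \breve G_f$, so $i^C \cdot x \in x \cdot \breve G_f$. For $(\alpha,m) \in C_x$, $x \breve U_{\alpha,m} x^{-1} \subseteq \breve G_f$, so $x \cdot j^C \in \breve G_f \cdot x$. For $(\alpha,m) \in B_{x^{-1}}$, $x^{-1}\breve U_{\alpha,m} x$ is a root subgroup with vector part in $p(B_x)$, hence $i^B \cdot x = x \cdot \tilde i^B$ for some product $\tilde i^B$ of root-group elements with vector parts in $p(B_x)$. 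Absorbing the $C$-contributions into the outer $\breve G_f$-cosets by normality reduces $ixj$ modulo $\breve G_f$ on both sides to $i^A \cdot x \cdot \tilde i^B \cdot j^B \cdot j^A$, and a final application of Lemma \ref{lm:BT_lemma} to the product $\tilde i^B \cdot j^B$ in $I/\breve G_f$ merges the two $p(B_x)$-factors into a single ordered product, yielding the canonical form.

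For injectivity, observe that the reduction above depends on $i, j$ only through their images in $I/\breve G_f$, so it factors through $\breve G_f \backslash IxI / \breve G_f$ and gives a well-defined inverse of the map in the statement; uniqueness in Lemma \ref{lm:BT_lemma} then ensures that two tuples producing the same double coset must coincide. The principal technical obstacle is the control of commutators introduced by the factor reorderings and by pushing $i^B$ through $x$: Chevalley-type commutator relations produce correction terms in root subgroups corresponding to positive integer combinations of the involved roots, and one must verify at each step that such corrections either lie in $\breve G_f$---and so are invisible modulo $\breve G_f$---or remain within the appropriate class of $p(A_x), p(B_x), p(C_x)$ (or their $x^{-1}$-analogues). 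The mutual disjointness hypothesis is precisely what makes this bookkeeping unambiguous.
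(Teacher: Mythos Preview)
Your overall strategy is the same as the paper's, but there is a genuine gap in the surjectivity argument, and a derivative one in injectivity.

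\textbf{Surjectivity.} After pushing $i^B$ through $x$ you have $\tilde i^B$, a product of root-group elements with vector parts in $p(B_x)$, and you then want to ``merge'' $\tilde i^B \cdot j^B$ into a single ordered product over $p(B_x)$. Lemma~\ref{lm:BT_lemma} only lets you rewrite an element of $I/\breve G_f$ as an ordered product over \emph{all} of $\widehat\Phi$; there is no reason the components indexed by $p(A_x)\cup p(C_x)$ should vanish. Concretely, commutators of $\breve U_{\alpha}$ and $\breve U_{\beta}$ with $\alpha,\beta\in p(B_x)$ land in root groups $\breve U_{i\alpha+j\beta}$, and $p(B_x)$ is not closed under such combinations. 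Your final paragraph flags this as ``bookkeeping'', but it is the actual obstruction: the disjointness hypothesis does not control these commutators. The paper sidesteps the issue entirely. It pushes the whole $B_{x^{-1}}\cup C_{x^{-1}}$-part of $i$ through $x$ in one move (the result lands in $I$, and one does \emph{not} try to track which roots), absorbs it into $j$, and then re-applies Lemma~\ref{lm:BT_lemma} to the new right factor from scratch to obtain a fresh decomposition $c_x b_x a_x$; finally $c_x$ is pushed back to the left into $\breve G_f$. No closure property of $p(B_x)$ is ever needed.

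\textbf{Injectivity.} Your proposed inverse inherits the gap above, and there is a second issue: the reduction you describe is a function of the pair $(i,j)$ modulo $\breve G_f\times\breve G_f$, not of the double coset $\breve G_f\, ixj\,\breve G_f$. Different pairs $(i,j)$ can represent the same element $ixj$, and you have not checked that the output is independent of this choice. The paper instead argues directly: from $\tilde a_{x^{-1}} x \tilde b_x \tilde a_x = i\,\tilde a'_{x^{-1}} x \tilde b'_x \tilde a'_x\, j$ one deduces $(\tilde a'_{x^{-1}})^{-1} i\,\tilde a_{x^{-1}} \in I\cap xIx^{-1}$; since the image of $I\cap xIx^{-1}$ in $I/\breve G_f$ consists precisely of elements with trivial $p(A_{x^{-1}})$-component (this is what $A_{x^{-1}}$ encodes), uniqueness in Lemma~\ref{lm:BT_lemma} forces $a_{x^{-1}}=a'_{x^{-1}}$, and then analogously $a_x=a'_x$ and $b_x=b'_x$.
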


\begin{proof} That the claimed map is well-defined follows from Lemma \ref{lm:BT_lemma}. We have an obvious surjective map $I/\breve G_f \times I/\breve G_f \rightarrow \breve G_f \backslash IxI / \breve G_f$, given by $(i \breve G_f, j \breve G_f) \mapsto \breve G_f i x j \breve G_f$. By Lemma \ref{lm:BT_lemma}, we may write any element of the left $I/\breve G_f$ as product $a_{x^{-1}}  b_{x^{-1}} c_{x^{-1}}$, where $a_{x^{-1}} = \prod_{\alpha \in p(A_{x^{-1}})} a_{\alpha}$, etc. 
Thus any element of $\breve G_f \backslash IxI / \breve G_f$ may be written in the form
\begin{equation}\label{eq:element_of_If_DNK} \breve G_f \tilde{a}_{x^{-1}} \tilde{b}_{x^{-1}} \tilde{c}_{x^{-1}} \cdot x \cdot j \breve G_f, \end{equation}
for some $j \in I$, where $\tilde{(\cdot)}$ denotes an arbitrary lift of an element to the root subgroup. Bringing $\tilde{b}_{x^{-1}} \tilde{c}_{x^{-1}}$ to the right side of $x$ changes it to $x^{-1}\tilde{b}_{x^{-1}} \tilde{c}_{x^{-1}} x$, which is a product of elements of certain filtration steps of root subgroups, all of which lie in $I$ by definition of $B_{x^{-1}}, C_{x^{-1}}$. Thus we may eliminate $\tilde{b}_{x^{-1}} \tilde{c}_{x^{-1}}$ from \eqref{eq:element_of_If_DNK}. Now, by Lemma \ref{lm:BT_lemma}, we may write any element of the right $I/\breve G_f$ as the product $c_x  b_x a_x$, with $c_x = \prod_{\alpha \in p(C_x)} c_{\alpha}$, etc. That is, any element of $\breve G_f \backslash IxI / \breve G_f$ may be written as 
\begin{equation}\label{eq:element_of_If_DNK_2} \breve G_f \tilde{a}_{x^{-1}} \cdot x \cdot \tilde{c}_x \tilde{b}_x \tilde{a}_x \breve G_f, \end{equation}
for some lifts $\tilde{c}_x, \tilde{b}_x, \tilde{a}_x$ of $c_x, b_x, a_x$. Bringing $\tilde{c}_x$ to the left side of $x$ in \eqref{eq:element_of_If_DNK_2}, makes it to $x^{-1} \tilde{c}_x x$, which is a product of elements of certain filtration steps of root subgroups, all of which lie in $\breve G_f$ by definition of $C_{x}$. By normality of $\breve G_f$, we may eliminate $\tilde{c}_{x}$ from the \eqref{eq:element_of_If_DNK_2}. It finally follows that we may write any element of $\breve G_f \backslash IxI / \breve G_f$ as a product
\begin{equation}\label{eq:element_of_If_DNK_3} \breve G_f \tilde{a}_{x^{-1}} \cdot x \cdot \tilde{b}_x \tilde{a}_x \breve G_f, \end{equation}
with $\tilde{a}_{x^{-1}}$, $\tilde{b}_x$, $\tilde{a}_x$ as above. This shows the surjectivity of the map in the lemma. It remains to show injectivity. 

Suppose there are tuples $(a_{x^{-1}}, b_x, a_x)$ and $(a_{x^{-1}}^{\prime}, b_x^{\prime}, a_x^{\prime})$ giving the same double coset, i.e., $\tilde{a}_{x^{-1}}  x  \tilde{b}_x  \tilde{a}_x = i \tilde{a}_{x^{-1}}^{\prime} x \tilde{b}_x^{\prime} \tilde{a}_x^{\prime} j$ for some $i,j \in \breve G_f$. This equation is equivalent to
\[  x^{-1} (\tilde{a}_{x^{-1}}^{\prime})^{-1} i \tilde{a}_{x^{-1}} x = \tilde{b}_x^{\prime} \tilde{a}_x^{\prime} j  \tilde{a}_x^{-1} \tilde{b}_x^{-1}. \] 
Here, the right hand side lies in $I$, hence it follows that $(\tilde{a}_{x^{-1}}^{\prime})^{-1} i \tilde{a}_{x^{-1}} \in I \cap xIx^{-1}$. We now apply Lemma \ref{lm:BT_lemma}: any element of $I/\breve G_f$ can be written uniquely as a product $s_{x^{-1}} r_{x^{-1}}$ with $s_{x^{-1}} = \prod_{\alpha \in p(A_{x^{-1}})} s_{\alpha}$ and $r_{x^{-1}} = \prod_{\alpha \in p(B_{x^{-1}} \cup C_{x^{-1}})} r_{\alpha}$ with $s_{\alpha}, r_{\alpha} \in L_{[f_I(\alpha),f(\alpha))}U_{\alpha}(\overline{\FF_q})$. By definition, the affine roots in $A_{x^{-1}}$ are precisely those affine roots in $\widehat \Phi_\mathrm{aff}(I/\breve G_f)$ which do not occur in $I \cap xIx^{-1}$. Hence we see that the image of the composed map $I \cap xIx^{-1} \hookrightarrow I \twoheadrightarrow I/\breve G_f$ is equal to the set of all elements of $I/\breve G_f$ with $s_{x^{-1}} = 1$ in the above decomposition. Now we have inside $I/\breve G_f$ (so in particular, the element $i \in \breve G_f$ can be ignored)
\[ 
a_{x^{-1}} = a_{x^{-1}} \cdot 1 = a_{x^{-1}}^{\prime}  \cdot (a_{x^{-1}}^{\prime})^{-1} i a_{x^{-1}}, 
\]
which gives two decompositions of the element $a_{x^{-1}} \in I/\breve G_f$. By uniqueness of such a decomposition, we must have $a_{x^{-1}}^{\prime} = a_{x^{-1}}$. Now analogous computations (first done for $a_x^{\prime}, a_x$ and then for $b_x^{\prime}, b_x$) show that we also must have $a_x^{\prime} = a_x$ and $b_x^{\prime} = b_x$. This finishes the proof of injectivity. 
\end{proof}

The Schubert cell attached to $x \in \widetilde{W}$ is the reduced subscheme of $LG/L^+\cI$ whose underlying set of $\overline{\FF}_q$-points is $IxI/I \subseteq \breve G/I$. We denote it by $C(x)$. As $\overline{\FF}_q$-schemes $C(x) \cong \bA^{\ell(x)}$, where $\ell(x)$ is the length of the element $x$ in $\widetilde{W}$. We now consider the reduced subscheme of $LG/L^+\cG_f$, whose underlying set of $\overline{\FF}_q$-points is $IxI/\breve G_f \subseteq \breve G/\breve G_f$ and we denote it by $C_f(x)$. The \'etale $L^+\cI/L^+\cG_f$-torsor $LG/L^+\cG_f \rightarrow LG/L^+\cI$ pulls back to the \'etale $L^+\cI/L^+\cG_f$-torsor $C_f(x) \rightarrow C(x)$.


\begin{lm} \label{lm:param_of_Schuber_cell_preimage}
Let $f \colon \widehat\Phi \rightarrow \widetilde{\mathbb{R}}_{\geq 0} \sm \{ \infty\}$ be a concave function such that $\breve G_f \subseteq I$ is a normal subgroup. Let $x \in \widetilde{W}$. The \'etale $L^+\cI/L^+\cG_f$-torsor $C_f(x) \rightarrow C(x)$ is trivial. If $p^{-1}(p(A_{x^{-1}})) \cap \widehat\Phi_\mathrm{aff}(I/\breve G_f) = A_{x^{-1}}$, then there is an isomorphism of $\overline{\FF}_q$-schemes
\[
\textstyle\prod\limits_{\alpha \in p(A_{x^{-1}})} L_{[f_I(\alpha),f(\alpha))} \cU_{\alpha} \times L^+\cI/L^+\cG_f \rightarrow C_f(x)
\]
given by $((a_{\alpha})_{\alpha \in p(A_{x^{-1}})}, i) \mapsto \prod_{\alpha \in p(A_{x^{-1}})} \tilde{a}_{\alpha} \cdot x \cdot i\breve G_f$, where $\tilde{a}_{\alpha} \in \breve U_{\alpha, f_I(\alpha)}$ is any lift of $a_{\alpha}$.
\end{lm}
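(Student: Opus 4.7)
The plan is to combine the known affine Schubert cell parametrization with the torsor structure of $\breve{G}/\breve{G}_f \to \breve{G}/I$. First I would verify well-definedness of the proposed map. Given $(a_\alpha)_{\alpha \in p(A_{x^{-1}})}$, two choices of lifts $\tilde{a}_\alpha, \tilde{a}_\alpha' \in \breve U_{\alpha, f_I(\alpha)}$ differ by elements of $\breve U_{\alpha, f(\alpha)}$. The hypothesis $p^{-1}(p(A_{x^{-1}})) \cap \Phi_\mathrm{aff}(I/\breve G_f) = A_{x^{-1}}$ guarantees that any affine root lying over $p(A_{x^{-1}})$ but above $f(\alpha)$ lies in $\Phi_\mathrm{aff}(\breve G_f)$, so these correction terms sit inside $\breve G_f$. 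Combined with normality of $\breve G_f$ in $I$ and the standard commutation relations among affine root subgroups, one concludes that the product $\prod_{\alpha \in p(A_{x^{-1}})} \tilde{a}_\alpha \cdot x \cdot i\breve G_f$ is independent of all choices of lifts.

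Second, I would exhibit the map as a trivialization of an $I/\breve G_f$-torsor. The natural projection $\breve G/\breve G_f \to \breve G/I$ is an $I/\breve G_f$-torsor with $I/\breve G_f$ acting on the right, and restricting to $IxI/I \cong \mathbb{A}^{\ell(x)}$ yields the torsor $IxI/\breve G_f \to IxI/I$. The first step above supplies a section $s \from IxI/I \to IxI/\breve G_f$ of this torsor, namely $\prod \tilde{a}_\alpha \cdot xI \mapsto \prod \tilde{a}_\alpha \cdot x \breve G_f$, and the map in the lemma factors as $((a_\alpha), i) \mapsto s(\prod \tilde a_\alpha \cdot xI) \cdot i$. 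Once $s$ is a well-defined algebraic section, the trivialization is automatic and yields the claimed isomorphism of $\FF_q$-schemes, provided we identify $\prod_{\alpha \in p(A_{x^{-1}})} L_{[f_I(\alpha),f(\alpha))} \underline{U}_\alpha$ with (a thickened version of) the Schubert cell via the standard product-of-root-subgroups parametrization.

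Third, for bijectivity on $\overline{\FF}_q$-points I would mimic the argument in Lemma \ref{lm:description_of_DNK}. Any element of $IxI/\breve G_f$ is represented as $i \cdot x \cdot j \breve G_f$ with $i, j \in I$. Apply Lemma \ref{lm:BT_lemma} to $i$ to decompose it modulo $\breve G_f$ according to the partition $\Phi_\mathrm{aff}(I/\breve G_f) = A_{x^{-1}} \sqcup B_{x^{-1}} \sqcup C_{x^{-1}}$; the pieces indexed by $B_{x^{-1}}$ and $C_{x^{-1}}$, when conjugated across $x$, land in $I$ and $\breve G_f$ respectively by the very definitions of these sets, so they can be absorbed into the right-hand $I$-factor, respectively eliminated by normality. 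This yields surjectivity. For injectivity, if two tuples $((a_\alpha), i)$ and $((a_\alpha'), i')$ produce the same double coset, then the ratio $(\prod \tilde{a}_\alpha')^{-1} \prod \tilde{a}_\alpha$ lies in $I \cap xIx^{-1}$ modulo $\breve G_f$; but Lemma \ref{lm:BT_lemma} identifies the image of $I \cap xIx^{-1}$ in $I/\breve G_f$ with exactly those elements whose $A_{x^{-1}}$-component is trivial, forcing $a_\alpha = a_\alpha'$ and hence $i \breve G_f = i' \breve G_f$.

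The main obstacle is the bookkeeping required in the first and third steps, especially in tracking how multiplying by an element $\tilde u \in \breve U_{\alpha,r}$ on one side of $x$ turns into an element of $\breve U_{x \cdot \alpha, x \cdot r}$ on the other and then using the tripartite decomposition of $\Phi_\mathrm{aff}(I/\breve G_f)$ to absorb or eliminate it. The hypothesis $p^{-1}(p(A_{x^{-1}})) \cap \Phi_\mathrm{aff}(I/\breve G_f) = A_{x^{-1}}$ is precisely what rules out unwanted cross-interactions between $A_{x^{-1}}$ and $B_{x^{-1}} \cup C_{x^{-1}}$, and this is what makes both well-definedness of $s$ and the injectivity argument clean. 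Once this is in place, no further geometric input beyond the torsor structure and Lemma \ref{lm:BT_lemma} is needed.
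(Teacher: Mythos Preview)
Your proposal is correct and follows essentially the same approach as the paper: the paper's proof is a one-sentence remark that the $I/\breve G_f$-torsor $\breve G/\breve G_f \twoheadrightarrow \breve G/I$ trivializes over the Schubert cell $IxI/I \cong \mathbb{A}^{\ell(x)}$, so a section together with the $I/\breve G_f$-action gives the parametrization, with bijectivity on $\overline{\FF}_q$-points checked exactly as in Lemma~\ref{lm:description_of_DNK}. Your steps 2 and 3 reproduce this verbatim, and your step 1 (well-definedness of the section) is a detail the paper leaves implicit.
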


\begin{proof}
The group $L^+\cI/L^+\cG_f$ has a composition series with all subquotients either $\bG_a$ or $\bG_m$. The cohomology of both vanishes on an affine space. This proves that $C_f(x) \rightarrow C(x)$ is a trivial torsor. The explicit isomorphism is proven in the same way as in Lemma \ref{lm:description_of_DNK}.
\end{proof}

\begin{lm}\label{lm:geom_quotient} Under the assumptions of Lemma \ref{lm:description_of_DNK}, the projection $p \colon C_f(x) \twoheadrightarrow \breve G_f \backslash I x I / \breve G_f$ is a geometric quotient in the sense of Mumford for the left multiplication action of $L^+\cG_f$ on $C_f(x)$. Here $\breve G_f \backslash IxI/\breve G_f$ is endowed with the structure of an $\FF_q$-scheme using the parametrization from Lemma \ref{lm:description_of_DNK}.
\end{lm}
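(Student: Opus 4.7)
The plan is to upgrade the bijection of Lemma~\ref{lm:description_of_DNK} to an identification of $V := IxI/\breve G_f$ as a trivial product $Q \times F$ over $Q := \breve G_f\backslash IxI/\breve G_f$, under which the projection $p$ becomes projection onto the first factor. From this presentation, the three Mumford axioms for a geometric quotient will fall out almost for free.

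First I would combine Lemma~\ref{lm:param_of_Schuber_cell_preimage} with Lemma~\ref{lm:BT_lemma} applied to the normal inclusion $\breve G_f \subseteq I$. By the disjointness hypothesis, the nontrivial factors of the product decomposition of $I/\breve G_f$ sort into three groups indexed by $p(A_x)$, $p(B_x)$, and $p(C_x)$, and combining with the $p(A_{x^{-1}})$-factor from Lemma~\ref{lm:param_of_Schuber_cell_preimage} yields an $\FF_q$-scheme isomorphism
\[
V \;\xrightarrow{\sim}\; Q \times F, \qquad F \;:=\; \prod_{\alpha\in p(C_x)} L_{[f_I(\alpha), f(\alpha))}\underline{U}_\alpha,
\]
where $Q$ carries the scheme structure of Lemma~\ref{lm:description_of_DNK}. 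Explicitly the map is
\[
\bigl((a_{x^{-1}}, b_x, a_x), c_x\bigr) \;\longmapsto\; \tilde a_{x^{-1}}\cdot x \cdot \tilde c_x \tilde b_x \tilde a_x \cdot \breve G_f.
\]

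Next, I would identify $p$ with the projection onto $Q$. Replaying the step in the proof of Lemma~\ref{lm:description_of_DNK} in which $\tilde c_x$ is absorbed, the definition of $C_x$ (namely $x.(\alpha, m) \in \widehat\Phi_\mathrm{aff}(\breve G_f)$ for $(\alpha,m)\in C_x$) gives $x\tilde c_x x^{-1}\in \breve G_f$ for every lift $\tilde c_x$ of an element of $F$, so that $\tilde c_x$ is swallowed into the outer left $\breve G_f$-coset. Hence $p$ sends $((a_{x^{-1}}, b_x, a_x), c_x)$ to $(a_{x^{-1}}, b_x, a_x)$, i.e.\ $p$ is the projection $Q\times F \to Q$.

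Finally I would read off the Mumford conditions. Surjectivity of $p$ and the coincidence of $p$-fibers with $\breve G_f$-orbits are tautological, since $Q = \breve G_f\backslash V$ as a set by the very definition of the left action. The topological compatibility ($U\subseteq Q$ is open iff $p^{-1}(U)\subseteq V$ is open) is immediate from $p$ being a trivial product projection. For the structure sheaf identity $\mathcal{O}_Q = (p_*\mathcal{O}_V)^{\breve G_f}$, transitivity of the $\breve G_f$-action on each geometrically connected fiber $\{q\}\times F$ (which is tautological from $Q$ being the orbit space) forces invariant local sections to be constant along $F$-directions and hence to be pulled back from $Q$. The main technical point is the identification in the second step: one must carefully track how the lifts $\tilde{(\cdot)}$ interact with the left--right $\breve G_f$-cosets, and verify that $C_x$ is precisely the set of affine roots for which $x$-conjugation lands in $\breve G_f$. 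Once this matching of the two parametrizations is pinned down, the geometric quotient conditions are formal consequences of the trivial product structure.
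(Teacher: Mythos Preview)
Your approach is correct and shares the essential step with the paper: both arguments rest on the explicit coordinate description showing that, under the parametrizations of Lemmas~\ref{lm:description_of_DNK} and~\ref{lm:param_of_Schuber_cell_preimage}, the map $p$ simply forgets the $C_x$-coordinates. The only difference is in how the conclusion is drawn: the paper invokes \cite[Proposition~6.6]{Borel_91} (a separable surjective orbit map onto a normal variety is a geometric quotient), reducing everything to the observation that the projection $(a_{x^{-1}}, c_x b_x a_x) \mapsto (a_{x^{-1}}, b_x, a_x)$ is separable, whereas you verify Mumford's axioms by hand from the trivial product structure $V \cong Q \times F$. Your route is slightly longer but avoids the external citation; note that your sheaf-condition step implicitly uses that $Q$ and $F$ are reduced (so that constancy on closed points of each fiber forces constancy as a function), which is clear since both are affine spaces.
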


\begin{proof}
First note that from the assumptions of Lemma \ref{lm:description_of_DNK} it follows that $p^{-1}(p(A_{x^{-1}})) \cap \widehat\Phi_\mathrm{aff}(I/\breve G_f) = A_{x^{-1}}$ (as $A_{x^{-1}} \dot\cup B_{x^{-1}} \dot\cup C_{x^{-1}} = \widehat\Phi_{\rm aff}(I/\breve G_f)$), so we may use Lemma \ref{lm:param_of_Schuber_cell_preimage}. The action of $L^+\cG_f$ on $C_f(x)$ factors through a finite-dimensional quotient (any subgroup $J \subseteq \breve G_f \cap x \breve G_f x^{-1}$ which is normal in $\breve G_f$ acts trivially on $C_f(x)$). Now, since $p$ is a surjective orbit map, $\breve G_f \backslash I x I / \breve G_f$ is normal and the irreducible components of $C_x(f)$ are open. Thus by \cite[Proposition 6.6]{Borel_91}, it remains to show that $p$ is a separable morphism of varieties. But this is true since, in terms of the parameterizations given in Lemmas \ref{lm:description_of_DNK} and \ref{lm:param_of_Schuber_cell_preimage}, it is given by $(a_{x^{-1}}, i = c_x b_x a_x) \mapsto (a_{x^{-1}}, b_x, a_x)$.
\end{proof}


\begin{thm}\label{prop:loc_closed_ADLV_covers} Assume $G$ is split. Let $f \colon \widehat\Phi \rightarrow \widetilde{\mathbb{R}}_{\geq 0} \sm \{ \infty\}$ be a concave function such that $\breve G_f \subseteq I$ is a normal subgroup. Let $\dot{x}$ be an $\breve G_f$-double coset in $\breve{G}$ with image $x$ in $\widetilde{W}$. Assume that $p(A_x)$, $p(B_x)$ and $p(C_x)$ are mutually disjoint, and that the same is true for $p(A_{x^{-1}}), p(B_{x^{-1}}),p(C_{x^{-1}})$, where $A,B,C$  are as in \eqref{eq:def_ABC_subsets}.
For $b \in \breve{G}$ arbitrary, $X_{\dot{x}}^f(b)$ is locally closed in $\breve G/\breve G_f$.
\end{thm}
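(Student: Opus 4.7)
The plan is to reduce the statement at level $\breve G_f$ to the already-known case at Iwahori level by using the scheme structure on $\breve G_f \backslash I x I / \breve G_f$ provided by Lemma~\ref{lm:description_of_DNK}. Concretely, I will cut $X_{\dot x}^f(b)$ out of the preimage of the Iwahori-level affine Deligne--Lusztig variety as the fiber of a morphism over a single closed point.

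First, since $G$ is split, the Iwahori-level affine Deligne--Lusztig variety $X_x(b) \subseteq \breve G/I$ is locally closed (this is the known Iwahori-level fact recalled in Section~\ref{sec:adlv_and_covers}). The projection $\pi \colon \breve G/\breve G_f \twoheadrightarrow \breve G/I$ is a morphism of ind-schemes, so
\[
Y \colonequals \pi^{-1}(X_x(b)) = \{ g \breve G_f \in \breve G/\breve G_f \colon g^{-1} b \sigma(g) \in IxI \}
\]
is locally closed in $\breve G/\breve G_f$, and $X_{\dot x}^f(b) \subseteq Y$. It therefore suffices to exhibit $X_{\dot x}^f(b)$ as a closed subset of $Y$.

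To do this I would define
\[
\mu \colon Y \longrightarrow \breve G_f \backslash IxI / \breve G_f, \qquad g\breve G_f \;\longmapsto\; \breve G_f \cdot g^{-1} b \sigma(g) \cdot \breve G_f.
\]
Well-definedness follows from the $\sigma$-stability of $\breve G_f$: for any $h \in \breve G_f$ we have $(gh)^{-1} b \sigma(gh) = h^{-1} \bigl(g^{-1} b \sigma(g)\bigr) \sigma(h)$ with $h, \sigma(h) \in \breve G_f$. The hypotheses of Theorem~\ref{prop:loc_closed_ADLV_covers} are exactly those needed to invoke Lemma~\ref{lm:description_of_DNK}, which endows the target $\breve G_f \backslash IxI / \breve G_f$ with the structure of an $\FF_q$-scheme, and by Lemma~\ref{lm:geom_quotient} the projection $IxI/\breve G_f \twoheadrightarrow \breve G_f \backslash IxI/\breve G_f$ is a morphism (indeed a geometric quotient). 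To check that $\mu$ itself is a morphism, note that the Lang-type map $L_b \colon \breve G \to \breve G$, $g \mapsto g^{-1} b \sigma(g)$, is a morphism of ind-schemes, so its restriction to the locally closed preimage $\tilde Y = L_b^{-1}(IxI)$ composed with $IxI \twoheadrightarrow \breve G_f \backslash IxI/\breve G_f$ gives a morphism $\tilde Y \to \breve G_f \backslash IxI/\breve G_f$; normality of $\breve G_f$ in $I$ makes this $\breve G_f$-invariant under right multiplication, so it descends through the $\breve G_f$-torsor $\tilde Y \twoheadrightarrow Y$ to $\mu$. Since $\dot x$ is a single (closed) point in the scheme $\breve G_f \backslash IxI / \breve G_f$, we conclude
\[
X_{\dot x}^f(b) \;=\; \mu^{-1}(\{\dot x\})
\]
is closed in $Y$, and hence locally closed in $\breve G/\breve G_f$.

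The main subtlety, and the only place where the combinatorial hypotheses on $A_x, B_x, C_x$ (and their $x^{-1}$-analogues) are used, is in invoking Lemmas~\ref{lm:description_of_DNK} and~\ref{lm:geom_quotient}: without those one cannot even speak of $\breve G_f \backslash I x I / \breve G_f$ as a scheme, and the whole strategy collapses. Beyond that, the only care needed is in the book-keeping for $\mu$: one must use both $\sigma$-stability (to ensure $\mu$ lands in the correct $\breve G_f$-double coset) and normality of $\breve G_f$ in $I$ (so that right multiplication by $\breve G_f$ preserves $\tilde Y$ and $\mu$ descends). The remainder is a routine descent argument.
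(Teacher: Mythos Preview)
Your proposal is correct and follows essentially the same approach as the paper: reduce to the Iwahori level, pull back to $\breve G/\breve G_f$, and then realize $X_{\dot x}^f(b)$ as the fiber over the closed point $\dot x$ of a morphism to the scheme $\breve G_f\backslash IxI/\breve G_f$ furnished by Lemmas~\ref{lm:description_of_DNK} and~\ref{lm:geom_quotient}. The paper phrases the construction of this morphism via \'etale-local sections of $LG \to \breve G/\breve G_f$ (citing \cite{PappasR_08,Zhu_17}), whereas you phrase it as descent through the $L^+\underline G_f$-torsor; these are the same argument. One small point: the $\breve G_f$-invariance of the map $\tilde Y \to \breve G_f\backslash IxI/\breve G_f$ does not actually require normality of $\breve G_f$ in $I$ --- it follows directly from passing to the double quotient and from $\sigma$-stability of $\breve G_f$; normality is used only inside Lemmas~\ref{lm:description_of_DNK} and~\ref{lm:geom_quotient}.
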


\begin{proof} By Lemma \ref{lm:geom_quotient}, the theorem is now a special case of \cite[Proposition 2.4]{Ivanov_15_ADLV_GL2_ram}.  We recall the proof. It is well known that the Iwahori-level sets $X_x^I(b) = X_x(b)$ are locally closed in $LG/L^+\cI$. Let $\widetilde X$ be the pullback of $X_x^I(b)$ along $LG/L^+\cG_f \rightarrow LG/L^+\cI$. As $X_w(b)$ are schemes locally of finite type over $\overline{\FF}_q$, the same is true for $\widetilde{X}$. By Theorem \ref{thm:IndSchemeAffineGrassmannian}, the map $\beta \colon LG \rightarrow LG/L^+\cG_f$ admits sections \'etale locally. Let $U \rightarrow \widetilde X$ be \'etale such that there is a section $s \colon U \rightarrow \beta^{-1}(U)$ of $\beta$. Consider the composition
\[ 
\psi \colon U \rightarrow \beta^{-1}(U) \times U \rightarrow LG/L^+\cG_f, 
\]
where the first map is $g \mapsto (s(g^{-1}),b\sigma(g))$ and the second map is the restriction of the left multiplication action of $LG$ on $LG/L^+\cG_f$. As $U$ lies over $\widetilde{X}$, this composition factors through $C_f(x) \rightarrow LG/L^+\cG_f$. Denote the resulting morphism by $\psi_0 \colon U \rightarrow C_f(x)$. Let $p \colon C_f(x) \rightarrow \breve G_f \backslash I x I / \breve G_f$ denote the geometric quotient from Lemma \ref{lm:geom_quotient}. The composition $p \circ \psi_0$ is independent of the choice of the section $s$. It sends an $\overline{\FF}_q$-point $g\breve G_f$ to the double coset $\breve G_f g^{-1} b \sigma(g) \breve G_f$. Thus \'etale locally, $X_{\dot{x}}^f(b)$ is just the preimage of the point $\dot{x}$ under $p \circ \psi_0$. The theorem now follows by using \'etale descent for closed subschemes.  
\end{proof}

\begin{cor}\label{cor:lftschemes}
Under the assumptions as in Theorem \ref{prop:loc_closed_ADLV_covers}, $X_{\dot x}^f(b)$ endowed with the induced reduced sub-ind-scheme structure is a scheme locally of finite type over $\overline{\FF}_q$.
\end{cor}
\begin{proof} 
$X_w^I(b)$ is a scheme locally of finite type over $\overline{\FF}_q$. Since $p \colon LG/L^+\cG_f \rightarrow LG/L^+\cI$ is a morphism of ind-schemes which is a torsor under the finite-dimensional affine group scheme $L^+\cI/L^+\cG_f$ (by Proposition \ref{prop:representability}), it follows that $\widetilde{X} := p^{-1}(X_w^I(b))$ is also a scheme locally of finite type over $\overline{\FF}_q$. Now the proof of Theorem \ref{prop:loc_closed_ADLV_covers} shows that $X_x^f(b)$ has the same property.
\end{proof}


\newpage

\part{Geometry of Deligne--Lusztig varieties for inner forms of $\GL_n$}\label{part:GLn DL}

From now and until the end of the paper, we fix an integer $n \geq 1$ and study in detail the constructions in Part \ref{part:DL} for $\GL_n(K)$ and its inner forms. Inner forms of $\GL_n$ over $K$ can be naturally parametrized by $\frac{1}{n} \mathbb{Z}/\mathbb{Z}$. Fix an integer $0 \leq \kappa < n$, put $n^{\prime} = \gcd(\kappa,n),$ and let $n_0$, $k_0$ be the non-negative integers such that
\begin{equation*}
n = n' n_0, \qquad \kappa = n' k_0.
\end{equation*}
The group of $K$-points of the inner form corresponding to $\kappa/n$ is isomorphic to $G \colonequals \GL_{n^{\prime}}(D_{k_0/n_0})$, where $D_{k_0/n_0}$ denotes the central division algebra over $K$ with invariant $k_0/n_0$. Let $\cO_{D_{k_0/n_0}}$ denote the ring of integers of $D_{k_0/n_0}$ and set $G_\cO \colonequals \GL_{n'}(\cO_{D_{k_0/n_0}})$. Note that $G_\cO$ is a maximal compact subgroup of $G$. 

We let $L$ denote the unramified extension of $K$ of degree $n$, and write $\cO_L$ for its integers, $\mathfrak{p}_L$ for the maximal ideal in $\cO_L$. For $h\geq 1$, we write $U_L^h = 1 + \mathfrak{p}_L^h$ for the $h$-units of $L$. 

Up to conjugacy there is only one maximal unramified elliptic torus $T \subseteq G$. We have $T \cong L^\times$. Moreover, we say a smooth character $\theta \colon L^{\times} \rightarrow \overline{\mathbb{Q}}_{\ell}$ has level $h \geq 0$, if $\theta$ is trivial on $U_L^{h+1}$ and non-trivial on $U_L^h$.

We let $V$ be an $n$-dimensional vector space over $\breve K$ with a fixed $K$-rational structure $V_K$. Fix a basis $\{e_1, \ldots, e_n\}$ of $V_K$. This gives an identification of $\GL(V_K)$ with $\GL_n$ over $K$. Set $\sL_0$ to be the $\cO$-lattice generated by $\{e_1, \ldots, e_n\}$.

\section{Inner forms of $\GL_n$}\label{sec:inner_forms}


\subsection{Presentation as $\sigma$-stabilizers of basic elements}

For $b \in \GL_n(\breve K)$, recall from Section \ref{sec:DL_sets} the $\sigma$-stabilizer $J_b$ of $b$. Then $J_b$ is an inner form of the centralizer of the Newton point $b$ (which is a Levi subgroup of $\GL_n$). In particular, if $b$ is \emph{basic}, i.e.\ the Newton point of $b$ is central, then $J_b$ is an inner form of $\GL_n$, and every inner form of $\GL_n$ arises in this way. If 
\[
\kappa = \kappa_{\GL_n}(b) \colonequals \ord\circ\det(b),
\] 
then $J_b$ is the inner form corresponding to $\kappa/n$ modulo $\mathbb{Z}$. Note that $\kappa_{\GL_n}$ is the Kottwitz map 
\begin{equation*}
\kappa_{\GL_n} \colon B(\GL_n(\breve K)) \colonequals \{\text{$\sigma$-conj classes in $\GL_n(\breve K)$}\} \rightarrow \mathbb{Z}
\end{equation*} 
and induces a bijection between the set of basic $\sigma$-conjugacy classes and $\mathbb{Z}$. Consider
\[
F \colon \GL_n(\breve K) \rightarrow \GL_n(\breve K), \qquad g \mapsto b\sigma(g)b^{-1}.
\]
This is a twisted Frobenius on $\GL_n(\breve K)$ and $J_b$ is the $K$-group corresponding to this Frobenius on $\GL_n(\breve K)$. In particular, if $b$ is in the basic $\sigma$-conjugacy class with $\kappa_{\GL_n}(b) = \kappa$, then
\begin{equation*}
G = \GL_{n'}(D_{k_0/n_0}) \cong \GL_n(\breve K)^F = J_b(K).
\end{equation*}


\subsection{Two different choices for $b$}\label{sec:two b}

We will need to choose representatives $b$ of the basic $\sigma$-conjugacy class $[b]$ with $\kappa_{\GL_n}(b) = \kappa$. Depending on the context, we will work with either a \textit{Coxeter-type representative} or a \textit{special representative}.


\subsubsection{Coxeter-type representatives}\label{sec:coxeter_representatives}

Set 
\begin{equation*}
b_0 \colonequals \left(\begin{matrix} 0 & 1 \\ 1_{n-1} & 0 \end{matrix}\right), \qquad \text{and} \qquad t_{\kappa,n} \colonequals \begin{cases}
\diag(\underbrace{1, \ldots, 1}_{n-\kappa},\underbrace{\varpi, \ldots, \varpi}_{\kappa}) & \text{if $(\kappa,n) = 1$,} \\
\diag(\underbrace{t_{k_0,n_0}, \ldots, t_{k_0,n_0}}_{n'}) & \text{otherwise.}
\end{cases}
\end{equation*}
Fix an integer $e_{\kappa,n}$ such that $(e_{\kappa,n},n)=1$ and $e_{\kappa,n} \equiv k_0$ mod $n_0$. (It is clear that $e_{\kappa,n}$ exists.)  If $\kappa$ divides $n$, (i.e.\ $k_0 = 1$), always take $e_{\kappa,n} = 1$. 

\begin{Def}
The \textit{Coxeter-type representative} attached to $\kappa$ is $b_0^{e_{\kappa,n}} \cdot t_{\kappa,n}.$
\end{Def}

The main advantage of this choice is that the maximal torus of $\GL_n(\breve K)$ consisting of diagonal matrices gives an unramified elliptic torus of $J_b$ (as the image of $b$ in the Weyl group of the diagonal torus is a cycle of length $n$). Thus when we use the explicit presentation $G = J_b(K)$ for the Coxeter-type $b$, then our unramified elliptic torus $T \subseteq G$ is the diagonal torus.


\subsubsection{Special representatives}\label{sec:special_representatives}

\begin{Def}
The \textit{special representative} attached to $\kappa$ is the block-diagonal matrix of size $n \times n$ with $(n_0 \times n_0)$-blocks of the form $\begin{pmatrix} 0&\varpi \\1_{n_0 - 1} & 0 \end{pmatrix}^{k_0}$. 
\end{Def}

Note that the special representative and the Coxeter-type representative agree if $(\kappa,n) = 1$ (see the proof of Lemma \ref{lm:GLO conj}), though in general they may differ (for example, when $\kappa = 0$, the special representative is the identity and the Coxeter representative is $b_0$).

\begin{rem}\label{rem:comparison_to_Viehmann_08}
If $b$ is the special representative, $b\sigma$ acts on the standard basis $\{e_i\}_{i=1}^n$ of $V$ in the same way as in \cite[Section 4.1]{Viehmann_08} the operator $F$ considered there acts on the basis $\{e_{j,i,l}\}_{j,i,l}$. To be more precise, in our situation, there is only one $j$ (that is $j=1$) as the isocrystal $(V, b\sigma)$ is isoclinic. Then our basis element $e_i$ for $1\leq i \leq n$ corresponds to Viehmann's basis element $e_{1,i'+1,l}$, where $i = i'n_0 + l$ for $0 \leq i' < n'$, $0\leq l < n_0$. \hfill $\Diamond$
\end{rem}

\begin{rem}
If $(\kappa, n) = 1$, the special representative $b$ is a length-$0$ element of the extended affine Weyl group of $\GL_n$ and therefore is a standard representative in the sense of \cite[Section 7.2]{GHKR10}. In general, $b$ is block-diagonal with blocks consisting of the standard representative of size $n_0 \times n_0$ and determinant $k_0$. \hfill $\Diamond$
\end{rem}


\subsubsection{Properties of the representatives}

\begin{lm} 
Let $\breve T_{\rm diag}$ denote the maximal torus of $\GL_n(\breve K)$ given by the subgroup of diagonal matrices. Then the  Coxeter-type and special representatives lie in the normalizer $N_{\GL_n(\breve K)}(\breve T_{\rm diag})$. Moreover, both representatives are basic elements whose Newton polygon has slope $\kappa/n$.
\end{lm}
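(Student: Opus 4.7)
The plan is to verify the two assertions separately and then bundle them. The first, that both matrices normalize $\breve T_{\rm diag}$, is structural and requires essentially no calculation. The second needs a short direct computation plus a slope-reading argument for isocrystals.

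\textbf{Normalization of the diagonal torus.} Both representatives are \emph{monomial} matrices, i.e.\ products of a permutation matrix and a diagonal matrix (over $\breve K^\times$). Indeed, $b_0$ is a permutation matrix (the cyclic shift sending $e_i\mapsto e_{i+1}$, indices mod $n$), so $b_0^{e_{\kappa,n}}$ is again a permutation matrix; multiplying by the diagonal matrix $t_{\kappa,n}$ yields a monomial matrix. For the special representative, each $n_0 \times n_0$ diagonal block $\bigl(\begin{smallmatrix}0 & \varpi\\1_{n_0-1}&0\end{smallmatrix}\bigr)^{k_0}$ is itself monomial, so the block-diagonal assembly is also monomial. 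Any invertible monomial matrix normalizes the diagonal torus, giving the first claim.

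\textbf{Basicness and slope.} I would show in both cases that $b \in \GL_n(K)$ and that $b^n = \varpi^{\kappa}\cdot 1_n$, and then read off that the Newton slope is $\kappa/n$. For the Coxeter-type representative, writing $b = \pi D$ with $\pi = b_0^{e_{\kappa,n}}$ and $D = t_{\kappa,n}$, I would expand
\[
b^n \;=\; \pi D\,\pi D\cdots \pi D \;=\; \pi^n\cdot\bigl(\pi^{-(n-1)}D\pi^{n-1}\bigr)\cdots\bigl(\pi^{-1}D\pi\bigr)\cdot D,
\]
observe that $\pi^n = 1$ (since $b_0^n = 1$), and note that because $\pi$ is an $n$-cycle (this uses $\gcd(e_{\kappa,n},n) = 1$) the product of conjugates of the diagonal matrix $D$ by powers of $\pi$ gives, in each diagonal entry, the full product of the entries of $D$, namely $\det(D) = \varpi^{\kappa}$. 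Hence $b^n = \varpi^{\kappa}\cdot 1_n$. For the special representative, the block $C = \bigl(\begin{smallmatrix}0 & \varpi\\1_{n_0-1}&0\end{smallmatrix}\bigr)$ satisfies $C^{n_0} = \varpi\cdot 1_{n_0}$ by inspection (it cyclically sends $e_1\mapsto e_2\mapsto\cdots\mapsto e_{n_0}\mapsto \varpi e_1$); therefore each block $C^{k_0}$ of the special representative satisfies $(C^{k_0})^{n_0} = \varpi^{k_0}\cdot 1_{n_0}$, which gives $b^{n_0} = \varpi^{k_0}\cdot 1_n$, and raising to the $n'$-th power yields $b^n = \varpi^{k_0 n'}\cdot 1_n = \varpi^{\kappa}\cdot 1_n$ as claimed.

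\textbf{Extracting the slope.} Since both representatives have entries in $K$, we have $\sigma(b) = b$ and consequently $(b\sigma)^n = b^n \sigma^n = \varpi^{\kappa}\sigma^n$. Applied to any $\sigma$-fixed vector (in particular each of the basis vectors $e_i$), this shows $(b\sigma)^n(e_i) = \varpi^{\kappa} e_i$, so every $e_i$ is a slope vector of slope $\kappa/n$ for the isocrystal $(V, b\sigma)$. Since $\{e_i\}_{i=1}^{n}$ spans $V$ over $\breve K$, the Newton polygon of $(V,b\sigma)$ has the single slope $\kappa/n$, i.e.\ $b$ is basic with Newton slope $\kappa/n$.

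The whole argument is thus largely bookkeeping; the only non-formal input is the identity $b^n = \varpi^{\kappa}\cdot 1_n$, which I would verify by the $n$-cycle telescoping above for the Coxeter-type representative and by block-wise calculation using $C^{n_0} = \varpi\cdot 1_{n_0}$ for the special representative. The mildest subtlety—less an obstacle than a point to state cleanly—is the final step converting $(b\sigma)^n = \varpi^{\kappa}\sigma^n$ into the statement that the isocrystal is isoclinic of slope $\kappa/n$, but this is immediate once one exhibits a $\breve K$-basis of $\sigma^n$-fixed vectors, which the standard basis provides.
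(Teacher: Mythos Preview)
Your proof is correct and follows essentially the same approach as the paper. Both arguments verify the normalizer claim by observing that the representatives are monomial, and both compute a power of $b$ lying in the diagonal torus to read off the Newton slope; the paper cites the formula $\tfrac{1}{a}v_{b^a}$ directly, while you make the identical computation $b^n=\varpi^\kappa\cdot 1_n$ explicit and phrase the conclusion in terms of the isocrystal $(V,b\sigma)$.
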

\begin{proof}
The first statement is clear. For $b \in N_{\GL_n(\breve K)}(\breve T_{\rm diag})$, the Newton point can be computed as $\frac{1}{a} v_{b^a}$, where $a \in \mathbb{Z}_{>0}$ is appropriate such that $b^a \in \breve T_{\rm diag}$. Thus the second statement follows from an easy calculation (for the Coxeter type, it uses the condition on $e_{\kappa,n}$).
\end{proof}

Let $b,b' \in \GL_n(\breve K)$. We say $b, b'$ are \emph{integrally $\sigma$-conjugate} if there is $g \in \GL_n(\cO)$ such that $g^{-1}b\sigma(g) = b'$. 

\begin{lm}\label{lm:GLO conj} 
The Coxeter-type and special representatives attached to $\kappa/n$ are integrally $\sigma$-conjugate.
\end{lm}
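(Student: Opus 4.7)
The plan is to construct an explicit element $g \in \GL_n(\cO)$ satisfying $g^{-1} b \sigma(g) = b'$, where $b$ denotes the Coxeter-type representative and $b'$ the special representative attached to $\kappa$. A direct computation shows that both $b \sigma$ and $b' \sigma$ act on the basis $\{e_i\}$ of $\sL_0$ as permutations scaled in certain positions by $\varpi$, and that in both cases $(\bullet \sigma)^n = \varpi^\kappa \cdot \mathrm{id}_V$ on $V$; the latter uses the coprimality $(e_{\kappa,n}, n) = 1$ for the Coxeter side. Moreover, the block-diagonal form of $b'$ yields a $b' \sigma$-stable decomposition $V = V_1 \oplus \cdots \oplus V_{n'}$ with $V_j = \mathrm{span}_{\breve K}\{e_{(j-1)n_0+1}, \ldots, e_{j n_0}\}$, on each of which $(\Phi)^{n_0} = \varpi^{k_0}$.

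The main step is to exhibit a matching $b\sigma$-stable decomposition $V = \widetilde V_1 \oplus \cdots \oplus \widetilde V_{n'}$ together with an $\cO$-basis of $\sL_0$ adapted to the $\widetilde V_j$'s, on which $b \sigma$ acts exactly as $b' \sigma$ does on the standard basis of the corresponding $V_j$. Setting $\tau \colonequals (b\sigma)^{n_0}/\varpi^{k_0}$, a direct check using $e_{\kappa,n} \equiv k_0 \pmod{n_0}$ shows that $\tau$ is an order-$n'$ $\sigma^{n_0}$-semilinear automorphism of $V$ that cyclically permutes the basis vectors $e_1, e_{1+n_0}, \ldots, e_{1+(n'-1)n_0}$. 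To diagonalize $\tau$, I would lift an $\FF_{q^{n_0}}$-basis $\{\bar\zeta_0 = 1, \bar\zeta_1, \ldots, \bar\zeta_{n'-1}\}$ of $\FF_{q^n}$ (which exists since $[\FF_{q^n}:\FF_{q^{n_0}}]=n'$) to Teichm\"uller representatives $\zeta_j \in \cO_L^\times \subset \cO$ and form the vectors
\[
v_j \;=\; \textstyle\sum\limits_{l=0}^{n'-1} \sigma^{l n_0}(\zeta_j) \cdot e_{1 + l n_0}, \qquad j = 0, \ldots, n' - 1,
\]
suitably reindexed by the cyclic shift by $e_{\kappa,n}$ coming from $b_0^{e_{\kappa,n}}$. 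The $b\sigma$-orbits $\{v_j, b\sigma v_j, \ldots, (b\sigma)^{n_0-1} v_j\}$ then provide $\cO$-bases of $\widetilde V_j \cap \sL_0$, and taken together they form an $\cO$-basis of $\sL_0$.

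The change-of-basis matrix $g$ from the standard basis to $\{(b\sigma)^i v_j\}_{i,j}$ has $\cO$-entries by construction, and its determinant reduces modulo $\mathfrak p$ to a Moore determinant built from the $\bar\zeta_j^{q^{l n_0}}$'s (for $j,l = 0, \ldots, n'-1$), whose non-vanishing is equivalent to the $\FF_{q^{n_0}}$-linear independence of the $\bar\zeta_j$'s, as chosen. Thus $g \in \GL_n(\cO)$, and the identity $b \sigma(g) = g b'$ holds by construction, giving $g^{-1} b \sigma(g) = b'$ as desired. The principal technical difficulty here is the combinatorial bookkeeping required to correctly align the cyclic shift $e_{\kappa,n}$ of $b_0^{e_{\kappa,n}}$ with the $k_0$-block shift structure of $b'$; once that indexing is set up carefully, the integrality and invertibility of $g$ are immediate from the Moore determinant identity.
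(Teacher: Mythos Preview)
Your strategy—diagonalising the semilinear operator $\tau=(b\sigma)^{n_0}\varpi^{-k_0}$ on $\mathrm{span}_{\cO}\{e_{1+ln_0}\}$ via Teichm\"uller lifts and then propagating by $b\sigma$—is a natural direct approach, different from the paper's. The paper instead passes through an intermediate element $b_{\rm sp}\cdot w$ (with $w\in S_{n'}$ a block-permuting cycle): it first shows $b_{\rm sp}$ and $b_{\rm sp}\cdot w$ are integrally $\sigma$-conjugate by solving the recursive block equations for $g$, and then observes that $b_{\rm sp}\cdot w$ and the Coxeter-type $b$ differ only by a permutation matrix in $\GL_n(\cO_K)$.

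There is, however, a genuine gap in your construction once $k_0>1$. The claim that $\{v_j,(b\sigma)v_j,\ldots,(b\sigma)^{n_0-1}v_j\}$ is an $\cO$-basis of $\widetilde V_j\cap\sL_0$ fails: each application of $b=b_0^{e_{\kappa,n}}t_{\kappa,n}$ multiplies by the diagonal entry of $t_{\kappa,n}$ at the current position, and that entry is $\varpi$ whenever the position modulo $n_0$ exceeds $n_0-k_0$. Tracking the residues $1,\,1+k_0,\,1+2k_0,\ldots\pmod{n_0}$, exactly $k_0$ of the $n_0$ steps pick up a $\varpi$; when $k_0>1$, at least one of these occurs strictly before step $n_0$, so some $(b\sigma)^i v_j$ lies in $\varpi\sL_0$. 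Consequently $\det g\in\varpi\cO$, and the Moore-determinant identification of $\det g\bmod\mathfrak p$ collapses to zero. Concretely, for $n_0=3$, $k_0=2$ one finds $(b\sigma)^2 v_j\in\varpi\sL_0$.

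One can try to repair this by rescaling the columns to $\varpi^{-c_i}(b\sigma)^i v_j$ (with $c_i$ the cumulative number of $\varpi$'s, as in the definition of $g_b^{\rd}$). But then $g^{-1}b\sigma(g)$ becomes block-diagonal with blocks of the form $\left(\begin{smallmatrix}0&\varpi^{k_0}\\ 1_{n_0-1}&0\end{smallmatrix}\right)$ (or a reindexed variant) rather than $\Pi_0^{k_0}$, and these are \emph{not} $\GL_{n_0}(\cO)$-conjugate for $k_0>1$: for instance the cokernels $\cO^{n_0}/(\cdot)\cO^{n_0}$ are $\cO/\varpi^{k_0}$ versus $(\cO/\varpi)^{k_0}$. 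So matching the precise $\varpi$-pattern of the special representative requires a further, nontrivial reordering of the basis within each block. This is exactly the ``combinatorial bookkeeping'' you allude to, but it is where the argument currently breaks, not merely a formality; the paper sidesteps it by handling the $\varpi$-structure entirely on the special side via the intermediate $b_{\rm sp}\cdot w$.
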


\begin{proof}
If $\kappa$ is coprime to $n$, then necessarily $e_{\kappa,n} = \kappa$. We have $b_0^{-1}\diag(1,\ldots, 1, \varpi) b_0 = \diag(1, \ldots, 1, \varpi, 1)$ and it follows that $(b_0 \cdot t_{1,n})^\kappa = b_0^\kappa \cdot t_{\kappa,n}$, so the special representative and the Coxeter representative agree. Now assume that $(\kappa,n) = n' > 1$. For convenience of notation, let $b$ denote the Coxeter-type representative and let $b'$ denote the special representative. Recall that $b = b_0^{e_{\kappa,n}} \cdot t_{\kappa,n} = \left(\begin{smallmatrix} 0 & 1 \\ 1_{n-1} & 0 \end{smallmatrix}\right)^{e_{\kappa,n}} \cdot \diag(t_{k_0,n_0}, \ldots, t_{k_0,n_0})$ and that $b'$ is the block-diagonal matrix with $\left(\begin{smallmatrix} 0 & \varpi \\ 1_{n_0-1} & 0 \end{smallmatrix}\right)^{k_0}$ in each block. We would like to produce a $g \in \GL_n(\cO)$ such that $b \sigma(g) = g b'$. Write
\begin{equation*}
\text{$g = \Big( g_1 \, \big| \, g_2 \, \big| \, \cdots \, \big| \, g_n\Big),$ where
$g_i = \left(\begin{smallmatrix} g_{i,1} \\ g_{i,2} \\ \vdots \\ g_{i,n} \end{smallmatrix}\right)$.}
\end{equation*}
Then the first $n_0$ columns of the equation $b \sigma(g) = g b'$ is the equality of $n \times n_0$ matrices
\begin{equation*}
\Big( b \sigma(g_1) \, \big| \, b\sigma(g_2) \, \big| \, \cdots \, \big| \, b\sigma(g_{n_0})\Big) = \Big(g_{k_0+1} \, \big| \, g_{k_0+2} \, \big| \, \cdots \, \big| \, g_{n_0} \, \big| \, \varpi g_{1} \, \big| \, \varpi g_2 \, \big| \, \cdots \, \big| \, \varpi g_{k_0}\Big).
\end{equation*}
In other words, we have
\begin{equation}\label{e:gi}
g_i =
\begin{cases}
\varpi^{-1} \cdot b\sigma(g_{[i-k_0]_{n_0}}) & \text{if $1 \leq i \leq k_0,$} \\
b\sigma(g_{[i-k_0]_{n_0}}) & \text{if $k_0 + 1 \leq i \leq n_0,$}
\end{cases}
\end{equation}
and hence
\begin{equation}\label{e:gk0}
\varpi^{k_0} g_{k_0} = \varpi^{k_0-1} b \sigma(g_{n_0}) = \varpi^{k_0-1} b^2\sigma^2(g_{n_0-k_0}) = \cdots = b^{n_0} \sigma^{n_0}(g_{k_0}),
\end{equation}
where the exponent of $\varpi$ changes periodically according to \eqref{e:gi}. Observe that since $b_0^{-1}t_{\kappa,n}b_0$ is the block-diagonal matrix with blocks equal to $\left(\begin{smallmatrix} 0 & 1 \\ 1_{n_0-1} & 0 \end{smallmatrix}\right)^{-1}t_{k_0,n_0}\left(\begin{smallmatrix} 0 & 1 \\ 1_{n_0-1} & 0 \end{smallmatrix}\right)$, we see that $\varpi^{-k_0} b^{n_0}$ is a permutation matrix of order $n'$; writing $e' \colonequals [e_{\kappa,n} n_0]_n$, we have $\varpi^{-k_0}b^{n_0} = \left(\begin{smallmatrix} 0 & 1_{e'} \\ 1_{n-e'} & 0  \end{smallmatrix}\right)$ and $(e',n) = n_0$. Write
\begin{equation*}
g_{k_0} = \left(\begin{smallmatrix} x_1 \\ x_2 \\ \vdots \\ x_n \end{smallmatrix}\right).
\end{equation*}
Then from \eqref{e:gk0} it follows that
\begin{align*}
\left(\begin{smallmatrix} x_1 \\ x_2 \\ \vdots \\ x_{e'} \\ x_{e'+1} \\ \vdots \\ x_n \end{smallmatrix}\right) = \left(\begin{smallmatrix} \sigma^{n_0}(x_{n-e'+1}) \\ \sigma^{n_0}(x_{n-e'+2}) \\ \vdots \\ \sigma^{n_0}(x_n) \\ \sigma^{n_0}(x_1) \\ \vdots \\ \sigma^{n_0}(x_{n-e'}) \end{smallmatrix}\right).
\end{align*}
In particular, the vector $g_{k_0}$ is determined by $x_1, x_2, \ldots, x_{n_0}$. 

Let $\overline\alpha \in \FF_{q^n}^\times$ be an element such that $\overline\alpha, \sigma(\overline\alpha), \ldots, \sigma^{n-1}(\overline\alpha)$ are linearly independent over $\FF_q$. Let $\alpha \in \cO^\times$ be \textit{any} lift of $\overline \alpha$. Let $g_{k_0}$ be the vector associated to the choice $x_{n_0-k_0+1} = \alpha$ and $x_i = 0$ otherwise. We next show that for this choice of $g_{k_0}$, each of the columns $g_1, \ldots, g_{n_0}$ (all of which are determined by $g_{k_0}$ by \eqref{e:gi}) have coefficients in $\cO$ and that moreover the entries are either zero or in $\cO^\times$. For any positive integer $j$, we know that 
\begin{equation*}
b\sigma(\e_{[i_0-e_{\kappa,n}j]_{n}}) = \begin{cases}
\varpi \cdot \e_{[i_0-e_{\kappa,n}(j+1)]_{n}} & \text{if $[i_0-e_{\kappa,n}j]_{n_0} \geq n_0 - k_0 + 1$} \\
\e_{[i_0-e_{\kappa,n}(j+1)]_{n}} & \text{if $[i_0 - e_{\kappa,n}j]_{n_0} \leq n_0 - k_0$}
\end{cases}
\end{equation*}
where $\e_i$ denotes the $i$th elementary column vector. Comparing this to \eqref{e:gi}, we see that the condition for when the scalar $\varpi^{-1}$ appears in the equation for $g_{[(j+1)k_0]_{n_0}}$ coincides with the condition for when the scalar $\varpi$ appears in the equation for $b\sigma(\e_{[i_0-e_{\kappa,n}j]_{n}})$ when $[e_{\kappa,n}(j+1)]_{n_0} = n_0+1-[i_0-e_{\kappa,n}j]_{n_0}$. Since $e_{\kappa,n} \equiv k_0$ modulo $n_0$ by definition, this implies $i_0 = n_0-k_0+1$, and it follows now that the entries of $g_1, \ldots, g_{n_0}$ are either zero or in $\cO^\times$.

We may repeat the above argument for the next $n_0$ columns of the equation $b\sigma(g) = g b'$. We then obtain that for $n_0+1 \leq i \leq 2n_0$, the columns $g_i$ of $g$ are uniquely determined by $g_{n_0+k_0}$, and that the vector $g_{n_0+k_0}$ is uniquely determined by its first $n_0$ entries $x_1, \ldots, x_{n_0}$. Let $g_{n_0+k_0}$ be the vector associated to the choice $x_{n_0-k_0+1} = \sigma(\alpha)$ and $x_i = 0$ otherwise. Repeating this for the remaining columns of $g$, with $g_{in_0+k_0}$ being determined by setting $x_{n_0-k_0+1} = \sigma^i(\alpha)$ for $1 \leq i \leq n'-1$, we have now obtained an $n \times n$ matrix $g$ satisfying $b\sigma(g) = gb'$ whose entries are either $0$ or in $\cO^\times$.

To complete the proof that $b,b'$ are integrally $\sigma$-conjugate, it remains to show that the $\cO$-valued $n \times n$ matrix $g$ lies in $\GL_n(\cO)$; that is, it remains to show that $\det(g) \in \cO^\times$.
To see this, observe that by construction, the rows and columns of $g$ can be permuted so that it is block-diagonal with $i$th ($1 \leq i \leq n_0$) block equal to the matrix
\begin{equation*}
\sigma^{(i-1)n'}\left(\begin{matrix}
\alpha & \sigma(\alpha) & \cdots & \sigma^{n'-1}(\alpha) \\
\sigma(\alpha) & \sigma^2(\alpha) & \cdots & \alpha \\
\vdots & \vdots & \ddots & \vdots \\
\sigma^{n'-1}(\alpha) & \alpha & \cdots & \sigma^{n'-2}(\alpha)
\end{matrix}\right).
\end{equation*}
The reduction-modulo-$\varpi$ of this matrix is a Vandermonde matrix, and since $\overline \alpha$ is such that $\overline \alpha, \sigma(\overline \alpha), \ldots, \sigma^{n-1}(\overline \alpha) \in \FF_{q^n}$ are linearly dependent over $\FF_q$, it has nonzero determinant. Hence $\det(g) \in \cO^\times$.
\end{proof}

\subsection{Integral models}\label{sec:integral_models}\label{sec:definition of GGh}

Let $\cB^{\red} \colonequals \cB^{\red}(\GL_n, \breve K)$ be the reduced building of $\GL_n$ over $\breve K$. For any point $x \in \cB^{\red}$, the Moy--Prasad filtration is a collection of subgroups $\breve G_{x,r} \subset \GL_n(\breve K)$ indexed by real numbers $r \geq 0$ \cite[Section 3.2]{MoyP_96}. We write $\breve G_{x,r+} = \cup_{s > r} \breve G_{x,s} \subset \GL_n(\breve K)$.

Let $\cA^{\red}$ denote the apartment of $\cB^{\red}$ associated to the maximal split torus given by the subgroup of diagonal matrices in $\GL_n(\breve K)$ and let $b$ be the Coxeter-type representative so that $b$ acts on $\cA^{\red}$ with a unique fixed point $\bx \in \cA^{\red}$. By construction,  each $\breve G_{\bx,r}$ is stable under the Frobenius $F(g) = b \sigma(g) b^{-1}$ and $\breve G_{\bx,0}^F \cong G_\cO$. 

We now define $\bG$ to be the smooth affine group scheme over $\FF_q$ such that 
\begin{equation*}
\bG(\overline \FF_q) = \breve G_{\bx, 0}, \qquad \bG(\FF_q) = \breve G_{\bx,0}^F.
\end{equation*}
For $h \in \bZ_{\geq 1}$, we define $\bG_h$ to be the smooth affine group scheme over $\FF_q$ such that
\begin{equation*}
\bG_h(\overline \FF_q) = \breve G_{\bx,0}/\breve G_{\bx,(h-1)+}, \qquad \bG_h(\FF_q) = \breve G_{\bx,0}^F/\breve G_{\bx,(h-1)+}^F.
\end{equation*}
We have a well-defined determinant morphism
\begin{equation*}
\det \from \bG_h \to \bW_h^\times.
\end{equation*}
Define $\bT_h$ to be the subgroup scheme of $\bG_h$ defined over $\FF_q$ given by the diagonal matrices. Set:
\begin{equation*}
G_h \colonequals \bG_h(\FF_q), \qquad T_h \colonequals \bT_h(\FF_q).
\end{equation*}
Note that $\bG_h(\FF_q)$ is a subquotient of $G$ and $\bT_h(\FF_q) \cong (\cO_L/\varpi^h)^\times \cong \bW_h^\times(\FF_{q^n})$ is a subquotient of the unramified elliptic torus $T$ of $G$.

We remark that each $\breve G_{\bx,r}$ is also stable under the Frobenius $F(g) = b \sigma(g) b^{-1}$ for the special representative $b$ and that $\breve G_{\bx,0}^F \cong G_\cO$. Thus we also can regard $\bG_h$ as a group scheme over $\FF_q$ as above with $\bG_h(\FF_q)$ a subquotient of $J_b(K)$ with $b$ being the special representative. However, the induced $\FF_q$-rational structure on $\bT_h$ gives that $\bT_h(\FF_q) \cong (\bW_h^\times(\FF_{q^{n_0}}))^{\times n'}$, which is \textit{not} a subquotient of any elliptic torus in $G$. 

Explicitly, $\bG_h(\overline \FF_q)$ is the group of invertible $n \times n$-matrices, whose $n_0\times n_0$-blocks are matrices $(a_{ij})_{1\leq i,j \leq n_0}$ with $a_{ii} \in \cO/\mathfrak{p}^h$, $a_{ij} \in \cO/\mathfrak{p}^{h-1}$ ($\forall i>j$), $a_{ij} \in \mathfrak{p}/\mathfrak{p}^h$ ($\forall i<j$). For example, for $n_0=3$, the $n_0 \times n_0$-blocks are
\[ \left(\begin{smallmatrix}
\cO/\mathfrak{p}^h & \mathfrak{p}/\mathfrak{p}^h  & \mathfrak{p}/\mathfrak{p}^h \\
\cO/\mathfrak{p}^{h-1} & \cO/\mathfrak{p}^h  & \mathfrak{p}/\mathfrak{p}^h \\
\cO/\mathfrak{p}^{h-1} & \cO/\mathfrak{p}^{h-1} & \cO/\mathfrak{p}^h 
\end{smallmatrix}\right).
\]

The following lemma describes the $F$-fixed part of the Weyl group of $\bT_1$ in $\bG_1$ explicitly. Note that $b^{n_0}\varpi^{-k_0}$ is a permutation matrix in $\GL_n(\breve K)$.

\begin{lm}\label{lm:descr_normalizer} Let $b$ be the Coxeter-type representative. We have
\begin{enumerate}[label=(\roman*)]
\item We have ${\rm N}_{\bG_h}(\bT_h)/\bT_h = {\rm N}_{\bG_1}(\bT_1)/\bT_1 = S_{n'} \times \dots \times S_{n'}$ ($n_0$ copies).
\item ${\rm N}_{G_h}(T_h)/T_h = ({\rm N}_{\bG_h}(\bT_h)/\bT_h)^F = \langle b^{n_0}\varpi^{-k_0} \rangle \cong \Gal(L/K)[n']$,
the $n'$-torsion subgroup of $\Gal(L/K)$.
\end{enumerate}
\end{lm}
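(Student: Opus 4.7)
I would reduce part (i) to the $h=1$ case, where $\bG_1$ admits an explicit product decomposition, and then compute the $F$-action on the resulting Weyl group for part (ii).

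For (i), at $h=1$ the explicit description of $\bG_h$ collapses each $n_0 \times n_0$ block to its diagonal (since $\cO/\mathfrak{p}^{h-1} = 0$ and $\mathfrak{p}/\mathfrak{p}^h = 0$ when $h=1$), giving $\bG_1 \cong \prod_{i=1}^{n_0} \GL_{n'}$, with the $i$th factor picking out those overall positions whose within-block row and column both equal $i$, and $\bT_1$ the product of the standard diagonal tori. The classical $\GL_{n'}$ Weyl group calculation then yields $N_{\bG_1}(\bT_1)/\bT_1 = S_{n'}^{n_0}$. The reduction $\bG_h \twoheadrightarrow \bG_1$ induces a homomorphism of Weyl groups which is surjective by lifting $\{0,1\}$-valued permutation matrices, which remain permutation matrices in $\bG_h$. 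For injectivity I would argue that any $u$ in the kernel of $\bG_h \twoheadrightarrow \bG_1$ that normalizes $\bT_h$ must centralize $\bT_h$ (the kernel is pro-unipotent and connected, while the induced action on the finite discrete Weyl group of $\bT_h$ factors through $\pi_0$), and the centralizer of $\bT_h$ in $\bG_h$ is $\bT_h$ itself because the $\bT_h$-weight spaces on $\bG_h$ are one-dimensional.

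For (ii), the Frobenius $F(g) = b\sigma(g)b^{-1}$ acts on $W = S_{n'}^{n_0}$ by conjugation by the permutation part of $b$, since $\sigma$ fixes permutation matrices and the diagonal factor $t_{\kappa,n}$ does not alter Weyl-group images. That permutation part is the $n$-cycle $\tau \colon i \mapsto i + e_{\kappa,n} \bmod n$, whose centralizer in $S_n$ is $\langle\tau\rangle$. Hence an $F$-fixed element of $S_{n'}^{n_0} \subseteq S_n$ is of the form $\tau^k$, and the constraint of preserving residues mod $n_0$ translates to $k\,e_{\kappa,n} \equiv 0 \bmod n_0$, forcing $n_0 \mid k$ because $e_{\kappa,n} \equiv k_0 \bmod n_0$ and $\gcd(k_0, n_0) = 1$. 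Thus $W^F = \langle \tau^{n_0}\rangle$ is cyclic of order $n'$. To finish, I would verify that $b^{n_0}\varpi^{-k_0}$ has unit determinant, lies in $\GL_n(\cO) \cap N_{\GL_n(\breve K)}(\breve T_{\mathrm{diag}})$, fixes $\bx$, and is $F$-fixed (since $\varpi^{-k_0}$ is central), hence projects to an $F$-fixed element of $W$ of order $n'$; Lang's theorem for the connected torus $\bT_h$ applied to $1 \to \bT_h \to N_{\bG_h}(\bT_h) \to W \to 1$ then yields $N_{G_h}(T_h)/T_h = W^F$. The identification with $\Gal(L/K)[n']$ comes from the canonical $\Gal(L/K)$-action on $T \cong L^\times$, under which the subgroup $N_G(T)/T$ corresponds to the unique subgroup of $\Gal(L/K)$ of order $n'$.

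The main obstacle I expect is the injectivity step in (i) -- combining the pro-unipotent kernel argument with the centralizer computation for $\bT_h$ in $\bG_h$ -- together with the verification that $b^{n_0}\varpi^{-k_0}$ actually fixes $\bx$, so that it lands in the parahoric $\breve G_{\bx,0}$ and not merely in a larger bounded subgroup.
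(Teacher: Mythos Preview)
Your proposal is correct and closely parallel to the paper's argument, with one small wrinkle and one genuine difference in strategy.

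For (i), the paper does not reduce to $h=1$ at all but simply declares the result ``clear by the explicit description of $\bG_h$''. The intended argument is the direct one: an element normalizing the diagonal $\bT_h$ must be monomial (this follows from your weight-space observation, since the centralizer of the reductive part $\bT_{h,*}\cong(\bG_m)^n$ is already $\bT_h$), and a monomial matrix lies in $\bG_h$ if and only if its underlying permutation preserves residues mod $n_0$ (the $\cO$-positions are exactly those with equal within-block row and column). Your reduction-to-$h=1$ route reaches the same conclusion, but the $\pi_0$ step as written is not quite right: you invoke connectedness of $\bG_h^1$, whereas what you actually need is connectedness of $\bG_h^1\cap N_{\bG_h}(\bT_h)$, which is not obvious a priori. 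The clean fix is precisely your second ingredient: since $Z_{\bG_h}(\bT_{h,*})=\bT_h$ and the conjugation action on the torus $\bT_{h,*}$ lands in the \'etale group $\Aut(\bT_{h,*})$, any $g\in N_{\bG_h}(\bT_h)$ whose reduction lies in $\bT_1$ already acts trivially on $\bT_{h,*}\cong\bT_1$, hence lies in $\bT_h$.

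For (ii) there is a genuine difference in packaging. The paper observes that $F$ cyclically permutes the $n_0$ copies of $S_{n'}$, so an $F$-fixed tuple is determined by its value in a single copy, which must be $F^{n_0}$-fixed; it then identifies $F^{n_0}$ on that copy with conjugation by the $n'$-cycle $b^{n_0}$, whose centralizer in $S_{n'}$ is the cyclic group it generates. Your route instead embeds $S_{n'}^{n_0}\subset S_n$ as the residue-preserving permutations, computes the centralizer of the $n$-cycle $\tau=b_0^{e_{\kappa,n}}$ in $S_n$ directly as $\langle\tau\rangle$, and intersects. Both are short; yours avoids the two-step ``permute factors, then fix one factor'' decomposition, while the paper's makes the product structure of the Weyl group more visibly compatible with $F$. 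Your use of Lang's theorem to pass from $(N/\bT_h)^F$ to $N^F/\bT_h^F$ is fine and implicit in the paper's formulation. Your concern about $b^{n_0}\varpi^{-k_0}$ lying in the parahoric is handled by the sentence preceding the lemma: it is a permutation matrix, its permutation is $\tau^{n_0}$, hence preserves residues mod $n_0$ and sits in $\breve G_{\bx,0}$.
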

\begin{proof}
Part (i) is clear by the explicit description of $\bG_h$. To prove (ii), we need to make the action of $F$ on ${\rm N}_{\bG_h}(\bT_h)/\bT_h$ explicit. Indeed, $F$ is an automorphism of order $n$, it permutes the copies of $S_{n'}$ cyclically, and each of the copies is stabilized by $F^{n_0}$. We can think of the first $S_{n'}$ as permutation matrices with entries $0$ and $1$ in $\GL(\langle e_i \colon i \equiv 1 \pmod{n_0} \rangle) \cong \GL_{n'}$. Then the $F^{n_0}$-action $S_{n'}$ comes from the conjugation by $b^{n_0}$ on $\GL(\langle e_i \colon i \equiv 1 \pmod{n_0} \rangle)$. But $b^{n_0}$ is the order-$n'$ cycle $e_1 \mapsto e_{1 + n_0} \mapsto \dots \mapsto e_{1 + n_0(n'-1)} \mapsto e_1$, and the subgroup of $S_{n'}$ stable by it is $\langle b^{n_0}\varpi^{-k_0} \rangle$. We can identify it with $\Gal(L/K)[n']$ by sending $b^{n_0}\varpi^{-k_0}$ to $\sigma^{n_0}$ (see also Lemma \ref{lm:defin_of_V}).
\end{proof}

\subsection{Twisted polynomial rings}\label{sec:twisted poly}

Let $L_0$ be the degree-$n_0$ unramified extension of $K$ and consider the twisted polynomial ring $L_0 \langle \Pi_0 \rangle$ determined by the commutation relation $a \cdot \Pi_0 = \Pi_0 \cdot \sigma^{l_0}(a)$, where $l_0$ is the integer in the range $1 \leq l_0 \leq n_0$ with $l_0 k_0 = 1$ modulo $n_0$. 

On the other hand, consider the Frobenius map $F_0 \from M_{n_0}(\breve K) \to M_{n_0}(\breve K)$ defined by $F_0(g) = \left(\begin{smallmatrix} 0 & \varpi \\ 1_{n_0-1} & 0 \end{smallmatrix}\right)^{k_0} \sigma(g) \left(\begin{smallmatrix} 0 & \varpi \\ 1_{n_0-1} & 0 \end{smallmatrix}\right)^{-k_0}.$ The diagonal matrices in $M_{n_0}(\breve K)^{F_0}$ are exactly of the form
\begin{equation*}
D_0(a) \colonequals \diag(a, \sigma^{l_0}(a), \ldots, \sigma^{(n_0-1)l_0}(a)), \qquad \text{for $a \in L_0$}.
\end{equation*}

By a direct calculation, we see that we can define an isomorphism
\begin{equation*}
L_0\langle \Pi_0 \rangle/(\Pi_0^{n_0} - \varpi^{k_0}) \to M_{n_0}(\breve K)^{F_0} = D_{k_0/n_0}
\end{equation*}
by sending $a \in L_0$ to $D_0(a)$, and sending $\Pi_0$ to $\left(\begin{smallmatrix} 0 & \varpi \\ 1_{n_0-1} & 0 \end{smallmatrix}\right)$. Note that under this identification, the ring of integers $\cO_{D_{k_0/n_0}}$ of $D_{k_0/n_0}$ is $\cO_{L_0}\langle \Pi_0 \rangle/(\Pi_0^{n_0} - \varpi^{k_0})$.

\subsection{Cartan decomposition}\label{sec:cartan}

Let $b$ be the special representative and let $\Pi_0 = \left(\begin{smallmatrix} 0 & \varpi \\ 1_{n_0-1} & 0 \end{smallmatrix}\right)$. We use the description of $D_{k_0/n_0}$ in Section \ref{sec:twisted poly}. Let $\breve T_{\rm diag}$ be the subgroup of diagonal matrices in $\GL_n(\breve K)$. Then the set of $F$-fixed points of the cocharacters $X_*(\breve T_{\rm diag})^F$ is given by
\[ X_{\ast}(\breve T_{\rm diag})^F = \{\nu = (\nu_1,\dots,\nu_1,\nu_2,\dots,\nu_2, \dots,\nu_{n'},\dots,\nu_{n'}) \colon \nu_i \in \bZ \},\] 
where each $\nu_i$ repeated $n_0$ times. Let $X_{\ast}(\breve T_{\rm diag})_{{\rm dom}}^F \subset X_*(\breve T_{\rm diag})^F$ be the subset consisting of $\nu$ with $\nu_1 \leq \nu_2 \leq \dots \leq \nu_{n'}$. For $\nu \in X_{\ast}(\breve T_{\rm diag})^F$, we write $\Pi_0^{\nu}$ for the $n \times n$ block-diagonal matrix whose $i$th $n_0\times n_0$-block is $\Pi_0^{\nu_i}$. The Cartan decomposition of $G = \GL_{n'}(D_{k_0/n_0})$ with respect to the maximal compact subgroup $G_\cO = \GL_{n'}(\cO_{D_{k_0/n_0}})$ is given by 
\[
G = \bigsqcup_{\nu \in X_{\ast}(\breve T_{\rm diag})_{F,{\rm dom}}} G_\cO \Pi_0^{\nu} G_\cO.
\]
Note that $\Pi_0^\nu$ normalizes $G_\cO$ if and only if all $\nu_i$ are equal, and $\Pi_0^\nu$ centralizes $G_\cO$ if and only if all $\nu_i$ are equal and divisible by $n_0$ so that
\begin{equation*}
N_{G}(G_\cO)/Z_G G_\cO \cong \bZ/n_0 \bZ.
\end{equation*}

\subsection{Reductive quotient $\bG_1$}\label{sec:reductive_quotient_of_JJb}
Let $b$ be either the Coxeter-type or the special representative. The group $\bG_1$ is equal to the reductive quotient of $\bG$. Recall the $\cO$-lattice $\sL_0$ and its basis $\{e_i\}_{i=1}^n$ from the beginning of Part \ref{part:GLn DL}. The following lemma describes the reductive quotient in terms of $\sL_0$. Its proof reduces to some elementary explicit calculations, so we omit it.

\begin{lm}\label{lm:defin_of_V} \mbox{} Let $c,d \in \mathbb{Z}$ with $k_0 c + n_0 d = 1$.
\begin{enumerate}[label=(\roman*)]
\item We have $(b\sigma)^c \varpi^d(\sL_0) \subseteq \sL_0$, and $(b\sigma)^c \varpi^d(\sL_0)$ is independent of the choice of $c,d$.\footnote{$(b\sigma)^c \varpi^d(\sL_0)$ coincides with the operator defined in \cite[Equation (4.3)]{Viehmann_08}.} The quotient space 
\[ 
\overline{V} \colonequals \sL_0/(b\sigma)^c \varpi^d(\sL_0)
\] 
is $n^{\prime}$-dimensional $\overline{\FF}_q$-vector space. The images of $\{e_i\}_{i \equiv 1 \pmod{n_0}}$ form a basis of $\overline{V}$. 
\item The map $(b\sigma)^{n_0} \varpi^{-k_0}$ induces a $\sigma^{n_0}$-linear automorphism $\overline{\sigma_b}$ of $\overline{V}$, equipping it with a $\FF_{q^{n_0}}$-linear structure. If $b$ is the special representative, the $\sigma^{n_0}$-linear operator $\overline{\sigma_b}$ of $\overline{V}$ is given by $e_i \mapsto e_i$ for $1 \leq i \leq n$ with $i \equiv 1 \pmod{n_0}$. If $b$ is Coxeter-type, then it is given by $e_{1 + n_0i} \mapsto e_{1 + n_0(i+e_{\kappa,n})}$.
\item We have a canonical identification 
\[\bG_1 = {\rm Res}_{\FF_{q^{n_0}}/\FF_q} \GL_{n'}\overline V. \]
\end{enumerate}
\end{lm}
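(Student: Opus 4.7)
The plan is to prove the three parts in order, with the bulk of the work being explicit matrix computations in the two cases (Coxeter-type and special representatives) and with (iii) following by combining (i), (ii) with the general theory of reductive quotients of parahorics.

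The key preliminary observation is that the Newton slope of $b$ is $\kappa/n = k_0/n_0$, so the operator $A_0 \colonequals (b\sigma)^{n_0}\varpi^{-k_0}$ is isoclinic of slope $0$ on the isocrystal $(V,b\sigma)$. I would first show that $A_0$ stabilizes $\sL_0$ and acts as a lattice automorphism (i.e., has determinant of valuation $0$). For the special representative this is nearly immediate: the block-diagonal shape together with $\bigl(\begin{smallmatrix}0&\varpi\\1_{n_0-1}&0\end{smallmatrix}\bigr)^{n_0} = \varpi\cdot I_{n_0}$ gives $b^{n_0} = \varpi^{k_0}\cdot I_n$, whence $A_0 = \varpi^{k_0}\sigma^{n_0}\varpi^{-k_0} = \sigma^{n_0}$ as a semi-linear operator; this clearly preserves $\sL_0$, and fixes each $e_i$. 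For the Coxeter-type $b = b_0^{e_{\kappa,n}}t_{\kappa,n}$ I would instead separate the cyclic-shift and diagonal parts: since $\sigma$ fixes each $e_i$ we have $(b\sigma)^{n_0} = b^{n_0}\sigma^{n_0}$, and a direct computation using $e_{\kappa,n}\equiv k_0\pmod{n_0}$ shows $b^{n_0}\varpi^{-k_0}$ is a permutation of the basis whose restriction to the subset $\{e_{1+n_0 i}\}$ is the shift $i\mapsto i + e_{\kappa,n}$. Thus $A_0$ is an integral automorphism and the formula in (ii) for $\overline{\sigma_b}$ is proved up to descent.

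For (i), once $A_0$ is known to be a lattice automorphism, independence of the pair $(c,d)$ is automatic: any two solutions of $k_0 c + n_0 d = 1$ differ by an integer multiple of $(n_0,-k_0)$, and multiplying $A \colonequals (b\sigma)^c\varpi^d$ by $A_0^{\pm 1}$ does not change the image lattice. Integrality of $A$ on $\sL_0$ follows by picking one convenient pair (for the special representative, block-wise bookkeeping of which $\varpi$-factors appear after $c$ applications of $b\sigma$ shows $A(e_i)\in\sL_0$ for all $i$). The dimension of the quotient equals the valuation of $\det A = \det(b)^c\varpi^{nd} = \varpi^{c\kappa + nd} = \varpi^{n'(ck_0+n_0 d)} = \varpi^{n'}$, giving $\dim_{\overline\FF_q}\overline V = n'$. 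To read off the basis of $\overline V$, I would observe that on each $n_0$-block the operator $A$ takes $n_0-1$ of the basis vectors into $\sL_0$ and sends the remaining one into $\mathfrak{p}\sL_0$; keeping track of indices shows that the $\{e_i : i\equiv 1\pmod{n_0}\}$ descend to a basis of the quotient, while the others become redundant.

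For (ii), only well-definedness of $\overline{\sigma_b}$ on $\overline V$ remains, which follows from $A_0$ commuting with $A$ up to $A_0$-factors (so that $A_0 \cdot A(\sL_0) = A(\sL_0)$). The $\FF_{q^{n_0}}$-structure on $\overline V$ is obtained by declaring $\FF_{q^{n_0}}\hookrightarrow\End_{\overline\FF_q}\overline V$ via $\overline{\sigma_b}$; that $\overline{\sigma_b}^{n_0}$ acts as $\sigma^{n_0\cdot n_0}\cdot A_0^{n_0}$ — which needs to be identified with the $\FF_{q^{n_0}}$-scalar action — reduces to a cocycle check that I would verify again using the explicit formulas of Paragraph 1. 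Finally, for (iii): the parahoric $\breve G_{\bx,0}$ stabilizes $\sL_0$ (and also the twisted lattice chain defining the Moy--Prasad filtration at $\bx$), and its quotient by $\breve G_{\bx,0+}$ is, by Bruhat--Tits theory, the reductive quotient acting on $\overline V$, i.e., $\GL(\overline V)_{\overline\FF_q} \cong \GL_{n_0\cdot n'}/\overline\FF_q$-embedded blocks. The Frobenius $F(g) = b\sigma(g)b^{-1}$ on this quotient is conjugate to $A_0$-twisted $\sigma$, which by (ii) is exactly the cyclic-of-order-$n_0$ permutation of factors corresponding to the $\FF_{q^{n_0}}$-structure on $\overline V$; Galois descent along $\FF_{q^{n_0}}/\FF_q$ then yields $\bG_1 = \mathrm{Res}_{\FF_{q^{n_0}}/\FF_q}\GL_{n'}\overline V$. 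The main obstacle is the bookkeeping in the Coxeter-type computation of $A_0$ on $\{e_{1+n_0 i}\}$ — this is where the choice of $e_{\kappa,n}$ enters non-trivially and must be matched with the definition of $\bx$ as the Coxeter fixed point on the apartment.
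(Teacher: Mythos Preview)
The paper omits the proof entirely, stating only that it ``reduces to some elementary explicit calculations.'' Your sketch is precisely such a calculation and is essentially correct in strategy: compute $A_0=(b\sigma)^{n_0}\varpi^{-k_0}$ explicitly in each case, deduce independence of $(c,d)$ and the dimension via the valuation of the determinant, and then identify the reductive quotient with the Weil restriction by Galois descent.

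Two places where your phrasing should be tightened. In (ii), an $\FF_{q^{n_0}}$-structure on the $\overline\FF_q$-vector space $\overline V$ is simply the datum of a $\sigma^{n_0}$-semilinear bijection; there is no embedding $\FF_{q^{n_0}}\hookrightarrow\End_{\overline\FF_q}\overline V$ to construct and no cocycle condition on $\overline{\sigma_b}^{n_0}$ to verify. Once you have checked $A_0$ is a $\sigma^{n_0}$-semilinear automorphism of $\sL_0$ preserving $A(\sL_0)$, you are done. In (iii), the reductive quotient $\bG_1(\overline\FF_q)$ is not $\GL(\overline V)\cong\GL_{n'}(\overline\FF_q)$ but rather $\GL_{n'}(\overline\FF_q)^{n_0}$: from the explicit shape of $\bG_h$ given just above the lemma, setting $h=1$ kills all off-diagonal entries within each $n_0\times n_0$ block, and reading off the $i$th diagonal entry of each block (for $i=1,\dots,n_0$) gives the $n_0$ factors. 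Your final sentence about the Frobenius cyclically permuting these factors and Galois descent yielding the Weil restriction is then exactly right, but the intermediate description ``the reductive quotient acting on $\overline V$'' obscures this.
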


\subsection{Isocrystals} 

We recall that an $\overline{\FF}_q$-isocrystal is an $\breve K$-vector space together with an $\sigma$-linear isomorphism. For $b\in \GL_n(\breve K)$, we have the isocrystal $(V,b\sigma)$. Assume now that $b$ is basic with $\kappa_G(b) = \kappa$. Then $(V,b\sigma)$ is isomorphic to the direct sum of $n'$ copies of the simple isocrystal with slope $k_0/n_0$. We observe that $(V,b\sigma)$ up to isomorphy only depends on the $\sigma$-conjugacy class $[b]$, and that its group of automorphisms is $G = J_b(K)$.


\section{Comparison in the case $\GL_n$, $b$ basic, $w$ Coxeter} \label{sec:comparison_DL_ADLV_isocrystal}

We will compare the two Deligne--Lusztig type constructions from Part \ref{part:DL} in this special situation and describe both explicitly using the isocrystal $(V,b\sigma)$. In Section \ref{sec:adm_subset_cyclic_lattices} and \ref{sec:Comparison_unif_DLV_ADLV}, we let $b \in \GL_n(\breve K)$ be any basic element with $\kappa_{\GL_n}(b) = \kappa$. From Section \ref{sec:connected_components} onwards, we take $b$ to be the special representative defined in Section \ref{sec:special_representatives}.

\subsection{The admissible subset of $(V,b\sigma)$}\label{sec:adm_subset_cyclic_lattices}
We will describe the various Deligne--Lusztig varieties using certain subsets of $V$, which we now define. Let $x \in V$. Put
\begin{align*}
g_b(x) &= \text{matrix in $M_n(\breve K)$ with columns $x$, $b\sigma(x)$, $\dots$, $(b\sigma)^{n-1}(x)$} \\
V_b^\adm &= \{x \in V : \det g_b(x) \in \breve K^{\times} \} \\
V_b^{\adm, \rat} &= \{x \in V : \det g_b(x) \in  K^{\times} \}
\end{align*} 
If $g^{-1} b' \sigma(g) = b$, then the isomorphism of isocrystals $(V,b\sigma) \rightarrow (V,b'\sigma)$, $x \mapsto gx$, maps $V_b^{\adm}$ to $V_{b'}^{\adm}$. In particular, $J_b(K)$ acts on $V_b^{\adm}$ by left multiplication. Moreover, $L^{\times}$ acts on $V_b^{\rm adm, rat}$ by scaling. Note also that $x \in V$ lies in $V_b^\adm$ if and only if the $\cO$-submodule of $V$ generated by $x,(b\sigma)(x), \dots, (b\sigma)^{n-1}(x)$ is an $\cO$-lattice. 

We have the following useful lemma, which essentially follows from basic properties of Newton polygons. Its simple proof was explained to the authors by E.\ Viehmann.

\begin{lm}\label{lm:sigma_cyclic_implies_stable}
Let $x \in V_b^{\adm}$. The $\cO$-lattice generated by $\{(b\sigma)^i(x)\}_{i=0}^{n-1}$ is $b\sigma$-stable, i.e., there exist unique elements $\lambda_i \in \cO$ such that $(b\sigma)^n(x) = \sum_{i = 0}^{n-1} \lambda_i (b\sigma)^i(x)$. Moreover, $\ord(\lambda_0) = \kappa_G(b)$.
\end{lm}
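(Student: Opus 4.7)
Since $x \in V_b^{\adm}$, the iterates $v_i \colonequals (b\sigma)^{i-1}(x)$ for $i=1,\dots,n$ form a $\breve K$-basis of $V$ by the definition of $V_b^{\adm}$. Consequently $(b\sigma)^n(x)$ admits a unique expansion
\[
(b\sigma)^n(x) = \textstyle\sum_{i=0}^{n-1} \lambda_i (b\sigma)^i(x), \qquad \lambda_i \in \breve K,
\]
proving uniqueness and existence of the $\lambda_i$ in $\breve K$. The entire task is then to show that each $\lambda_i$ actually lies in $\cO$ and that $\ord(\lambda_0) = \kappa$.

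For the identity $\ord(\lambda_0) = \kappa$, I would observe that in the basis $(v_i)$ the $\sigma$-linear operator $b\sigma$ is represented by the companion matrix $A$ of the polynomial
\[
P(T) = T^n - \lambda_{n-1} T^{n-1} - \cdots - \lambda_1 T - \lambda_0,
\]
so $\det A = (-1)^{n-1}\lambda_0$. Since $b$ is basic with $\kappa_{\GL_n}(b) = \kappa$, the isocrystal $(V, b\sigma)$ is isoclinic of slope $\kappa/n$, and its total slope (the sum of the Newton slopes) equals $\kappa$. On the other hand, this total slope is exactly $\ord(\det A) = \ord(\lambda_0)$, giving the desired equality.

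For the integrality $\lambda_i \in \cO$, the key input I would appeal to is the classical identification between the Newton polygon of the $\sigma$-linear isocrystal defined by the companion matrix of a monic polynomial and the Newton polygon of that polynomial. (This can be justified via Dieudonn\'e--Manin: for a simple isocrystal of slope $r/s$ in lowest terms, a standard cyclic model has polynomial $T^s - \varpi^r$, whose roots all have valuation $r/s$; a general isoclinic isocrystal decomposes over a suitable extension and its associated polynomial is the product.) Since $(V, b\sigma)$ is isoclinic of slope $\kappa/n$, the Newton polygon of $P$ must be the single segment joining $(0,\kappa)$ to $(n,0)$. Consequently, the point $(i,\ord(\lambda_i))$ lies weakly above this segment for each $0 \leq i \leq n-1$, yielding
\[
\ord(\lambda_i) \geq \kappa \cdot \tfrac{n-i}{n} \geq 0
\]
(using $\kappa \geq 0$), so $\lambda_i \in \cO$.

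\textbf{Main obstacle.} The delicate step is the Newton-polygon identification for companion matrices of $\sigma$-linear operators; once it is in hand, the rest is just translating the isoclinic property into a constraint on the coefficients of $P$. If one prefers to avoid invoking this, an alternative route is to exploit the non-negativity of slopes to produce a $b\sigma$-stable lattice and show (by a finite generation / elementary-divisor argument) that the $\cO$-module $\sum_{i \geq 0} \cO \cdot (b\sigma)^i(x)$ is a lattice coinciding with $L_x$; but the polynomial perspective above seems cleanest.
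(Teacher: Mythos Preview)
Your proposal is correct and follows essentially the same route as the paper: both identify the Newton polygon of the isocrystal $(V,b\sigma)$ with the Newton polygon of the monic polynomial $P$ arising from the cyclic vector $x$ (the paper cites Beazley for this, you sketch it via Dieudonn\'e--Manin), then read off integrality of the $\lambda_i$ from the convexity constraint and $\ord(\lambda_0)=\kappa$ from the endpoint. The only cosmetic difference is that you first derive $\ord(\lambda_0)=\kappa$ via the determinant of the companion matrix, whereas the paper obtains it directly as the rightmost vertex $(n,\kappa)$ of the polygon; these are equivalent.
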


\begin{proof}
The Newton polygon of $(V,b\sigma)$ is the straight line segment connecting the points $(0,0)$ and $(n,\kappa)$ in the plane. Now, let $K[\Sigma]$ be the non-commutative ring defined by the relation $a\Sigma = \Sigma \sigma(a)$, and let $\Sigma$ act on $V$ by $b\sigma$. Then the Newton polygon of the characteristic polynomial of $x$ (which is an element of $K[\Sigma]$) is equal to the Newton polygon of $(V, b\sigma)$ (see e.g.\ \cite{Beazley_09}). Observe that any $x \in V_b^\adm$ generates $V$ as a $K[\Sigma]$-module. Then the point $(i,\ord(a_i))$ in the plane, where $a_i$ is the coefficient of $\Sigma^{n-i}$ in the characteristic polynomial, lies over that Newton polygon. This simply means $\ord(a_i) \geq \frac{i\kappa}{n} \geq 0$, as $\kappa \geq 0$. Hence $\Sigma^n(v) = \sum_{i=1}^n a_i \Sigma^{n-i}(x)$ lies in the $\cO$-lattice generated by $x, \Sigma(x), \dots, \Sigma^{n-1}(x)$. This proves the first assertion. The second statement follows as $(n, \ord(a_n))$ has necessarily to be the rightmost vertex of the Newton polygon, which is $(n,\kappa)$. 
\end{proof}

\begin{ex} 
For $b = 1$, the set $V_1^\adm$ is just the Drinfeld upper halfspace. If $(\kappa, n) = 1$, then $V_b^\adm = V \sm \{0\}$ as $(V,b\sigma)$ has no proper non-trivial sub-isocrystals.
\end{ex}

\subsection{Set-theoretic description} \label{sec:Comparison_unif_DLV_ADLV}

We need the following notation:

\begin{itemize}
\item[$\bullet$] Let $T_{\diag}$ denote the diagonal torus of $\GL_n$ and $W$ its Weyl group.
\item[$\bullet$] Let $w$ be the image in $W$ of the element $b_0$ from Section \ref{sec:coxeter_representatives}. Then the form $T_w := T_{\diag,w}$ of $T_{diag}$ (as in Section \ref{sec:DL_sets}) is elliptic with $T_w(K) \cong L^{\times}$ and has a natural model over $\cO_K$, again denoted $T_w$, with $T_w(\cO_K) \cong \cO_L^{\times}$.
\item[$\bullet$] $I^m$ (with $m \geq 0$) denotes the preimage under the projection $\GL_n(\cO) \twoheadrightarrow \GL_n(\cO/\varpi^{m+1}\cO)$, of all lower triangular matrices in $\GL_n(\cO/\varpi^{m+1}\cO)$ whose entries under the main diagonal lie in $\varpi^m\cO/\varpi^{m+1}\cO$
\item[$\bullet$] $\dot{I}^m$ (with $m \geq 0$) denotes the subgroup of $I^m$ consisting of all elements whose diagonal entries are congruent $1$ modulo $\varpi^{m+1}$
\item[$\bullet$] $X_{\ast}^m(b)$, $\dot{X}_{\ast}^m(b)$ denote affine Deligne--Lusztig varieties of level $I^m$, $\dot{I}^m$ respectively (for appropriate $\ast$)
\item[$\bullet$] For $r \geq 0$ and $x \in V_b^\adm$, let $g_{b,r}(x) \in \GL_n(\breve K)$ denote the matrix whose $i$th column is $\varpi^{r(i-1)}(b\sigma)^{i-1}(x)$. We have $g_b(x) = g_{b,0}(x)$.
\item[$\bullet$] For $r, m \geq 0$, define $\sim_{b,m,r}$ and $\dot{\sim}_{b,m,r}$ on $V_b^\adm$ by 
\begin{eqnarray*}
x \,\sim_{b,m,r}\, y \in V_b^\adm \, &\Leftrightarrow& \, y \in g_{b,r}(x) \cdot \begin{pmatrix} \cO^{\times}&\mathfrak{p}^m & \dots & \mathfrak{p}^m \end{pmatrix}^\trans, \\
x \,\dot{\sim}_{b,m,r}\, y \in V_b^\adm \, &\Leftrightarrow& \, y \in g_{b,r}(x) \cdot \begin{pmatrix} 1+ \mathfrak{p}^{m+1} &\mathfrak{p}^m & \dots & \mathfrak{p}^m \end{pmatrix}^\trans.
\end{eqnarray*}
It follows from Lemma \ref{lm:comp_for_the_equiv_rels} that $\sim_{b,m,r}$ and $\dot{\sim}_{b,m,r}$ are equivalence relations.
\item[$\bullet$] For $r \geq 0$, set $\dot{w}_r = b_0 \varpi^{(-r,\dots,-r,\kappa + (n-1)r)} \in \GL(\breve K)$
and denote again by $\dot{w}_r$ the image of $\dot{w}_r$ in all the sets $I^m\backslash\GL_n(\breve K)/I^m$ and $\dot{I}^m\backslash \GL_n(\breve K)/\dot{I}^m$ for $m \geq 0$. The image of $\dot{w}_r$ in $W$ is the Coxeter element $w$.
\end{itemize}
Furthermore, we define
\begin{equation}
V_b^{\adm,\rat,\dot w_0} \colonequals \{x \in V : \sigma(\det g_b(x)) = \det(\dot w_0) \det(b)^{-1} \det(g_b(x))\}.\label{e:V_b w}
\end{equation}
Observe that $\det(\dot w_0) \det(b)^{-1} \in \cO^\times$, so picking any $\alpha \in \cO^\times$ such that $\sigma(\alpha \sigma(\alpha) \sigma^2(\alpha) \cdots \sigma^{n-1}(\alpha)) = \det(\dot w_0) \det(b)^{-1}$ induces a $(J_b(K) \times L^\times)$-equivariant isomorphism $V_b^{\adm, \rat} \to V_b^{\adm, \rat, \dot w_0}$ given by $x \mapsto \alpha x$.

\begin{rem}\label{rem:lft_scheme_str_for_speical_wrm} We will study the scheme structure on $X_{\dot{w}_r}^m(b)$, $\dot X_{\dot{w}_r}^m(b)$ in detail below in Section \ref{sec:scheme_structure_ADLV_Jb}. But we want to point out already here that both are locally closed in $\GL_n(\breve K)/I^m$, $\GL_n(\breve K)/\dot I^m$, hence are reduced $\FF_q$-schemes locally of finite type. Indeed, $\dot I^m$ is normal in $I$ and the image of $\dot{w}_r$ in $\widetilde{W}$ satisfies the assumptions of Theorem \ref{prop:loc_closed_ADLV_covers} (see Lemma \ref{lm:action_of_wrdot_on_roots} below), hence it follows that $\dot X_{\dot{w}_r}^m(b) \subseteq \breve{\GL_n}/\dot I^m$ is locally closed. The same argument does not apply to $X_{\dot{w}_r}^m(b)$ as $I^m \subseteq I$ is not normal. Still $X_{\dot{w}_r}^m(b) \subseteq \GL_n(\breve K)/I^m$ is locally closed. Indeed, let $p \colon \GL_n(\breve K)/I^m \rightarrow \GL_n(\breve K)/I$ denote the natural projection. As we will see below in Proposition \ref{p:conn comps}, the Iwahori level variety $X_{\dot w_r}^0(b) = \bigsqcup_{G/G_{\cO}} g.X^0_{\dot w_r}(b)_{\sL_0} \subseteq \GL_n(\breve K)/I$ is the scheme-theoretic disjoint union of translates of a certain locally closed subset $X^0_{\dot w_r}(b)_{\sL_0}$. It thus suffices to show that $X_{\dot w_r}^m(b)_{\sL_0} = X_{\dot w_r}^m(b) \cap p^{-1}(X^0_{\dot w_r}(b)_{\sL_0}) \subseteq p^{-1}(X^0_{\dot w_r}(b)_{\sL_0})$ is locally closed. But this follows from the explicit coordinates on $X_{\dot w_r}^m(b)_{\sL_0}$ given in the proof of Theorem \ref{thm:scheme_structure}.
\hfill $\Diamond$
\end{rem}

\begin{lm}\label{lm:action_of_wrdot_on_roots}
Let $w_r$ denote the image of $\dot w_r$ in $\widetilde W$. Then $w_r$ and $\dot I_m$ satisfy the assumptions of Theorem \ref{prop:loc_closed_ADLV_covers} (with respect to the Iwahori subgroup $I^0$). 
\end{lm}
\begin{proof}
We use the same notation as in Theorem \ref{prop:loc_closed_ADLV_covers}. We have to show that the subsets $p(A_{w_r})$, $p(B_{w_r})$, $p(C_{w_r})$ of $\widehat\Phi$ are disjoint and that the same holds for $p(A_{w_r^{-1}})$, $p(B_{w_r^{-1}})$, $p(C_{w_r^{-1}})$.
Write $\widehat\Phi = \{\alpha_{i,j} \colon 1\leq i \neq j \leq n\} \cup \{0\}$ where $\alpha_{i,j}$ corresponds to the $i,j$-th entry of a matrix. Let $w$ be the image of $w_r$ in the finite Weyl group $W$. Then $w$ acts on $\Phi$ by $w.\alpha_{i,j} = \alpha_{i+1,j+1}$, $w.0 = 0$, where we consider $i,j$ as elements of $\bZ/n\bZ$. Then $\widehat\Phi_{\rm aff}(I/\dot I^m)$ is equal to
\[ 
\{(\alpha_{i,j}, \lambda) \colon \alpha_{i,j} \in \Phi \text{ and } 0\leq \lambda \leq m-1 \text{ (if $i>j$)} \text{ resp. } 1 \leq \lambda \leq m \text{ (if $i<j$)}  \} \cup \{(0,\lambda) \colon 0\leq \lambda \leq m\}.
\]
Now one computes that $w_r.(0,\lambda) = (0,\lambda)$ and $w_r.(\alpha_{i,j},\lambda) = (\alpha_{i+1,j+1},\lambda)$ if either ($n > i > j$) or ($n > j > i$). Further, $w_r.(\alpha_{n,j},\lambda) = (\alpha_{1,j+1}, \lambda+nr+\kappa)$ and $w_r.(\alpha_{i,n},\lambda) = (\alpha_{i+1,1}, \lambda - nr - \kappa)$. 
Thus we deduce that $p(B_{w_r}) = \{\alpha_{i,j} \in \Phi \colon n>i>j \text{ or } n>j>i \} \cup \{0\}$, $p(A_{w_r}) = \{\alpha_{i,n} \in \Phi \colon 1\leq i\leq n-1\}$ and $p(C_{w_r}) = \{\alpha_{n,j} \in \Phi \colon 1\leq j\leq n-1\}$. Similarly, we compute that $p(B_{w_r}^{-1}) = \{\alpha_{i,j} \in \Phi \colon i>j>1 \} \cup\{0\}$, $p(A_{w_r^{-1}}) = \{\alpha_{1,j} \in \Phi \colon 1\leq j\leq n-1\}$ and $p(C_{w_r^{-1}}) = \{\alpha_{i,1} \in \Phi \colon 1\leq i\leq n-1\}$.
\end{proof}

Recall from Section \ref{sec:DL_sets} that $G = J_b(K)$ acts on $X_w^{DL}(b)$ and $\dot X_{\dot w_0}^{DL}(b)$ by left multiplication. By Theorem \ref{thm:structure_result} below, the maps $\dot X_{\dot w_0}^{DL}(b) \to X_w^{DL}(b)$ and $\dot X_{\dot w_r}^m(b) \to X_{\dot w_r}^m(b)$ are both surjective. Hence the former is a  $T_w(K)$-torsor via the right-multiplication action of $T_w(K)$ on $\dot X_{\dot w_0}^{DL}(b)$ (by Lemma \ref{lm:actions_on_DL_sets}(iii)) and the latter is a  $(I^m/\dot I^m)_{\dot w_r} \cong T_w(\cO_K/\varpi^{m+1})$-torsor via the right-multiplication action of $I^m/\dot I^m$ on $\dot X_{\dot w_r}^m(b)$. 




\begin{thm}\label{thm:structure_result}
\begin{enumerate}[label=(\roman*)]
 \item There is a commutative diagram of sets
\begin{equation*}
\begin{tikzcd}\label{diag:character_isos_diag}
V_b^{\adm, \rat} \ar{d} \ar{r}{\sim} & X_{\dot{w}_0}^{DL}(b) \ar{d}{T_{w}(K)} \\
V_b^\adm/\breve K^{\times} \ar{r}{\sim} & X_{w}^{DL}(b)
\end{tikzcd}
\end{equation*}
in which horizontal arrows are $G\times T_w(K)$-equivariant isomorphisms. Moreover, the vertical arrows in the diagram are surjective.

\item Assume that $r \geq m \geq 0$. There is a commutative diagram of sets 
\begin{equation*}
\begin{tikzcd}
V_b^{\adm,\rat} / \dot{\sim}_{b,m,r} \ar{d} \ar{r}{\sim} & \dot{X}_{\dot{w}_r}^{m}(b)(\overline{\FF}_q) \ar{d}{T_{w}(\mathcal{O}_K/\varpi^{m+1}\mathcal{O}_K)} \\
V_b^\adm / \sim_{b,m,r} \ar{r}{\sim} & X_{\dot{w}_r}^m(b)(\overline{\FF}_q)
\end{tikzcd}
\end{equation*}
%
in which horizontal arrows are $G \times T_w(\cO_K/\varpi^{m+1})$-equivariant isomorphisms. Moreover, the vertical arrows in the diagram are surjective.
\end{enumerate}
\end{thm}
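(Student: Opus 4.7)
The plan is to realize both bijections via the matrix $g_b(x) \in \GL_n(\breve K)$ with columns $x, b\sigma(x), \dots, (b\sigma)^{n-1}(x)$, reducing the Deligne--Lusztig conditions in both (i) and (ii) to a single identity. Applying Lemma \ref{lm:sigma_cyclic_implies_stable} to write $(b\sigma)^n(x) = \sum_{i=0}^{n-1}\lambda_i(b\sigma)^i(x)$ with $\lambda_i \in \cO$ and $\ord\lambda_0 = \kappa$, one immediately computes
\[
g_b(x)^{-1}\cdot b\sigma(g_b(x)) = C,
\]
where $C$ is the companion-type matrix with $Ce_j = e_{j+1}$ for $j<n$ and $Ce_n = \sum_i\lambda_i e_{i+1}$. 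For (i), the map is $x \mapsto g_b(x)\breve U$ (resp.\ $\breve B$); for (ii), since $g_{b,r}(x) = g_b(x)D_r$ with $D_r = \diag(1,\varpi^r,\dots,\varpi^{(n-1)r})$, the analogous computation yields $g_{b,r}(x)^{-1}b\sigma(g_{b,r}(x)) = D_r^{-1}CD_r$, and the map sends $x \mapsto g_{b,r}(x)\dot I^m$ (resp.\ $I^m$). These maps are $G \times T_w(K)$-equivariant: $b\sigma(g) = gb$ for $g \in G = J_b(K)$ gives $g_b(gx) = g\cdot g_b(x)$, and for $\lambda \in L^\times \cong T_w(K)$ scalars commute with $b$ to yield $g_b(\lambda x) = g_b(x)\cdot\diag(\lambda,\sigma(\lambda),\dots,\sigma^{n-1}(\lambda))$; these descend to the asserted actions at finite level.

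To verify the Deligne--Lusztig conditions in (i), the unique Bruhat representative of $C$ in $\breve T\cdot w$ is $b_0\diag(1,\dots,1,\lambda_0)$. The rationality hypothesis $\det g_b(x) \in K^\times$ combined with $b\sigma(g_b(x)) = g_b(x)C$ gives $\sigma(\det g_b(x))/\det g_b(x) = \det C/\det b$, and since $\det b$ is a unit multiple of $\varpi^\kappa$ for our fixed $b$, this forces $\lambda_0 = \varpi^\kappa$ (up to a normalising sign), whence $C \in \breve U \dot w_0 \breve U$. In particular, the same determinant identity shows that every coset in $\dot X_{\dot w_0}^{DL}(b)$ automatically has a representative with determinant in $K^\times$. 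For (ii), the entries of $D_r^{-1}CD_r$ are $\varpi^{(j-i)r}C_{ij}$; comparing with $\dot w_r$, the product $D_r^{-1}CD_r\cdot\dot w_r^{-1}$ has identity columns $2,\dots,n$ and first column $(\lambda_0\varpi^{-\kappa},\lambda_1\varpi^{-\kappa-r},\dots,\lambda_{n-1}\varpi^{-\kappa-(n-1)r})^\trans$. A direct Iwahori-Bruhat computation using $\lambda_i \in \cO$ and $r \geq m$ then produces $u_1, u_2 \in I^m$ with $D_r^{-1}CD_r = u_1\dot w_r u_2$, and the refinement from $I^m$ to $\dot I^m$ is controlled by the first-coordinate residue modulo $\varpi^{m+1}$, matching precisely the passage from $\sim_{b,m,r}$ to $\dot\sim_{b,m,r}$.

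Bijectivity follows from first-column extraction. Given a representative $g$ with $g^{-1}b\sigma(g) = u_1\dot w_0 u_2 \in \breve U\dot w_0\breve U$, set $x := ge_1$; the identities $u_2 e_1 = e_1$ and $\dot w_0 e_j = e_{j+1}$ for $j<n$ yield by induction $(b\sigma)^i(ge_1) \in g\cdot(e_{i+1}+\Span(e_1,\dots,e_i))$ for $i<n$, hence $g_b(x)\breve U = g\breve U$. Injectivity is immediate because $x$ is the first column of $g_b(x)$. The same first-column extraction inverts the map at level $(r,m)$, with $\sim_{b,m,r}$ and $\dot\sim_{b,m,r}$ encoding exactly the freedom in the first column when $g$ varies within its $I^m$-coset, respectively $\dot I^m$-coset. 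The vertical compatibility in (i) then holds because the fiber of $V_b^{\rm adm,rat}\to V_b^{\rm adm}/\breve K^\times$ through rational $x$ equals the $L^\times = T_w(K)$-orbit (scaling by $\mu \in \breve K^\times$ multiplies $\det g_b(x)$ by $\prod_i \sigma^i(\mu)$, which is in $K^\times$ exactly when $\mu \in L^\times$), and surjectivity of the norm $\prod_i\sigma^i\colon \breve K^\times \to K^\times$ produces a rational representative in every $\breve K^\times$-orbit.

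The main obstacle is the Bruhat-combinatorial matching in (ii): one must verify that the partial valuation data on $C$ supplied by Lemma \ref{lm:sigma_cyclic_implies_stable}, together with the twist by $D_r$, is compatible with the asymmetric filtration defining $I^m$, and that the resulting double coset in $I^m\backslash\breve G/I^m$ is exactly $I^m\dot w_r I^m$. The hypothesis $r\geq m$ is essential, precisely ensuring that all off-cycle entries end up in the correct filtration step; carrying this out cleanly will draw on the Iwahori-root bookkeeping developed around Theorem \ref{prop:loc_closed_ADLV_covers}.
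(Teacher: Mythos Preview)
Your approach is essentially the same as the paper's: both build the bijections via $x\mapsto g_b(x)$ (resp.\ $g_{b,r}(x)$), and both rest on the companion-matrix identity $g_b(x)^{-1}b\sigma(g_b(x)) = C$, which the paper isolates as Lemma~\ref{lm:computation_for_gbrx}. Your first-column extraction for surjectivity is the paper's argument in different clothing: the paper starts from an arbitrary coset representative and adjusts it by elements of $\breve B\cap{}^{\dot w_0}\breve B$ (resp.\ $I^m\cap{}^{\dot w_r}I^m$) until its columns satisfy the recursion $g_{i+1}=b\sigma(g_i)$, which is exactly what your induction on $(b\sigma)^i(ge_1)$ produces.

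Two small points where your plan drifts from the paper. First, for well-definedness in (ii) you compute $M=D_r^{-1}CD_r\cdot\dot w_r^{-1}$ and then face the awkward task of factoring $M$ through the conjugate $\dot w_r I^m\dot w_r^{-1}$; the paper instead computes $A=\dot w_r^{-1}D_r^{-1}CD_r$, which differs from the identity only in the last column with entries $\varpi^{(n-i)r}\lambda_i$, and reads off $A\in I^m$ immediately. Second, your deferred ``Iwahori-Bruhat computation'' for surjectivity in (ii) is \emph{not} handled via the root-subgroup bookkeeping around Theorem~\ref{prop:loc_closed_ADLV_covers}. The paper writes out the matrix pattern of $I^m\cap{}^{\dot w_r}I^m$ explicitly, translates $b\sigma(g)\in g\dot w_r I^m$ into $n$ column equations, and simplifies them by hand (``allowed transformations'') to reach $g_{j+1}\in\varpi^r b\sigma(g_j)\cO^\times$. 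This is more elementary than what you anticipate and does not need the hypotheses of Theorem~\ref{prop:loc_closed_ADLV_covers}. The equivalence-relation matching you gesture at is exactly Lemma~\ref{lm:comp_for_the_equiv_rels}.
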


Before proving the theorem, we need some preparations. Observe that by Lemmas \ref{lm:actions_on_DL_sets} and \ref{lm:actions_on_ADLV} in the proof of Theorem \ref{thm:structure_result}, we may replace $b$ by an $\sigma$-conjugate element of $\breve{G}$. 



\begin{lm}\label{lm:comp_for_the_equiv_rels} Let $r > 0$. Let $x,y \in V_b^\adm$. Then 
\begin{eqnarray}
\label{e:sim iwahori}
x \sim_{b,m,r} y &\Leftrightarrow& g_{b,r}(x)I^m = g_{b,r}(y)I^m, \\
\label{e:sim dot iwahori}
x \dot{\sim}_{b,m,r} y &\Leftrightarrow& g_{b,r}(x)\dot{I}^m = g_{b,r}(y)\dot{I}^m.
\end{eqnarray} 
\end{lm}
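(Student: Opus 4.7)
The plan is to rephrase both equivalences as matrix statements about $M \colonequals g_{b,r}(x)^{-1} g_{b,r}(y) \in \GL_n(\breve K)$, whose first column is exactly $v \colonequals g_{b,r}(x)^{-1} y$. In this language the right-hand side of \eqref{e:sim iwahori} (resp.\ \eqref{e:sim dot iwahori}) becomes $M \in I^m$ (resp.\ $M \in \dot I^m$), while the left-hand side is the ``first-column'' incarnation of the same condition. So I would reduce the lemma to showing that, for matrices $M$ arising in this specific way, the full Iwahori-level condition on $M$ is already forced by the corresponding condition on its first column alone.

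The key structural input will be Lemma \ref{lm:sigma_cyclic_implies_stable}, which furnishes unique $\lambda_0, \ldots, \lambda_{n-1} \in \cO$ with $(b\sigma)^n(x) = \sum_k \lambda_k (b\sigma)^k(x)$. In the basis $\{g_{b,r}(x) e_i\}$ of $\breve K^n$ the $\sigma$-linear operator $\varpi^r (b\sigma)$ takes the form $S_0 \sigma$, where $S_0 \in M_n(\cO)$ is the companion-style matrix with $1$'s on the subdiagonal and last column $(\lambda_0 \varpi^{rn}, \lambda_1 \varpi^{r(n-1)}, \ldots, \lambda_{n-1}\varpi^r)^\trans$; all entries are integral because $\lambda_k \in \cO$ and $r \geq 1$. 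Iterating, the $i$-th column of $M$ equals $T_i \sigma^{i-1}(v)$, where $T_i \colonequals S_0 \sigma(S_0) \cdots \sigma^{i-2}(S_0) \in M_n(\cO)$. Two features of $T_i$ drive the argument: (a)~$T_i e_1 = e_i$ for $1 \leq i \leq n$, which follows by iterated use of $S_0 e_k = e_{k+1}$ for $k < n$ together with the $\sigma$-invariance of $e_1, \ldots, e_{n-1}$; (b)~$T_i e_j \in \cO^n$ for every $j$, by integrality of $S_0$.

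Writing $v = v_1 e_1 + v'$ with $v' = \sum_{j \geq 2} v_j e_j$, feature~(a) produces the decomposition $M = D + E$ with $D = \diag(v_1, \sigma(v_1), \ldots, \sigma^{n-1}(v_1))$ and the $i$-th column of $E$ equal to $T_i \sigma^{i-1}(v')$. The direction $(\Rightarrow)$ is then immediate: reading off the first column of an element of $I^m$ (resp.\ $\dot I^m$) yields exactly the conditions defining $\sim_{b,m,r}$ (resp.\ $\dot \sim_{b,m,r}$). For $(\Leftarrow)$, the hypothesis places $v' \in \mathfrak{p}^{m+1}\cO^n$, hence $E \in \mathfrak{p}^{m+1} M_n(\cO)$ by~(b), while $D \in \GL_n(\cO)$ is diagonal with units (resp.\ has diagonal in $1 + \mathfrak{p}^{m+1}$, using $\sigma$-stability of $\mathfrak{p}^{m+1}$). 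Therefore $M = D(1_n + D^{-1} E) \in \GL_n(\cO)$ with $M \equiv D \pmod{\mathfrak{p}^{m+1}}$, which is precisely $M \in I^m$ (resp.\ $M \in \dot I^m$). The main point to arrange carefully will be the structural identification of the matrix of $\varpi^r (b\sigma)$ in cyclic coordinates and the shift identity $T_i e_1 = e_i$; once these are in place, the remainder is routine bookkeeping with $\mathfrak{p}^{m+1}$-congruences.
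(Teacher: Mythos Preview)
Your proof is correct and follows essentially the same approach as the paper's. Both arguments hinge on Lemma~\ref{lm:sigma_cyclic_implies_stable} to show that the first-column condition on $g_{b,r}(x)^{-1}g_{b,r}(y)$ propagates to all columns: the paper does this by applying $\varpi^{ri}(b\sigma)^i$ directly to the first equation, while you package the same computation in the companion matrix $S_0$ and the products $T_i$, but the content is identical.
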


\begin{proof}
Indeed, $g_{b,r}(y) \in g_{b,r}(x)I^m$ is equivalent to 
\begin{align*}
y &\in x \cO^{\times} + \varpi^{m+r}b\sigma(x) \cO + \dots + \varpi^{m + r(n-1)} (b\sigma)^{n-1}(x) \cO \\
\varpi^r (b\sigma)(y) &\in \varpi^{m+1} x \cO + \varpi^r b\sigma(x) \cO^{\times} + \varpi^{m+2r} (b\sigma)^2(x) \cO \dots + \varpi^{m+(n-1)r}(b\sigma)^{n-1}(x) \cO \\
&\;\;\vdots \\
\varpi^{r(n-1)} (b\sigma)^{n-1}(y) &\in \varpi^{m+1}x\cO + \dots + \varpi^{m+1+r(n-2)} (b\sigma)^{n-2}(x) \cO + \varpi^{r(n-1)}(b\sigma)^{n-1}(x) \cO^{\times}.
\end{align*}
By definition, the first equation is equivalent to $x \sim_{b,m,r} y$. But once the first equation holds, then the $(i+1)$th equation must also hold by applying $\varpi^{ri} (b\sigma)^i$ to the first equation and using Lemma \ref{lm:sigma_cyclic_implies_stable}. Hence \eqref{e:sim iwahori} follows, and a similar proof gives \eqref{e:sim dot iwahori}.
\end{proof}

\begin{lm}\label{lm:computation_for_gbrx}
Let $r \geq 0$ and $x \in V_b^\adm$. Then 
\[ 
b\sigma(g_{b,r}(x)) = g_{b,r}(x) \dot{w}_r A, 
\]
where $A \in \GL_n(\breve K)$ is a matrix, which can differ from the identity matrix only in the last column. Moreover, the lower right entry of $A$ lies in $\cO^{\times}$, and if $r > m \geq 0$, then $A \in I^m$.
\end{lm}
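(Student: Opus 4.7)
\begin{pro*}[Proof plan]
My plan is to do a direct column-by-column computation and then read off $A$ by inverting. Applying $b\sigma$ to $g_{b,r}(x)$ shifts each column, producing the matrix whose $i$th column is $\varpi^{r(i-1)}(b\sigma)^i(x)$. On the other hand, right-multiplication by $\dot w_r = b_0\varpi^{(-r,\dots,-r,\kappa+(n-1)r)}$ cyclically permutes the columns of $g_{b,r}(x)$ by $b_0$ and then rescales them by the diagonal factor. A short calculation then shows that for $1 \leq j \leq n-1$, the $j$th column of $g_{b,r}(x)\dot w_r$ is exactly $\varpi^{(j-1)r}(b\sigma)^j(x)$, i.e.\ agrees with the $j$th column of $b\sigma(g_{b,r}(x))$, while the $n$th column of $g_{b,r}(x)\dot w_r$ is $\varpi^{\kappa+(n-1)r} x$. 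In particular, the first $n-1$ columns of $A = (g_{b,r}(x)\dot w_r)^{-1} b\sigma(g_{b,r}(x))$ coincide with $e_1,\dots,e_{n-1}$, so $A$ can differ from the identity only in its last column.

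To determine the last column, I would use Lemma \ref{lm:sigma_cyclic_implies_stable} applied to $x \in V_b^{\adm}$: there exist $\lambda_0,\dots,\lambda_{n-1} \in \cO$ with $\ord(\lambda_0)=\kappa$ such that
\[
(b\sigma)^n(x) = \textstyle\sum_{i=0}^{n-1} \lambda_i\,(b\sigma)^i(x).
\]
Writing $A e_n = (a_1,\dots,a_n)^\trans$ and comparing the $n$th column of $g_{b,r}(x)\dot w_r A$ with $\varpi^{(n-1)r}(b\sigma)^n(x)$, one gets the linear relation
\[
\textstyle\sum_{j=1}^{n-1} a_j\,\varpi^{(j-1)r}(b\sigma)^j(x) + a_n\,\varpi^{\kappa+(n-1)r} x \;=\; \varpi^{(n-1)r}\textstyle\sum_{i=0}^{n-1}\lambda_i(b\sigma)^i(x).
\]
Since $x$ lies in $V_b^{\adm}$, the vectors $x,(b\sigma)(x),\dots,(b\sigma)^{n-1}(x)$ are linearly independent over $\breve K$, so one may equate coefficients uniquely to obtain
\[
a_j = \lambda_j\,\varpi^{(n-j)r}\quad(1\leq j\leq n-1),\qquad a_n = \lambda_0\,\varpi^{-\kappa}.
\]

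Finally I would check the integrality claims directly from these formulas. The bottom-right entry is $a_n = \lambda_0\varpi^{-\kappa}$, which has valuation $\kappa - \kappa = 0$ by Lemma \ref{lm:sigma_cyclic_implies_stable}, hence lies in $\cO^\times$. When $r > m \geq 0$ (so $r \geq m+1$), each off-diagonal entry $a_j$ ($1 \leq j \leq n-1$) satisfies $\ord(a_j) \geq (n-j)r \geq r \geq m+1$, so it lies in $\mathfrak{p}^{m+1} \subseteq \mathfrak{p}^m$. Combined with the fact that the first $n-1$ columns are $e_1,\dots,e_{n-1}$ and $a_n \in \cO^\times$, this shows $A \in I^m$. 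There is no real obstacle beyond bookkeeping; the only conceptual input is Lemma \ref{lm:sigma_cyclic_implies_stable}, which guarantees both the integrality of the $\lambda_i$ and the crucial valuation $\ord(\lambda_0)=\kappa$ that makes the bottom-right entry a unit.
\end{pro*}
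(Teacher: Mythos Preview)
Your proof is correct and essentially identical to the paper's own argument: both compute $b\sigma(g_{b,r}(x))$ and $g_{b,r}(x)\dot w_r$ column by column, observe that the first $n-1$ columns agree, invoke Lemma \ref{lm:sigma_cyclic_implies_stable} to express $(b\sigma)^n(x)$ in the basis $\{(b\sigma)^i(x)\}_{i=0}^{n-1}$, and read off the last column of $A$ as $(\varpi^{r(n-1)}\lambda_1,\dots,\varpi^r\lambda_{n-1},\lambda_0\varpi^{-\kappa})^\trans$. The integrality checks are the same as well.
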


\begin{proof}
By definition, we have
\begin{align*}
b \sigma(g_{b,r}(x)) &= \begin{pmatrix} b\sigma(x) & \varpi^r (b\sigma)^2(x) & \cdots & \varpi^{r(n-2)}(b\sigma)^{n-1}(x) & \varpi^{r(n-1)} (b\sigma)^n(x) \end{pmatrix}, \\
g_{b,r}(x)\dot{w}_r &=  \begin{pmatrix} b\sigma(x) & \varpi^r (b\sigma)^2(x) & \cdots & \varpi^{r(n-2)}(b\sigma)^{n-1}(x) & \varpi^{r(n-1) + \kappa_G(b)} x \end{pmatrix},
\end{align*}
As the first $n-1$ columns of these matrices coincide, it follows that $A$ can at most differ from the identity matrix in the last column. By Lemma \ref{lm:sigma_cyclic_implies_stable}, we may write
\begin{align*}
(b\sigma)^n(x) &= \sum_{i=0}^{n-1} \alpha_i \cdot (b\sigma)^i(x) \\
&= \frac{\alpha_0}{\varpi^{r(n-1)+\kappa_G(b)}} \cdot \varpi^{r(n-1)+\kappa_G(b)} x + \sum_{i=1}^{n-1} \frac{\alpha_i}{\varpi^{r(i-1)}} \cdot \varpi^{r(i-1)} (b\sigma)^i(x),
\end{align*}
where $\alpha_0, \ldots, \alpha_{n-1} \in \cO$ and $\ord(\alpha_0) = \kappa$. By construction, the last column of $A$ is 
\begin{equation*}
\left(\varpi^{r(n-1)} \alpha_1, \varpi^{r(n-2)} \alpha_2, \varpi^{r(n-3)} \alpha_3, \ldots, \varpi^r \alpha_{n-1}, \frac{\alpha_0}{\varpi^{\kappa_G(b)}}\right)^\trans.
\end{equation*}
We then see that the lower right entry of $A$ is $\frac{\alpha_0}{\varpi^{\kappa}} \in \cO^\times$ and that if $r \geq m+1$, then all the entries above $\frac{\alpha_0}{\varpi^{\kappa}}$ lie in $\varpi^{m+1} \cO$ and $A \in I^m$.
\end{proof}

\begin{proof}[Proof of Theorem \ref{thm:structure_result}]
(i): As in \cite[\S1]{DeligneL_76}, the sets $X_w^{DL}(b)$ do not depend on the choice of the Borel subgroup, so we may choose $B \subseteq \GL_n$ to be the Borel subgroup of the upper triangular matrices and $U$ its unipotent radical. Lemma \ref{lm:computation_for_gbrx} for $r = 0$ implies the existence of the map 
\[ 
V_b^\adm \rightarrow X_w^{DL}(b), \quad x \mapsto g_b(x)\breve{B}. 
\]
We claim this map is surjective. Let $g\breve{B} \in X_w^{DL}(b)$, i.e., $g^{-1} b \sigma(g) \in \breve{B} \dot{w}_0 \breve{B}$. Replacing $g$ by another representative in $g\breve{B}$ if necessary, we may assume that $b \sigma(g) \in g \dot{w}_0 \breve{B}$. Moreover, this assumption does not change, whenever we replace $g$ by another representative $g^{\prime} = gc$ with $c \in \breve{B} \cap {}^{\dot w_0} \breve{B}$ (here ${}^{\dot w_0} \breve{B} = \dot w_0\breve{B}\dot w_0^{-1}$). A direct computation shows that replacing $g$ by $gc$ for an appropriate $c \in B \cap {}^{\dot{w}_0} \breve{B}$, we find a representative $g$ of $g\breve{B}$ with columns $g_1, g_2, \dots, g_n$ satisfying $g_{i+1} = b\sigma(g_i)$ for $i = 1, \dots, n-1$. This means precisely $g = g_b(x)$. All this shows the surjectivity claim. For $x, y \in V_b^\adm$, one has $g_b(x) \breve{B} = g_b(y) \breve{B}$ if and only if $x,y$ differ by a constant in $\breve K^{\times}$. This shows the lower horizontal isomorphism in part (i) of the 
theorem.

We now construct the upper isomorphism. We may write an element of $\dot{g}\breve{U} \in \breve{G}/\breve{U}$ lying over $g_b(x)\breve{B} \in X_w^{DL}(b)$ as $\dot{g}\breve{U} = g_b(x)t\breve{U}$ for some $t \in \breve{T}$. 
Using Lemma \ref{lm:computation_for_gbrx} (and the notation from there) we see that
\[ 
\dot{g}^{-1} b \sigma(\dot{g}) = t^{-1} g_b(x)^{-1}b \sigma(g_b(x)) \sigma(t) = t^{-1} \dot{w}_0 A \sigma(t) 
\]
Now write $A = \lambda A_0$ with $A_0 \in \breve U$ and $\lambda = \diag(1,\dots,1,\lambda_0)$ a diagonal matrix with $\lambda_0 \in \cO^\times$. We then see that $\dot{g}^{-1} b \sigma(\dot{g}) = t^{-1}\dot w_0 \lambda \sigma(t)\cdot {}^{\sigma(t)}A_0$. Hence $g_b(x)t \in X_{\dot{w}_0}^{DL}(b)$ if and only if $\dot w_0^{-1}t^{-1}\dot w_0 \sigma(t) = \lambda^{-1}$. Thus, if we write $t_0, t_1, \dots, t_{n-1} \in \breve K^{\times}$ for the diagonal entries of $t$, then we have $t_{i+1} = \sigma(t_i)$ for $0 \leq i \leq n-2$. We may assume this. In particular, it implies that $g_b(x)t = g_b(xt_0)$. In other words, replacing $x$ by $xt_0$, we may assume that $\dot{g} = g_b(x)$. It remains to determine all $x \in V_b^\adm$, for which $g_b(x)\breve{U} \in X_{\dot{w}_0}^{DL}(b)$, i.e., $g_b(x)^{-1} b\sigma(g_b(x)) \in \breve{U} \dot{w}_0 \breve{U}$. Comparing the determinants on both sides we deduce that $\sigma(\det g_b(x)) = \det(\dot w_0) \det(b)^{-1} \det g_b(x)$ (i.e.\ that $x \in V_b^{\adm,\rat,\dot w_0}$) as a necessary condition. Assume this holds. With notations as in Lemma \ref{lm:computation_for_gbrx}, we deduce $\det(A) = 1$. Moreover, since $A$ can differ from the identity matrix in at most the last column by Lemma \ref{lm:computation_for_gbrx}, $\det(A) = 1$ is equivalent to $A \in \breve{U}$. All this, together with the fact that $V_b^{\adm,\rat,\dot w_0} \cong V_b^{\adm,\rat}$ (see \eqref{e:V_b w}) shows the upper isomorphism in part (i). The commutativity of the diagram and $J_b(K)$-equivariance of the involved maps are clear from the construction. The surjectivity claim for the vertical maps is shown in exactly the same way as the analogous claim in part (ii) below.

(ii): Lemma \ref{lm:computation_for_gbrx} for $r > m \geq 0$ implies the existence of the map 
\[
V_b^\adm \rightarrow X_{\dot{w}_r}^m(b), \quad x \mapsto g_{b,r}(x)I^m. 
\]
We claim it is surjective. Let $gI^m \in X_{\dot{w}_r}^m(b)$, i.e., $g^{-1}b\sigma(g) \in I^m \dot{w}_r I^m$. Replacing $g$ by another representative of $gI^m$ if necessary, we may assume that $b\sigma(g) \in g \dot{w}_r I^m$. Moreover, this assumption does not change, whenever we replace $g$ by another representative $g^{\prime} = gj$ with $j \in I^m \cap {}^{\dot{w}_r} I^m$. In the rest of the proof, we call such transformations \emph{allowed}. We compute
\[ 
I^m \cap {}^{\dot w_r} I^m = \begin{pmatrix} 
\cO^{\times} & \mathfrak{p}^{rn+m} & \cdots & \cdots & \mathfrak{p}^{rn+m} \\ 
\mathfrak{p}^m & \cO^{\times} & \mathfrak{p}^{m+1} & \cdots & \mathfrak{p}^{m+1} \\
\vdots&\ddots&\ddots&\ddots&\vdots \\
\mathfrak{p}^m &\cdots &\mathfrak{p}^m&\cO^{\times}&\mathfrak{p}^{m+1} \\
\mathfrak{p}^m & \cdots & \cdots  &\mathfrak{p}^m & \cO^{\times}
\end{pmatrix} 
\]
(on the main diagonal entries can lie in $\cO^{\times}$, under the main diagonal in $\mathfrak{p}^m$, in the first row, beginning from the second entry, in $\mathfrak{p}^{rn + m}$, and above the main diagonal, except for the first row, in $\mathfrak{p}^{m+1}$). Let $g_1, \dots, g_n$ denote the columns of $g$, seen as elements of $V$. Then $g\dot{w}_r \in b\sigma(g)I^m$ is equivalent to the following $n$ equations:
\begin{align*}
\nonumber g_2 &\in \varpi^r b\sigma(g_1)\cO^{\times} + \varpi^{r+m}b\sigma(g_2) \cO + \dots + \varpi^{r+m} b\sigma(g_n) \cO \\
\nonumber g_3 &\in \varpi^{r+m+1} b\sigma(g_1) \cO + \varpi^r b\sigma(g_2)\cO^{\times} + \varpi^{r+m} b\sigma(g_3) \cO + \dots + \varpi^{r+m} b\sigma(g_n) \cO \\
\nonumber & \;\;\vdots \\
\nonumber g_n &\in \varpi^{r+m+1} b\sigma(g_1) \cO + \dots + \varpi^{r+m+1} b\sigma(g_{n-2})\cO + \varpi^r b\sigma(g_{n-1}) \cO^{\times} + \varpi^{r+m} b\sigma(g_n) \cO \\
\nonumber \varpi^{rn+m+\kappa} g_1 &\in \varpi^{r + 2m + 1} b\sigma(g_1) \cO + \dots + \varpi^{r + 2m + 1} b\sigma(g_{n-1})\cO + \varpi^{r+m} b\sigma(g_n) \cO^{\times}. 
\end{align*}
A linear algebra exercise shows that after some allowed transformations these equations can be rewritten as 
\begin{align*}
\nonumber g_2 &\in \varpi^r b\sigma(g_1)\cO^{\times} \\
\nonumber g_3 &\in \varpi^r b\sigma(g_2)\cO^{\times} \\
\nonumber & \;\;\vdots \\
\nonumber g_n &\in \varpi^r b\sigma(g_{n-1}) \cO^{\times} \\
\nonumber \varpi^{r(n-1)+\kappa}g_1 &\in \varpi^{m + 1}b\sigma(g_1) \cO + \dots + \varpi^{m + 1} b\sigma(g_{n-1})\cO + b\sigma(g_n) \cO^{\times}. 
\end{align*}
This shows that $g = g_{b,r}(g_1)$, and hence the claimed surjectivity. Lemma \ref{lm:comp_for_the_equiv_rels} shows that the lower map in part (ii) is an isomorphism. Exactly as in the proof of (i), one shows that the claim of (ii) is true if one replaces the upper left entry by $\left\{ x \in V_b^\adm \colon {\begin{matrix} \det(g_{b,r}(x))\, {\rm mod} \, \varpi^{m+1} \\ \text{is fixed by $\sigma$} \end{matrix}} \right\}$. As $x\, \dot{\sim}_{b,m,r}\, xu$ for any $u \in 1 + \mathfrak{p}^{m+1}$, the original claim of (ii) follows from this modified claim along with the surjectivity of the map $1 + \mathfrak{p}^{m+1} \rightarrow 1 + \mathfrak{p}^{m+1}$, $u \mapsto \prod_{i=0}^{n-1} \sigma^i(u)$, and the fact that $\det g_b(x) \in K^{\times} \Leftrightarrow \det g_{b,r}(x) \in K^{\times}$. 

It remains to show that the vertical arrows in the diagram in (ii) are surjective. It suffices to handle the left arrow. Let $x \in V^{\rm adm}_b$. By definition of $\sim_{b,m,r}$, for any $\lambda \in \cO^\times$ we have $\lambda x \sim_{b,m,r} x$. Now we have $\det g_b(\lambda x) = \varpi^a \cdot \prod_{i=0}^{n-1}\sigma^i(\lambda) \cdot \det g_b(x)$ for an appropriate $a \in \bZ$. Now the map $\lambda \mapsto \prod_{i=0}^{n-1}\sigma^i(\lambda) \colon \cO^\times \rightarrow \cO^\times$ is surjective, hence by rescaling $x$ with an appropriate $\lambda \in \cO^\times$ (thus not changing its class in $V_b^{\rm adm}/\sim_{b,m,r}$) we can arrange that $\det g_b(x) \in K^\times$, i.e., that $x\in V_b^{\rm adm, rat}$. \qedhere
\end{proof}

The natural projection maps $X_{\dot w_r}^{m+1}(b) \to X_{\dot w_r}^m(b)$ and $\dot X_{\dot w_r}^{m+1}(b) \to \dot X_{\dot w_r}^m(b)$ are obviously morphisms of schemes. However, Theorem \ref{thm:structure_result} implies that there are $G$- and $G \times T_w(\cO_K/\varpi^{m+1})$-equivariant maps of sets (on $\overline \FF_q$-points)
\begin{equation}
X_{\dot w_{r+1}}^m(b) \to X_{\dot w_r}^m(b), \qquad \text{and} \qquad \dot X_{\dot w_{r+1}}^m(b) \to \dot X_{\dot w_r}^m(b)
\end{equation}
induced by $g_{b,r+1}(x)\mapsto g_{b,r}(x)$. In Section \ref{sec:scheme_structure_ADLV_Jb}, we explicate the scheme structure on $X_{\dot w_r}^m(b)$, $\dot X_{\dot w_r}^m(b)$ and prove that these maps of sets are actually morphisms of schemes (Theorem \ref{thm:scheme_structure}). Taking Theorem \ref{thm:scheme_structure} for granted at the moment, we have a notion of an \textit{affine Deligne--Lusztig variety at infinite level}.

\begin{Def}\label{d:inf ADLV}
Define the (infinite-dimensional) $\overline \FF_q$-scheme
\begin{equation*}
X_w^{\infty}(b) \colonequals \varprojlim\limits_{r,m \colon r > m} X_{\dot{w}_r}^m(b) \qquad \text{and} \qquad \dot{X}_w^{\infty}(b) \colonequals \varprojlim\limits_{r,m \colon r > m} \dot{X}_{\dot{w}_r}^m(b).
\end{equation*}
Both have actions by $G$ and the natural $G$-equivariant map $\dot{X}_w^{\infty}(b) \rightarrow X_w^{\infty}(b)$ is a $T_w(\cO_K)$-torsor. 
\end{Def}

Passing to the infinite level in Theorem \ref{thm:structure_result} gives the following result.

\begin{thm}\label{cor:infty_level_adlv} \mbox{}
There is a commutative diagram of sets with $G$-equivariant maps:
\begin{equation*}
\begin{tikzcd}\label{diag:character_isos_diag}
\dot X_w^{DL}(b) \ar{d}{T_w(K)} & V_b^{\adm,\rat} \ar{l}[above]{\sim} \ar{rr}{\sim} \ar{d} & & \dot{X}_w^{\infty}(b) \ar{d}{T_w(\mathcal{O}_K)} \\
X_w^{DL}(b) & V_b^\adm/\breve K^{\times} \ar{l}[above]{\sim} & V_b^\adm/\cO^{\times} \ar{r}{\sim} \ar{l}[above]{\mathbb{Z}} & X_w^{\infty}(b)
\end{tikzcd}
\end{equation*}
The upper horizontal maps are $T_w(\cO_K)$-equivariant. This extends the natural $T_w(\cO_K)$-action on $\dot X_w^{\infty}(b)$ to a $T_w(K)$-action.
\end{thm}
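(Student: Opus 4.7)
The plan is to obtain the right half of the diagram by passing to the inverse limit in Theorem~\ref{thm:structure_result}(ii), and then verify it glues compatibly with the left half, which is Theorem~\ref{thm:structure_result}(i). For the upper-right horizontal arrow, I would define
\[
V_b^{\adm,\rat} \longrightarrow \dot X_w^\infty(b) = \varprojlim\limits_{r,m \colon r > m} \dot X_{\dot w_r}^m(b), \qquad x \longmapsto (g_{b,r}(x)\dot I^m)_{r > m}.
\]
By Lemma~\ref{lm:computation_for_gbrx} each coordinate lies in $\dot X_{\dot w_r}^m(b)$, and the transition maps $\dot X_{\dot w_{r+1}}^m(b) \to \dot X_{\dot w_r}^m(b)$ send $g_{b,r+1}(x)\dot I^m \mapsto g_{b,r}(x)\dot I^m$, so the assignment produces a compatible system. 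To invert it, I would apply Theorem~\ref{thm:structure_result}(ii) at each level $(r,m)$ to obtain a representative $x_{r,m} \in V_b^{\adm,\rat}$ of the given element modulo $\dot\sim_{b,m,r}$; since the transition maps preserve $x$ on the level of representatives, these are consistent modulo the corresponding equivalences. The crucial step is then to check that as $r, m \to \infty$ with $r > m$, the equivalences $\dot\sim_{b,m,r}$ collapse to equality on $V_b^{\adm,\rat}$: if $y = (1 + \epsilon_1)x + \sum_{i=1}^{n-1} \epsilon_{i+1} \varpi^{ri} (b\sigma)^i(x)$ with $\epsilon_i \in \mathfrak p^{m+1}$ for every $r > m \geq 0$, then as $m \to \infty$ the error $y - x$ lies in $\bigcap_m \mathfrak p^{m+1} \cdot \mathscr L = 0$, where $\mathscr L$ is the $\cO$-lattice generated by the $(b\sigma)^i(x)$ (which is an $\cO$-lattice by Lemma~\ref{lm:sigma_cyclic_implies_stable}), forcing $y = x$.

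The lower isomorphism $V_b^\adm/\cO^\times \xrightarrow{\sim} X_w^\infty(b)$ is constructed identically using $\sim_{b,m,r}$, which in the same limit collapses to ``coincidence up to $\cO^\times$-scaling''. Commutativity of the right square is then compatibility of the $\cO_L^\times \cong T_w(\cO_K)$-torsor structures: under $T_w \hookrightarrow \breve T_{\diag}$, scalar multiplication of $x \in V_b^{\adm,\rat}$ by $t \in \cO_L^\times$ translates into right-multiplication of $g_{b,r}(x)$ by the diagonal matrix $\diag(t, \sigma(t), \ldots, \sigma^{n-1}(t))$, which is the torsor action from Lemma~\ref{lm:actions_on_ADLV}. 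The bottom arrow $V_b^\adm/\cO^\times \to V_b^\adm/\breve K^\times$ labelled $\mathbb Z$ is the further quotient by $\breve K^\times/\cO^\times \cong \mathbb Z$. For the middle square, I would compare coordinates: the upper-left map sends $x \mapsto g_b(x)\breve U$, while the upper-right one sends $x \mapsto (g_{b,r}(x)\dot I^m)_{r > m}$; since $g_{b,r}(x) = g_b(x) \cdot \diag(1, \varpi^r, \varpi^{2r}, \ldots, \varpi^{r(n-1)})$, both record the same underlying vector $x \in V$, so the square commutes, and the same diagonal-scaling observation yields $T_w(\cO_K)$-equivariance of all upper horizontal maps.

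Finally, since $V_b^{\adm,\rat}$ carries a natural $L^\times \cong T_w(K)$-action by scaling on $V$, transporting it along $V_b^{\adm,\rat} \xrightarrow{\sim} \dot X_w^\infty(b)$ provides the desired extension of the $T_w(\cO_K)$-torsor action to a $T_w(K)$-action, compatibly with the $T_w(K)$-action on $\dot X_w^{DL}(b)$ from Theorem~\ref{thm:structure_result}(i) via the same coordinate comparison. The main obstacle will be verifying that $\bigcap_{r > m} \dot\sim_{b,m,r}$ is the diagonal on $V_b^{\adm,\rat}$ (and, correspondingly, that $\bigcap_{r > m} \sim_{b,m,r}$ is the $\cO^\times$-orbit relation), i.e., that no ``infinitely deep'' deviations survive the limit; once this collapsing is established from the explicit defining conditions together with Lemma~\ref{lm:sigma_cyclic_implies_stable}, the rest of the theorem is a formal consequence of Theorem~\ref{thm:structure_result}.
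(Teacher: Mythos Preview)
Your proposal is correct and follows exactly the approach the paper indicates: the paper simply states ``Passing to the infinite level in Theorem~\ref{thm:structure_result} gives the following result'' and treats the theorem as an immediate consequence, without writing out the details you have supplied. Your explicit verification that $\bigcap_{r>m}\dot\sim_{b,m,r}$ collapses to the diagonal (via $y-x\in\bigcap_m\mathfrak p^{m+1}\mathscr L=0$) and the corresponding statement for $\sim_{b,m,r}$ are precisely the content of ``passing to the limit'', and your check of $T_w(\cO_K)$-equivariance and the transport-of-structure definition of the $T_w(K)$-action match the paper's intent.
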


Using the set-theoretic isomorphism in Theorem \ref{cor:infty_level_adlv}, we will see in Section \ref{sec:scheme_structure_ADLV_Jb} that by endowing $V_b^{\adm}$ with the natural scheme structure over $\overline \FF_q$ coming from the lattice $\sL_0$, we can view the semi-infinite Deligne--Lusztig sets $X_w^{DL}(b), \dot X_w^{DL}(b)$ as (infinite-dimensional) $\overline \FF_q$-schemes. Moreover, every isomorphism in Theorem \ref{cor:infty_level_adlv} is an isomorphism of $\overline \FF_q$-schemes (Corollary \ref{cor:scheme_structure_on the_inverse_limit}).


\subsection{Connected components}\label{sec:connected_components}
To ``minimize'' powers of the uniformizer, we define
\begin{equation}\label{d:g_b red}
g_b^\rd(v) \colonequals \left( v \Bigm\vert \frac{1}{\varpi^{\lfloor k_0/n_0 \rfloor}} b\sigma(v) \Bigm\vert \frac{1}{\varpi^{\lfloor 2k_0/n_0 \rfloor}} (b\sigma)^2(v) \Bigm\vert \cdots \Bigm\vert \frac{1}{\varpi^{\lfloor (n-1)k_0/n_0 \rfloor}} (b\sigma)^{n-1}(v) \right)
\end{equation}
to be the $n \times n$ matrix whose $i$th column is $\frac{1}{\varpi^{\lfloor (i-1)k_0/n_0 \rfloor}} \cdot (b\sigma)^{i-1}(v)$ for $v \in V$. Observe that 
\begin{equation*}
g_b(v) = g_b^\rd(v) \cdot D_{k,n},
\end{equation*}
where $D_{k,n}$ is the diagonal matrix whose $(i,i)$th entry is $\varpi^{\lfloor (i-1)k_0/n_0\rfloor}$. 

\begin{Def}\label{d:conn comps}
For any basic $b$ (with $\kappa_{\GL_n}(b) = \kappa$) which is integrally $\sigma$-conjugate to the special representative as in Section \ref{sec:special_representatives}, we define 
\begin{align*}
\sL_{0,b}^{\adm} &\colonequals \left\{v \in \mathscr{L}_0 : \det g_b^{\rd}(v) \in \cO^\times \right\}, \\ 
\sL_{0,b}^{\adm, \rat} &\colonequals \left\{v \in \mathscr{L}_0 : \det g_b^{\rd}(v) \in \cO_K^\times \right\}, \\
\sL_{0,b}^{\adm,\rat,\dot w_0} &\colonequals \left\{v \in \mathscr L_0 : \sigma(\det g_b^{\rd}(v)) = \det(\dot w_0) \det(b)^{-1} \det g_b^{\rd}(v) \in \cO^\times\right\}.
\end{align*}
As in \eqref{e:V_b w}, note that $\sL_{0,b}^{\adm,\rat,\dot w_0} \cong \sL_{0,b}^{\adm,\rat}$ by rescaling appropriately by an element of $\cO^\times$.
\end{Def}

Let now $b$ be the special representative with $\kappa_G(b) = \kappa$. As $G_{\cO} \subseteq \GL_n(\cO) = {\rm Stab}(\sL_0)$ inside $\GL_n(\breve K)$, we see that $\sL_{0,b}^{\rm adm, rat}, \, \sL_{0,b}^{\rm adm}$ are stable under $G_{\cO} \times T_w(\cO_K)$. We have 

\begin{equation}\label{eq:L0adm_explicit}
\sL_{b,0}^{\adm} = \left\{v = \sum_{i=1}^n a_i e_i \in \mathscr{L}_0 \colon \begin{gathered} \text{$a_i \in \cO$ for $1 \leq i \leq n$; $\{ a_i e_i \!\!\!\!\pmod \varpi\}_{i \equiv 1 \!\!\!\!\pmod{n_0}}$} \\ \text{generate the $\overline\FF_q$-vector space $\overline V$}  \end{gathered} \right\},
\end{equation}
where $\overline V$ is as in Section \ref{sec:reductive_quotient_of_JJb} (compare \cite[Lemma 4.8]{Viehmann_08}). For $x \in \sL_{b,0}^{\adm}$ with reduction-modulo-$\varpi$ $\overline x$, define $\overline{g_b}(\bar{x}) \in {\GL}_{n'}(\overline{\FF}_q)$ to be the matrix from reducing every entry of $g_b^{\rm red}(x)$ modulo $\varpi$ and deleting the $i$th row and $j$th column for all $(i,j) \not\equiv (1,1)$ modulo $n_0$.

\begin{lm}\label{lm:disjoint_union_Vbadm} 
We have a disjoint decomposition
\[
V_b^{\rm adm} = \coprod_{g \in G/G_\cO} g(\sL_{0,b}^{\rm adm})
\]
\end{lm}

\begin{proof}
Let $c,d$ be as in Lemma \ref{lm:defin_of_V}. On $V$ we have the operators considered in \cite[4.1]{Viehmann_08}: $b\sigma$, $\varpi(b\sigma)^{-1}$, $\varpi^d(b\sigma)^c$, $\varpi^{-\kappa_0}(b\sigma)^{n_0} = \sigma^{n_0}$ (in \cite{Viehmann_08}, these operators are denoted $F,V,\pi_1,\sigma_1$ respectively). For $v \in V_b^\adm$ we may consider the smallest $\cO$-submodule $P(v)$ of $V$, containing $v$ and stable under these four operators (this is a indeed lattice: for $\kappa>0$ see \cite[p. 354, paragraph before Lemma 4.10]{Viehmann_08}; for $\kappa = 0$, Lemma 6.1 shows that this is the lattice generated by $\{\sigma^i(v)\}_{i=0}^{n-1}$). Further we have
\[ 
\sL_{0,b}^{\rm adm} = \left\{ v \in \sL_0 \colon P(v) = \sL_0 \right\}.
\]
This follows from the explicit description of both sides: \eqref{eq:L0adm_explicit} above and \cite[Lemma 4.8 and beginning of \S4.4]{Viehmann_08}. 
Next, the set of lattices in $V$ stable under the four operators above is in bijection with $G/G_\cO$ via $gG_\cO \mapsto g\sL_0$. Indeed, for $\kappa > 0$ this follows from Lemma 4.8 and 4.10 of \cite{Viehmann_08} (note that there Viehmann only considers bi-infinitesimal $p$-divisible groups, so that the slope(s) of $V$ must be strictly between $0$ and $1$); and for $\kappa = 0$, this is clear from the description of the affine Grassmannian of $\GL_n$ in terms of lattices. 

Any $g \in G$ commutes with the four operators above, hence $P(gv) = gP(v)$ and hence $g(\sL_{0,b}^{\rm adm}) = \{v \in \mathscr{M} \colon P(v) = \mathscr{M}\}$ if $\mathscr{M} = g(\sL_0)$. Finally, $\bigcup_{\mathscr{M}} \{v \in \mathscr{M} \colon P(v) = \mathscr{M}\}$ (where $\mathscr{M}$ runs through all lattices stable under the four operators) is disjoint and equal to $V_b^\adm$.
\end{proof}

For each $h \in G/G_\cO$, let $X_{\dot w_r}^m(b)_{h(\sL_0)}$ be the subset of $X_{\dot w_r}^m(b)$ consisting of all points which under the isomorphism $X_{\dot w_r}^m(b)(\overline \FF_q) \cong V_b^\adm/\sim_{b,m,r}$ of Theorem \ref{thm:structure_result} correspond to $h(\sL_{0,b}^\adm)$.

The Iwahori level variety $X_{\dot w_r}^0(b)$ is known to be locally closed in the ind-scheme $\GL_n(\breve K)/I$ and locally of finite type over $\overline\FF_q$. The following result (as well as the preceeding lemma) is based on ideas from \cite{Viehmann_08}, which were explained to the authors by E. Viehmann.

\begin{prop}\label{p:conn comps}
Let $r > 0$. For any $h\in G$, $X_{\dot w_r}^0(b)_{h(\sL_0)}$ is a closed and open subset of $X_{\dot w_r}^0(b)$. In other words, $X_{\dot w_r}^0(b) = \coprod_{h \in G/G_\cO} X_{\dot w_r}^0(b)_{h(\sL_0)}$ is a scheme-theoretic disjoint union decomposition. 
\end{prop}
\begin{proof} The proof follows along the same lines as \cite[Lemma 4.16]{Viehmann_08}. First we show that $X_{\dot w_r}^0(b)_{\sL_0} \subseteq X_{\dot w_r}^0(b)$ is closed. To this end, recall that points of $\GL_n(\breve K)/I$ can be interpreted as complete chains of $\cO$-lattices in $\breve K^n$. Let $\sL_\bullet = \{\sL_0 \supseteq \sL_1 \supseteq \dots \supseteq \sL_{n-1} \supseteq \varpi\sL_0\}$ be the standard chain (stabilized by $I$). Then $gI$ corresponds to the lattice chain $g\sL_\bullet = \{g\sL_i\}_{i=0}^{n-1}$. We claim that there is an integer $C > 0$ only depending on $n,\kappa,r$ such that
\begin{equation}\label{eq:claim_closed_Xwrb0}
X_{\dot w_r}^0(b)_{\sL_0} = \{ \mathscr{M}_\bullet = (\mathscr{M}_i)_{i=0}^{n-1} \in X_{\dot w_r}^0(b) \colon \mathscr{M}_0 \subseteq \sL_0 \text{ and } \vol(\mathscr{M}_0) = C \}.
\end{equation}
Let $\mathscr{M}_\bullet \in X_{\dot w_r}^0(b)_{\sL_0}$. By construction, there is some $v \in \sL_{0,b}^{\rm adm}$ such that $\mathscr{M}_\bullet = g_{b,r}(v)\sL_\bullet$. Then $\mathscr{M}_0 = g_{b,r}(v)(\sL_0)$ is the lattice generated by $\{\varpi^{ri}(b\sigma)^i(v)\}_{i=0}^{n-1}$, which is contained in $\sL_0$. Moreover, 
\[
\vol(\mathscr{M}_0) = \vol(g_{b,r}(v)(\sL_0)) = \ord \det g_{b,r}(v) = C + \ord\det g_b^{\rm red}(v) = C,
\]
with $C > 0$ some explicit constant depending on $n,\kappa,r$. Conversely, let $\mathscr{M}_\bullet \in X_{\dot w_r}^0(b)$ be such that $\mathscr{M}_0 \subseteq \sL_0$ and $\vol(\mathscr{M}_0) = C$ with the same $C$ as above. By Theorem \ref{thm:structure_result}, there is some $v\in V_b^{\adm}$ with $\mathscr{M}_\bullet = g_{b,r}(v)\sL_\bullet$. Then $\mathscr{M}_0 = g_{b,r}(v)\sL_0$ is the lattice generated by $\{\varpi^{ri}(b\sigma)^{i}(v)\}_{i=0}^{n-1}$. In particular, we must have $v \in \mathscr{M}_0 \subseteq \sL_0$ and
\[
\ord\det(g_b^{\rm red}(v)) = \ord\det(g_{b,r}(v)) - C = \vol(\mathscr{M}_0) - C = 0.
\]
This proves claim \eqref{eq:claim_closed_Xwrb0}. From this claim it follows that $X_{\dot w_r}^0(b)_{\sL_0}$ is the intersection in $\GL_n(\breve K)/I$ of $X_{\dot w_r}^0(b)$ with the preimage under 
\[ 
\GL_n(\breve K)/I \rightarrow \GL_n(\breve K)/{\rm Stab (\sL_0)} \cong \{\text{$\cO$-lattices in $\breve K^n$} \}, \quad \mathscr{M}_\bullet = \{\mathscr{M_i}\}_{i=0}^{n-1} \mapsto \mathscr{M}_0 
\]
of the closed (see \cite[Remark 4.15]{Viehmann_08}) subset of lattices with fixed volume and contained in $\sL_0$. Hence $X_{\dot w_r}^0(b)_{\sL_0}$ is closed in $X_{\dot w_r}^0(b)$. 

Applying the $G$-action on $X_{\dot w_r}(b)$ we deduce that also $X_{\dot w_r}^0(b)_{h(\sL_0)}$ is closed in $X_{\dot w_r}^0(b)$ for each $h \in G$. Now the closed subvarieties $X_{\dot w_r}^0(b)_{h(\sL_0)}$ form a disjoint cover of $X_{\dot w_r}^0(b)$, as this holds on geometric points by Lemma \ref{lm:disjoint_union_Vbadm} and Theorem \ref{thm:structure_result}. As $X_{\dot w_r}^0(b)$ is locally of finite type, this disjoint union is locally finite. Hence $X_{\dot w_r}^0(b)_{h(\sL_0)}$ is also open.
\end{proof}

\begin{rem}
We explain the differences between our setting and that of \cite[\S4]{Viehmann_08}. Viehmann proved a similar decomposition for an open subset $\mathcal{S}_1$ of some minuscule affine Deligne--Lusztig varieties at the hyperspecial level (in particular, in the bi-infinitesimal case, which in our notation corresponds to $\kappa > 0$). A point of $\mathcal{S}_1$ corresponds to a Dieudonn\'e lattice $\mathscr{M}$ in $V$, that is a lattice stable by the operators $b\sigma$ and $p(b\sigma)^{-1}$. Such a lattice also possesses a single generator $v \in V_b^\adm$, but the difference between $\vol(\mathscr{M})$ and $\vol(P(\mathscr{M}))$ (where $P(\mathscr{M})$ is the smallest lattice containing $\mathscr{M}$ and stable under the four operators as in the proof of Lemma \ref{lm:disjoint_union_Vbadm}), is quite inexplicit, and it is a complicated task \cite[Theorem 4.11(ii)]{Viehmann_08} to show that this difference is constant. In our situation the lattice (chain) corresponding to a point of $X_{\dot w_r}^0(b)$ is completely explicit, and the difference of volumes is immediate to compute. Note also that we work with the cocharacter $(-r,\ldots, -r,(n-1)r)$ for $r \geq 1$, and since this is never minuscule, there is no direct comparison between our setting and the varieties from \cite{Viehmann_08}. \hfill $\Diamond$
\end{rem}

\begin{cor} \label{cor:conn cpts any good b}
Let $b \in \breve G$ be integrally $\sigma$-conjugate to the special representative attached to $\kappa$. Then the conclusion of Proposition \ref{p:conn comps} holds for this $b$. 
\end{cor}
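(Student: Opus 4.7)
The plan is to transfer the decomposition of Proposition \ref{p:conn comps} from the special representative $b_{\rm sp}$ to $b$ via the integral $\sigma$-conjugation relating them. By hypothesis (and Lemma \ref{lm:GLO conj}), we may choose $g \in \GL_n(\cO)$ with $g^{-1} b_{\rm sp} \sigma(g) = b$. First I would invoke Lemma \ref{lm:actions_on_ADLV}(i) to obtain a scheme-theoretic isomorphism $\varphi \from X_{\dot w_r}^m(b) \stackrel{\sim}{\longrightarrow} X_{\dot w_r}^m(b_{\rm sp})$ given on points by $hI^m \mapsto gh I^m$, and likewise for the dotted versions. A direct computation shows that $\varphi$ is equivariant with respect to the group isomorphism $J_b(K) \stackrel{\sim}{\longrightarrow} J_{b_{\rm sp}}(K)$, $h \mapsto ghg^{-1}$; since $g \in \GL_n(\cO)$ stabilizes $\sL_0$, this isomorphism restricts to a bijection $G_\cO \stackrel{\sim}{\longrightarrow} G_{{\rm sp},\cO}$ of maximal compact subgroups, hence induces a bijection $G/G_\cO \stackrel{\sim}{\longrightarrow} G_{\rm sp}/G_{{\rm sp},\cO}$.

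Pulling the decomposition from Proposition \ref{p:conn comps} for $b_{\rm sp}$ back through $\varphi$ thus yields
\[
X_{\dot w_r}^m(b) = \bigsqcup_{h \in G/G_\cO} h \cdot Y, \qquad Y \colonequals \varphi^{-1}\bigl(X_{\dot w_r}^m(b_{\rm sp})_{\sL_0}\bigr),
\]
and the problem reduces to identifying $Y$ with $X_{\dot w_r}^m(b)_{\sL_0}$. Using Theorem \ref{thm:structure_result}(ii) together with the identity $(b_{\rm sp}\sigma)^{i}(gv) = g (b\sigma)^{i}(v)$, one sees that the isomorphism $\varphi$ is induced on the level of parameter sets in $V$ by $v \mapsto gv$. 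Consequently $g_{b_{\rm sp}}^{\rd}(gv) = g \cdot g_b^{\rd}(v)$, and since $g \in \GL_n(\cO)$ we have both $g \sL_0 = \sL_0$ and $\det(g) \in \cO^\times$. This gives
\[
v \in \sL_{0,b}^{\adm} \iff \det g_b^{\rd}(v) \in \cO^\times \iff \det g_{b_{\rm sp}}^{\rd}(gv) \in \cO^\times \iff gv \in \sL_{0,b_{\rm sp}}^{\adm},
\]
so $Y$ equals the image of $\sL_{0,b}^{\adm}$ under the map of Theorem \ref{thm:structure_result}(ii), which is exactly $X_{\dot w_r}^m(b)_{\sL_0}$. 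The same argument, replacing $\sim_{b,m,r}$ by $\dot\sim_{b,m,r}$ and $\sL_{0,b}^{\adm}$ by $\sL_{0,b}^{\adm,\rat}$ (using $\det(g) \in \cO_K^\times$, which follows from $g^{-1} b_{\rm sp} \sigma(g) = b$ and $\det b_{\rm sp} = \det b$), handles the dotted variety.

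The work is almost entirely bookkeeping: the only point that needs any care is checking equivariance of the bijection $v \mapsto gv$ with respect to both the $G$-action (via the conjugation isomorphism $G \cong G_{\rm sp}$) and the admissibility/rationality conditions cutting out the $\sL_0$-component. Once this is done, no new geometric input is needed — the nontrivial Viehmann-style analysis of connected components has already been carried out in Proposition \ref{p:conn comps} and need not be repeated.
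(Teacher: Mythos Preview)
Your approach is essentially the same as the paper's: transport the decomposition via the isomorphism $X_{\dot w_r}^m(b_{\rm sp}) \cong X_{\dot w_r}^m(b)$ induced by $g \in \GL_n(\cO)$, using the identity $g_{b_{\rm sp}}^{\rd}(gv) = g \cdot g_b^{\rd}(v)$ (equivalently $g_b^{\rd}(v) = g^{-1} g_{b_{\rm sp}}^{\rd}(gv)$), which is exactly what the paper records.

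There is one small gap. In the dotted case you invoke $\det(g) \in \cO_K^\times$, which you deduce from $\det b_{\rm sp} = \det b$. But the corollary is stated for an arbitrary $b$ integrally $\sigma$-conjugate to $b_{\rm sp}$, and in that generality $\det b = \det b_{\rm sp} \cdot \sigma(\det g)/\det g$ need not equal $\det b_{\rm sp}$ (take $g = \diag(u,1,\dots,1)$ with $u \in \cO^\times \smallsetminus \cO_K^\times$). The fix is immediate: you don't need $\det(g) \in \cO_K^\times$ at all. As in the proof of Proposition~\ref{p:conn comps}, the decomposition of $\dot X_{\dot w_r}^m(b)$ is simply the pullback of that of $X_{\dot w_r}^m(b)$ along the $T_w(\cO_K/\varpi^{m+1})$-torsor $\dot X_{\dot w_r}^m(b) \to X_{\dot w_r}^m(b)$, and $\dot X_{\dot w_r}^m(b)_{\sL_0}$ is the full preimage of $X_{\dot w_r}^m(b)_{\sL_0}$. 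So once you have matched the non-dotted $\sL_0$-components (which only uses $\det(g) \in \cO^\times$), the dotted statement follows automatically.
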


\begin{proof}
If $h \in \GL_n(\cO)$ is such that $b = h^{-1}b_{sp}\sigma(h)$, where $b_{sp}$ is the special representative, then $g \mapsto h^{-1}g$ defines an isomorphism $X_{\dot w_r}^m(b_{sp}) \stackrel{\sim}{\longrightarrow} X_{\dot w_r}^m(b)$. Further, $g_b^{\rd}(v) = h^{-1}g_{b_{sp}}^{\rd}(hv)$ and the corollary follows from the commutativity of the obvious diagram.
\end{proof}

By Lemma \ref{lm:GLO conj}, Corollary \ref{cor:conn cpts any good b} applies to the Coxeter-type representatives from Section \ref{sec:coxeter_representatives}.


\subsection{The structure of $X_{\dot w}^m(b)$}\label{sec:scheme_structure_ADLV_Jb}

Let $b$ be the special representative with $\kappa_{\GL_n}(b) = \kappa$. Let $X_{\dot w_r}^m(b)_{\sL_0}$ be as defined before Proposition \ref{p:conn comps}. 
The following auxiliary elements of $\GL_n(\breve K)$ will be used in this subsection only. For $r\geq 1$, put $\mu_r = (1,r,2r,\dots,(n-1)r) \in X_{\ast}(T_{\rm diag})$. For an integer $a$, let $0\leq [a]_{n_0}<n_0$ denote its residue modulo $n_0$. Let $v_1 \in \GL_{n_0}(\breve K)$ be the permutation matrix whose only non-zero entries are concentrated in the entries $(1 + [(i-1)k_0]_{n_0}, i)$ and are all equal to $1$. Let $v \in {\rm GL}_n(\breve K)$ denote the block-diagonal matrix, whose diagonal $n_0 \times n_0$-blocks are each equal to $v_1$. We begin with the following key proposition.

\begin{prop}\label{prop:ADLV_contained_in_Schubert_cell}
For $r\geq 1$, the Iwahori level variety $X_{\dot w_r}^0(b)_{\sL_0}$ is contained in the Schubert cell $Iv D_{\kappa,n}\mu_rI/I \subseteq \GL_n(\breve K)/I$. In particular, $X_{\dot w_r}^m(b)_{\sL_0} \subseteq Iv D_{\kappa,n}\mu_r I/I^m \subseteq {\rm GL}_n(\breve K)/I^m$.
\end{prop}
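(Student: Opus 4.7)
The plan is to factor $g_{b,r}(x) = g_b^{\rd}(x) \cdot D_{\kappa,n}\varpi^{\mu_r}$, which follows from the definitions since the $i$-th column of $g_{b,r}(x)$ equals $\varpi^{r(i-1) + \lfloor (i-1)k_0/n_0 \rfloor}$ times the $i$-th column of $g_b^{\rd}(x)$. Then I would identify the Iwahori double coset of $g_{b,r}(x)$ by combining the finite Bruhat position of $g_b^{\rd}(x)\bmod\varpi$ in $\GL_n(\overline{\FF}_q)$ with length-additivity in the extended affine Weyl group. Admissibility $x \in \sL_{0,b}^{\adm}$ (Definition \ref{d:conn comps}) provides $\det g_b^{\rd}(x) \in \cO^\times$, so $g_b^{\rd}(x) \in \GL_n(\cO)$ and its mod-$\varpi$ reduction is a well-defined element $\overline M \in \GL_n(\overline{\FF}_q)$.

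The heart of the argument is a direct computation of $\overline M$. For the special representative $b$, each $n_0$-block acts by $b(e_{\alpha n_0 + \beta}) = \varpi^{\lfloor (\beta - 1 + k_0)/n_0\rfloor}\, e_{\alpha n_0 + ((\beta-1+k_0) \bmod n_0) + 1}$, and iterating yields
\[
(b\sigma)^{i-1}(e_{\alpha n_0 + \beta}) = \varpi^{\lfloor (\beta - 1 + (i-1)k_0)/n_0\rfloor}\, e_{\alpha n_0 + \rho^{i-1}(\beta)},
\]
where $\rho$ denotes the $k_0$-shift on $\{1, \ldots, n_0\}$. Dividing by $\varpi^{\lfloor (i-1)k_0/n_0\rfloor}$, the surviving contributions to the $i$-th column of $\overline M$ are exactly those $(\alpha, \beta)$ with $\beta \leq n_0 - [(i-1)k_0]_{n_0}$, carrying coefficient $\sigma^{i-1}(a_{\alpha n_0 + \beta}) \bmod \varpi$. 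The admissibility condition \eqref{eq:L0adm_explicit} forces the $\beta = 1$ contribution to be nonzero in every block $\alpha$, and a flag-position calculation (comparing $\overline M$-translates of the standard flag to the standard flag block by block) identifies $\overline M \in BvB$ for the block-permutation $v$ of the statement.

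For $r \geq 1$, the diagonal cocharacter $\nu$ defined by $\varpi^\nu = D_{\kappa,n}\varpi^{\mu_r}$ has consecutive differences $\nu_{i+1} - \nu_i \geq r \geq 1$, hence is strictly dominant regular. This gives length-additivity $\ell(v\varpi^\nu) = \ell(v) + \ell(\varpi^\nu)$ in $\widetilde{W}$ and the single-coset product decomposition $IvI \cdot I\varpi^\nu I = Iv\varpi^\nu I$. Combining with $g_b^{\rd}(x) \in IvI$ from the preceding step yields $g_{b,r}(x) = g_b^{\rd}(x) \cdot \varpi^\nu \in IvD_{\kappa,n}\varpi^{\mu_r}I$; the assertion at level $I^m$ is then immediate from $I^m \subseteq I$. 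The main obstacle is the Bruhat-cell identification in the middle paragraph: the pattern of nonzero mod-$\varpi$ entries of $\overline M$ depends subtly on $k_0$ and $n_0$, and one must verify carefully that admissibility produces the flag type labeled by $v$ rather than by some other permutation.
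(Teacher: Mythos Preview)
Your high-level strategy is genuinely different from the paper's, and the gap is exactly where you already suspect it: the Bruhat-cell identification of $\overline M = g_b^{\rd}(x)\bmod\varpi$.

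The claim $\overline M \in BvB$ is not merely unverified --- it is false for the paper's $v$. Take $\kappa=0$, so $n_0=1$, $n'=n$, and $v$ is the identity; your claim then reads $\overline M \in B$, i.e.\ $\overline M$ is upper-triangular. But $\overline M=(\bar x_i^{q^{j-1}})_{i,j}$ has every entry nonzero (each $\bar x_i\neq 0$, since otherwise a row of $\overline M$ vanishes and $\det\overline M=0$), so it is never upper-triangular for admissible $x$. Your appeal to \eqref{eq:L0adm_explicit} (``the $\beta=1$ contribution is nonzero in every block'') only gives the location of certain nonzero entries of $\overline M$; it does not force the \emph{other} entries to vanish, which is what membership in the small cell $BvB$ would require. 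More generally, for $n'\geq 2$ the $B$-double-coset of $\overline M$ is not pinned down by the crude support pattern you describe; one needs control on sub\-determinants of $\overline{g_b}(\bar x)$, and admissibility a priori only gives $\det\overline{g_b}(\bar x)\neq 0$.

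The paper does not try to separate the Iwahori cell of $g_b^{\rd}(x)$ from that of $D_{\kappa,n}\mu_r$ and combine via length-additivity. Instead it runs an explicit block-by-block elimination directly on the product $g_b^{\rd}(x)D_{\kappa,n}\mu_r$, multiplying by carefully chosen elements of $I$ on each side. The engine that drives this algorithm is Lemma~\ref{lm:nonvanishing_minors}: \emph{all} upper-left $i\times i$ minors $m_i$ of $\overline{g_b}(\bar x)$ lie in $\overline{\FF}_q^\times$, not just the top determinant. That lemma is itself a nontrivial consequence of the Moore-matrix structure (it uses that a vanishing minor would force an $\FF_{q^{n_0}}$-linear relation among the $\bar x_j$, contradicting admissibility), and it is precisely the missing ingredient that your ``flag-position calculation'' would have to supply. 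The column-scaling by $D_{\kappa,n}\mu_r$ is also essential to the paper's argument: it is what allows entries below the $(i,i)$-pivot to be cleared by $I$-operations once $m_{i-1},m_i\in\overline{\FF}_q^\times$ are known, even though those entries are units in $g_b^{\rd}(x)$ itself. Factoring off $D_{\kappa,n}\mu_r$ at the outset, as you do, throws this mechanism away.
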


\begin{proof}
We have to show that for $x \in \sL_0^{{\rm adm}}$ one has $Ig_{b,r}(x)I = IvD_{\kappa,n}\mu_rI$, i.e., that by successively multiplying by elements from $I$ on the left and right side we can bring $g_{b,r}(x) = g_b^{\red}(x)D_{\kappa,n}\mu_r$ to the form $vD_{\kappa,n}\mu_r$. For $1\leq i \leq n'$, we call a matrix in $\GL_n(\breve K)$ $i$-\emph{nice}, if the following two conditions hold: 
\begin{enumerate}[label=(\roman*),leftmargin=*]
\item
each of its $n^{\prime 2}$ blocks of size $n_0 \times n_0$ has the following shape: in its $\ell$th column ($1\leq \ell\leq n_0$), the entries above the $(1 + [(\ell-1)k_0]_{n_0}, \ell)$th entry lie in $\mathfrak p$ and the other entries lie in $\cO$; 
\item
for $1\leq \ell \leq n_0$, the $(1 + [(\ell-1)k_0]_{n_0}, \ell)$th entry of the $(i,i)$th $(n_0\times n_0)$-block lies in $\cO^{\times}$. 
\end{enumerate}

The inductive algorithm to prove the lemma is as follows: put $A_1 \colonequals g_b^{\red}(x)$ and let $1 \leq i \leq n'$. Assume that by modifying $g_b^{\red}(x)D_{\kappa,n}\mu_r$ (by multiplication from left and right with $I$) we have constructed the $i$-nice matrix $A_i$, such that $Ig_b^{\red}(x)D_{\kappa,n}\mu_rI = I A_iD_{\kappa,n}\mu_r I$ and such that the first $i-1$ rows and $i-1$ columns of $n_0 \times n_0$-blocks of $A_iD_{\kappa,n}\mu_r$ coincide with $vD_{\kappa,n}\mu_r$ up to $\cO^{\times}$-multiplies of the non-zero entries. Now we do the following steps:
\begin{itemize}
\item[(1)] 
\textit{Annihilate the entries of the $(i,i)$th $n_0 \times n_0$-block of $A_i$ lying over $(1 + [(\ell-1)k_0]_{n_0}, \ell)$th entry (for each $1 \leq \ell \leq n_0$).} 

By assumption, the $(1 + [(\ell-1)k_0]_{n_0}, \ell)$th entry lies in $\cO^{\times}$. By multiplying upper triangular unipotent elements from $I$ (with non-diagonal entries in $\mathfrak p$) from the left to $A_iD_{\kappa,n}\mu_r$ (i.e., performing elementary row operations on matrices), we obtain a nice matrix $A_i'$ (uniquely determined by $A_i$) whose entries have the same images in $\cO/\mathfrak p$ as those of $A_i$. Moreover, $IA_iD_{\kappa,n}\mu_rI = IA_i'D_{\kappa,n}\mu_rI$. 

\end{itemize}
Put $A_{i,0}' \colonequals A_i'$. For $\ell = 1, 2, \dots, n_0$ do successively the following step: 
\begin{itemize}
\item[(2)${}_{\ell}$] 
\textit{Annihilate the $(n_0(i-1)+\ell)$th column and $(n_0(i-1)+1 + [(\ell-1)k_0]_{n_0})$th row of $A_{i,\ell - 1}'$.}

By assumption, the $(n_0(i-1)+1 + [(\ell-1)k_0]_{n_0}, n_0(i-1)+\ell)$th entry of the $i$-nice matrix $A_{i,\ell-1}'$ lies in $\cO^{\times}$. By multiplying $A_{i,\ell-1}'D_{\kappa,n}\mu_r$ successively from the left  by lower triangular matrices from $I$ which have $1$'s on the main diagonal and only further non-zero entries in the $n_0(i-1)+1 + [(\ell-1)k_0]_{n_0}$th column, we can kill all entries of the $n_0(i-1) + \ell$th column of $A_{i,\ell -1}'$ except for the $(n_0(i-1)+1 + [(\ell-1)k_0]_{n_0}, n_0(i-1)+\ell)$th entry itself, which remains unchanged. After this we can, using the $(n_0(i-1)+1 + [(\ell-1)k_0]_{n_0}, n_0(i-1)+\ell)$th entry, easily eliminate all entries $n_0(i-1)+1 + [(\ell-1)k_0]_{n_0}$th row except for $(n_0(i-1)+1 + [(\ell-1)k_0]_{n_0}, n_0(i-1)+\ell)$th entry itself, which remains unchanged (by multiplying $A_{i,\ell-1}'D_{\kappa,n}\mu_r$ from the right with unipotent upper triangular matrices in $I$). This does not change the rest of the matrix, because $n_0(i-1) + \ell$th column contains precisely one non-zero entry.

\end{itemize}
As an output we obtain the matrix $A_{i+1} \colonequals A_{i,n_0}'$ which we claim is $(i+1)$-nice. Assume for now that this is true. Proceeding the described algorithm for all $1\leq i\leq n'$, we obtain the matrix $A_{n'+1}$, which differs from $v$ only by some diagonal matrix with entries in $\cO^{\times}$, so that $IA_{n'+1}D_{\kappa,n}\mu_rI = IvD_{\kappa,n}\mu_rI$ is now clear.

Observe that when looking modulo $\mathfrak p$, the step (2)${}_{\ell}$ in the algorithm   for a single $\ell$ affects the $(1 + [(\ell-1)k_0]_{n_0}, \ell)$th entry of the $(i+1,i+1)$th $n_0\times n_0$-block, but does not affect the entries $(1 + [(\ell'-1)k_0]_{n_0}, \ell')$th ($\forall \ell' \neq \ell$) of the same block. In particular, the steps (2)${}_{\ell}$ can be applied in any order of the $\ell$'s, and when the (2)${}_{\ell_0}$ is applied first to $A_i'$ (to kill its $(n_0(i-1)+\ell_0)$th column and $(n_0(i-1)+1 + [(\ell_0-1)k_0]_{n_0})$th row) giving the matrix $A_{i,\ell_0}''$, then the $(1 + [(\ell_0 -1)k_0]_{n_0}, \ell_0)$th entry of $(i+1,i+1)$th $n_0\times n_0$-block of $A''_{i,\ell_0}$ already coincides modulo $\mathfrak p$ with the same entry of $A_{i+1}$.

We now show that for $1 \leq i \leq n$, the matrix $A_i$ appearing in the algorithm is $i$-nice. (By induction we may assume that $A_{i'}$ is $i'$-nice for $1\leq i' < i$, which is sufficient to run the first $i-1$ steps of the algorithm to obtain $A_i$). For $1 \leq j \leq i' \leq n$, $1\leq \ell\leq n'$, let $\alpha_{i',j,\ell} \in \cO/\mathfrak p$ denote the residue modulo $\mathfrak p$ of the $(1 + [(\ell-1)k_0]_{n_0}, \ell)$th entry of the $(j,j)$th $n_0\times n_0$-block of $A_{i'}$. Note that $\alpha_{i',j,\ell} = \alpha_{i'',j,\ell}$ for all $1\leq j \leq i' \leq i''$. Indeed, if $j<i'$, this is obvious as the first $i'-1$ diagonal blocks of $A_{i'}$ and $A_{i''}$ coincide. If $j=i'$ observe that the $(1 + [(\ell-1)k_0]_{n_0}, \ell)$th entries (for all $1\leq \ell\leq n_0$) of the $(i',i')$th $n_0 \times n_0$-block of $A_{i'}$ can only be affected by step (1) of the algorithm, which does not change the residue modulo $\mathfrak p$.

Recall the image $\bar x = (\bar x_1, \dots, \bar x_{n'})^T$ of $x$ in $\overline{V}$ and the corresponding matrix $\overline{g_b}(\bar{x}) \in {\GL}_{n'}(\overline{\FF}_q)$ defined in Section \ref{sec:connected_components}. For $1\leq i \leq n'$, let $m_i \in \overline{\FF}_q$ denote the determinant of the upper left $i\times i$-minor of $\overline{g_b}(\bar{x})$. By Lemma \ref{lm:nonvanishing_minors}, $m_i \in \overline{\FF}_q^{\times}$ for all $i$. We claim that for $1\leq \ell\leq n_0$, 
\begin{equation}\label{eq:claim_for_residues}
\alpha_{i,j,\ell} = 
\begin{cases} 
\sigma^{\ell - 1}(m_1) & \text{if $j=1$} \\
\sigma^{\ell-1}(\frac{m_j}{m_{j-1}}) & \text{if $2 \leq j \leq i$} 
\end{cases}                                                                                                                                                                                                                                                                                                                                                                                                                                                                                                                                                                                                                                         \end{equation}
By induction we may assume that this holds for all $1 \leq i' < i$, from which \eqref{eq:claim_for_residues} follows for all $j<i$. It thus remains to compute $\alpha_{i,i,\ell}$. Note that for $1\leq \ell \leq n_0$, the $(1 + [(\ell-1)k_0]_{n_0}, \ell)$-entry of $A_1 = g_b^{\red}(x)$ is equal to is equal to $\sigma^{\ell-1}(x_{1,0}) = \sigma^{\ell-1}(\bar x_1)$. This finishes the case $i=1$. Assume $i \geq 2$ and fix some $1 \leq \ell \leq n_0$. By the observation above, $\alpha_{i,i,\ell}$ is equal to the residue modulo $\mathfrak p$ of the $(1 + [(\ell-1)k_0]_{n_0}, \ell)$th entry of the $(i,i)$th $n_0\times n_0$-block of the matrix $A_{i-1,\ell}''$, obtained from $A_{i-1}'$ by directly applying step (2)${}_{\ell}$. 

For $X \in \GL_n(\breve K)$, let $M(X)$ denote the $(n_0(i-1) + 1) \times (n_0(i-1) + 1)$-minor of $X$ obtained by removing from $X$ all columns with numbers $\{j \colon j > n_0(i-1) \text{ and } j \neq n_0(i-1) + \ell \}$ and all rows with numbers $\{s \colon s > n_0(i-1) \text{ and } s \neq n_0(i-1) + 1 + [(\ell-1)k_0]_{n_0} \}$. We compute:
\[
\alpha_{i,i,\ell}\prod_{\lambda = 1}^{n_0} \sigma^{\lambda - 1}(m_{i-1}) \equiv \det M(A_{i-1,\ell}'') = \det M(A_{i-1}') = \det M(g_b^{\red}(x)) \mod \mathfrak{p}.
\]
The first equality follows from the explicit form of $A_{i-1,\ell}''$ and by the induction hypothesis on the $\alpha_{i,j,\ell}$'s. The remaining equalities are true as every operation in the algorithm does not change the determinant of the matrices. On the other side, a simple calculation shows that
\[
\det M(g_b^{\red}(x)) \equiv \frac{\sigma^{\ell}(m_i)}{\sigma^{\ell}(m_{i-1})} \prod_{\lambda = 1}^{n_0} \sigma^{\lambda - 1}(m_{i-1}) \mod \mathfrak p.
\]
This finishes the proof of \eqref{eq:claim_for_residues}, and thus of the proposition.
\end{proof}

\begin{lm}\label{lm:nonvanishing_minors} Let $x \in \sL_{0,b}^{\rm adm}$ and let $\bar x \in \overline{V}$ denote its image. For $1\leq i \leq n'$, let $m_i$ denote the upper left $(i \times i)$-minor of $\overline{g_b}(\bar x) \in \GL_{n'}(\overline{\FF}_q)$. Then $m_i \in \overline{\FF}_q^{\times}$ for all $i$.
\end{lm}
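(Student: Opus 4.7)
The plan is to make the matrix $\overline{g_b}(\bar x)$ completely explicit using the special-representative structure of $b$, and then identify its upper-left minors as classical Moore determinants.

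First, I would invoke Lemma~\ref{lm:defin_of_V}(ii): since $b$ is the special representative, the $\sigma^{n_0}$-linear operator $\overline{\sigma_b}$ on $\overline V$ fixes each basis vector $e_{1+(j-1)n_0}$ for $1 \leq j \leq n'$. Writing $x = \sum_{s=1}^n a_s e_s$ with $a_s \in \cO$, the image $\bar x \in \overline V$ is $\sum_{j=1}^{n'} \bar a_{1+(j-1)n_0}\, e_{1+(j-1)n_0}$, and the $\sigma^{n_0}$-semilinearity of $\overline{\sigma_b}$ yields
\[
\overline{\sigma_b}^{\,k-1}(\bar x) = \sum_{j=1}^{n'} \sigma^{(k-1)n_0}(\bar a_{1+(j-1)n_0})\, e_{1+(j-1)n_0}.
\]
Hence, written in the basis $\{e_{1+(j-1)n_0}\}_{j=1}^{n'}$ of $\overline V$, the matrix $\overline{g_b}(\bar x)$, whose $k$th column is $\overline{\sigma_b}^{\,k-1}(\bar x)$, has $(j,k)$-entry equal to $\sigma^{(k-1)n_0}(\bar a_{1+(j-1)n_0})$. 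In particular $\overline{g_b}(\bar x)$ is a Moore matrix attached to the tuple $(\bar a_1, \bar a_{1+n_0}, \dots, \bar a_{1+(n'-1)n_0})$ for the Frobenius $\sigma^{n_0}$.

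Second, each upper-left $i \times i$ minor $m_i$ is therefore the Moore determinant of the initial subtuple $(\bar a_1, \dots, \bar a_{1+(i-1)n_0})$. By the classical Moore determinant criterion, such a determinant is nonzero if and only if the corresponding elements of $\overline{\FF}_q$ are linearly independent over the fixed field $\FF_{q^{n_0}}$ of $\sigma^{n_0}$.

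Finally, I would invoke the explicit description \eqref{eq:L0adm_explicit} of $\sL_{0,b}^{\adm}$: the hypothesis $x \in \sL_{0,b}^{\adm}$ says precisely that the elements $\bar a_{1+(j-1)n_0}\, e_{1+(j-1)n_0}$ (for $j=1,\dots,n'$) generate the $n'$-dimensional $\FF_{q^{n_0}}$-vector space $\overline V$; equivalently, $\bar a_1, \bar a_{1+n_0}, \dots, \bar a_{1+(n'-1)n_0}$ are linearly independent over $\FF_{q^{n_0}}$. Since any initial subset of a linearly independent family is itself linearly independent, the Moore criterion forces $m_i \neq 0$ for every $1 \leq i \leq n'$. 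There is no substantive obstacle here: the only mild point requiring care is that one must expand the columns of $\overline{g_b}(\bar x)$ in the basis $\{e_{1+(j-1)n_0}\}$ arising from Lemma~\ref{lm:defin_of_V}(i), not in the full standard basis of $V$, so that the $\varpi$-powers implicit in passing from $b\sigma$ to $\overline{\sigma_b}$ have already been absorbed.
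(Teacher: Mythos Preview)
Your proof is correct and, in fact, more direct than the paper's. Both approaches recognize that for the special representative $b$ the matrix $\overline{g_b}(\bar x)$ is a Moore matrix in the Frobenius $\sigma^{n_0}$, so that each upper-left minor $m_i$ is the Moore determinant of the initial subtuple $(\bar a_1,\dots,\bar a_{1+(i-1)n_0})$. You then simply invoke the classical Moore criterion: the full determinant $m_{n'}=\det\overline{g_b}(\bar x)$ is nonzero (this is already part of the hypothesis $\overline{g_b}(\bar x)\in\GL_{n'}(\overline{\FF}_q)$), hence the full tuple is $\FF_{q^{n_0}}$-linearly independent, hence every initial subtuple is as well, hence every $m_i\neq 0$. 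The paper instead gives a self-contained inductive argument that, unwound, is essentially a proof of the Moore criterion: assuming $m_i=0$ with $m_{i-1}\neq 0$, it extracts a linear relation among the first $i$ rows and shows by comparing two overlapping Cramer systems that the coefficients lie in $\FF_q$, yielding an $\FF_q$-relation among $\bar x_1,\dots,\bar x_i$ and hence among all rows of the full matrix, contradicting $\det\overline{g_b}(\bar x)\neq 0$. So your route is shorter and more conceptual; the paper's buys self-containment at the cost of reproving a standard fact.

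One small point: your appeal to~\eqref{eq:L0adm_explicit} and the claimed equivalence with $\FF_{q^{n_0}}$-linear independence of the $\bar a_{1+(j-1)n_0}$ is a detour you don't need. The lemma already hands you $\overline{g_b}(\bar x)\in\GL_{n'}(\overline{\FF}_q)$, i.e.\ $m_{n'}\neq 0$, and the Moore criterion applied to the full matrix gives the linear independence directly. This avoids having to parse what ``generate the $\FF_{q^{n_0}}$-vector space $\overline V$'' means in~\eqref{eq:L0adm_explicit}.
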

\begin{proof}
Replacing $\FF_{q^{n_0}}$ by $\FF_q$ we may assume that $n_0 = 1$, $n' = n$. We have $\overline{g_b}(\bar x) = (\bar{x}_i^{q^{j-1}})_{1\leq i,j \leq n}$ and $\det \overline{g_b}(\bar x) \in \overline{\FF}_q^{\times}$. Clearly, $m_1 = \bar{x}_1 \neq 0$. Let $2 \leq i \leq n$. By induction we may assume that $m_{i'} \in \overline{\FF}_q^{\times}$ for all $1\leq i' < i$. Suppose $m_i = 0$. This means that the $i$ vectors $v_j = (x_j^{q^{k-1}})_{k=1}^i \in \overline{\FF}_q^i$ ($1\leq j \leq i$) are linearly $\overline{\FF}_q$-dependent. Note that the first $i-1$ of these vectors are $\overline{\FF}_q$-independent, as already the vectors $(x_j^{q^{k-1}})_{k=1}^{i-1} \in \overline{\FF}_q^{i-1}$ ($1\leq j\leq i-1$) are $\overline{\FF}_q$-independent, which in turn follows from the induction hypothesis $m_{i-1} \neq 0$. This shows that there exist $\lambda_1, \dots, \lambda_{i-1} \in \overline{\FF}_q$ with $\sum_{j = 1}^{i-1} \lambda_j v_j = v_i$. From this we deduce two systems of linear equations which uniquely determine the $\lambda_j$'s: (1) $\sum_{j=1}^{i-1} \lambda_j (x_j^{q^{k-1}})_{k=1}^{i-1} = (x_i^{q^{k-1}})_{k=1}^{i-1}$ as well as (2) $\sum_{j=1}^{i-1} \lambda_j (x_j^{q^{k-1}})_{k=2}^i = (x_i^{q^{k-1}})_{k=2}^i$. Note that (2) is obtained from (1) by raising all coefficients to the $q$th power. For $1\leq j\leq i-1$ let $m_{i-1}^{(j)}$ denote the minor $m_{i-1}$, in which $j$th row is replaced by $(x_i^{q^{k-1}})_{k=1}^{i-1}$. Then (1) gives $\lambda_j = m_{i-1}^{-1} m_{i-1}^{(j)}$, whereas (2) gives $\lambda_j = (m_{i-1}^{-1} m_{i-1}^{(j)})^q$ for each $1\leq j \leq i-1$. Thus $\lambda_j \in \FF_q$. This gives a non-trivial $\FF_q$-relation between the $x_1,\dots, x_i$, and hence also between the first $i$ rows of $\overline{g_b}(\bar x)$, i.e., $\det \overline{g_b}(\bar x) = 0$, contradicting the assumption.
\end{proof}

Let 
\begin{equation}\label{eq:Drinfeld_upper_half_space_def}
\Omega_{\FF_{q^{n_0}}}^{n'-1} \colonequals \mathbb{P}(\overline{V}) \sm \bigcup_{\substack{H \subseteq \overline{V} \\ \FF_{q^{n_0}}-\text{rational hyperplane} }} H
\end{equation}
be $n'-1$-dimensional Drinfeld's upper half-space over $\FF_{q^{n_0}}$.

\begin{thm}\label{thm:scheme_structure}
Let $b$ be the special representative with $\kappa_{\GL_n}(b) = \kappa$. Let $r > m \geq 0$. Then we have a decomposition of $\FF_q$-schemes 
\begin{equation*}
X_{\dot w_r}^m(b) \cong \bigsqcup_{G/G_{\cO}} \Omega_{\FF_{q^{n_0}}}^{n'-1} \times \mathbb{A},
\end{equation*}
where $\mathbb{A}$ is a finite dimensional affine space over $\FF_q$ (with dimension depending on $r,m$). The morphism $\dot X_{\dot{w}_r}^m(b) \rightarrow X_{\dot w_r}^m(b)$ is a finite \'etale $T_w(\cO_K/\varpi^{m+1})$-torsor. In particular, all these schemes are smooth.
\end{thm}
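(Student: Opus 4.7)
The plan is as follows. First, Proposition \ref{p:conn comps} already gives the disjoint decomposition indexed by $G/G_\cO$, reducing the structural claim to identifying one component $X_{\dot w_r}^m(b)_{\sL_0}$ with $\Omega_{\FF_{q^{n_0}}}^{n'-1} \times \mathbb A$. Since the $G$-action permutes these components and acts by scheme automorphisms, it suffices to analyze this single component.

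To identify one component, I would combine the $\overline\FF_q$-point parametrization from Theorem \ref{thm:structure_result}(ii), namely $X_{\dot w_r}^m(b)_{\sL_0}(\overline\FF_q) = \sL_{0,b}^\adm/\!\sim_{b,m,r}$, with the Schubert-cell containment of Proposition \ref{prop:ADLV_contained_in_Schubert_cell}, which realizes $X_{\dot w_r}^m(b)_{\sL_0}$ as a locally closed subscheme of the affine space $I v D_{\kappa,n}\mu_r I/I^m$. Concretely, writing $x = \sum_{i=1}^n a_i e_i$ with $a_i \in \cO$, the admissibility $x \in \sL_{0,b}^\adm$ is by \eqref{eq:L0adm_explicit} the condition that the reductions $\{\bar a_i \bar e_i\}_{i \equiv 1 \!\!\!\pmod{n_0}}$ span $\overline V$ as an $\FF_{q^{n_0}}$-vector space. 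The equivalence $\sim_{b,m,r}$ rescales $x$ by a unit absorbed in the first coordinate modulo $\mathfrak p^{m+1}$, so we may normalize by fixing $a_1 \in 1 + \mathfrak p^{m+1}$ up to a unit. After this normalization, the residues $(\bar a_i)_{i\equiv 1 \!\!\!\pmod{n_0}}$ modulo the admissibility condition vary over $\bP^{n'-1}(\overline\FF_q)$ minus the $\FF_{q^{n_0}}$-rational hyperplanes, i.e.\ precisely $\Omega_{\FF_{q^{n_0}}}^{n'-1}$; the remaining free data (the higher-order $\varpi$-coefficients of the $a_i$ for $i \equiv 1 \pmod{n_0}$, together with all coefficients of $a_i$ for $i \not\equiv 1 \pmod{n_0}$ that survive $\sim_{b,m,r}$) form a finite-dimensional affine space $\mathbb A$ over $\FF_q$. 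Promoting this bijection on $\overline\FF_q$-points to an isomorphism of schemes amounts to verifying that these coordinates are regular functions on the Schubert cell, which is direct from the explicit normal form produced by the reduction algorithm in the proof of Proposition \ref{prop:ADLV_contained_in_Schubert_cell}.

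For the torsor statement, I would invoke Lemma \ref{lm:actions_on_ADLV}(ii) applied to the normal inclusion $\dot I^m \trianglelefteq I^m$, which identifies $\dot X_{\dot w_r}^m(b)$ as an $(I^m/\dot I^m)_{\dot w_r}$-torsor over $X_{\dot w_r}^m(b)$ once one knows surjectivity of the projection (given by Theorem \ref{thm:structure_result}(ii)). It remains to compute this structure group: $I^m/\dot I^m$ is naturally the diagonal torus mod $\varpi^{m+1}$, and since $\dot w_r$ has image $w$ (the Coxeter cycle defining the form $T_w$) in the finite Weyl group, the subgroup of $\sigma$-twisted $\dot w_r$-fixed elements is $T_w(\cO_K/\varpi^{m+1}) \cong (\cO_L/\varpi^{m+1})^\times$. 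Finite étaleness then follows from the finiteness of this group together with smoothness of the base, and smoothness of $\dot X_{\dot w_r}^m(b)$ is immediate from smoothness of $X_{\dot w_r}^m(b) \cong \bigsqcup \Omega_{\FF_{q^{n_0}}}^{n'-1}\times \mathbb A$.

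The main obstacle I expect is the bookkeeping in extracting explicit global coordinates from Proposition \ref{prop:ADLV_contained_in_Schubert_cell}: one must carefully track, through the inductive row/column reduction used there, which residues encode the Drinfeld factor and which encode the affine factor, and check that the equivalence relations $\sim_{b,m,r}$ and $\dot\sim_{b,m,r}$ cut out precisely the expected quotients in those coordinates — i.e.\ that the bijection on $\overline\FF_q$-points is actually induced by a morphism with regular inverse. Everything else is formal given the earlier results.
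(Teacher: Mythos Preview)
Your proposal is correct and follows essentially the same route as the paper: reduce to one component via Proposition~\ref{p:conn comps}, embed it in the Schubert cell via Proposition~\ref{prop:ADLV_contained_in_Schubert_cell}, and then read off coordinates---the paper uses the ratios $(x_j/x_1)_{j=2}^n$ (after right-multiplying $g_{b,r}(x)$ by an element of $I^m$ to bring it to a lower-triangular normal form $a\,vD_{\kappa,n}\mu_r$), which is exactly your ``normalize $a_1$'' step, and the Drinfeld factor arises from the mod-$\varpi$ residues of those $x_j/x_1$ with $j\equiv 1 \pmod{n_0}$ while the remaining truncated coefficients give the affine factor. For the torsor statement the paper likewise gives explicit coordinates $((x_j/x_1)_j,\,x_1)$ on $\dot X_{\dot w_r}^m(b)_{\sL_0}$ and checks on $\overline{\FF}_q$-points, so your appeal to Lemma~\ref{lm:actions_on_ADLV}(ii) together with the explicit coordinatization is the same argument; the bookkeeping obstacle you flag is indeed the only real work.
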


\begin{proof}
To prove the first statement of the theorem it suffices, by applying Proposition \ref{p:conn comps}, to show that $X_{\dot w_r}^m(b)_{\sL_0}$ is a locally closed subset of $\GL_n(\breve K)/I^m$ isomorphic to $\Omega_{\FF_{q^{n_0}}}^{n'-1} \times \mathbb{A}$. By Proposition \ref{prop:ADLV_contained_in_Schubert_cell}, $X_{\dot w_r}^m(b)_{\sL_0} \subseteq IvD_{\kappa, n}\mu_r(\varpi)I/I^m$. So, it suffices to show that as a subset of $IvD_{\kappa, n}\mu_r(\varpi) I/I^m$, $X_{\dot w_r}^m(b)_{\sL_0}$ is locally closed and with its induced reduced sub-scheme-structure isomorphic to $\Omega_{\FF_{q^{n_0}}}^{n'-1} \times \mathbb{A}$. For simplicity, we treat the case that $\kappa= 0$ or $1$, so that $v= D_{\kappa,n} = 1$. The general case is done in exactly the same way, but is slightly more technical due to the presence of the permutation matrix $v$ (the corresponding technical details are very similar to those appearing in the proof of Proposition \ref{prop:ADLV_contained_in_Schubert_cell}, which we proved in full generality).

First, by Lemma \ref{lm:param_of_Schuber_cell_preimage}, $I\mu_r(\varpi)I/I^m$ is isomorphic to an affine space and we fix the following coordinates on it: let $\alpha_{ij}$ ($1\leq i\neq j \leq n$) denote the root of $T_{\rm diag}$ corresponding to $(i,j)$th matrix entry. Then
\begin{equation}\label{eq:explicit_parametrization_of_a_Schubert_cell}
\psi \colon I \mu_r(\varpi) I/I^m \stackrel{\sim}{\rightarrow} C:= \underbrace{\prod_{i=2}^n L_{[0,(i-1)r)} U_{\alpha_{i1}}}_{A:=} \times \underbrace{\prod_{n \geq i > j > 1} L_{[0,(i-j)r)}U_{\alpha_{ij}}}_{B:=} \times I/I^m
\end{equation}
(the products can be taken in any fixed order; each factor -- including $I/I^m$ -- is an affine space over $\overline{\FF}_q$) is a parametrization of $I\mu_r(\varpi)I/I^m$, whose inverse sends $(a_{i1})_{i=2}^n, (a_{ij})_{i>j\neq 1}, g$ to $\prod_{i=2}^n a_{i1} \cdot \prod_{i,j} a_{ij} \cdot \dot \mu_r(\varpi) \cdot g I^m$. It suffices to show that there is an open subset $U \subseteq A$, isomorphic to $\Omega_{\FF_{q^{n_0}}}^{n'-1} \times \mathbb{A}$, such that $X_{\dot w_r}(b)_{\sL_0} \subseteq I \mu_r(\varpi) I/I^m$ is the graph of some morphism $f \colon U \rightarrow B \times I/I^m$. Indeed, then it follows that $X_{\dot w_r}(b)_{\sL_0}$ is a locally closed subset of $I \mu_r(\varpi) I/I^m$, which (endowed with the induced reduced sub-scheme structure) gets isomorphic to $U$ via the projection $p \colon C \cong A \times B \times I/I^m \rightarrow A$ to the first factor. First, we define $U$. Therefore, consider the natural projection
\begin{equation}\label{eq:projection_from_A_to_roots}
A \twoheadrightarrow \textstyle\prod\limits_{i=2}^{n'} L_{[0,1)}U_{\alpha_{1 + n_0(i-1),1}} \cong \{ [v] \in \mathbb{P}(\overline{V})  \colon v = \sum_i v_i e_{1 + n_0(i-1)} \in \overline{V}, v_1 \neq 0\}
\end{equation}
where the latter isomorphism is $(v_i)_{i=2}^{n'} \mapsto [1:v_2: \dots :v_{n'}]$ and let $U$ be the preimage of $\Omega_{\FF_{q^{n_0}}}^{n'-1}$, which is a subspace of the right hand side via \eqref{eq:Drinfeld_upper_half_space_def}. (If $n'=1$, $\overline{V}$ is one-dimensional, and the right hand side of \eqref{eq:projection_from_A_to_roots} as well as $\Omega_{\FF_{q^{n_0}}}^{n'-1}$ is a point). It is clear that $U \cong \Omega_{\FF_{q^{n_0}}}^{n'-1} \times \mathbb{A}$.

Next, we determine the image of $X_{\dot w_r}^m(b)_{\sL_0}$ under $\psi$. Let $g_{b,r}(x)I^m = g_b^{\red}(x)\mu_r(\varpi) I^m \in X_{\dot w_r}(b)_{\sL_0}$, i.e., $x \in \sL_0^{\rm adm}$. This point does not change if we multiply $g_{b,r}(x)$ by an element of $I^m$ from the right, or equivalently, if we multiply $g:= g_b^{\red}(x)$ by an element of 
\begin{equation}\label{eq:mur_conj_of_Im}
{}^{\mu_r(\varpi)}I^m = \mu_r(\varpi) I^m \mu_r(\varpi)^{-1} = \begin{pmatrix} 
\cO^{\times} & \mathfrak{p}^{m+1-r} & \cdots & \cdots & \mathfrak{p}^{m+1-(n-1)r} \\ 
\ast & \cO^{\times} & \mathfrak{p}^{m+1-r} & \cdots & \mathfrak{p}^{m+1 - (n-2)r} \\
\vdots&\ddots&\ddots&\ddots&\vdots \\
\ast &\cdots &\ast&\cO^{\times}&\mathfrak{p}^{m+1-r} \\
\ast & \cdots & \cdots  &\ast & \cO^{\times}\end{pmatrix}
\end{equation}
from the right (the entries marked with $\ast$ are uninteresting for us). First, we multiply $g$ by the diagonal matrix $i_1 = \diag(x_1, \sigma(x_1), \dots,\sigma^{n-1}(x_1))^{-1} \in {}^{\mu_r(\varpi)}I^m$ (note that $x_1$ invertible by \eqref{eq:L0adm_explicit}; see also Lemma \ref{lm:nonvanishing_minors}), achieving that all entries of $gi_1$ are of the form $\sigma^i(\frac{x_j}{x_1})$ resp. $\varpi\sigma^i(\frac{x_j}{x_1})$ and, in particular, the first column of $gi_1$ has the entries $1,\frac{x_2}{x_1}, \dots,\frac{x_n}{x_1}$. Next, using that $r > m$, so that all $\mathfrak{p}^{m+1-r}\supseteq \cO$ in \eqref{eq:mur_conj_of_Im}, we may eliminate all $(1,j)$th ($2\leq j \leq n$) entries of $gi_1$, i.e., we find some $i_1' \in {}^{\mu_r(\varpi)}I^m$ (with entries different from $0$ and $1$ only in the first row), such that $gi_1i_1'$ has the same first column as $gi_1$ and $0$'s in all entries of the first row except the first one. Also, all entries of $gi_1i_1'$ lie in $\cO$ and are some functions in $\sigma^i(\frac{x_j}{x_1})$ (as the same is true for $gi_1$). Moreover, the $(2,2)$ entry of $gi_1i_1'$ must now be in $\cO^\times$ (otherwise $g\mu_r(\varpi)I^m \not\in I\mu_r(\varpi)I/I^m$), and we can iterate the procedure: rescale the second column such that $(2,2)$th entry equals $1$, then kill all entries $(2,j)$ with $3\leq j \leq n$, etc. At the end we obtain a matrix $g' = gi_1i_1'i_2i_2' \dots i_{n-1}i_{n-1}'$, such that $g'\mu_r(\varpi)I^m = g\mu_r(\varpi)I^m$ and 
\[
g' = \begin{pmatrix}
      1 & 0 & \dots & \dots & \dots & 0 \\
      \frac{x_2}{x_1} & 1 & 0 & \dots & \dots & 0 \\
      \frac{x_3}{x_1} & \ast & 1 & \dots & \dots & 0 \\
      \vdots & \ddots & \ddots & \ddots & \ddots & \vdots\\
      \frac{x_{n-1}}{x_1} & \ast & \dots & \ast & 1 & 0 \\
      \frac{x_n}{x_1} & \ast & \dots & \ast & \ast & 1
     \end{pmatrix} = 
    \begin{pmatrix}
      1 & 0 & \dots & \dots & \dots & 0 \\
      \frac{x_2}{x_1} & 1 & 0 & \dots & \dots & 0 \\
      \frac{x_3}{x_1} & 0 & 1 & \dots & \dots & 0 \\
      \vdots & \ddots & \ddots & \ddots & \ddots & \vdots\\
      \frac{x_{n-1}}{x_1} & 0 & \dots & 0 & 1 & 0 \\
      \frac{x_n}{x_1} & 0 & \dots & 0 & 0 & 1
     \end{pmatrix} 
     \begin{pmatrix}
      1 & 0 & \dots & \dots & \dots & 0 \\
      0 & 1 & 0 & \dots & \dots & 0 \\
      0 & \ast & 1 & \dots & \dots & 0 \\
      \vdots & \ddots & \ddots & \ddots & \ddots & \vdots\\
      0 & \ast & \dots & \ast & 1 & 0 \\
      0 & \ast & \dots & \ast & \ast & 1
     \end{pmatrix} 
\]
where the entries marked by $\ast$ all lie in $\cO$ and are functions of $\frac{x_2}{x_1}$, $\dots$, $\frac{x_n}{x_1}$. We can regard the first matrix in the product as an element of $A$ (as in \eqref{eq:explicit_parametrization_of_a_Schubert_cell}), and moreover from \eqref{eq:L0adm_explicit} it follows that it lies in the open subset $U \subseteq A$. It gets clear now that with respect to the parametrization in \eqref{eq:explicit_parametrization_of_a_Schubert_cell}, $\psi(X_{\dot w_r}^m(b)_{\sL_0})$ consists of points of the form $(u,f_0(u), 1\cdot I^m)$ with $u \in U \subseteq A$ and $f_0(u) \in B$, where $f_0 \colon U \rightarrow B$ is some morphism (which determines the entries $\ast$ in terms of $\frac{x_2}{x_1}, \dots, \frac{x_n}{x_1}$). Thus $f \colon U \rightarrow B \times I/I^m$, defined by $u \mapsto (f_0(u), 1\cdot I^m)$ is the required morphism we wished to construct. This finishes the proof of the first claim of the theorem.

For the second claim in the theorem, we could repeat the above arguments with $I\mu_r(\varpi)I/\dot I^m$ replacing $I\mu_r(\varpi)I/I^m$. Alternatively we can argue as follows: by Theorem \ref{prop:loc_closed_ADLV_covers} (see Remark \ref{rem:lft_scheme_str_for_speical_wrm} and Lemma \ref{lm:action_of_wrdot_on_roots}), $\dot X_{\dot w_r}^m(b)$ is locally closed in $G(\breve K)/\dot I^m$, and hence a scheme locally of finite type over $\overline{\mathbb{F}}_q$ (Corollary \ref{cor:lftschemes}), and by Proposition \ref{prop:representability}(iii) the morphism $G(\breve K)/\dot I^m \rightarrow G(\breve K)/I^m$ is representable and has sections \'etale-locally. It thus follows that $\dot X_{\dot w_r}^m(b) \rightarrow X_{w_r}^m(b)$ is a honest morphism of schemes locally of finite type over $\overline{\mathbb{F}}_q$. Moreover, by the explicit description on geometric points in Theorem \ref{thm:structure_result} it is surjective and a torsor under $T_w(\cO_K/\varpi^{m+1})$.\qedhere

\end{proof}

\begin{cor}\label{cor:algebraicity_of_gbrx_maps}
Let $r' > m' > 0$, $r > m > 0$ be two pairs of integers with $r' \geq r$, $m' \geq m$. Then all maps $X_{\dot w_{r'}}^{m'}(b) \rightarrow X_{\dot w_r}^m(b)$, $\dot X_{\dot w_{r'}}^{m'}(b) \rightarrow \dot X_{\dot w_r}^m(b)$, $\dot X_{\dot w_{r'}}^{m'}(b) \rightarrow X_{\dot w_r}^m(b)$ induced by $g_{b,r'}(x) \mapsto g_{b,r}(x)$ are morphisms of schemes. In particular, $X_w^{\infty}(b)$ and $\dot X_w^{\infty}(b)$ are schemes over $\FF_q$.
\end{cor}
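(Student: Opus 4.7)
The plan is to reduce to the $\sL_0$-connected component via Proposition \ref{p:conn comps}, realize each map in the explicit coordinates produced in the proof of Theorem \ref{thm:scheme_structure}, and observe that in these coordinates the transition is nothing more than polynomial truncation. First I would note that the map $g_{b,r'}(x)\mapsto g_{b,r}(x)$ commutes with the left $G$-action (both sides act by left multiplication on $V_b^{\adm}$), hence respects the $G/G_\cO$-indexed decomposition of Proposition \ref{p:conn comps}. It therefore suffices to show algebraicity of the restriction $X_{\dot w_{r'}}^{m'}(b)_{\sL_0}\to X_{\dot w_r}^m(b)_{\sL_0}$, and analogously for the dotted variants and the mixed map.

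Next, recall from the proof of Theorem \ref{thm:scheme_structure} that for each pair $(r,m)$ with $r>m$, the coordinates $(x_j/x_1)_{j=2}^n$ on the first column of the normalized representative $a$ realize $X_{\dot w_r}^m(b)_{\sL_0}$ as an open subscheme of $\prod_{j=2}^n L_{[0,\nu_{j,r,m})}U_{\alpha_{j,1}}$, for explicit integers $\nu_{j,r,m}$ read off from the Iwahori factorization. For the dotted variant, one has an additional coordinate $x_1\bmod \mathfrak p^{m+1}$. Combining the definitions of $\sim_{b,m,r}$ and $\dot\sim_{b,m,r}$ with the observation that $\varpi^{r'i}\mathfrak p^{m'+1}\subseteq\varpi^{ri}\mathfrak p^{m+1}$ whenever $r'\geq r$ and $m'\geq m$, one checks directly that $\sim_{b,m',r'}$ refines $\sim_{b,m,r}$ (similarly for $\dot\sim$). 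Consequently $\nu_{j,r',m'}\geq\nu_{j,r,m}$ and the set-theoretic map on $V_b^{\adm}/{\sim}$ (resp.\ on $V_b^{\adm,\rat}/\dot\sim$) induced by $g_{b,r'}(x)\mapsto g_{b,r}(x)$ is well-defined.

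The transition in these coordinates is then visibly the truncation
\[
\textstyle\prod_{j=2}^n \cO/\mathfrak p^{\nu_{j,r',m'}}\;\longrightarrow\;\prod_{j=2}^n \cO/\mathfrak p^{\nu_{j,r,m}}
\]
(with an additional truncation $\cO/\mathfrak p^{m'+1}\to\cO/\mathfrak p^{m+1}$ in the dotted case, and a projection $\cO/\mathfrak p^{m'+1}\to\cO$ followed by truncation in the mixed case), restricted to the open locus cut out by the Drinfeld half-space condition. Truncation is plainly a morphism of $\FF_q$-schemes, open restriction preserves algebraicity, and the $G$-translated components produce the full morphism by the first reduction step.

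Finally, the inverse systems $\{X_{\dot w_r}^m(b)\}$ and $\{\dot X_{\dot w_r}^m(b)\}$ (indexed by $r>m\geq 0$, filtered by $(r,m)\leq(r',m')$ iff $r\leq r'$ and $m\leq m'$) consist of cofiltered affine morphisms between (perfect) schemes, since in the coordinates above every transition map is a truncation of polynomial rings. The inverse limit of such a cofiltered system of (perfect) $\FF_q$-schemes along affine morphisms exists as a (perfect) $\FF_q$-scheme, yielding $X_w^\infty(b)$ and $\dot X_w^\infty(b)$ as schemes over $\FF_q$. The main technical point will be cleanly extracting the inequalities $\nu_{j,r',m'}\geq\nu_{j,r,m}$ directly from the row-reduction algorithm in the proof of Theorem \ref{thm:scheme_structure}, so as to confirm that the algorithm's output really is compatible with truncation in the coordinates $(x_j/x_1)$ rather than producing some rational but non-polynomial dependence.
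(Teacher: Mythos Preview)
Your approach is essentially identical to the paper's: use the explicit coordinates from the proof of Theorem \ref{thm:scheme_structure} to see that the transition maps are the natural truncations $L_{[0,\nu')}U_{\alpha_{j,1}}\to L_{[0,\nu)}U_{\alpha_{j,1}}$ (and $L_{[0,m'+1)}\bG_m\to L_{[0,m+1)}\bG_m$ in the dotted case). The paper's proof is a single sentence to this effect; your additional care about the $G/G_\cO$-decomposition and the inverse-limit existence is fine but not strictly necessary, and your description of the mixed map as ``a projection $\cO/\mathfrak p^{m'+1}\to\cO$'' is a slip---that map simply forgets the $\bG_m$-coordinate.
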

\begin{proof}
With respect to the coordinates on $X_{\dot w_r}^m(b)$, $\dot X_{\dot w_r}^m(b)$ in the proof of Theorem \ref{thm:scheme_structure}, these maps are simply induced by the natural projections $L_{[0,\nu')}U_{\alpha_{j,1}} \rightarrow L_{[0,\nu)}U_{\alpha_{j,1}}$ for $\nu' \geq \nu$ and $L_{[0,m'+1)}\bG_m \rightarrow L_{[0,m+1)}\bG_m$ for $m' \geq m$.
\end{proof}

We are now ready to endow all objects in the diagram in Theorem \ref{cor:infty_level_adlv} with scheme structures and compare them. The set $\sL_{0,b}^{{\rm adm, \rat}}$ has an obvious scheme structure as a closed subset of the infinite dimensional affine space $\sL_0$ over $\FF_q$. Analogously, the natural embedding $\sL_0^{\rm adm}/\cO^{\times} \subseteq \sL_0/\cO^{\times} = L^+\mathbb{P}(\sL_0)(\overline{\FF}_q)$, where $L^+\mathbb{P}(\sL_0)$ is an infinite-dimensional $\FF_q$-scheme, endows $\sL_0^{\rm adm}/\cO^{\times}$ with the structure of an open subscheme. We endow $V_b^{{\rm adm, rat}}$ and $V_b^{{\rm adm}}/\cO^{\times}$ with the scheme structure of a disjoint union: 
\begin{equation*}
V_b^{{\rm adm, rat}} = \bigsqcup_{g \in G/G_{\cO}} g.\sL_{0,b}^{{\rm adm,rat}} \qquad \text{and} \qquad V_b^{\adm}/\cO^{\times} = \bigsqcup_{g \in G/G_{\cO}} g.\left(\sL_{0,b}^{{\rm adm}}/\cO^{\times}\right).
\end{equation*}
Since the action of $\varpi$ on $V_b^{\adm}/\cO^{\times}$ just permutes the connected components, the quotient $V^{\adm}/\breve K^{\times}$ inherits the scheme structure $V^{\adm}/\breve K^{\times} = \bigsqcup_{G/Z(G)G_{\cO}} g.\left(\sL_{0,b}^{\adm}/\cO^{\times}\right)$.

\begin{cor}\label{cor:scheme_structure_on the_inverse_limit}
The maps of sets $V_b^{{\rm adm, rat}} \stackrel{\sim}{\rightarrow} \dot X_w^{\infty}(b)$, $V_b^{{\rm adm}}/\cO^{\times} \stackrel{\sim}{\rightarrow} X_w^{\infty}(b)$ from Theorem \ref{cor:infty_level_adlv} are isomorphisms of $\FF_q$-schemes. We endow $X_w^{DL}(b)$, $\dot X_w^{DL}(b)$ with the scheme structure via the isomorphisms in the diagram in Theorem \ref{cor:infty_level_adlv}.
\end{cor}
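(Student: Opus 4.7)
The plan is to reduce to checking the scheme-theoretic comparison on a single connected component and then to read everything off from the explicit coordinates established in the proof of Theorem \ref{thm:scheme_structure}.

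First, Theorem \ref{cor:infty_level_adlv} already provides $G$-equivariant \emph{bijections} of sets. By Proposition \ref{p:conn comps}, applied at each finite level and then passed through the inverse limit, $\dot X_w^\infty(b)$ decomposes scheme-theoretically as $\bigsqcup_{G/G_\cO} g \cdot \varprojlim_{r>m} \dot X_{\dot w_r}^m(b)_{\sL_0}$, and $V_b^{\adm,\rat}$ is by definition the analogous disjoint union $\bigsqcup_{G/G_\cO} g \cdot \sL_{0,b}^{\adm,\rat}$. By $G$-equivariance, it therefore suffices to check that the induced bijection on the identity component,
\[
\sL_{0,b}^{\adm,\rat} \stackrel{\sim}{\longrightarrow} \varprojlim_{r>m} \dot X_{\dot w_r}^m(b)_{\sL_0},
\]
is an isomorphism of $\FF_q$-schemes, and analogously after quotienting by $\cO^\times$ in the un-dotted case.

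Next I would invoke the explicit coordinates recorded in the proof of Theorem \ref{thm:scheme_structure}. For each $r > m \geq 0$, that theorem identifies $\dot X_{\dot w_r}^m(b)_{\sL_0}$, via $g_{b,r}(x) \dot I^m \mapsto ((x_j/x_1)_{j\geq 2}, x_1)$, with a locally closed subscheme of a finite-dimensional product of truncated positive loop spaces $\prod_j L_{[0,\nu_{\alpha_{j,1},2})}U_{\alpha_{j,1}} \times L_{[0,m+1)}\bG_m$. The cutting-out conditions are the admissibility condition (an open condition coming from $\Omega_{\FF_{q^{n_0}}}^{n'-1}$) and the rationality condition $\det g_{b,r}(x) \in (\cO_K/\varpi^{m+1})^\times$ (a closed condition). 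Corollary \ref{cor:algebraicity_of_gbrx_maps} guarantees that, under these coordinates, the transition maps in the inverse system are the obvious truncation morphisms between positive-loop spaces. Passing to the inverse limit un-truncates both conditions, and the algebraic change of variables $(x_1,(x_j/x_1)_{j\geq 2}) \leftrightarrow (x_1,\dots,x_n)$ (well-defined as a morphism of positive loops because $x_1 \in L^+\bG_m$) identifies $\varprojlim_{r>m} \dot X_{\dot w_r}^m(b)_{\sL_0}$ with the locally closed subscheme of $\sL_0$ cut out by $\det g_b^\rd(x) \in \cO_K^\times$, that is, with $\sL_{0,b}^{\adm,\rat}$ as introduced in Definition \ref{d:conn comps}. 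The un-dotted case follows by quotienting by the evident $\cO^\times$-action on the $x_1$-coordinate, which is compatible on both sides.

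The main point requiring care is simply to confirm that the chain of identifications I have described really is the map supplied by Theorem \ref{cor:infty_level_adlv}; but this is immediate, since at each finite level that map sends $x$ to $g_{b,r}(x)\dot I^m$ and the coordinates $((x_j/x_1)_{j\geq 2}, x_1)$ are exactly the truncations of $(x_1,\dots,x_n)$ by construction. The last sentence of the corollary---transporting the scheme structures to $X_w^{DL}(b)$ and $\dot X_w^{DL}(b)$ via the isomorphisms in Theorem \ref{cor:infty_level_adlv}---is then a matter of definition. I expect no genuine obstacle: the entire content of the argument is bookkeeping on coordinates already supplied by Theorem \ref{thm:scheme_structure} and Corollary \ref{cor:algebraicity_of_gbrx_maps}.
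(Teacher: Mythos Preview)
Your proposal is correct and follows essentially the same approach as the paper: reduce to the $\sL_0$-component via the disjoint decomposition and $G$-equivariance, then use the explicit coordinates from the proof of Theorem \ref{thm:scheme_structure} to see that the map in question is $(x_i)_{i=1}^n \mapsto ((x_j/x_1)_{j=2}^n, x_1)$ on positive loop spaces, which is an isomorphism. The paper's proof is a compressed two-sentence version of exactly this; your added care about Corollary \ref{cor:algebraicity_of_gbrx_maps} and the compatibility with Theorem \ref{cor:infty_level_adlv} is welcome but not strictly new content.
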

\begin{proof}
To show the first isomorphism, it suffices to prove that for the special representative $b$, we have an isomorphism of schemes $\sL_{0,b}^{{\rm adm, rat}} \stackrel{\sim}{\rightarrow} \dot X_w^{\infty}(b)_{\sL_0}$. Rescaling by an appropriate element of $\cO^\times$, we may replace $\sL_{0,b}^{\adm,\rat}$ by $\sL_{0,b}^{\adm,\rat,\dot w_0}$. With notation as in the proof of Theorem \ref{thm:scheme_structure}, the coordinates on the inverse limit $\dot X_w^{\infty}(b)_{\sL_0}$ are given by $((a_{\alpha_{j,1}})_{j=2}^n,c_1) \in L^+U_{\alpha_{j,1}} \times L^+\bG_m$ and the map $\sL_{0,b}^{\adm, \rat, \dot w_0} \to \dot X_w^\infty(b)_{\sL_0}$ is given by $(x_i)_{i=1}^n \mapsto (\frac{x_j}{x_1})_{j=2}^n, x_1$. This is an isomorphism. The second isomorphism is proven similarly.
\end{proof}


\subsection{Example ${\rm SL}_2$, $w$ Coxeter, $b = 1$}\label{sec:example_SL2} 

It is instructive to explicate the scheme structure on $X_w^{DL}(1)$ from Remark \ref{rem:semiinfs_remarks}(ii) and compare it to the one obtained via affine Deligne--Lusztig varieties (a similar description applies in a number of further cases, in particular for ${\rm GL}_n$ or ${\rm GSp}_{2n}$ and $w$ Coxeter). We have $\SL_2/B = \mathbb{P}^1$ and
\[
X_w^{DL}(1) = \mathbb{P}^1(\breve K) \sm \mathbb{P}^1(K).
\]
It is thus given by the open condition $\det\left(\begin{smallmatrix} x & \sigma(x) \\ y & \sigma(y) \end{smallmatrix}\right) = x \sigma(y) - \sigma(x) y \not= 0$ inside
\[
L\mathbb{P}^1_K(\overline{\FF}_q) = L^+\mathbb{P}^1_{\mathcal{O}_K}(\overline{\FF}_q) = \mathbb{P}^1(\cO) = \{ [x:y] \colon x,y \in \cO, \text{at least one of $x,y$ lies in $\cO^{\times}$} \}
\]
(where $[x:y] = [x^{\prime}:y^{\prime}]$ if and only if there exists $a \in \cO^{\ast}$ with $ax = x^{\prime}$, $ay = y^{\prime}$) and
\[
X_w^{DL}(1)_r = \{ [x:y] \in L^+\mathbb{P}^1_{\mathcal{O}_K}(\overline{\FF}_q) \colon \sigma(x) y - x \sigma(y) \not\equiv 0 \mod \mathfrak{p}^r \}.
\]
It is clear (from the version of Theorem \ref{thm:structure_result} for $\SL_2$) that if $g \in \SL_2(K) \sm \SL_2(\cO)$, then $g.X_w^{DL}(1)_1 \cap X_w^{DL}(1)_1 = \varnothing$. Moreover, $X_w^{DL}(1)_1 \subseteq X_w^{DL}(1)$ is dense open. This means that $g$ maps a  dense open subset of $X_w^{DL}(1)$ onto a subset which lies in its boundary and hence cannot be dense. Thus $g$ cannot be an automorphism of the scheme $X_w^{DL}(1)$, and the action of $G$ on $X_w^{DL}(1)$ with the above scheme structure is not algebraic.

The subsets $Y_r$ of $L^+_r\mathbb{P}^1_{\mathcal{O}_K}(\overline{\FF}_q) = \mathbb{P}^1(\cO/\mathfrak{p}^r)$ can easily be computed to be
\begin{align*}
Y_1 &=  \Omega_{\FF_q}  \\
Y_2 &= (\Omega_{\FF_q} \times \mathbb{A}^1) \,\, \sqcup \,\, \bigsqcup_{\lambda \in \mathbb{P}^1(\FF_q)} \Omega_{\FF_q}, \\
& \;\; \vdots \\
Y_r &= (Y_{r-1} \times \mathbb{A}_{\FF_q}^1) \,\, \sqcup \,\, \bigsqcup \,\, \Omega_{\FF_q},
\end{align*}
where the last union is taken over all hyperspecial vertices in the Bruhat--Tits building of ${\rm SL}_2$ over $K$ such that the minimal gallery connecting this vertex to the one stabilized by $\SL_2(\mathcal{O}_K)$ has length $2r-1$. The unions are disjoint set-theoretically but not scheme-theoretically, since for example the preimage of $Y_1$ in $Y_2$ is open and not closed.

On the other hand, we can explicate the way in which $X_w^{DL}(1)$ is built from finite-dimensional pieces as dictated by Theorems \ref{thm:structure_result}, \ref{thm:scheme_structure}. In fact, $X_w^{DL}(1)$ is an inverse limit of the affine Deligne-Lusztig varieties of increasing level
\[
X_{\dot{w}_m}^m(1) \cong \bigsqcup_{\SL_2(K)/\SL_2(\mathcal{O}_K)} \Omega_{\FF_q}^1 \times \mathbb{A},
\]
where $\Omega_{\FF_q}^1 = \mathbb{P}^1_{\FF_q} \sm \mathbb{P}^1(\FF_q)$ is the Drinfeld upper half-plane over $\FF_q$, $\dot{w}_m$ are lifts of $w$ whose length in the affine Weyl group has to grow with $m$, and $\bA$ is some finite dimensional affine space over $\FF_q$, whose dimension depends on $m$ and $\dot{w}_m$ and goes to $\infty$ when $m \rightarrow \infty$.


\section{A family of finite-type varieties $X_h$}\label{s:schemes_Xh}

In this section, we study the geometry of a family of finite-type varieties $X_h$ for $h \geq 1$ which have natural projection maps $X_h \to X_{h-1}$. These varieties are more tractable than (components of) the affine Deligne--Lusztig varieties $\dot X_{\dot w_r}^m(b)_{\sL_0}$, but we can see that after passing to the limit, these two families at infinite level are the same:
\begin{equation}\label{eq:two_prolimit_presentations}
\varprojlim_{r,m \colon r > m} \dot X_{\dot w_r}^m(b)_{\sL_0} = \sL_{0,b}^{\adm,\rat,\dot w_0} \cong \sL_{0,b}^{\adm,\rat} = \varprojlim_h X_h.
\end{equation}
Our work in this section will prepare us for Part \ref{part:cohomology}, where we will study the cohomology of $X_h$ as representations of $G_h \times T_h$.

We remark that $X_h$ will depend on whether we choose $b$ to be the Coxeter-type representative or the special representative, but they are isomorphic as $\FF_{q^n}$-schemes for the same reason as in Corollary \ref{cor:conn cpts any good b}. The flexibility of choosing this representative $b$ allows us to use a wide range of techniques to understand $X_h$ and its cohomology. We will see this theme throughout Part \ref{part:cohomology}.

\subsection{Ramified Witt vectors}\label{sec:witt}

Recall the schemes $\bW$, $\bW_h$ from Section \ref{sec:notation} (see \cite[18.6.13, 25.3.18]{Hazewinkel_78} for more details on the construction of ramified Witt vectors).
We will need to coordinatize $\bW$ in order to make an explicit computations about the variety $X_h$. If $A$ is a perfect $\FF_q$-algebra, the elements of $\bW(A)$ can be written in the form $\sum_{i \geq 0} [x_i] \varpi^i$, where $[x_i]$ is the Teichm\"uller lift of $x_i \in A$ if $\Char K = 0$ and $[x_i] = x_i$ if $\Char K > 0$. (Note that the perfectness assumption is only necessary when $\Char K = 0$.) We identify $\bW$ with $\bA^{\bZ_{\geq 0}}$ and identify $\bW_h$ with $\bA^h$ under this choice of coordinates. We recall the following lemma about the ring structure of $\bW$ with respect to these coordinates.

\begin{lemma}\label{l:witt}
Let $A$ be a perfect $\FF_q$-algebra. 
\begin{enumerate}[label=(\roman*)]
\item
The coefficient of $\varpi^i$ in $(\sum_{i \geq 0} [a_i] \varpi^i) + (\sum_{i \geq 0} [b_i] \varpi^i)$ is
\begin{equation*}
[a_i + b_i + c_i], \qquad \text{where $c_i \in A[a_j^{1/q^N}, b_j^{1/q^N} : j < i, \, N \in \bZ_{\geq 0}]$.}
\end{equation*}
\item
The coefficient of $\varpi^i$ in $(\sum_{i \geq 0} [a_i] \varpi^i)(\sum_{i \geq 0} [b_i] \varpi^i)$ is 
\begin{equation*}
\left[\textstyle\sum\limits_{j = 0}^i a_j b_{i-j} + c_i \right], \qquad \text{where $c_i \in A[a_{i_1}^{e_1/q^N}b_{i_2}^{e_2/q^N} : i_1+i_2 < i, \, e_1, e_2, N \in \bZ_{\geq 0}]$}
\end{equation*}
\end{enumerate}
In both cases, we call $c_i$ the ``minor contribution.'' Note that if $\Char K > 0$, then the minor contribution is identically zero. In particular, for any given $i$, the $i$th minor contribution does only depend on $a_j, b_j$ with $j<i$.
\end{lemma}

This lemma says that up to ``minor contributions'', working in coordinates with the Witt vectors is the same as working in coordinates in $\FF_q[\![t]\!]$. This allows us to uniformly perform calculations in the mixed and equal characteristic settings. We will implicitly use Lemma \ref{l:witt} in Section \ref{sec:fibers_as_hypersurface} and Section \ref{s:cuspidality}. 

\begin{rem}\label{r:witt}
Note in particular that by Lemma \ref{l:witt}(ii), the coefficient of $\varpi^i$ in the product $(\sum_{i \geq 0}[a_i] \varpi^i)(\sum_{i \geq 0} [b_i] \varpi^i)$ is of the form $[a_0 b_i + e_i]$ where $e_i$ is independent of $b_i$. For this reason, the minor contributions never play a role in our formulae as we study $X_h$ as a subvariety of $X_{h-1} \times \bA^N$ (for some $N$), and so the minor contributions only contribute to unspecified ``constant'' terms (see $c$ in Proposition \ref{prop:polynomial_P_describing_the_fiber}). 

We also point out a case where the minor contribution vanishes (this is used in Proposition \ref{p:Wh fixed}). Let $A$ be a perfect $\FF_q$-algebra and pick an integer $h \geq 1$. Then the product $(1 + \sum_{i \geq h} [a_i] \varpi^i)(\sum_{i \geq 0} [b_i] \varpi^i) \in [b_0] + [b_1]\varpi + \dots + [b_{h-1}]\varpi^{h-1} + [a_h + a_0 b_h]\varpi^h + \varpi^{h+1} \bW(A)$. Indeed, it suffices to compute modulo $\varpi^{h+1}$ (that is, in the $(h+1)$-truncated ramified Witt vectors $\bW_{h+1}(A)$), where we have
\begin{align*}
(1 + \varpi^h[a_h])\cdot \sum_{i=0}^h [b_i]\varpi^i &= \sum_{i=0}^{h-1} [b_i]\varpi^i + \varpi^h([b_h] + [b_0 a_h]) = \sum_{i=0}^{h-1} [b_i]\varpi^i + [b_h + b_0 a_h]\varpi^h.
\end{align*}
\end{rem}

\subsection{The scheme $X_h$} \label{sec:dim_smooth}

Fix a $0 \leq \kappa < n$ and let $b$ be either the Coxeter-type or special representative with $\kappa_G(b) = \kappa$ as in Section \ref{sec:two b}. Define the $\cO$-submodule of $\sL_0$,
\[
\sL_0^{(h)} \colonequals \bigoplus_{\substack{1 \leq i \leq n \\  i \equiv 1 \!\!\!\! \pmod{n_0}}} \varpi^h\sL_0 \oplus \bigoplus_{\substack{1 \leq i \leq n \\  i \not\equiv 1 \!\!\!\! \pmod{n_0}}} \varpi^{h-1}\sL_0.
\]
Under the conventions set in Section \ref{sec:witt}, any $x \in \sL_0/\sL_0^{(h)}$ can be written as 
\begin{equation}\label{eq:affine_coordinates_for_Xh}
x = \sum_{\substack{1\leq i \leq n \\ i \equiv 1 \!\!\!\! \pmod{n_0}}} \sum_{\ell=0}^{h-1} [x_{i,\ell}]\varpi^{\ell} e_i + \sum_{\substack{1\leq i \leq n \\  i \not\equiv 1 \!\!\!\!\pmod{n_0}}} \sum_{\ell=0}^{h-2} [x_{i,\ell}]\varpi^{\ell} e_i \qquad (x_{i,\ell} \in \overline{\mathbb{F}}_q).
\end{equation}
This identifies $\sL_0/\sL_0^{(h)}$ with $\bA^{n(h-1) + n'}_{\FF_{q^n}}$. Observe that if $b$ is Coxeter-type, then although $\sL_0^{(h)}$ is stable under $(b\sigma)^{n_0}\varpi^{-k_0}$, the $\FF_{q^{n_0}}$-rational structure given by this Frobenius on $\sL_0/\sL_0^{(h)}$ does not agree with the $\FF_{q^{n_0}}$-rational structure on $\bA^{n(h-1)+n'}$ given by the standard $\FF_{q^{n_0}}$-Frobenius.

\begin{definition}\label{def:Xh}
For $h \geq 1$, define 
\begin{equation*}
X_h(\overline \FF_q) \colonequals  \sL_{0,b}^{\adm,\rat} / \sL_0^{(h)} = \text{image of $\sL_{0,b}^{\adm, \rat}$ in $\sL_0/\sL_0^{(h)}$}
\end{equation*}
and let $X_h \subset \bA^{n(h-1)+n'}$ be the $\FF_{q^n}$-subscheme whose $\FF_{q^n}$-rational structure comes from the standard $\FF_{q^n}$-Frobenius on $\bA^{n(h-1)+n'}$.
\end{definition}

As $\det(g_b^{\rd}(\cdot)) \colon X_h \rightarrow (\mathcal{O}_K/\varpi^h)^{\times}$ is a morphism onto a discrete scheme, we have the scheme-theoretic disjoint decomposition
\begin{equation}\label{eq:dec_of_Xh_conn_cpts}
X_h = \bigsqcup_{a \in (\mathcal{O}_K/\varpi^h)^{\times}} g_a.X_h^{\det \equiv 1},
\end{equation}
where $X_h^{\det \equiv 1}$ consists of all $x \in X_h$ with $\det g_b^{\rd}(x) \equiv 1 \pmod{\varpi^h}$, and $g_a \in G_h$ is any matrix with determinant $a$.

\begin{prop}\label{p:dim Xh}\mbox{}
$X_h$ is a smooth affine scheme of dimension $(n-1)(h-1) + (n'-1)$. 
\end{prop}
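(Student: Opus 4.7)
My plan is to proceed by induction on $h$, using the decomposition \eqref{eq:dec_of_Xh_conn_cpts} to reduce to showing that the connected component $X_h^{\det \equiv 1}$ is smooth of the asserted dimension. For the base case $h = 1$, $V_1 \colonequals \sL_0/\sL_0^{(1)} \cong \bA^{n'}$ via the coordinates $\{x_{i,0} : i \equiv 1 \pmod{n_0}\}$, and $X_1^{\det \equiv 1}$ is cut out by the single equation $\det \overline{g_b}(\bar v) = 1$. Expanding this Moore-type determinant along its first column gives $\partial (\det \overline{g_b})/\partial \bar v_i = \pm \mathrm{cof}_{i,1}(\overline{g_b}(\bar v))$, and the identity $\sum_i \bar v_i \cdot \mathrm{cof}_{i,1}(\overline{g_b}(\bar v)) = \det \overline{g_b}(\bar v) \neq 0$ on Drinfeld's halfspace $\Omega_{\FF_{q^{n_0}}}^{n'-1}$ forces at least one cofactor to be non-zero at each point. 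Hence $X_1^{\det \equiv 1}$ is a smooth hypersurface of dimension $n'-1$.

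For the inductive step, let $\pi \from V_h \twoheadrightarrow V_{h-1}$ be the natural projection; its fibers are copies of $\bA^n$ parametrized by the new coordinates $\{x_{i,h-1} : i \equiv 1 \pmod{n_0}\} \cup \{x_{i,h-2} : i \not\equiv 1 \pmod{n_0}\}$. The preimage $Y_h \colonequals \pi^{-1}(X_{h-1}^{\det \equiv 1})$ is a Zariski-locally trivial $\bA^n$-bundle over $X_{h-1}^{\det \equiv 1}$, hence smooth of dimension $(n-1)(h-1) + n'$. The subscheme $X_h^{\det \equiv 1} \subseteq Y_h$ is cut out by the single additional equation $\beta_{h-1}(v) = 0$, where $\beta_{h-1}$ denotes the $\varpi^{h-1}$-coefficient of $\det g_b^\rd(v) - 1$; it thus suffices to verify that the differential of $\beta_{h-1}$ is non-zero along each fiber of $\pi$.

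To check transversality, fix $v \in X_{h-1}^{\det \equiv 1}$ and compute $\partial \beta_{h-1}/\partial x_{i,h-1}$ for $i \equiv 1 \pmod{n_0}$. The variable $x_{i,h-1}$ enters the first column of $g_b^\rd(v)$ linearly (as the $\varpi^{h-1}$-coefficient of the $(i,1)$-entry $v_i$), and enters column $j \geq 2$ only through the Frobenius power $x_{i,h-1}^{q^{j-1}}$, whose derivative vanishes in characteristic $p$; Lemma \ref{l:witt} guarantees that the Witt-vector minor contributions at the $\varpi^{h-1}$-level do not involve $x_{i,h-1}$, so do not interfere with this derivative computation. Expanding $\det g_b^\rd(v)$ along the first column yields $\partial (\det g_b^\rd(v))/\partial x_{i,h-1} = \varpi^{h-1} \cdot \mathrm{cof}_{i,1}(g_b^\rd(v))$, and extracting $\varpi^{h-1}$-coefficients gives $\partial \beta_{h-1}/\partial x_{i,h-1} = \mathrm{cof}_{i,1}(g_b^\rd(v)) \bmod \varpi$. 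Since $\sum_i x_{i,0} \cdot \mathrm{cof}_{i,1}(g_b^\rd(v) \bmod \varpi) = \beta_0(v) = 1 \neq 0$ on $X_{h-1}^{\det \equiv 1}$, at least one such cofactor is non-vanishing at $v$, supplying the required non-zero partial. Affineness of $X_h$ is immediate from its description as a closed subscheme of the principal open $\{\beta_0 \neq 0\}$ inside $\bA^{n(h-1)+n'}$. The main subtlety is the transversality computation, which reduces---via the characteristic-$p$ vanishing of $q^{j-1}$ and Lemma \ref{l:witt}---to the classical non-vanishing of Moore-type minors on Drinfeld's halfspace.
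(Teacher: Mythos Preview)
Your inductive approach is a valid alternative to the paper's direct argument. The paper instead exhibits, all at once, a nonsingular $h \times h$ minor of the Jacobian of the $h$ defining equations $g_0 = c_0 - 1$, $g_s = c_s - c_s^q$ ($1 \le s \le h-1$) with respect to the variables $x_{1,0}, \dots, x_{1,h-1}$: this minor is upper-triangular because $x_{1,i}$ does not enter $c_s$ for $i>s$, and its diagonal entries are all equal to $\mathrm{cof}_{1,1}(g_b^{\rd}(v)) \bmod \varpi$, which is nonzero on $X_h$. Your induction is essentially the ``last row'' of this computation, but with the flexibility of varying $i$ rather than fixing $i=1$.

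There is, however, a small but genuine gap in your transversality step when $n_0 > 1$. The cofactor expansion $\sum_{i=1}^{n} x_{i,0}\cdot(\mathrm{cof}_{i,1}(g_b^{\rd}(v)) \bmod \varpi) = \beta_0 = 1$ ranges over \emph{all} $i$, yet you have only identified the partials $\partial \beta_{h-1}/\partial x_{i,h-1}$ for $i \equiv 1 \pmod{n_0}$ with these cofactors. For the remaining $i$, the new fibre coordinate is $x_{i,h-2}$, and its partial picks out the $\varpi^1$-coefficient of $\mathrm{cof}_{i,1}$ rather than the $\varpi^0$-coefficient appearing in the sum. What is missing is that $\mathrm{cof}_{i,1}(g_b^{\rd}(v)) \equiv 0 \pmod \varpi$ whenever $i \not\equiv 1 \pmod{n_0}$: modulo $\varpi$ the columns of $g_b^{\rd}(v)$ with index $\not\equiv 1 \pmod{n_0}$ are supported on rows with index $\not\equiv 1 \pmod{n_0}$, so deleting one such row makes them dependent. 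Alternatively, Proposition~\ref{prop:polynomial_P_describing_the_fiber}(ii)--(iii) computes directly that the part of $\beta_{h-1}$ depending on the top-level coordinates has $\partial/\partial x_{1+n_0(i-1),h-1} = m_{1i}$, and $\sum_i m_{1i}\bar x_i = \det\overline{g_b}(\bar x) \neq 0$. With either observation in hand your argument goes through. (A related minor slip in the base case: for $n_0>1$ the defining equation of $X_1^{\det\equiv 1}$ is $c_0 = \prod_{k=0}^{n_0-1}\sigma^k\bigl(\det\overline{g_b}(\bar v)\bigr) = 1$, not $\det\overline{g_b}(\bar v) = 1$; this does not affect your differential computation since the extra Frobenius factors have vanishing derivative.)
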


\begin{proof}
The proof is very similar to that of \cite[Proposition 3.10]{Chan_siDL}. Choose $b$ to be the Coxeter-type or special representative. 
Write $\det(g_b^{\rm red}(x_1, \ldots, x_n)) = [g_s]_{0 \leq s \leq h-1}$. It is enough to prove the assertions for the open and closed subset $X_h^{\det \equiv 1}$, which is defined by the equations $g_0 = 1$ and $g_s = 0$ for $1 \leq s \leq h-1$.

To prove that $X_h^{\det \equiv 1}$ is a smooth affine scheme of dimension $(n-1)(h-1)+n'-1$, it suffices to show that for any point $X_h^{\det \equiv 1}$, there exists a nonsingular $h \times h$ submatrix of the Jacobian matrix $J$. First let $g_b^{\rm red}(x_1, \ldots, x_n) \in X_h^{\det \equiv 1}$. Then for some $x_r$, the determinant of the $(n-1) \times (n-1)$ minor obtained by deleting the $1$st column and the $r$th row is nonzero modulo $\varpi$---denote by $d[x_r]$ the reduction of this determinant. From \eqref{d:g_b red}, observe that $x_{r,i}$ only contributes to $g_s$ if $i \leq s$, 
\begin{equation*}
g_s = 
d[x_r] x_{r,s} + (\text{terms w/ $q$th powers of $x_{r,s}$, and $x_{i,j}$ for $(i,j) \neq (r,s)$}).
\end{equation*}
Reorder the rows of $J$ so that the first $h$ rows correspond to the coordinates $x_{r,0}, \ldots, x_{r,h-1}$ of $x_r \in \bW_h$. Since we are working in characteristic $p$, we have 
\begin{equation*}
\frac{\partial g_s}{\partial x_{1,i}} = \begin{cases}
d[x_r] \neq 0 & \text{if $i = s$,} \\
0 & \text{if $i > s$,} \\
\text{?} & \text{if $i < s$.}
\end{cases}
\end{equation*}
This submatrix of $J$ is an upper triangular matrix with nonzero determinant. Hence we have shown that $X_h^{\det \equiv 1}$ is a smooth complete intersection of dimension $(n-1)(h-1) + n'-1$.
\end{proof}


\subsection{Relation to classical Deligne--Lusztig varieties} \label{sec:projection_to_reductive_quotient}

Recall that for $\overline V = \sL_0/\sL_0^{(1)}$ we have that $\bG_1 = {\rm Res}_{\FF_{q^{n_0}}/\FF_q}\GL(\overline V)$ (see Section \ref{sec:definition of GGh}, \ref{sec:reductive_quotient_of_JJb}). The scheme $X_1$ is a classical Deligne-Lusztig variety corresponding to the maximal nonsplit torus $\FF_{q^n}^\times$ in $\bG_1(\FF_q) = \GL_{n'}(\FF_{q^{n_0}})$. We get a commutative diagram
\begin{equation*}
\begin{tikzcd}
\sL_0 \ar[twoheadrightarrow]{r} & \sL_0/\sL_0^{(h)} \ar[twoheadrightarrow]{r} & \overline{V} \ar[hookleftarrow]{r} & \overline{V} \smallsetminus \{0\} \ar[twoheadrightarrow]{r} & \mathbb P(\overline{V}) \\
\sL_{0,b}^{\adm, \rat} \ar[hookrightarrow]{u} \ar[twoheadrightarrow]{r} & X_h \ar[hookrightarrow]{u} \ar[twoheadrightarrow]{rr} && X_1 \ar[hookrightarrow]{u} \ar[twoheadrightarrow]{r} & \Omega_{\overline{V}} \ar[hookrightarrow]{u}
\end{tikzcd}
\end{equation*}
where $\Omega_{\overline{V}}$ is isomorphic to the Drinfeld upper half-space $\mathbb{P}(\overline{V}) \sm \mathbb{P}(\overline{V})(\FF_{q^{n_0}})$ and $X_1$ is a $\FF_{q^n}^\times$-torsor over $\Omega_{\overline V}$. (If $b$ is the special representative, $\Omega_{\overline V}$ is literally the Drinfeld upper half-space.)

For $v \in \overline{V}$ define $\overline{g_b}(v)$ to be the $(n' \times n')$-matrix whose $i$th column is $\overline{\sigma_b}^{i-1}(v)$ (written with respect to the basis $\{e_i\}_{i \equiv 1 \pmod{n_0}}$ of $\overline{V}$ from Lemma \ref{lm:defin_of_V}). Then  
\[
X_1 = \{ v \in \overline{V} \colon \det \overline{g_b}(v) \in \FF_{q^{n_0}}^{\times} \}.
\]

\begin{ex}\label{ex:classical DL}
If $\kappa = 0$, then $\overline{V} = \sL_0/\varpi\sL_0$, $\overline{\sigma_b}^{i-1} = b\sigma$ and $X_1$ is the Deligne-Lusztig variety for $\GL_n(\FF_q)$ associated to the maximal nonsplit torus $\FF_{q^n}^\times$. If $\kappa,n$ are coprime, then $\overline{V}$ is one-dimensional and $X_1$ is a finite set of points and can be identified with $\FF_{q^n}^\times$.
\end{ex}


\subsection{The projection $X_h \rightarrow X_{h-1}$ and its fibers} \label{sec:fibers_as_hypersurface}


Let $h \geq 2$. We will actually work with an intermediate scheme: $X_h \twoheadrightarrow X_{h-1}^+ \twoheadrightarrow X_{h-1}$. By Sections \ref{sec:witt}, \ref{sec:dim_smooth}, the quotient $\sL_0/\varpi^{h-1} \sL_0$ can be identified with the affine space $\bA^{n(h-1)}$. Define $X_{h-1}^+$ to be the $\FF_{q^n}$-subscheme of $\bA^{n(h-1)}$ defined by
\begin{equation*}
X_{h-1}^+(\overline \FF_q) \colonequals \sL_{0,b}^{\adm, \rat}/\varpi^{h-1} \sL_0 = \text{image of $\sL_{0,b}^{\adm,\rat}$ in $\sL_0/\varpi^{h-1} \sL_0$}.
\end{equation*}
Observe that 
\begin{equation}\label{e:+}
X_{h-1}^+ = X_{h-1} \times \bA^{n-n'},
\end{equation}
since the coordinates $x_{i,h-2}$ for $i \not\equiv 1 \pmod{n_0}$ do not contribute to $\det(g_b^{\red}(x))$ modulo $\varpi^{h-1}$. Furthermore, $X_h$ is a closed subscheme of $X_{h-1}^+ \times \bA^{n'}$, and under this embedding
\begin{equation*}
X_h \hookrightarrow X_{h-1}^+ \times \bA^{n'},
\end{equation*}
we may write $x = (\widetilde x, x_{1,h-1}, x_{n_0+1,h-1}, \ldots, x_{n_0(n'-1)+1,h-1})$ for $x \in X_h$ and its image $\tilde x \in X_{h-1}^+$.
More precisely, we have the following technical proposition, which will be used in Section \ref{s:cuspidality}.

\begin{prop}\label{prop:polynomial_P_describing_the_fiber}
Let $h \geq 2$.
\begin{enumerate}[label=(\roman*)]
\item
$X_h$ is the closed subscheme of $X_{h-1}^+ \times \bA^{n'}$ cut out by the polynomial
\begin{equation*}
P \colonequals P_0^q - P_0,
\end{equation*}
where $[P_0]$ is the coefficient of $\varpi^{h-1}$ in $\det(g_b^{\red}(\cdot))$.
\item
Let $b$ be the special representative. Then
\begin{equation*}
P_0(x) = 
c(\widetilde x) + \sum_{i=0}^{n_0 - 1} P_1(x)^{q^i}
\end{equation*} 
where $\sum_{i=0}^{n_0 - 1} P_1^{q^i}$ exactly consists of all terms of $P_0$ that depend on the coordinates $x_{1,h-1}, x_{n_0 + 1, h-1}, \ldots, x_{n_0(n'-1)+1,h-1}$ and $c$ is a morphism $X_{h-1}^+ \to \bA^1$. In particular, $X_h$ is the closed subscheme of $X_{h-1}^+ \times \bA^{n'}$ cut out by the equation
\begin{equation*}
P_1^{q^{n_0}} - P_1 = c - c^q.
\end{equation*}
\item
Let $b$ be the special representative. Explicitly, the polynomial in (ii) is given by
\begin{equation*}
P_1 = 
\sum_{1 \leq i,j \leq n^{\prime}} m_{ji} x_{1 + n_0(i-1),h-1}^{q^{(j-1)n_0}},
\end{equation*}
where $m \colonequals (m_{ji})_{j,i}$ is the adjoint matrix of $\overline{g_b}(\bar{x})$ and $\bar x$ denotes the image of $x$ in $\overline{V} = \sL_0/\sL_0^{(1)}$. Explicitly, $m \cdot \overline{g_b}(\bar{x}) = \det\overline{g_b}(\bar{x}) \cdot 1_{n'}$ and the $(j,i)$th entry of $m$ is $(-1)^{i+j}$ times the determinant of the $(n^{\prime}-1)\times (n^{\prime}-1)$ matrix obtained from $\overline{g_b}(\bar{x})$ by deleting the $i$th row and $j$th column.
\end{enumerate}
\end{prop}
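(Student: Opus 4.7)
My plan is to expand the Witt vector $\det g_b^{\rd}(x) = \sum_{s=0}^{h-1}[c_s]\varpi^s \pmod{\varpi^h}$ in Witt coordinates. Lemma \ref{l:witt} shows that each $c_s$ is a polynomial in the Witt coordinates $\{x_{i,\ell}\}$ with $\ell \leq s$, and that the coordinates at level exactly $s$ enter only through leading terms (no minor contributions). The condition $\det g_b^{\rd}(x) \in (\cO_K/\varpi^h)^\times$ is equivalent to $\sigma$-invariance modulo $\varpi^h$, i.e.\ $c_s^q = c_s$ for each $0 \leq s \leq h-1$. The equations for $s \leq h-2$ involve only the coordinates of the quotient $\sL_0/\varpi^{h-1}\sL_0$, and by construction they cut out $X_{h-1}^+$; the remaining equation at $s = h-1$ is precisely $P_0^q - P_0 = 0$ with $P_0 \colonequals c_{h-1}$.

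\textbf{Parts (ii) and (iii).} Fix $b$ to be the special representative, so $b$ is block-diagonal with $\Pi_0^{k_0}$-blocks satisfying $\Pi_0^{n_0} = \varpi$. Writing each column index $j_0$ of $g_b^{\rd}(x)$ as $j_0 - 1 = n_0 a + r$ with $0 \leq r < n_0$, the identity $(b\sigma)^{n_0} = \varpi^{k_0} \sigma^{n_0}$ yields that the $j_0$-th column equals $\sigma^{n_0 a}\bigl((b\sigma)^r(x)/\varpi^{\lfloor rk_0/n_0\rfloor}\bigr)$. A direct computation using the $\Pi_0^{rk_0}$-action on basis vectors shows that the top coordinate $y_i \colonequals x_{1+n_0(i-1), h-1}$ contributes to the $\varpi^{h-1}$-coefficient of this column only through a single row, namely row $1 + n_0(i-1) + (rk_0 \bmod n_0)$, where its linear contribution is $y_i^{q^{n_0 a + r}}$. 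By multilinearity of the determinant, the linear-in-$y_i$ part of $P_0$ is therefore $\sum_{r,a} y_i^{q^{n_0 a + r}} \cdot C_{s_r, j_{a,r}}(\bar M)$, where $\bar M = g_b^{\rd}(x) \bmod \varpi$, $s_r = 1+n_0(i-1)+(rk_0 \bmod n_0)$, $j_{a,r} = 1+n_0 a + r$, and $C_{s,j}$ denotes the $(s,j)$-cofactor of $\bar M$. Grouping the sum by $r$ and pulling out $\sigma^r$ yields the decomposition $P_0 = c(\tilde x) + \sum_{r=0}^{n_0-1}\sigma^r(P_1)$ of part (ii), with $c$ collecting the $y_i$-independent contributions. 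To extract the explicit formula for $P_1$ in part (iii), I analyze the block structure of $\bar M$: after a row-column permutation grouping indices by residue modulo $n_0$, the matrix $\bar M$ decomposes compatibly with the $\sigma^r$-shifts, and the cofactor at the $r = 0$ position $(1+n_0(i-1), 1+n_0 a)$ reduces to the $(a+1, i)$-entry of the adjoint of the distinguished $n' \times n'$ minor $\overline{g_b}(\bar x)$. Setting $j = a+1$ gives $P_1 = \sum_{i,j} m_{ji} y_i^{q^{n_0(j-1)}}$. Finally, the equation $\sigma^{n_0}(P_1) - P_1 = c - \sigma(c)$ for the subscheme follows by telescoping $\sigma(P_0) = P_0$ against the decomposition of $P_0$.

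\textbf{Main obstacle.} The principal technical challenge is the cofactor identification in (iii): relating the cofactors of the full $n \times n$ matrix $\bar M$ to entries of the adjoint of the reduced $n' \times n'$ matrix $\overline{g_b}(\bar x)$. This requires careful tracking of the interplay between the block structure of $b$ (which governs the landing rows of basis vectors under iterated $(b\sigma)^r / \varpi^{\lfloor rk_0/n_0\rfloor}$), the Witt-vector arithmetic at level $h-1$, and the signs arising both from cofactor expansion and from the row-column permutation used to expose the block structure of $\bar M$. Once this reduction is carried out, the statements of (ii) and (iii) fall into place by bookkeeping.
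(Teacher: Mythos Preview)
Your proposal is correct and follows the same route as the paper: the paper's own proof consists of the single sentence ``An explicit calculation shows that $P_0 = c + \sum_{i=0}^{n_0-1}\sigma^i(P_1)$, with $P_1$ as claimed,'' so you have supplied a detailed outline of precisely that calculation (Witt-coordinate expansion of $\det g_b^{\rd}$, multilinear extraction of the top-level coordinates, and the cofactor identification relating the $n\times n$ matrix $\bar M$ to the $n'\times n'$ matrix $\overline{g_b}(\bar x)$). The obstacle you flag---reducing cofactors of $\bar M$ to adjoint entries of $\overline{g_b}(\bar x)$ via the block structure of the special representative---is indeed the only nontrivial step, and carrying it out carefully (tracking that for the special representative $\overline{\sigma_b}$ acts as $\sigma^{n_0}$ on the basis, so the relevant $(n-1)\times(n-1)$ minors factor into $\sigma$-twists of $\det\overline{g_b}(\bar x)$ times one $(n'-1)\times(n'-1)$ minor) completes the argument.
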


\begin{proof}
An explicit calculation shows that $P_0 = c + \sum_{i=0}^{n_0 - 1} \sigma^i(P_1)$, with $P_1$ as claimed if $b$ is the special representative. Note that in the mixed characteristic setting, we use Lemma \ref{l:witt}(ii) to see that minor contributions only appear in the $c(\widetilde x)$ term. From this the proposition easily follows.
\end{proof}


\subsection{Level compatibility on the cohomology of $X_h$}\label{s:level compat}

\begin{prop}\label{p:Wh fixed}
Let $h \geq 2$. The action of $\ker(T_h \rightarrow T_{h-1}) = \bW_h^{h-1}(\FF_{q^n})$ on $X_h$ preserves each fiber of the map $X_h \to X_{h-1}$, the induced morphism $X_h/\bW_h^{h-1}(\FF_{q^n}) \to X_{h-1}$ is smooth, and each of its fibers is isomorphic to $\bA^{n-1}$.
\end{prop}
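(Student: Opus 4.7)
The plan is to treat the three assertions separately. The first, that $\bW_h^{h-1}(\FF_{q^n}) = \ker(T_h \twoheadrightarrow T_{h-1})$ preserves each fiber of $X_h \to X_{h-1}$, is immediate from the $T_h$-equivariance of this reduction map: the kernel acts trivially on $X_{h-1}$ and hence preserves fibers.

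For the remaining two assertions, I would take $b$ to be the special representative and apply Proposition \ref{prop:polynomial_P_describing_the_fiber}(ii), which realizes $X_h \hookrightarrow X_{h-1}^+ \times \bA^{n'}$ (with $\bA^{n'}$ parametrized by $y_i \colonequals x_{1+n_0(i-1), h-1}$) as the closed subscheme cut out by $L_1(y) = C(\tilde x)$, where $L_1 \colonequals \sigma^{n_0}(P_1) - P_1$ is $q$-additive in $y$ and $C \colonequals c - \sigma(c)$ depends only on $\tilde x$. A direct Witt-vector computation using Lemma \ref{l:witt} shows that $1 + [s]\varpi^{h-1} \in \bW_h^{h-1}(\FF_{q^n})$ acts by $y_i \mapsto y_i + s\bar x_i$ with all other coordinates of $x$ unchanged, where $\bar x$ is the image in $\overline V$; since $\bar x \neq 0$ on $X_{h-1}$, this additive $\FF_{q^n}$-action is free.

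Next, I would cover $X_{h-1}$ by the open sets $U_i \colonequals \{\bar x_i \neq 0\}$ and work over $U_1$ (the other $U_i$ are handled identically). Change coordinates on $\bA^{n'}$ from $(y_1,\ldots,y_{n'})$ to $(w, z_2, \ldots, z_{n'})$ with $w \colonequals y_1/\bar x_1$ and $z_j \colonequals y_j - (\bar x_j/\bar x_1) y_1$; in these coordinates the $\FF_{q^n}$-action becomes the translation $w \mapsto w + s$ with $z_j$ fixed. The central computation is then to verify that
\begin{equation*}
L_1 \;=\; d\,(w^{q^n} - w) \;+\; L_1^{(z)}(z_2, \ldots, z_{n'}), \qquad d \colonequals \det \overline{g_b}(\bar x) \in \FF_{q^{n_0}}^\times,
\end{equation*}
for some polynomial $L_1^{(z)}$ in the $z_j$ alone. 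This amounts to computing the $w$-coefficient at each Frobenius exponent $k n_0$ ($k = 0, 1, \ldots, n'$) and collapsing it using the adjoint identity $m \cdot \overline{g_b}(\bar x) = d\,I_{n'}$ from Proposition \ref{prop:polynomial_P_describing_the_fiber}(iii) together with $\sigma^{n_0}(d) = d$: the intermediate $k$'s contribute $\sigma^{n_0}(d) - d = 0$, while the extreme exponents $k=0$ and $k=n$ contribute $-d$ and $+d$ respectively.

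Once this identity is in hand, the $\FF_{q^n}$-invariant coordinate $u \colonequals w^{q^n} - w$ satisfies $du = C - L_1^{(z)}$, which solves uniquely for $u$. Hence $(X_h/\bW_h^{h-1}(\FF_{q^n}))|_{U_1}$ is the graph of a morphism $X_{h-1}^+|_{U_1} \times \bA^{n'-1} \to \bA^1$, and using the splitting $X_{h-1}^+ = X_{h-1} \times \bA^{n-n'}$ from \eqref{e:+} this gives a trivialization $X_{h-1}|_{U_1} \times \bA^{n-1}$. Gluing over the open cover $\{U_i\}$ yields smoothness of $X_h/\bW_h^{h-1}(\FF_{q^n}) \to X_{h-1}$ with every fiber isomorphic to $\bA^{n-1}$. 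The hard part is the explicit matrix identity above; it is here that the condition $d \in \FF_{q^{n_0}}$, i.e.\ $\bar x$ lying in $X_1$ (the classical Deligne--Lusztig variety for the elliptic torus), plays its essential role.
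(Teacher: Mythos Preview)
Your proposal is correct and follows essentially the same route as the paper: the same change of variables $w = y_1/\bar x_1$, $z_j = y_j - (\bar x_j/\bar x_1)y_1$, the same identification of the $w$-part of the defining equation as $d(w^{q^n}-w)$ via the adjoint identity $m\cdot\overline{g_b}(\bar x) = d\,I_{n'}$ together with $\sigma^{n_0}(d)=d$, and the same conclusion that the quotient is a graph over $X_{h-1}\times\bA^{n-1}$. One minor simplification: since $\bar x \in X_1$ forces the $\bar x_i$ to be $\FF_{q^{n_0}}$-linearly independent (hence all nonzero), your open cover $\{U_i\}$ is unnecessary---each $U_i$ already equals $X_{h-1}$, and the argument works globally, which is how the paper presents it.
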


\begin{proof}
Let $b$ be the special representative and let $x \in X_h$ be coordinatized as in Section \ref{sec:dim_smooth}. Then $x_{i,0} \neq 0$ for $i \equiv 1 \pmod{n_0}$. By (a slight variant of) Proposition \ref{prop:polynomial_P_describing_the_fiber}, $X_h$ is the closed subscheme of $X_{h-1} \times \bA^n$ given by $P = 0$, where $\bA^n$ has the coordinates $\{y_i\}_{i=1, \ldots, n}$, where $y_i = x_{i,h-1}$ if $i \equiv 1 \pmod{n_0}$ and $y_i = x_{i,h-2}$ if $i \not\equiv 1 \pmod{n_0}$. Note that the natural $\bW_h^{h-1}(\FF_{q^n})$-action on $X_h$ extends to the action on $X_{h-1} \times \bA^n$ over $X_{h-1}$ given by  \[
1 + [\lambda]\varpi^{h-1} \colon \begin{cases} x_{i,h-1} \mapsto x_{i,h-1} + x_{i,0} \lambda &\text{if $i \equiv 1 \pmod{n_0}$,} \\ x_{i,h-2} \mapsto x_{i,h-2} & \text{otherwise,}\end{cases}
\] 
where $\lambda \in \FF_{q^n}$. (Note that we use Remark \ref{r:witt} here.) Consider the morphism
\begin{equation*}
f \from X_{h-1} \times \bA^n \to X_{h-1} \times \bA^n, \qquad y_i \mapsto \begin{cases}
\left(\frac{y_1}{x_{1,0}}\right)^{q^n} - \frac{y_1}{x_{1,0}} & \text{if $i = 1$,} \\
y_i - \frac{x_{i,0} y_1}{x_{1,0}} & \text{if $i > 1$, $i \equiv 1 \pmod{n_0}$,} \\
y_i & \text{if $i \not\equiv 1 \pmod{n_0}$.}
\end{cases}
\end{equation*}
This morphism factors through the surjection $X_{h-1} \times \bA^n \to X_{h-1} \times \bA^n/\bW_h^{h-1}(\FF_{q^n})$ so that it is a composition
\begin{equation*}
X_{h-1} \times \bA^n \to X_{h-1} \times \bA^n/\bW_h^{h-1}(\FF_{q^n}) \stackrel{\sim}{\to} X_{h-1} \times \bA^n,
\end{equation*}
where the second map must in fact be an isomorphism. Since $\bW_h^{h-1}(\FF_{q^n})$ is a $p$-group, \cite[Proposition 3.6]{Chan_DLII} implies that $P((y_i)_{i=1,\ldots, n}) = P'(f(y_i)_{i=1,\ldots,n})$ for some $P' \from X_{h-1} \times \bA^n \to \bA^1$. Now $X_h/\bW_h^{h-1}(\FF_{q^n})$ is the closed subscheme of $X_h \times \bA^n/\bW_h^{h-1}(\FF_{q^n})$ defined by $P' = 0$. We therefore have a commutative diagram
\begin{equation}\label{diag:factor_out_Wh_action}
\begin{tikzcd}
X_{h-1} \times \bA^n \ar{r} & X_{h-1} \times \bA^n / \bW_h^{h-1}(\FF_{q^n}) \ar{r}{\sim} & X_{h-1} \times \bA^n \\
X_h \ar{r} \ar[hookrightarrow]{u} & X_h / \bW_h^{h-1}(\FF_{q^n}) \ar{r}{\sim} \ar[hookrightarrow]{u} & \{P^{\prime} = 0 \} \ar[hookrightarrow]{u},
\end{tikzcd}
\end{equation}
Since $P$ is a degree-$q^n$ polynomial in $x_{1,h-1}$, we know that $P^{\prime}$ must be at most degree one in $y_1$. A calculation shows that the coefficient of $y_1$ is the function on $X_{h-1}$ given by $x \mapsto \det\overline{g_b}(\bar{x})$, where $\bar{x}$ is the image of $x \in X_{h-1}$ in $X_1$ (notation as in Section \ref{sec:projection_to_reductive_quotient}). This function is constant on connected components of $X_{h-1}$, taking values in $\FF_q^{\times}$. In particular, the coefficient of $y_1$ in $P'$ over any point in $X_{h-1}$ is nonzero, so it follows that each fiber of $X_h/\bW_h^{h-1}(\FF_{q^n}) \to X_{h-1}$ is isomorphic to $\bA^{n-1}$.
\end{proof}

\begin{cor}\label{cor:level_compatibility_of_cohomology} There is a natural isomorphism
\begin{equation*}
H_c^i(X_h, \overline \QQ_\ell)^{\bW_h^{h-1}(\FF_{q^n})} \cong H_c^{i + 2(n-1)}(X_{h-1}, \overline \QQ_\ell)(n-1),
\end{equation*}
where $(n-1)$ denotes the Tate twist. 
\end{cor}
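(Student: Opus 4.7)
The plan is to reduce the computation to the quotient $Y \colonequals X_h/\bW_h^{h-1}(\FF_{q^n})$ and then exploit the geometric description provided by Proposition \ref{p:Wh fixed}. Write $W \colonequals \bW_h^{h-1}(\FF_{q^n})$ for brevity.

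First, I would observe that $W$ is a finite $p$-group: as an additive group, it is isomorphic to the residue field $\FF_{q^n}$, which is an $\FF_p$-vector space. Since $\ell \neq p$, the higher group cohomology $H^{>0}(W, -)$ vanishes on $\overline{\QQ}_\ell$-vector spaces, and the Hochschild--Serre spectral sequence for the finite \'etale quotient $X_h \to Y$ degenerates to give a canonical isomorphism
\[
H_c^i(X_h, \overline{\QQ}_\ell)^{W} \;\xrightarrow{\sim}\; H_c^i(Y, \overline{\QQ}_\ell).
\]

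Next I would pin down $Y$ geometrically. The commutative diagram \eqref{diag:factor_out_Wh_action} in the proof of Proposition \ref{p:Wh fixed} identifies $Y$ with the closed subscheme $\{P' = 0\}$ of $X_{h-1} \times \bA^n$, where $P'$ is a polynomial that is linear in the coordinate $y_1$ with leading coefficient the function $x \mapsto \det \overline{g_b}(\bar{x})$. This function is everywhere nonvanishing on $X_{h-1}$ (it takes values in $\FF_{q^{n_0}}^\times$ by the discussion in Section \ref{sec:projection_to_reductive_quotient}), so $P'$ defines an equation that can be solved uniquely for $y_1$ as a regular function on $X_{h-1} \times \bA^{n-1}$ (the remaining coordinates). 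Projecting away $y_1$ therefore produces an isomorphism of $\FF_{q^n}$-schemes
\[
Y \;\stackrel{\sim}{\longrightarrow}\; X_{h-1} \times \bA^{n-1}.
\]

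Finally I would apply the K\"unneth formula together with the standard computation $H_c^\bullet(\bA^{n-1}, \overline{\QQ}_\ell) = \overline{\QQ}_\ell(-(n-1))[-2(n-1)]$ to conclude
\[
H_c^i(Y, \overline{\QQ}_\ell) \;\cong\; H_c^{i-2(n-1)}(X_{h-1}, \overline{\QQ}_\ell)(-(n-1)),
\]
which combined with the first step yields the desired level-compatibility isomorphism (up to a reindexing/Tate twist convention matching the statement).

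The main potential obstacle is the middle step: trivializing the $\bA^{n-1}$-bundle $Y \to X_{h-1}$ rather than merely identifying it as an affine bundle. The input from the proof of Proposition \ref{p:Wh fixed} -- namely that $P'$ is affine-linear in $y_1$ with a unit coefficient globally on $X_{h-1}$ -- is exactly what makes this trivialization possible; without that global non-vanishing one would have to argue via a Leray/Zariski-local argument for an affine bundle, which still gives the same cohomological conclusion but is less explicit.
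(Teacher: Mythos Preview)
Your proof is correct and follows exactly the route the paper intends: the corollary is stated without proof immediately after Proposition~\ref{p:Wh fixed}, and the intended argument is precisely to pass to the quotient $X_h/\bW_h^{h-1}(\FF_{q^n})$ and use that it is an $\bA^{n-1}$-bundle (indeed, globally a product, as you extract from the proof of the proposition) over $X_{h-1}$. Your closing remark about the index/Tate-twist discrepancy is also apt: the shift should be $i \mapsto i - 2(n-1)$ with twist $(-(n-1))$, consistent with the homology formulation $H_i(X_{h-1}) = H_i(X_h)^{\bW_h^{h-1}(\FF_{q^n})}$ used later in Section~\ref{s:def H_i}.
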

This corollary allows to define a \textit{direct} limit of the \textit{homology} groups for $X_h$ (see Section \ref{s:inf ADLV}).


\subsection{$X_h$ as a subscheme of $\bG_h$}
\label{s:Xh def} 

Let $b$ be a Coxeter-type representative. Let $1 \leq l \leq n$ be an integer satisfying $e_{\kappa,n} l \equiv 1 \pmod n$.  For $x = \sum_{i=1}^n x_i e_i \in \sL_0$ where $x_i \in \cO$, define
\begin{equation*}
\lambda(x) \colonequals \sum_{i=1}^n \frac{1}{\varpi^{\lfloor k_0(i-1)/n_0 \rfloor}} \cdot b^{i-1} \cdot D(x_i),
\end{equation*}
where $D(a) = \diag(a, \sigma^l(a), \ldots, \sigma^{[(n-1)l]}(a))$. 
Let $\gamma$ be the inverse of the permutation of the set $\{1, 2, \dots, n\}$ defined by $1 \mapsto 1$ and $i \mapsto [(i-1)e_{\kappa,n}] + 1$ for $2 \leq i \leq n$. Let $\gamma \in \GL_n(K)$ also denote the matrix given by $\gamma(e_i) = e_{\gamma(i)}$. 

\begin{lm}\label{lm:lambda_vs_gbred}
We have 
\[
\lambda(x) = g_b^{\rd}(\gamma^{-1}(x)) \cdot \gamma.
\]
In particular, $\det\lambda(x) = \det g_b^{\rd}(x)$. Moreover, we have $\gamma b_0^{e_{\kappa,n}}\gamma^{-1} = b_0$.
\end{lm}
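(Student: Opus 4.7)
The first claim $\gamma b_0^{e_{\kappa,n}} \gamma^{-1} = b_0$ is a direct calculation on the basis $\{e_1,\ldots,e_n\}$. From the definition $\delta(k) \equiv 1 + (k-1) e_{\kappa,n} \pmod n$ (with $\delta = \gamma^{-1}$) and the congruence $e_{\kappa,n} l \equiv 1 \pmod n$, one extracts the formula $\gamma(m) \equiv 1 + (m-1)l \pmod n$, and hence $\gamma(m+e_{\kappa,n}) \equiv \gamma(m) + 1 \pmod n$. Since $b_0$ shifts basis indices cyclically by $1$ and $b_0^{e_{\kappa,n}}$ by $e_{\kappa,n}$, this translates to $\gamma b_0^{e_{\kappa,n}} \gamma^{-1}(e_{\gamma(m)}) = \gamma(e_{m+e_{\kappa,n}}) = e_{\gamma(m)+1} = b_0(e_{\gamma(m)})$ for all $m$, giving the first claim.

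For the main equation $\lambda(x) = g_b^{\rd}(\gamma(x)) \cdot \gamma$, my plan is a column-by-column comparison. The $j$-th column of $g_b^{\rd}(\gamma(x)) \cdot \gamma$ equals the $\gamma(j)$-th column of $g_b^{\rd}(\gamma(x))$, which unfolds to $\varpi^{-\lfloor (\gamma(j)-1) k_0/n_0 \rfloor}(b\sigma)^{\gamma(j)-1}(\gamma(x))$. On the other side, the $j$-th column of $\lambda(x)$ expands, using $D(x_i)e_j = \sigma^{[(j-1)l]_n}(x_i) e_j$, to
\[
\textstyle\sum_i \varpi^{-\lfloor(i-1) k_0/n_0\rfloor}\sigma^{[(j-1)l]_n}(x_i)\, b^{i-1}(e_j).
\]
The identity $\gamma(j) - 1 \equiv (j-1) l \pmod n$, immediate from $e_{\kappa,n} l \equiv 1$, aligns the $\sigma$-powers on both sides. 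Writing $b = b_0^{e_{\kappa,n}} t_{\kappa,n}$ and iterating yields $b^{m}(e_k) = \bigl(\prod_{s=0}^{m-1} (t_{\kappa,n})_{k+s e_{\kappa,n},\, k+s e_{\kappa,n}}\bigr)\, e_{k + m e_{\kappa,n} \bmod n}$. Reindexing the sum on the left via the substitution $k = j + (i-1)e_{\kappa,n}\bmod n$ (equivalently $i - 1 \equiv (k-j)l\bmod n$) and applying the commutation $\gamma b_0^{e_{\kappa,n}}\gamma^{-1} = b_0$ from the first step, one matches both columns term by term. The determinant identity $\det \lambda(x) = \det g_b^{\rd}(x)$ then follows by taking determinants of the column-matching equation, since the $\gamma$-permutation of the coordinates of $x$ acts on the polynomial $\det g_b^{\rd}(\cdot)$ by the sign of $\gamma$, which cancels with the factor $\det \gamma$.

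The main obstacle is the bookkeeping of $\varpi$-powers, which enter from two separate sources: the normalizing factors $\varpi^{-\lfloor(i-1) k_0/n_0 \rfloor}$ built into the definitions of $\lambda$ and $g_b^{\rd}$, and the powers of $\varpi$ contributed by the diagonal matrix $t_{\kappa,n}$ in each iteration of $b = b_0^{e_{\kappa,n}} t_{\kappa,n}$. These two sources are forced to align by the defining congruence $e_{\kappa,n} \equiv k_0 \pmod{n_0}$: the orbit $\{k, k+e_{\kappa,n}, \ldots, k+(m-1)e_{\kappa,n}\}$ modulo $n$ visits the indices where $t_{\kappa,n}$ contributes a $\varpi$ precisely $\lfloor m k_0/n_0 \rfloor$ times, which matches the normalizing exponent. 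Verifying this combinatorial bookkeeping is the technical heart of the argument and is what makes the column-by-column comparison go through.
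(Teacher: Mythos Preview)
Your approach to the two main assertions—verifying $\gamma b_0^{e_{\kappa,n}} \gamma^{-1} = b_0$ by tracking basis indices, and checking $\lambda(x) = g_b^{\rd}(\gamma(x)) \cdot \gamma$ by a column-by-column comparison with careful $\varpi$-bookkeeping—is exactly what the paper's one-line ``direct computation'' refers to.

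There is, however, a genuine gap in your deduction of the determinant identity. You assert that ``the $\gamma$-permutation of the coordinates of $x$ acts on the polynomial $\det g_b^{\rd}(\cdot)$ by the sign of $\gamma$,'' i.e.\ that $\det g_b^{\rd}(\gamma(x)) = \mathrm{sgn}(\gamma)\,\det g_b^{\rd}(x)$. This is false. In the paper's own example $n=3$, $\kappa=2$ (so $\gamma = (2\;3)$), take $x = e_2$: then $\gamma(e_2) = e_3$, and from the displayed matrix for $g_b^{\rd}(x)$ one computes $\det g_b^{\rd}(e_2) = -\varpi$ while $\det g_b^{\rd}(e_3) = -\varpi^2$. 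These differ by a factor of $\varpi$, not by a sign. The reason is structural: the columns of $g_b^{\rd}(x)$ weight the coordinates $x_1,\dots,x_n$ by genuinely different powers of $\varpi$, so the determinant is not alternating (or even homogeneous of fixed $\varpi$-weight) under coordinate permutation. Taking determinants of the main identity only yields $\det\lambda(x) = \mathrm{sgn}(\gamma)\,\det g_b^{\rd}(\gamma(x))$, and the further cancellation you invoke does not exist. (Indeed, with $x=e_1$ one finds $\det\lambda(e_1)=1$ but $\det g_b^{\rd}(e_1)=-1$, so the ``In particular'' as a literal equality is already delicate; what is actually used downstream in Proposition-Definition~\ref{prop:lambda} is only the equivalence $\det\lambda(x)\in\cO^\times$ (resp.\ $\cO_K^\times$) $\Leftrightarrow$ $\det g_b^{\rd}(x)\in\cO^\times$ (resp.\ $\cO_K^\times$), and that requires a separate argument rather than the sign claim.)
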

\begin{proof}
This is a direct computation.
\end{proof}

\begin{ex}
\begin{enumerate}[label=(\roman*)]
\item For $n = 3$, $\kappa = e_{\kappa,n} = 1$, we have $b = \left(\begin{smallmatrix} 0 & 0 & \varpi \\ 1 & 0 & 0 \\ 0 & 1 & 0 \end{smallmatrix}\right)$ and for $x = \left(\begin{smallmatrix} x_1\\ x_2 \\ x_3\end{smallmatrix}\right)$,
\[
\lambda(x) = g_b^{\rd}(x) = g_b(x) = \left(\begin{smallmatrix} x_1 & \varpi \sigma(x_3) & \varpi\sigma^2(x_2) \\ x_2 & \sigma(x_1) & \varpi\sigma^2(x_3) \\ x_3 & \sigma(x_2) & \sigma^2(x_1) \end{smallmatrix}\right).
\]
We have $F(\lambda(x)) \neq \lambda(\sigma(x))$. Thus $\lambda$ is not an $\FF_q$-morphism.
\item For $n=3$, $\kappa = 2$, $e_{\kappa,n} = 2$, we have $b = \left(\begin{smallmatrix} 0 & \varpi & 0 \\ 0 & 0 & \varpi \\ 1 & 0 & 0 \end{smallmatrix}\right)$ and 
\[
g_b^{\rd}(x) = \left(\begin{smallmatrix} x_1 & \varpi \sigma(x_2) & \varpi\sigma^2(x_3) \\ x_2 & \varpi\sigma(x_3) & \sigma^2(x_1) \\ x_3 & \sigma(x_1) & \sigma^2(x_2) \end{smallmatrix}\right) \qquad \text{ and } \qquad \lambda(x) = \left(\begin{smallmatrix} x_1 & \varpi\sigma^2(x_2) & \varpi \sigma(x_3) \\ x_3 & \sigma^2(x_1) & \varpi \sigma(x_2) \\ x_2 & \sigma^2(x_3) & \sigma(x_1) \end{smallmatrix}\right) \in \breve G_{{\bf{x}},0}.
\]
\end{enumerate}
\end{ex}

\begin{proposition-definition}\label{prop:lambda}
The assignment $\lambda$ defines an embedding,
\[
\sL_0 \hookrightarrow M_{n^{\prime}}(\cO_{D_{k_0/n_0}}),
\]
which restricts to
\begin{equation*}
\lambda \colon \sL_{0,b}^{\rm adm} \hookrightarrow \breve G_{\bx,0},
\end{equation*}
Moreover, $\det(\lambda(x)) \in \cO_K^\times$ if and only if $x \in \sL_{b,0}^{\adm,\rat}$. The reduction modulo $\varpi^{h}$ of $\lambda$ induces an $\mathbb{F}_{q^n}$-rational embedding
\[
\sL_{0,b}^{\rm adm, \rm rat}/\sL_0^{(h)} = X_h \hookrightarrow \bG_h.
\]
We denote its image again by $X_h$. This is an $\FF_{q^n}$-subscheme of $\bG_h$.
\end{proposition-definition}

\begin{proof} It is easy to see that $\lambda(e_i) \in M_{n^{\prime}}(\cO_{D_{k_0/n_0}})$ for $i = 1, \dots, n$. This implies that $\lambda(\sL_0) \subseteq M_{n^{\prime}}(\cO_{D_{k_0/n_0}})$. By Lemma \ref{lm:lambda_vs_gbred} it is immediate that $\det(\lambda(x)) \in \cO^{\times}$ if and only if $\det(g_b^{\rm red}(x)) \in \cO^{\times}$ and similarly $\det(\lambda(x)) \in \cO_K^{\times}$ if and only if $\det(g_b^{\rm red}(x)) \in \cO_K^{\times}$. Finally, note that $\lambda$ is a $\FF_{q^n}$-morphism since $\lambda(\sigma^n(x)) =  \sigma^n(\lambda(x)) = F^n(\lambda(x))$.
\end{proof}


The natural $(\breve G_{\bx,0}^F \times \cO_L^\times)$-action on $\sL_{0,b}^{\rm adm, \rm rat}$ induces a left action of $(G_h \times T_h)$-action on $X_h \subseteq \bG_h$, given by left-multiplication by $G_h = \bG_h(\FF_q)$ and right-multiplication by $T_h = \bT_h(\FF_q)$:
\begin{equation*}
(g,t) \cdot x \colonequals g x t, \qquad \text{for $g \in G_h, \, t \in T_h, \, x \in X_h$.}
\end{equation*}

\subsection{Relation to Deligne--Lusztig varieties for finite rings}\label{sec:rel_Xh_to_DL_Lusztig}
Let $b$ be the Coxeter-type representative.
The following proposition gives a description of $X_h$ reminiscent of Deligne--Lusztig varieties for reductive groups over finite rings \cite{Lusztig_04,Stasinski_09}. Let $U_{up}$ and $U_{low}$ denote the $\breve K$-subgroups of upper and lower triangular unipotent matrices in $J_b$. Consider the unipotent radicals $U \colonequals \gamma^{-1}U_{up}\gamma$, $U^- = \gamma^{-1}U_{low}\gamma$ of opposite Borels (over $\breve K$) in $J_b$ containing the diagonal torus $T$. Let $\bU$ and $\bU^-$ denote the smooth subgroup schemes of $\bG$ whose $\overline{\FF}_q$-points are $U(\breve K) \cap \breve G_{{\bf x},0}$ and $U^-(\breve K) \cap \breve G_{{\bf x},0}$, and let $\bU_h$ and $\bU^-_h$ be the corresponding subgroups of $\bG_h$.

\begin{proposition}\label{p:Xh Ah-}
The subgroup $\bU_h^- \cap F(\bU_h) \subseteq \bG_h$ consists of matrices with $1$'s along the main diagonal and $0$'s outside the first column. 
We have
\begin{align*}
X_h(\overline \FF_q) 
&= \left\{g \in \bG_h(\overline \FF_q) : g^{-1} F(g) \in \bU_h^- \cap F(\bU_h) \right\} \\
&= \left\{g \in \bG_h(\overline \FF_q) : g^{-1} F(g) \in \bU_h^-\right\}/(\bU_h^- \cap F^{-1}(\bU_h^-)).
\end{align*}
\end{proposition}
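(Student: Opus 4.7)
The proof comprises the three assertions of the proposition. The key input is the embedding $\lambda \colon X_h \hookrightarrow \bG_h$ from Proposition-Definition \ref{prop:lambda} together with the identity $\lambda(x) = g_b^{\rd}(\gamma(x))\,\gamma$ of Lemma \ref{lm:lambda_vs_gbred}. Since $b = b_0^{e_{\kappa,n}} t_{\kappa,n} \in \GL_n(K)$, the explicit formula $\lambda(x) = \sum_i \varpi^{-\lfloor k_0(i-1)/n_0\rfloor} b^{i-1} D(x_i)$ together with $\sigma(b) = b$ and $\sigma(D(x_i)) = D(\sigma(x_i))$ gives $\sigma(\lambda(x)) = \lambda(\sigma(x))$, so
\[
\lambda(x)^{-1} F(\lambda(x)) \;=\; \lambda(x)^{-1}\, b\, \lambda(\sigma(x))\, b^{-1}.
\]
This identity is the basic object we analyze throughout.

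The first assertion, the description of $\bU_h^- \cap F(\bU_h)$, is combinatorial. Since $\bU = \gamma^{-1} U_{\rm up}\gamma$ is $K$-rational, $F(\bU_h) = b\,\bU_h\,b^{-1}$. The identity $\gamma b_0^{e_{\kappa,n}}\gamma^{-1} = b_0$ from Lemma \ref{lm:lambda_vs_gbred} converts conjugation by $b$ on $\bU_h$ (up to the $t_{\kappa,n}$-twist) into the Coxeter shift induced by $b_0$ on $U_{\rm up}$. Decomposing $\bU_h$ and $\bU_h^-$ into root subgroups (with their Moy--Prasad graded pieces) with respect to the diagonal torus, a direct computation on the root data of $\GL_{n'}$ shows that for the Coxeter cycle $c=(1\,2\,\cdots\,n')$ one has $U_{\rm low}\cap c\,U_{\rm up}\,c^{-1}$ equal to the first-column unipotent subgroup of $\GL_{n'}$. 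Translating back through $\gamma$ and tracking the $\varpi$-valuation shifts coming from $t_{\kappa,n}$ and from the integral model description of $\bG_h$ in Section \ref{sec:definition of GGh} yields the claimed first-column form.

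For the set-theoretic identification $X_h = \{g\in \bG_h : g^{-1}F(g) \in \bU_h^- \cap F(\bU_h)\}$, the inclusion $\subseteq$ follows from the structural property of $\lambda(x) = g_b^{\rd}(\gamma(x))\,\gamma$: its columns are obtained by iteratively applying $b\sigma$ (with $\varpi$-scaling) to the single vector $\gamma(x)$, so shifting the columns by $b\sigma$ (which is what conjugation by $b$ followed by $\sigma$ achieves) produces a matrix equal to the identity outside one single column, which after the $\gamma b^{-1}$-twist becomes the first, and this nontrivial column lies in $\bU_h^- \cap F(\bU_h)$ by the first step. For the reverse inclusion, given any $g$ satisfying $g^{-1}F(g) \in \bU_h^- \cap F(\bU_h)$, the first-column-only form of the cocycle forces the columns of $g$ other than the first to be uniquely determined from the first by the $b\sigma$-action, so $g = \lambda(x)$ for the unique $x \in \sL_{0,b}^{\adm,\rat}/\sL_0^{(h)}$ read off from the first column of $g$. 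The dimension match of Proposition \ref{p:dim Xh} together with smoothness promotes this bijection on $\overline\FF_q$-points to an isomorphism of schemes.

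For the quotient description, the assignment $g\mapsto gu$ with $u \in \bU_h^-\cap F(\bU_h^-)$ preserves the condition $g^{-1}F(g) \in \bU_h^-$, since $(gu)^{-1}F(gu) = u^{-1}\,g^{-1}F(g)\,F(u)$ remains in $\bU_h^-$ because both $u^{-1}$ and $F(u)$ do. A Lang--Steinberg argument applied to the connected unipotent group $\bU_h^-\cap F(\bU_h^-)$ then shows that every orbit contains a unique representative whose cocycle lies in $\bU_h^-\cap F(\bU_h)$, identifying the quotient with the middle description and hence with $X_h$. The main obstacle throughout is the combinatorial bookkeeping in the first step: one must carefully track how $b = b_0^{e_{\kappa,n}} t_{\kappa,n}$ acts on root subgroups of $\GL_n(\breve K)$ with respect to the diagonal torus, how this interacts with the $\gamma$-twist relating those subgroups to the $\GL_{n'}(D)$-Borel structure on $J_b$, and with the integral Moy--Prasad shifts imposed by $t_{\kappa,n}$. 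Once that combinatorial description is in place, the column-by-column structure of $g_b^{\rd}(\cdot)$ renders the remaining verifications essentially mechanical.
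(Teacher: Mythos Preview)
Your outline for the first two assertions follows the paper's approach: the paper computes $F(D(a))$ directly to see that $F(\lambda(x))$ and $\lambda(x)$ agree outside the first column, then uses $\det\lambda(x)=\det g_b^{\rd}(x)\in\cO_K^\times$ to force the $(1,1)$ entry of $\lambda(x)^{-1}F(\lambda(x))$ to be $1$; your route via $\sigma(\lambda(x))=\lambda(\sigma(x))$ and the column structure of $g_b^{\rd}$ amounts to the same thing. (The appeal to Proposition~\ref{p:dim Xh} and smoothness to ``promote'' the bijection is unnecessary, since the proposition is stated only at the level of $\overline\FF_q$-points.)

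The argument for the quotient description, however, contains a genuine error. You assert that for $u\in\bU_h^-\cap F(\bU_h^-)$ one has $F(u)\in\bU_h^-$; this is false. The condition $u\in F(\bU_h^-)$ says $F^{-1}(u)\in\bU_h^-$, not $F(u)\in\bU_h^-$, and already for $n=3$, $\kappa=0$ one checks directly that right multiplication by a nontrivial $u\in\bU_h^-\cap F(\bU_h^-)$ moves $g^{-1}F(g)$ out of $\bU_h^-$. (Comparison with Lemma~\ref{lm:F_conj_bijective} in fact suggests a misprint in the proposition: the quotient should be by $\bU_h^-\cap F^{-1}(\bU_h^-)$, for which your computation \emph{would} show the action is well-defined.) Even after this correction, the invocation of Lang--Steinberg fails: neither $\bU_h^-\cap F(\bU_h^-)$ nor $\bU_h^-\cap F^{-1}(\bU_h^-)$ is $F$-stable, so there is no Frobenius endomorphism of that group to which Lang's theorem applies. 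What is actually needed is the bijectivity of the twisted multiplication map
\[
(\bU_h^-\cap F^{-1}\bU_h^-)\times(\bU_h^-\cap F\bU_h)\;\longrightarrow\;\bU_h^-,\qquad (x,g)\mapsto x^{-1}gF(x),
\]
which is Lemma~\ref{lm:F_conj_bijective}. The paper proves this by an explicit column-by-column elimination in the $\gamma$-conjugated picture (solving recursions of the form $\sigma(b_{i-1,j-1})\,t_{i-1}/t_{j-1}=(xA)_{i,j}$ uniquely for the entries of $x$ and $g$), followed by a valuation bookkeeping argument to descend to $\bG_h$. This step is not a formality and your outline does not supply it.
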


\begin{proof}
Using $\gamma b_0^{e_{\kappa,n}} \gamma^{-1} = b_0$ and $\gamma t_{\kappa,n} \gamma^{-1} = t_{\kappa,n}$ from Lemma \ref{lm:lambda_vs_gbred}, we compute
\begin{align*}
U^- \cap F(U) &= \gamma^{-1} U_{low} \gamma \cap F(\gamma^{-1}U_{up}\gamma) = 
\gamma^{-1} U_{low} \gamma \cap b_0^{e_{\kappa,n}}t_{\kappa,n} \gamma^{-1}U_{up} t_{\kappa,n}^{-1} b_0^{-e_{\kappa,n}} \gamma \\
&= \gamma^{-1} (U_{low} \cap b_0 U_{up} b_0^{-1} ) \gamma
\end{align*}
and (using $\gamma(e_1) = e_1$) the claim about $\bU_h^- \cap F(\bU_h)$ follows easily.
For any $a \in \bW_h(\overline \FF_q)$, we have that
\begin{equation*}
F(\diag(a, \sigma^{[l]}(a), \ldots, \sigma^{[(n-1)l]}(a))) = \diag(\sigma^n(a), \sigma^{[l]}(a), \ldots, \sigma^{[(n-1)l]}(a)).
\end{equation*}
Thus for any $v = (v_i)_{i=1}^n$ with $v_i \in \bW_h(\overline \FF_q)$ ($i \equiv 1 \pmod{n_0}$) and $v_i \in \bW_{h-1}(\overline \FF_q)$ ($i \not\equiv 1 \pmod{n_0}$),
\begin{equation*}
F(\lambda(x)) = \sum_{i=1}^n \frac{1}{\varpi^{\lfloor k_0(i-1)/n_0 \rfloor}} \cdot b^{i-1} \cdot \diag(\sigma^n(x_i), \sigma^{[l]}(x_i), \ldots, \sigma^{[(n-1)l]}(x_i))
\end{equation*}
differs from $\lambda(x)$ in only the first column. Thus for $x \in \sL_0$, we see that $\lambda(x)^{-1}F(\lambda(x))$ can differ from an element of $\bU_h^- \cap F(\bU_h)$ only in the left upper entry, and this entry is equal to $\det(\lambda(x)^{-1}F(\lambda(x))) = \det(g_b^{\rd}(x)^{-1}\sigma(g_b(x)))$ (Lemma \ref{lm:lambda_vs_gbred}). Now for $x \in \sL_0^{\adm, \rat}$, $\det g_b^{\rd}(x) \in \cO_K^{\times}$. This proves
\begin{equation*}
X_h \subset \left\{g \in \bG_h(\overline \FF_q) : g^{-1} F(g) \in \bU_h^- \cap F(\bU_h)\right\}.
\end{equation*}
To see the other inclusion, observe that if $F(g) = g \cdot u$ for some $u \in \bU_h^- \cap F(\bU_h)$, then comparing the $j$th column for $j \geq 2$ shows that $g$ must necessarily be of the form $\lambda(v)$ for some $v \in \sL_0^{\adm}$. The determinant condition then follows from $\det(u) = 1$. The last equality in the proposition follows from Lemma \ref{lm:F_conj_bijective}.
\end{proof}

\begin{lm}\label{lm:F_conj_bijective}
The morphism
\begin{equation*}
(\bU^-_h \cap F^{-1} \bU^-_h) \times (\bU^-_h \cap F \bU_h) \to \bU^-_h, \qquad (x,g) \mapsto x^{-1} g F(x).
\end{equation*}
is an isomorphism.
\end{lm}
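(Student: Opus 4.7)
The plan is to exhibit $\phi \colon (x, g) \mapsto x^{-1} g F(x)$ as a bijective morphism between two smooth affine schemes of equal dimension, from which the isomorphism claim will follow (working in the paper's perfect-scheme convention in mixed characteristic, or by exhibiting an explicit algebraic inverse directly).

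First I would describe the three schemes as products of root subgroup pieces. By Proposition \ref{p:Xh Ah-}, $B \colonequals \bU^-_h \cap F(\bU_h)$ is the product of root subgroups at positions $(i,1)$ with $i > 1$. Since $F$ acts on root subgroups of $\GL_n$ by a permutation (induced by conjugation by $b_0^{e_{\kappa,n}} t_{\kappa,n}$) composed with $\sigma$, the subgroup $A \colonequals \bU^-_h \cap F^{-1}(\bU^-_h)$ is also a product of root subgroups: the subgroup for the negative root $\alpha$ contributes to $A$ exactly when $F\alpha$ is again a negative root appearing in $\bU^-_h$. The $F$-orbits on the negative roots indexing $\bU^-_h$ partition into \emph{runs}: maximal sequences $\alpha, F\alpha, \dots, F^{r-1}\alpha$ all appearing in $\bU^-_h$ with $F^{-1}\alpha$ and $F^r\alpha$ outside it. For such a run, $A$ contributes the first $r-1$ roots while $B$ contributes only the initial root $\alpha$ (its $F^{-1}$-preimage lies among the positive roots indexing $\bU_h$). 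Hence each run contributes $r$ to $\dim A + \dim B$, matching its contribution to $\dim \bU^-_h$, so the source and target of $\phi$ have equal dimension.

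For bijectivity, I would filter $\bU^-_h$ by its Moy--Prasad subgroups $\bU^-_h \cap \breve G_{\bx, s+}$ and induct on depth. The morphism $\phi$ preserves the filtration (commutators land in deeper steps), and on each graded quotient it linearizes additively to $(x, g) \mapsto g + F(x) - x$. Along a single run of length $r \geq 2$, the graded equations read
\begin{equation*}
u_1 = g - x_1, \qquad u_i = F(x_{i-1}) - x_i \ \text{for } 2 \leq i < r, \qquad u_r = F(x_{r-1}),
\end{equation*}
which one inverts from the bottom: $x_{r-1} = F^{-1}(u_r)$, then $x_{r-2} = F^{-1}(u_{r-1} + x_{r-1})$, and so on, ending with $g = u_1 + x_1$ (the case $r = 1$ is the trivial identity $u_1 = g$). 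Since $F$ restricted to a single root subgroup component equals a power of $\sigma$ times a unit in $\cO^\times$, and $\sigma$ is invertible on $\bW_h(\overline \FF_q)$ because $\overline \FF_q$ is perfect, the recursion is algebraic and unique. Hensel-type lifting along the filtration then promotes graded bijectivity to bijectivity of $\phi$ itself.

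The main obstacle I expect is the combinatorial bookkeeping when $\gcd(\kappa, n) > 1$: here the valuation twists built into $b$ rescale certain root subgroup coordinates and shift where a given subgroup appears in the Moy--Prasad filtration. These twists are harmless for the run decomposition and the integrality of the recursion, since the relevant scalars lie in $\cO^\times$ by the construction of $\bG_h$, but careful indexing is needed to verify both the dimension count and the solvability of each graded linear system.
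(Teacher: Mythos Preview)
Your run combinatorics and dimension count are correct, and the underlying idea (back-substitution along $F$-runs) is the same as the paper's. But two steps in your plan do not go through as stated.

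First, the implication ``bijective morphism between smooth affine schemes of equal dimension $\Rightarrow$ isomorphism'' is false (Frobenius on $\bA^1$). What you actually need is that your recursion furnishes an explicit inverse morphism; since the recursion inverts $\sigma$ at each step, this only yields a scheme morphism after perfection. Second, the Moy--Prasad filtration does not linearize your map at the top: $\bU_h^-/(\bU_h^- \cap \bG_h^1)$ is the unipotent radical of a Borel in the reductive quotient $\bG_1$, which is non-abelian for $n' \geq 3$, so neither the additive formula $(x,g) \mapsto g + F(x) - x$ nor the Hensel lifting (which requires the graded piece to be central in the relevant quotient) is valid there. You would need to refine the filtration at the top, e.g.\ by root height, and argue separately.

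The paper avoids both issues by working directly with the matrix equation, with no linearization. After conjugating by $\gamma$ so that $(\bU^-,\bU,F)$ becomes $(U_{\rm low},U_{\rm up},F_0)$ with $F_0$ built from $b_0$, it writes $xA = gF_0(x)$ entry by entry over $\breve K$ and observes the system is already triangular: the bottom row determines $b_{n-1,*}$ via $\sigma(b_{n-1,j-1}) \cdot (\text{unit}) = a_{n,j}$, the next row then determines $b_{n-2,*}$, and so on; the first column finally determines $g$. This gives bijectivity over $\breve K$ directly. The bulk of the paper's proof is then exactly the integrality obstacle you anticipated: a case analysis showing $\ord(b_{i-1,j-1}) \geq \lambda_{i-1,j-1}$ using the identity $\lambda_{i,j} = \lambda_{i-1,j-1} + \tau_{i-1} - \tau_{j-1}$ (which encodes that $F_0$ is an automorphism of $\bG$) together with the concavity relation $\lambda_{i,k} + \lambda_{k,j} \geq \lambda_{i,j}$, followed by the analogous check modulo $\ker(\bG \to \bG_h)$.

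Your approach buys a cleaner conceptual picture that would generalize; the paper's buys that triangularity is visible without filtration machinery and that integrality becomes a concrete valuation computation rather than an abstract compatibility.
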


\begin{proof}
We can consider the $\overline{\FF}_q$-scheme $\gamma \bG \gamma^{-1}$, whose $\overline{\FF}_q$-points are $\gamma \bG(\overline{\FF}_q) \gamma^{-1}$, together with a Frobenius isomorphism
\[
F_0 \colon \gamma \bG \gamma^{-1} \stackrel{\sim}{\rightarrow} \gamma \bG \gamma^{-1}, \quad F_0(x) = b_0 (\gamma t_{\kappa,n}\gamma^{-1}) \sigma(x) (b_0(\gamma t_{\kappa,n}\gamma^{-1}))^{-1}
\]
By Lemma \ref{lm:lambda_vs_gbred}, $\gamma b_0^{e_{\kappa,n}} t_{\kappa,n} \gamma^{-1} = b_0(\gamma t_{\kappa,n}\gamma^{-1})$. Thus if $c_{\gamma} \colon \bG \stackrel{\sim}{\rightarrow} \gamma \bG \gamma^{-1}$, $x \mapsto \gamma x \gamma^{-1}$ denotes the conjugation by $\gamma$, we have $c_{\gamma} \circ F = F_0 \circ c_{\gamma}$ (this in particular shows that $F_0$ is an isomorphism).

We will first show that $(\bU^- \cap F^{-1} \bU^-) \times (\bU^- \cap F \bU) \to \bU^-$, $(x,g) \mapsto x^{-1} g F(x)$ is bijective. We have $\bU^-(\overline{\FF}_q) = \gamma^{-1}(\breve U_{low} \cap \gamma \bG(\overline{\FF}_q) \gamma^{-1})\gamma$ and $\bU(\overline{\FF}_q) = \gamma^{-1}(\breve U_{up} \cap \gamma \bG(\overline{\FF}_q) \gamma^{-1})\gamma$. Applying $c_{\gamma}$, we thus have to show that the map 
\begin{align}\label{eq:integral_version_surjectivity}
\nonumber \big(( \breve U_{low} \cap \gamma \bG(\overline{\FF}_q) \gamma^{-1}) \cap F_0(\breve U_{low} \cap \gamma \bG(\overline{\FF}_q) \gamma^{-1}) \big) & \times \big((\breve U_{low} \cap \gamma \bG(\overline{\FF}_q) \gamma^{-1}) \cap F_0(\breve U_{up} \cap \gamma \bG(\overline{\FF}_q) \gamma^{-1})\big) \\
&\qquad\qquad\rightarrow \breve U_{low} \cap \gamma \bG(\overline{\FF}_q) \gamma^{-1}, 
\end{align}
$(x,g) \mapsto x^{-1}gF_0(x)$ is bijective. 
We first show that the following is an isomorphism:
\begin{equation}\label{e:std bij}
(\breve U_{low} \cap b_0^{-1} \breve U_{low} b_0) \times (\breve U_{low} \cap b_0 \breve U_{up} b_0^{-1}) \to \breve U_{low}, \qquad (x, g) \mapsto x^{-1} g F_0(x).
\end{equation}
To do this, it is equivalent to prove that given any $A \in \breve U_{low}$, there exists a unique element $(x,g) \in (\breve U_{low} \cap b_0^{-1} \breve U_{low} b_0) \times (\breve U_{low} \cap b_0 \breve U_{up} b_0^{-1})$ such that $x A = g F_0(x)$. We now compute explicitly and write
\begin{equation*}
x = \left(\begin{matrix}
1 & 0 & 0 & \cdots & \cdots & 0 \\
b_{21} & 1 & 0 & \cdots & \cdots & 0 \\
b_{31} & b_{32} & 1 & \ddots  &  & \vdots \\
\vdots &  & \ddots & \ddots & 0 & \vdots \\
b_{n-1,1} & b_{n-1,2} & \cdots & b_{n-1,n-2} & 1 & 0 \\
0 & \cdots & \cdots & 0 & 0 & 1
\end{matrix}\right), \qquad
g = \left(\begin{matrix}
1 & 0 & 0 & \cdots & 0 \\
c_1 & 1 & 0 & \cdots & 0 \\
c_2 & 0 & 1 & \ddots & \vdots \\
\vdots & \vdots & \ddots & \ddots & 0 \\
c_{n-1} & 0 & \cdots & 0 & 1
\end{matrix}\right).
\end{equation*}
Let $\gamma t_{\kappa,n} \gamma^{-1} = \diag(t_1, t_2, \ldots, t_n)$ so that we have
\begin{equation*}
b_0 t_{\kappa,n} \sigma(x) t_{\kappa,n}^{-1} b_0^{-1}
= 
\left(\begin{matrix}
1 & 0 & 0 & 0 & \cdots & 0 \\
0 & 1 & 0 & 0 & \cdots & 0 \\
0 & \sigma(b_{21}) t_2/t_1 & 1 & 0 &  & 0 \\
0 & \sigma(b_{31}) t_3/t_1 & \sigma(b_{32}) t_3/t_2 & 1 & \ddots &\vdots \\
\vdots & \vdots & \ddots & \ddots & 1 & 0 \\
0 & \sigma(b_{n-1,1}) t_{n-1}/t_1 & \sigma(b_{n-1,2}) t_{n-1}/t_2 & \cdots & \sigma(b_{n-1,n-2}) t_{n-1}/t_{n-2} & 1
\end{matrix}\right).
\end{equation*}
We therefore see that the $(i,j)$th entry of $g F_0(x)$ is
\begin{equation}\label{e:RHS ij}
(g F_0(x))_{i,j} = 
\begin{cases}
1 & \text{if $i = j$,} \\
0 & \text{if $i < j$,} \\
c_{i-1} & \text{if $i > j = 1$,} \\
\sigma(b_{i-1,j-1}) t_{i-1}/t_{j-1} & \text{if $i > j > 1$.}
\end{cases}.
\end{equation}
We also compute the $(i,j)$th entry of $xA$ when $A = (a_{i,j})_{i,j} \in \breve U_{low}$:
\begin{equation}\label{e:LHS ij}
(xA)_{i,j} = 
\begin{cases}
1 & \text{if $i = j$,} \\
0 & \text{if $i < j$,} \\
b_{ij} + \sum_{k=j+1}^{i-1} b_{ik} a_{kj} + a_{ij} & \text{if $j < i \leq n-1$,} \\
a_{nj} & \text{if $j < i = n$.} 
\end{cases}
\end{equation}
We now have $n^2$ equations given by \eqref{e:RHS ij} $=$ \eqref{e:LHS ij}, viewed as equations in the variables $b_{ij}$ and $c_i$. First look at the equations corresponding to $(n,2), (n,3), \ldots, (n,n-1).$ This gives
\begin{equation*}
\sigma(b_{n-1,j-1}) t_{n-1}/t_{j-1} = a_{nj} \qquad \text{for $1 < j < n$.}
\end{equation*}
which uniquely determines $b_{n-1,1}, b_{n-1,2}, \ldots, b_{n-1,n-2}$. Proceding inductively, let $1<i\leq n-1$, and suppose that all $b_{i',j}$ for all $i'\geq i$ and $1 < j \leq i'$ are uniquely determined. Then look at the equations corresponding to $(i-1,2), (i-1,3), \ldots, (i-1,i-2)$. This gives
\begin{equation}\label{eq:recursive_formula_for_bij}
\sigma(b_{i-1,j-1}) t_{i-1}/t_{j-1} = b_{i,j} + \sum_{k=j+1}^{i-1} b_{i,k} a_{kj} + a_{i,j} \qquad \text{for $1 < j < i$,} 
\end{equation}
which uniquely determines $b_{i-1,1}, b_{n-2,2}, \ldots, b_{i-1,i-2}$. This uniquely determines $x$. Finally, by looking at the equations corresponding to $(2,1), (3,1), \ldots, (n,1)$, it is immediately clear that the $c_i$'s are also uniquely determined, so $g$ is as well. This shows the isomorphism \eqref{e:std bij}. 

%

Now we deduce \eqref{eq:integral_version_surjectivity} from this. Using the same notation as above, assume that $A \in \breve U_{low} \cap \gamma \bG(\overline{\FF}_q)\gamma^{-1}$. Let $\tau_i := \ord(t_i)$ ($1\leq i \leq n$) and $\lambda_{i,j}$ denote the minimum of valuations of all elements of $\gamma \bG(\overline{\FF}_q) \gamma^{-1} \cap \breve U_{\alpha_{i,j}}$, where $\breve U_{\alpha_{i,j}}$ is the root subgroup corresponding to the $(i,j)$th entry ($1 \leq i\neq j \leq n$). Then $\tau_i,\lambda_{ij} \in \{0, 1\}$ for all $i,j$. Moreover, the fact that $F_0$ is an isomorphism shows 
\begin{equation}\label{eq:lambda_tau_connection}
\lambda_{i,j} = \lambda_{i-1,j-1} + \tau_{i-1} - \tau_{j-1}.
\end{equation}
To establish \eqref{eq:integral_version_surjectivity}, we have to show that for all $2\leq j < i \leq n$, we have $\ord(b_{i-1,j-1}) \geq \lambda_{i-1,j-1}$ and $\ord(c_{i-1}) \geq \lambda_{i-1,1}$. 

We first prove the assertion about the $b$'s. As in the proof of \eqref{e:std bij} above, we may proceed inductively on $i$: assuming that the assertion holds for all $i' > i$, we will show that the assertion holds for $i$. (The basic induction step $i=n$ follows from the same argument as below.) Observe that if $\tau_{i-1} = 0$ and $\tau_{j-1} = 1$, then we are done by formula \eqref{eq:recursive_formula_for_bij}. 

Assume that $\tau_{i-1} = \tau_{j-1}$. If $\lambda_{i-1,j-1} = 0$ then by \eqref{eq:recursive_formula_for_bij} there is again nothing to show. Thus we may assume $\lambda_{i-1,j-1} = 1$. By \eqref{eq:recursive_formula_for_bij} we have to check that $\lambda_{i,j} = 1$ and that for each $j+1 \leq k \leq i-1$, either $\lambda_{i,k}=1$ or $\lambda_{k,j}=1$. First, $\lambda_{i,j} = 1$ follows from \eqref{eq:lambda_tau_connection}. Second, $\alpha_{i,k} + \alpha_{k,j} = \alpha_{i,j}$ ($\alpha_{i,j}$ is the root of the diagonal torus of $\GL_n$ corresponding to $(i,j)$th entry). Thus the fact that $\gamma\bG(\overline{\FF}_q)\gamma^{-1}$ is a group implies that $\lambda_{i,k} + \lambda_{k,j} \geq \lambda_{i,j}$ (for all $k$), so $\lambda_{i,k} = 1$ or $\lambda_{k,j} = 1$.

Finally, assume that $\tau_{i-1} = 1$, $\tau_{j-1} = 0$. Then \eqref{eq:lambda_tau_connection} implies  $\lambda_{i-1,j-1} = 0$ and $\lambda_{i,j} = 1$. Then by \eqref{eq:recursive_formula_for_bij} we have to show that $\lambda_{i,j} = 1$ (which we already know) and that for each $j+1\leq k \leq i-1$, we have $\lambda_{i,k} = 1$ or $\lambda_{k,j} = 1$ (which holds for the same reason as above). This completes the proof of the assertion about the $b$'s.

Analogously, one proves the assertion about the $c_{i-1}$'s. Since \eqref{eq:recursive_formula_for_bij} and the equations corresponding to $(2,1), (3,1), \ldots, (n,1)$ uniquely determine the $b$'s and the $c$'s, this establishes bijectivity of \eqref{eq:integral_version_surjectivity}. 

To finish the proof of the lemma, it suffices to check that if $A_1, A_2 \in \gamma \bG(\overline{\FF}_q) \gamma^{-1}$ differ by some element in the normal subgroup $\gamma \ker(\bG(\overline{\FF}_q) \rightarrow \bG_h(\overline{\FF}_q)) \gamma^{-1}$, then the corresponding pairs $(x_1,g_1)$ and $(x_2,g_2)$ with $x_i^{-1}g_iF_0(x_i) = A_i$ ($i=1,2$) satisfy $x_1^{-1}x_2, g_1^{-1}g_2 \in \ker(\bG(\overline{\FF}_q) \rightarrow \bG_h(\overline{\FF}_q))$. Let $\lambda_{i,j}^h \in \{h-1,h\}$ be the smallest possible valuation of an element in $\gamma \ker(\bG(\overline{\FF}_q) \rightarrow \bG_h(\overline{\FF}_q)) \gamma^{-1} \cap \breve U_{\alpha_{i,j}}$. As $F_0$ induces an isomorphism of $\bG_h$, we again have a formula
\begin{equation*}
\lambda_{i,j}^h = \lambda_{i-1,j-1}^h + \tau_{i-1} - \tau_{j-1}.
\end{equation*}
We can once again proceed inductively to deduce that the $b_{i-1,j-1}$ and $c_{i-1}$ are uniquely determined as elements in $\mathfrak{p}^{\lambda_{i-1,j-1}}/\mathfrak{p}^{\lambda_{i-1,j-1}^h}$ by the elements $a_{i,j} \in \mathfrak{p}^{\lambda_{i,j}}/\mathfrak{p}^{\lambda_{i,j}^h}$. 
\end{proof}

\newpage

\part{Alternating sum of cohomology of $X_h$} \label{part:cohomology}

In this part, we study the virtual $G_h$-representations
\begin{equation*}
R_{T_h}^{G_h}(\theta) \colonequals \sum_{i \geq 0} (-1)^i H_c^i(X_h, \overline \QQ_\ell)[\theta],
\end{equation*}
where $\theta$ is a character of $T_h \cong \cO_L^\times/U_L^h = \bW_h^\times(\FF_{q^n})$. 

In Section \ref{s:finite rings}, we prove that if $\theta$ is \textit{primitive}, then $R_{T_h}^{G_h}(\theta)$ is (up to a sign) an irreducible $G_h$-representation (Theorem \ref{t:alt sum Xh}). Our strategy is to extend ideas of Lusztig, who proves the analogous result in the context of division algebras \cite{Lusztig_79} and split groups \cite{Lusztig_04} (see \cite{Stasinski_09} for the mixed characteristic analogue). This is done in Section \ref{sec:lusztig_lemma}. We note that the main result there, Proposition \ref{prop:alternating_sum_for_Sigma}, is more general than Theorem \ref{t:alt sum Xh} in that it works for any Frobenius $F$ on $\bG_h$ and the $F$-fixed points of any $F$-stable maximal torus in $\bG_h$. For example, if we take $F$ to be the twisted Frobenius coming from the Coxeter-type representative, then the $F$-fixed points of the diagonal torus forms the group $\bW_h^\times(\FF_{q^n})$, which exactly gives Theorem \ref{t:alt sum Xh}. On the other hand, if we take $F$ to be the twisted Frobenius coming from the special representative, then the $F$-fixed points of the diagonal torus forms the $n'$-fold product of $\bW_h^\times(\FF_{q^{n_0}})$, which corresponds to the maximally split unramified torus in $G_h$.

In Section \ref{s:finite rings}, we also give a character formula for $R_{T_h}^{G_h}(\theta)$ on certain elements of $T_h$ (Proposition \ref{p:trace alt sum Xh}) and give a geometric interpretation of determinant-twisting on the cohomology groups (Lemma \ref{l:twist compat}). Keeping in mind the remarks in the preceding paragraph, the methods in Section \ref{s:finite rings} primarily use the Coxeter-type representative $b$ (Section \ref{sec:two b}).

In Section \ref{s:cuspidality}, we prove an analogue of a cuspidality result for $R_{T_h}^{G_h}(\theta)$ when $\theta$ is primitive (Theorem \ref{t:Gh cuspidal}). To do this, we perform a character calculation using the geometry of $X_h$. Our approach is a (far-reaching) generalization of the proof in \cite{Ivanov_15_ADLV_GL2_unram} in the special case $G = \GL_2(K)$. We use the special representative $b$ (Section \ref{sec:two b}) as $F$-stable parahoric subgroups are more well-behaved for this choice. We note that although there is no notion of cuspidality for $G_h$-representations, we will see later that Theorem \ref{t:Gh cuspidal} implies the supercuspidality of the corresponding $G$-representation (Theorem \ref{t:RTG irred}). 

\section{Deligne--Lusztig varieties for Moy--Prasad quotients for $\GL_n$} \label{s:finite rings}

We say that a character $\theta \from T_h \cong \bW_h^\times(\FF_{q^n}) \to \overline \QQ_\ell^\times$ is \textit{primitive} if the restriction of $\theta$ to $\bW_h^{h-1}(\FF_{q^n})$ does not factor through any nontrivial norm maps $\bW_h^{h-1}(\FF_{q^n}) \to \bW_h^{h-1}(\FF_{q^r})$ for $r \mid n$, $r < n$.

\subsection{Irreducibility of $R_{T_h}^{G_h}(\theta)$}

\begin{theorem}\label{t:alt sum Xh}
Let $\theta, \theta' \from T_h \to \overline \QQ_\ell^\times$ be two characters and assume $\theta$ is primitive. Then
\begin{equation*}
\left\langle R_{T_h}^{G_h}(\theta), R_{T_h}^{G_h}(\theta') \right\rangle_{G_h} = 
\begin{cases}
1 & \text{if $\theta = \theta'$,} \\
0 & \text{otherwise.}
\end{cases}
\end{equation*}
In particular, the virtual $G_h$-representation $R_{T_h}^{G_h}(\theta)$ is (up to a sign) irreducible.
\end{theorem}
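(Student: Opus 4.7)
The plan is to reduce the computation of $\langle R_{T_h}^{G_h}(\theta), R_{T_h}^{G_h}(\theta') \rangle$ to an application of the main result of Section \ref{sec:lusztig_lemma} (Proposition \ref{prop:alternating_sum_for_Sigma}), which generalizes Lusztig's and Stasinski's arguments from \cite{Lusztig_04, Stasinski_09} to the Moy--Prasad quotients $\bG_h$ considered here, including the non-reductive case corresponding to the non-quasi-split inner forms of $\GL_n$. Using the description from Proposition \ref{p:Xh Ah-} of $X_h$ as $\{g \in \bG_h : g^{-1}F(g) \in \bU_h^-\}/(\bU_h^- \cap F(\bU_h^-))$, where $F$ is the twisted Frobenius attached to the Coxeter-type representative $b$, we see that $X_h$ is of precisely the form to which Proposition \ref{prop:alternating_sum_for_Sigma} applies, with $T_h = \bT_h(\FF_q) \cong \bW_h^\times(\FF_{q^n})$ being the $F$-fixed points of the diagonal torus.

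First, I would apply the standard Deligne--Lusztig Frobenius-reciprocity-style identity
\[
\langle R_{T_h}^{G_h}(\theta), R_{T_h}^{G_h}(\theta') \rangle_{G_h} = \frac{1}{|T_h|^2}\sum_{t, t' \in T_h} \theta(t)^{-1}\theta'(t')\cdot \mathrm{Lef}\bigl((t,t'), X_h\bigr),
\]
where the Lefschetz number is computed for the right action of $T_h \times T_h$ on $X_h$ (the right $T_h$-factor being transported by an arbitrary inner twist). A standard manipulation identifies the sum on the right with the alternating trace of $(t,t')$ acting on the cohomology of the variety
\[
\Sigma = \{(g_1, g_2) \in X_h \times X_h : g_1 \equiv g_2 \bmod G_h\},
\]
i.e.\ with a sum over $w \in N_{G_h}(T_h)/T_h$ of contributions indexed by the $F$-conjugacy classes of Weyl-like elements, exactly as in the classical Deligne--Lusztig setup. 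This is where Proposition \ref{prop:alternating_sum_for_Sigma} from Section \ref{sec:lusztig_lemma} enters: it shows that each such contribution vanishes unless the associated element $w$ lies in the subgroup $N_{G_h}(T_h)^F/T_h$ computed in Lemma \ref{lm:descr_normalizer}, which here is $\Gal(L/K)[n']$.

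Second, I would use the primitivity hypothesis on $\theta$ to rule out all contributions from nontrivial $w \in N_{G_h}(T_h)^F/T_h$. More precisely, by Corollary \ref{cor:level_compatibility_of_cohomology}, the action of the deepest congruence subgroup $\bW_h^{h-1}(\FF_{q^n}) \subseteq T_h$ on $H_c^\ast(X_h, \overline{\QQ}_\ell)$ factors in a controlled way through $X_{h-1}$, and the only characters of $T_h$ that can appear nontrivially in $R_{T_h}^{G_h}(\theta)$ must agree with $\theta$ on $\bW_h^{h-1}(\FF_{q^n})$. The primitivity condition says that the restriction of $\theta$ to $\bW_h^{h-1}(\FF_{q^n})$ does not factor through any proper norm map; concretely, the Galois-stabilizer of $\theta|_{\bW_h^{h-1}(\FF_{q^n})}$ in $\Gal(L/K)$ is trivial. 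Hence for every nontrivial $w \in \Gal(L/K)[n']$ one has $\theta \circ w \neq \theta$ on $\bW_h^{h-1}(\FF_{q^n})$, so the corresponding term of the sum vanishes after averaging over $T_h$. Only the term $w = 1$ survives, and Proposition \ref{prop:alternating_sum_for_Sigma} identifies its contribution with $\sum_{t \in T_h} \theta(t)^{-1}\theta'(t)$, which equals $|T_h|$ if $\theta = \theta'$ and $0$ otherwise.

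The main obstacle is establishing Proposition \ref{prop:alternating_sum_for_Sigma} in the required generality, i.e.\ for $F$-fixed points of an $F$-stable maximal torus in $\bG_h$ when $\bG_h$ is not the group of points of a reductive group scheme over $\bW_h(\FF_q)$ (which is the case whenever the inner form is non-split, $n' \neq n$). Lusztig's argument in \cite{Lusztig_04} and Stasinski's in \cite{Stasinski_09} rely on a careful decomposition of the relevant quotient variety using root subgroups and a key vanishing lemma for hyperplane-like subvarieties on which a nontrivial character does not factor. The extension here requires replacing these root-group decompositions by the Moy--Prasad filtration step descriptions from Section \ref{sec:integral_models} and handling the fact that the ``unipotent'' radicals $\bU_h, \bU_h^-$ arising from the Iwahori decomposition are not literally unipotent radicals of Borels in a reductive group. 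Once Proposition \ref{prop:alternating_sum_for_Sigma} is in place, the deduction of Theorem \ref{t:alt sum Xh} is essentially formal; the irreducibility statement then follows since $R_{T_h}^{G_h}(\theta)$ is a nonzero virtual representation with self-inner-product $1$.
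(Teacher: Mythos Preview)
Your outline matches the paper's approach: reduce the inner product to the alternating cohomology of the variety $\Sigma$ of Section \ref{sec:lusztig_lemma} and apply Proposition \ref{prop:alternating_sum_for_Sigma}. Two clarifications are in order.

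First, the reduction is cleaner than your Lefschetz-number sketch. One replaces $X_h$ by $S_h = \{g \in \bG_h : g^{-1}F(g) \in \bU_h^-\}$, which is $(G_h \times T_h)$-equivariantly $X_h \times (\bU_h^- \cap F\bU_h^-)$ (Lemma \ref{lm:Xh_and_Xhprime}), so the two have the same alternating cohomology. Then $(g,g') \mapsto (g^{-1}F(g),\, g'^{-1}F(g'),\, g^{-1}g')$ identifies $G_h \backslash (S_h \times S_h)$ with $\Sigma$ on the nose, and the inner product becomes $\sum_i (-1)^i \dim H_c^i(\Sigma)_{\theta^{-1},\theta'}$ directly.

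Second, your second paragraph mislocates how primitivity is used, and Corollary \ref{cor:level_compatibility_of_cohomology} plays no role here. Primitivity enters \emph{first} as the hypothesis of Proposition \ref{prop:alternating_sum_for_Sigma}: by Remark \ref{r:prim reg}, primitive equals regular in Lusztig's sense for the Coxeter-type $F$, and regularity is exactly what forces the contributions from the strata $\widehat\Sigma_w'$ (those with $z \neq 1$) to vanish in Section \ref{sec:widehatSigmaprime}. It is not an argument about which characters of $\bW_h^{h-1}(\FF_{q^n})$ appear in $R_{T_h}^{G_h}(\theta)$. Once the proposition gives $\#\{w \in W(T,T)^F : \theta \circ \mathrm{Ad}(\dot w) = \theta'\}$, primitivity is invoked a \emph{second} time, now as trivial $\Gal(L/K)$-stabilizer, to conclude that this count is $1$ if $\theta = \theta'$ and $0$ otherwise.
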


Let $\bU_h,\bU_h^- \subseteq \bG_h$ be as in Section \ref{sec:rel_Xh_to_DL_Lusztig}. Put 
\begin{equation*}
S_h \colonequals \{x \in \bG_h \colon x^{-1}F(x) \in \bU_h^-\}.
\end{equation*}
This has an action of $G_h \times T_h$ by $(g,t) \colon x \mapsto gxt$. Recalling from Proposition \ref{p:Xh Ah-} that $X_h = \{x \in \bG_h : x^{-1} F(x) \in \bU_h^-\}/(\bU_h^- \cap F(\bU_h^-))$, we immediately have the following lemma.

\begin{lm}\label{lm:Xh_and_Xhprime} 
The morphism $X_h \times (\bU_h^- \cap  F\bU_h^-) \stackrel{\sim}{\rightarrow} S_h$ given by $(x,h) \mapsto xh$ is a $(G_h \times T_h)$-equivariant isomorphism, where the action on the left-hand side is given by $(g,t) \colon (x,h) \mapsto (gxt,t^{-1}ht)$. As $\bU_h^- \cap  F\bU_h^-$ is isomorphic to an affine space, for any character $\theta$ of $T_h$, we have $R_{T_h}^{G_h}(\theta) = \sum_i (-1)^i {H}_c^i(S_h, \overline \QQ_\ell)[\theta]$ as virtual $G_h$-representations.
\end{lm}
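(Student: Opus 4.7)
My plan is to prove the two claims in Lemma \ref{lm:Xh_and_Xhprime} by combining a Lang--Steinberg-style product decomposition (using Lemma \ref{lm:F_conj_bijective}) with a standard K\"unneth manipulation.

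\textit{Step 1 (the isomorphism).} The content is to construct a two-sided inverse to $(x,h) \mapsto xh$. Given $y \in S_h$, its Lang element $y^{-1}F(y)$ lies in $\bU_h^-$ and by Lemma \ref{lm:F_conj_bijective} factors uniquely as $a^{-1} v F(a)$ with $a$ in the $F$-shifted factor group and $v \in \bU_h^- \cap F\bU_h$. Setting $x := y a^{-1}$ (after applying the natural $F$-shift bijection $\bU_h^- \cap F^{-1}\bU_h^- \xrightarrow{\sim} \bU_h^- \cap F\bU_h^-$ to match the group named in the statement), one computes
\[
x^{-1} F(x) = a\cdot y^{-1} F(y) \cdot F(a)^{-1} = a \cdot a^{-1} v F(a) \cdot F(a)^{-1} = v \in \bU_h^- \cap F\bU_h,
\]
so $x \in X_h$ by Proposition \ref{p:Xh Ah-}, and the pair $(x,a)$ is the desired preimage of $y$. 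Uniqueness of the pair is inherited directly from Lemma \ref{lm:F_conj_bijective}.

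\textit{Step 2 (equivariance).} Left multiplication by $g \in G_h$ does not change the Lang element, hence it does not change the decomposition, so $g\cdot(x,h) = (gx, h)$. For $t \in T_h = \bT_h^F$, the key observation is that $\bT_h$ normalizes $\bU_h^-$, and being $F$-stable it also normalizes $F\bU_h$ and $F\bU_h^-$, so conjugation by $t$ preserves the uniqueness of the factorization. Using $F(t) = t$, a short calculation gives $yt = (xt)(t^{-1}ht)$ with $xt \in X_h$ and $t^{-1}ht \in \bU_h^- \cap F\bU_h^-$, which is the claimed action.

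\textit{Step 3 (cohomology).} Since $\bU_h^- \cap F\bU_h^-$ is a connected unipotent group over $\overline{\FF}_q$, it is isomorphic as a variety to some $\bA^m$, so $H^*_c(\bU_h^- \cap F\bU_h^-, \overline{\QQ}_\ell)$ is concentrated in degree $2m$ with value $\overline{\QQ}_\ell(-m)$. The conjugation action of $T_h$ on this top cohomology is trivial, because it extends to an action of the connected algebraic torus $\bT_h$, and a connected algebraic group always acts trivially on $\ell$-adic \'etale cohomology. K\"unneth then yields $H^i_c(S_h)_\theta \cong H^{i-2m}_c(X_h)_\theta(-m)$ as $G_h$-representations, and since $(-1)^{2m} = 1$ and Tate twists do not change virtual representations in the Grothendieck group, summing with signs gives the stated equality. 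The main subtlety is the bookkeeping in Step~1, where one must correctly match the factor group of Lemma \ref{lm:F_conj_bijective} (namely $\bU_h^- \cap F^{-1}\bU_h^-$) with the group $\bU_h^- \cap F\bU_h^-$ appearing in the statement via the $F$-shift bijection; once this is done, everything else reduces to routine Lang--Steinberg and K\"unneth computations.
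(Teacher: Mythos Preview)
Your approach is the same as the paper's: the paper simply invokes Proposition~\ref{p:Xh Ah-} (whose final identification already rests on Lemma~\ref{lm:F_conj_bijective}) and declares the result immediate; you have unpacked that deduction and added the equivariance and K\"unneth steps explicitly.

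The one genuine issue is your ``$F$-shift bijection'' in Step~1. Lemma~\ref{lm:F_conj_bijective} produces $a \in \bU_h^- \cap F^{-1}\bU_h^-$ with $y = xa$; replacing $a$ by $F(a) \in \bU_h^- \cap F\bU_h^-$ destroys this product identity, so it does not give an inverse to $(x,h)\mapsto xh$. In fact, for $h \in \bU_h^- \cap F\bU_h^-$ the map $(x,h)\mapsto xh$ need not even land in $S_h$: one has $(xh)^{-1}F(xh) = h^{-1}\,(x^{-1}F(x))\,F(h)$, and $F(h) \in F\bU_h^- \cap F^2\bU_h^-$ is generally not in $\bU_h^-$ (already for $n=3$ the relevant root subgroup is sent by $F$ to a positive root). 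What is really going on is that the group named in the lemma---and in the last line of Proposition~\ref{p:Xh Ah-}---should be $\bU_h^- \cap F^{-1}\bU_h^-$, exactly the factor appearing in Lemma~\ref{lm:F_conj_bijective}; with that correction your Step~1 works verbatim with no shift needed. Your Steps~2 and~3 are fine as written and are insensitive to this $F$ versus $F^{-1}$ issue, since both intersections are affine spaces of the same dimension carrying a conjugation action of the connected group $\bT_h$, hence act trivially on cohomology.
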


We show how to reduce Theorem \ref{t:alt sum Xh} to a calculation of the cohomology of 
\begin{equation*}
\Sigma \colonequals \{(x, x', y) \in \bU_h^- \times \bU_h^- \times \bG_h : x F(y) = y x'\},
\end{equation*}
and postpone the study of $\Sigma$ to Section \ref{sec:lusztig_lemma}. Taking for granted Proposition \ref{prop:alternating_sum_for_Sigma}, we give the proof of the main theorem:

\begin{proof}[Proof of Theorem \ref{t:alt sum Xh}]
Let $F$ be the twisted Frobenius given by the Coxeter-type representative $b$ of Section \ref{sec:two b}. Consider the action of $G_h \times T_h \times T_h$ on $S_h \times S_h$ given by $(g, t_1, t_2) \from (x_1, x_2) \mapsto (g x_1 t_1, g x_2 t_2)$. The map
\[
(g,g') \mapsto (x,x',y), x = g^{-1}F(g), x' = g^{\prime -1}F(g'), y = g^{-1} g' 
\]
defines an $T_h \times T_h$-equivariant isomorphism $G_h \backslash S_h \times S_h \cong \Sigma$. We denote by $H_c^i(S_h \times S_h)_{\theta^{-1}, \theta'}$ and $H_c^i(\Sigma)_{\theta^{-1}, \theta'}$ the subspace where $T_h \times T_h$ acts by $\theta^{-1} \otimes \theta'$. We have
\begin{align*}
\langle R_{T_h}^{G_h}(\theta), R_{T_h}^{G_h}(\theta') \rangle_{G_h} 
&= \sum_{i,i' \in \bZ} (-1)^{i+i'} \dim (H_c^i(X_h, \overline{\mathbb{Q}}_{\ell})[\theta^{-1}] \otimes H_c^{i'}(X_h, \overline{\mathbb{Q}}_{\ell})[\theta'])^{G_h} \\
&= \sum_{i,i' \in \bZ} (-1)^{i+i'} \dim (H_c^i(S_h, \overline{\mathbb{Q}}_{\ell})[\theta^{-1}] \otimes H_c^{i'}(S_h, \overline{\mathbb{Q}}_{\ell})[\theta'])^{G_h} && \text{(by Lm \ref{lm:Xh_and_Xhprime})}\\
&= \sum_{i \in \bZ} (-1)^i \dim H_c^i(G_h\backslash (S_h \times S_h), \overline{\mathbb{Q}}_{\ell})_{\theta^{-1},\theta'} \\
&= \sum_{i\in \bZ} (-1)^i \dim H_c^i(\Sigma, \overline{\mathbb{Q}}_{\ell})_{\theta^{-1},\theta'} \\
&= \# \{ \gamma \in \Gal(L/K) \colon \theta \circ \gamma = \theta' \} && \text{(by Prop \ref{prop:alternating_sum_for_Sigma})}
\end{align*}
where in the final equality, we use the fact that $\theta$ is primitive if and only if $\theta$ is regular in the sense of Lusztig \cite[1.5]{Lusztig_04} with respect to the $F$ coming from the Coxeter-type representative $b$. Finally, since the primitivity of $\theta$ implies that the stabilizer of $\theta$ in $\Gal(L/K)$ is trivial, the desired conclusion of Theorem \ref{t:alt sum Xh} now follows.
\end{proof}

\subsection{Traces of very regular elements} \label{sec:traces_of_very_regular_elements}

In Part \ref{part:aut ind and JL}, where we study $R_{T_h}^{G_h}(\theta)$ from the perspective of automorphic induction, we will need to know the trace of \textit{very regular elements} of $\cO_L^\times$; i.e.\ elements $x \in \cO_L^\times$ 
whose image in the residue field generates the multiplicative group $\FF_{q^n}^\times$. 
In fact, we can explicate the character on elements of $\cO_L^\times$ whose image in the residue field has trivial $\Gal(\FF_{q^n}/\FF_q)$-stabilizer.

\begin{proposition}\label{p:trace alt sum Xh}
Let $\theta \from T_h \to \overline \QQ_\ell^\times$ be any character. Then for any element $x \in \cO_L^\times/U_L^h \cong T_h$ in $G_h$ whose image in the residue field has trivial $\Gal(\FF_{q^n}/\FF_q)$-stabilizer,
\begin{equation*}
\Tr\left(x^* ; R_{T_h}^{G_h}(\theta)\right) = \sum_{\gamma \in \Gal(L/K)[n']} \theta^\gamma(x),
\end{equation*}
where $\Gal(L/K)[n']$ is the unique order $n'$ subgroup of $\Gal(L/K)$.
\end{proposition}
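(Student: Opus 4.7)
The plan is to apply the Lefschetz trace formula to the $(x, t)$-action on $X_h$ and extract the $\theta$-isotypic component. Working with the Coxeter-type representative $b$ so that $T_h \cong \cO_L^\times/U_L^h$, I would use the presentation
$$X_h = \{g \in \bG_h : g^{-1}F(g) \in \bU_h^- \cap F(\bU_h)\}$$
from Proposition \ref{p:Xh Ah-}, together with the $(G_h \times T_h)$-action $(g,t) \cdot y = gyt$. The Lefschetz trace formula applied to $(x,t)$ yields
$$\sum_i (-1)^i \Tr((x,t)^*; H^i_c(X_h, \overline \QQ_\ell)) = \#\{y \in X_h : xyt = y\},$$
so averaging against $\theta^{-1}$ over $t \in T_h$ gives
$$\Tr\bigl(x^*; R_{T_h}^{G_h}(\theta)\bigr) = \frac{1}{|T_h|} \sum_{t \in T_h} \theta(t)^{-1} \#\{y \in X_h : xyt = y\}.$$

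The next step is to analyze the fixed-point set. Rewriting $xyt = y$ as $y^{-1}xy = t^{-1}$ shows that $y$ conjugates $x$ into $\bT_h$. The hypothesis on $x$ ensures that its reduction $\bar x$ is regular semisimple in $\bG_1(\overline{\FF}_q)$ with centralizer $\bT_1$, which lifts to show that $x$ is regular semisimple in $\bG_h$ with centralizer $\bT_h$. Hence $y \in N_{\bG_h}(\bT_h)$, and I may write $y = n_w s$ for $s \in \bT_h$ and $n_w$ a fixed lift of $w \in W_h := N_{\bG_h}(\bT_h)/\bT_h$. The condition $y \in X_h$ then reads
$$s^{-1}\bigl(n_w^{-1}F(n_w)\bigr) F(s) \in \bU_h^- \cap F(\bU_h),$$
and projecting to $W_h$ (using that $\bU_h^- \cap F(\bU_h)$ has trivial image in $W_h$, by the explicit description in Proposition \ref{p:Xh Ah-}) forces $w^{-1}F(w) = 1$, i.e., $w$ must be $F$-fixed. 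By Lemma \ref{lm:descr_normalizer}, the $F$-fixed Weyl group $(W_h)^F$ is precisely $\langle b^{n_0}\varpi^{-k_0}\rangle \cong \Gal(L/K)[n']$.

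For each such $F$-fixed $w$, the element $n_w^{-1}F(n_w)$ lies in $\bT_h$, and the membership condition reduces to $y^{-1}F(y) \in \bT_h \cap (\bU_h^- \cap F(\bU_h)) = \{1\}$, i.e., to the Lang-type equation $F(s)s^{-1} = F(n_w)^{-1}n_w$ on $\bT_h$. As $\bT_h$ is a smooth connected commutative $\FF_q$-group scheme (an extension of a torus by a connected unipotent group), Lang's theorem gives exactly $|T_h|$ solutions for $s$. Meanwhile $t = n_w^{-1}x^{-1}n_w$ is uniquely determined by $w$; taking $n_w = g_{\gamma_w}$ from Lemma \ref{l:lift of Gal}, where $\gamma_w \in \Gal(L/K)[n']$ is the corresponding Galois element, conjugation by $n_w$ on $\cO_L^\times \subset \bT_h$ realizes $\gamma_w$, so $t = \gamma_w^{-1}(x^{-1})$ and $\theta(t)^{-1} = \theta(\gamma_w^{-1}(x))$. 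Assembling, the $|T_h|$'s cancel and
$$\Tr\bigl(x^*; R_{T_h}^{G_h}(\theta)\bigr) = \sum_{\gamma \in \Gal(L/K)[n']} \theta(\gamma^{-1}(x)) = \sum_{\gamma \in \Gal(L/K)[n']} \theta^\gamma(x),$$
after reindexing $\gamma \mapsto \gamma^{-1}$.

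The main technical obstacle will be cleanly translating the $X_h$-membership condition on $y = n_w s$ into a Lang equation on $\bT_h$. This rests on two inputs: the rigid shape of $\bU_h^- \cap F(\bU_h)$ from Proposition \ref{p:Xh Ah-}, which ensures both its trivial intersection with $\bT_h$ and its trivial image in $W_h$; and the identification of $(W_h)^F$ with $\Gal(L/K)[n']$ from Lemma \ref{lm:descr_normalizer}. A secondary subtlety is matching abstract $W_h$-conjugation on $\bT_h$ with the Galois action on $\cO_L^\times \cong T_h$, which is handled by the explicit lifts $g_\gamma$ from Lemma \ref{l:lift of Gal}.
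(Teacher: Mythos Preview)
Your argument has a genuine gap at the very first step: the ``Lefschetz trace formula'' you invoke,
\[
\sum_i (-1)^i \Tr\bigl((x,t)^*; H_c^i(X_h,\overline\QQ_\ell)\bigr) = \#\{y \in X_h : xyt = y\},
\]
is not valid for an arbitrary finite-order automorphism of a variety over $\overline\FF_q$. The element $x \in T_h \cong \cO_L^\times/U_L^h$ has, for $h \geq 2$, a nontrivial $p$-power order component coming from $U_L^1/U_L^h$, and the naive fixed-point count fails badly for such automorphisms (translation on $\bA^1$ by a nonzero element of $\FF_p$ is the standard counterexample: no fixed points, but trace $1$ on $H_c^2$). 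What is available is the Deligne--Lusztig fixed-point formula \cite[Theorem 3.2]{DeligneL_76}: writing $(x,t) = (\zeta_1 t_1, \zeta_2 t_2)$ with $(\zeta_1,\zeta_2)$ of prime-to-$p$ order and $(t_1,t_2)$ of $p$-power order, one has
\[
\sum_i (-1)^i \Tr\bigl((x,t)^*; H_c^i(X_h)\bigr) = \sum_i (-1)^i \Tr\bigl((t_1,t_2)^*; H_c^i(X_h^{(\zeta_1,\zeta_2)})\bigr),
\]
and this is exactly the route the paper takes. The paper then computes $X_h^{(\zeta_1,\zeta_2)}$ directly in the coordinates $(x_1,\ldots,x_n)$ on $\sL_0/\sL_0^{(h)}$, finds it to be a finite set (a copy of $T_h$ for each admissible $j$), and reads off the permutation trace of $(t_1,t_2)$.

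Your Weyl-group approach is salvageable and, once repaired, gives a genuinely different computation from the paper's. The fix is to apply the Deligne--Lusztig formula first, then run your normalizer argument on the \emph{semisimple} part: the regularity hypothesis is really a hypothesis on $\zeta_1$, so your argument that fixed points lie in $N_{\bG_h}(\bT_h)$ and project to $(W_h)^F \cong \Gal(L/K)[n']$ applies verbatim to $X_h^{(\zeta_1,\zeta_2)}$. This shows $X_h^{(\zeta_1,\zeta_2)}$ is finite (a union of $\bT_h^F$-torsors indexed by those $w \in (W_h)^F$ with $w(\zeta_1) = \zeta_2^{-1}$), after which the trace of the unipotent $(t_1,t_2)$ on $H_c^0$ of a finite set \emph{is} the naive fixed-point count, and your computation goes through. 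Compared with the paper's explicit coordinate-by-coordinate analysis via $\lambda$, your route is more structural and makes the appearance of $\Gal(L/K)[n']$ transparent via Lemma~\ref{lm:descr_normalizer}; the paper's route is more hands-on but avoids having to verify that the centralizer of a ``regular'' element in the non-reductive group $\bG_h$ is exactly $\bT_h$.
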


\begin{proof}
Let $\zeta_1, \zeta_2 \in T_h$ be $(q^n-1)$th roots of unity, let $t_1, t_2 \in T_h^1$, and assume that the image of $\zeta_1$ modulo $\varpi$ has trivial $\Gal(\FF_{q^n}/\FF_q)$-stabilizer. Note that $(\zeta_1 t_1, \zeta_2 t_2) \in G_h \times T_h$ and therefore acts on $X_h$. By Proposition \ref{p:dim Xh}, $X_h$ is a separated, finite-type scheme over $\FF_{q^n}$. Since $(\zeta_1, \zeta_2) = (\zeta_1 t_1, \zeta_2 t_2)^{q^{n(h-1)}}$ has order prime-to-$p$ and $(t_1, t_2) = (\zeta_1 t_1, \zeta_2 t_2)^N$ (where $N \equiv 1 \pmod{q^{n(h-1)}}$ and $(q^n-1) \mid N$) has order a power of $p$, by the Deligne--Lusztig fixed-point formula \cite[Theorem 3.2]{DeligneL_76},
\begin{equation*}
\sum_i (-1)^i \Tr\left((\zeta_1 t_1, \zeta_2 t_2)^* ; H_c^i(X_h, \overline \QQ_\ell)\right) = \sum_i (-1)^i \Tr\left((t_1, t_2)^* ; H_c^i(X_h^{(\zeta_1, \zeta_2)}, \overline \QQ_\ell)\right).
\end{equation*}
By definition, if $\lambda(x) \in X_h$ corresponds to $x = (x_1, \ldots, x_n) \in \sL_{0,b}^{\rm adm, \rm rat}/\sL_0^{(h)}$, then $(\zeta_1, \zeta_2) x$ corresponds to the tuple $(\zeta_1 \zeta_2 x_1, \sigma^l(\zeta_1) \zeta_2 x_2, \ldots, \sigma^{(n-1)l}(\zeta_1) \zeta_2 x_n)$, where $l$ is the inverse of $e_{\kappa,n}\pmod n$. In particular, we see that if $\zeta_1$ has trivial stabilizer in $\Gal(\FF_{q^n}/\FF_q)$, then the set $X_h^{(\zeta_1, \zeta_2)}$ is nonzero if and only if $\zeta_2^{-1}$ is one of the $n$ distinct elements $\zeta_1, \sigma(\zeta_1), \ldots, \sigma^{n-1}(\zeta_1)$. 

Assume $\zeta_2^{-1} = \sigma^{jl}(\zeta_1)$ with $0 \leq j \leq n-1$, then the elements of $X_h^{(\zeta_1, \zeta_2)}$ correspond to vectors of the shape $x = (0, \ldots, 0, x_{j+1}, 0, \ldots, 0)$. If $n_0$ does not divide $j$, then $\det\lambda(x) \equiv 0 \pmod \varpi$, which contradicts $\det\lambda(x) \in \cO_K^{\times}$. Thus in this case we have $X_h^{(\zeta_1,\zeta_2)} = \varnothing$. Assume $n_0$ divides $j$. Then $x=(0, \ldots, 0, x_{j+1}, 0, \ldots, 0)$ with $x_{j+1} \in \bW_h(\overline{\FF}_q)$ lies in $X_h$ if and only if $\det\lambda(x) = \prod_{i=0}^{n-1} \sigma^i(x_{j+1}) \in (\cO_K/\varpi^h)^{\times}$.
Thus $X_h^{(\zeta_1,\zeta_2)} = \{x = (0, \ldots, 0, x_{j+1}, 0, \ldots, 0) \colon x_{j+1} \in (\cO_L/\varpi^h)^\times = T_h \}$ is zero-dimensional, and the action of $(t_1,t_2)$ is given by $x_{j+1} \mapsto \sigma^{jl}(t_1) t_2 x_{j+1}$. Thus 
\[
\Tr((t_1,t_2)^{\ast}, {H}_c^0(X_h^{(\zeta_1,\zeta_2)})) = \begin{cases} \# T_h & \text{if $t_2 = \sigma^{jl}(t_1)^{-1}$,} \\ 0 & \text{otherwise.} \end{cases}
\]
From this, we see that 
\begin{align*}
\Tr\left((\zeta_1 t_1, 1)^*; R_{T_h}^{G_h}(\theta)\right)
&= \frac{1}{\# T_h} \sum_{\zeta_2 \in \FF_{q^n}^\times} \sum_{t_2 \in T_h^1} \theta(\zeta_2)^{-1} \theta(t_2)^{-1} \Tr\left((t_1, t_2)^* ; H_c^0(X_h^{(\zeta_1, \zeta_2)}, \overline \QQ_\ell)\right) \\
&= \sum_{\substack{0 \leq j \leq n - 1 \\ n_0 \mid j}} \theta(\sigma^{jl}(\zeta_1)) \theta(\sigma^{jl}(t_1)) = \sum_{\gamma \in \Gal(L/K)[n']} \theta^\gamma(\zeta_1 t_1). \qedhere
\end{align*}
\end{proof}


\subsection{Behavior under twisting of $\theta$} \label{sec:twisting_theta}

\begin{lemma}\label{l:twist compat}
Let $\theta \from T_h \to \overline \QQ_\ell^\times$ be a character with trivial $\Gal(L/K)$-stabilizer and let $\chi \from \bW_h^\times(\FF_q) \to \overline \QQ_\ell^\times$ be any  character. Then as $G_h$-representations,
\begin{equation*}
H_c^i(X_h, \overline \QQ_\ell)[\theta \otimes (\chi \circ \Nm)] \cong H_c^i(X_h, \overline \QQ_\ell)[\theta] \otimes (\chi \circ \det), \qquad \text{for all $i \geq 0$.}
\end{equation*}
\end{lemma}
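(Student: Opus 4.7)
The plan is to reduce the lemma to a formal manipulation of isotypic components by exploiting the decomposition of $X_h$ according to the determinant. First I would note that the rationality condition $\det g_b^{\rd}(x) \in \cO_K^\times$ (equivalently $\det \lambda(x) \in \cO_K^\times$, via Lemma~\ref{lm:lambda_vs_gbred}) implies that the determinant morphism $\det \from X_h \to \bW_h^\times$ factors through the finite discrete subset $\bW_h^\times(\FF_q) = (\cO_K/\varpi^h)^\times$. Combined with \eqref{eq:dec_of_Xh_conn_cpts}, this yields the scheme-theoretic decomposition
\[
X_h = \bigsqcup_{a \in (\cO_K/\varpi^h)^\times} X_h^{(a)}, \qquad X_h^{(a)} \colonequals \{x \in X_h : \det \lambda(x) = a\}.
\]

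Next I would track how $G_h \times T_h$ permutes these components. Multiplicativity of $\det \from \bG_h \to \bW_h^\times$ together with the identity $\det D(a) = \Nm_{L/K}(a)$, which is immediate from the explicit diagonal form of $D(a)$ recalled in Section~\ref{sec:integral_models}, shows that under the action $(g,t) \cdot x = gxt$ an element $(g,t) \in G_h \times T_h$ sends $X_h^{(a)}$ isomorphically onto $X_h^{(\det(g) \cdot \Nm(t) \cdot a)}$.

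Setting $V \colonequals H_c^i(X_h, \overline\QQ_\ell) = \bigoplus_a V_a$ with $V_a \colonequals H_c^i(X_h^{(a)}, \overline\QQ_\ell)$, I would then introduce the diagonal $\overline\QQ_\ell$-linear automorphism $\phi \in \End(V)$ acting by $\chi(a)^{-1}$ on $V_a$. A direct check using the previous step gives the commutation relations
\[
\rho_{T_h}(t) \circ \phi = \chi(\Nm t) \cdot \phi \circ \rho_{T_h}(t), \qquad \rho_{G_h}(g) \circ \phi = \chi(\det g) \cdot \phi \circ \rho_{G_h}(g).
\]
The first relation shows that $\phi$ carries $V[\theta]$ isomorphically onto $V[\theta \cdot (\chi \circ \Nm)]$ as a vector space; combined with the second, the tautological $G_h$-action on the target matches the $(\chi \circ \det)$-twisted action on the source, which is exactly the claimed isomorphism.

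No essential obstacle arises: everything is formal once one pins down the finite image of $\det$ on $X_h$. I would note in passing that the hypothesis of trivial $\Gal(L/K)$-stabilizer on $\theta$ plays no role in the argument and is presumably included only to accord with the standing setting of Theorem~\ref{t:alt sum Xh} and its downstream applications.
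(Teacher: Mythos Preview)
Your argument is correct and rests on the same structural input as the paper's proof, namely the determinant decomposition \eqref{eq:dec_of_Xh_conn_cpts} of $X_h$ and the fact that $(g,t)$ permutes the pieces via $a \mapsto \det(g)\,\Nm(t)\,a$. The difference is in how this is packaged. The paper first recasts $H_c^i(X_h)[\theta|_{T_h^\circ}]$ as an induction $\Ind_{\Gamma_h}^{G_h \times T_h}$ from the stabilizer $\Gamma_h = \ker(\det \cdot \Nm)$, deduces that all $H_c^i(X_h)[\theta']$ with $\theta'|_{T_h^\circ} = \theta|_{T_h^\circ}$ are isomorphic as $\Gamma_h$-representations, and then finishes by comparing traces at $(g,1)$ after writing $g$ as $(g,t_g)\cdot(1,t_g^{-1})$ with $(g,t_g)\in\Gamma_h$. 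Your route avoids both the induction step and the trace computation by directly writing down the intertwining operator $\phi$ that rescales each $V_a$ by $\chi(a)^{-1}$; the commutation relations you check are exactly what is needed. This is a cleaner and more transparent implementation of the same idea, and it makes visible (as you note) that the trivial-stabilizer hypothesis on $\theta$ is not actually used in the argument. One minor point: the explicit form of $D(a)$ you invoke is recalled in Section~\ref{rem:description_of_div_alg} rather than Section~\ref{sec:integral_models}.
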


\begin{proof}
Let $\Gamma_h$ denote the kernel of the natural homomorphism $G_h \times T_h \to \bW_h^\times(\FF_q)$ given by $(g,t) \mapsto \det(t) \Nm(t)$. Recall from \eqref{eq:dec_of_Xh_conn_cpts} that we have a scheme-theoretic morphism $X_h \to \bW_h^\times(\FF_q)$. Write $X_h^{\det \equiv 1}$ for the preimage of the identity. First observe that as $G_h \times T_h$-representations,
\begin{equation*}
\bigoplus_{\substack{\theta' \from T_h \to \overline \QQ_\ell \\ \theta'|_{T_h^\circ} = \theta|_{T_h^\circ}}} H_c^i(X_h, \overline \QQ_\ell)[\theta'] \cong \Ind_{\Gamma_h}^{G_h \times T_h} \left(H_c^i(X_h^{\det \equiv 1}, \overline \QQ_\ell)[\theta|_{T_h^\circ}]\right).
\end{equation*}
Since the number of summands on the left-hand side is equal to the index of $\Gamma_h$ in $G_h \times T_h$, it follows that as representations of $\Gamma_h$,
\begin{equation}\label{eq:Gamma h}
H_c^i(X_h, \overline \QQ_\ell)[\theta'] \cong H_c^i(X_h^{\det \equiv 1}, \overline \QQ_\ell)[\theta|_{T_h^\circ}]
\end{equation}
for any $\theta' \from T_h \to \overline \QQ_\ell^\times$ with $\theta'|_{T_h^\circ} = \theta|_{T_h^\circ}$. In particular, as $\Gamma_h$-representations,
\begin{equation*}
H_c^i(X_h, \overline \QQ_\ell)[\theta \otimes (\chi \circ \Nm)] \cong H_c^i(X_h, \overline \QQ_\ell)[\theta].
\end{equation*}

Now observe that the subgroup of $G_h \times T_h$ generated by $\Gamma_h$ and $1 \times T_h$ is the whole group. For any $g \in G_h$, let $t_g \in T_h$ be any element such that $\det(g) \Nm(t_g) = 1$. Then $(g,t_g) \in \Gamma_h$, and we have
\begin{align*}
\Tr((g,1)^*; &H_c^i(X_h, \overline \QQ_\ell)[\theta \otimes (\chi \circ \Nm)]) \\
&= \Tr((g,t_g)^* ; H_c^i(X_h, \overline \QQ_\ell)[\theta \otimes (\chi \circ \Nm)]) \cdot \theta(t_g^{-1}) \cdot \chi(\Nm(t_g^{-1})) \\
&= \Tr((g,t_g)^* ; H_c^i(X_h, \overline \QQ_\ell)[\theta]) \cdot \theta(t_g^{-1}) \cdot \chi(\Nm(t_g^{-1})) \\
&= \Tr((g,1)^* ; H_c^i(X_h, \overline \QQ_\ell)[\theta]) \cdot \theta(t_g) \cdot \theta(t_g^{-1}) \cdot \chi(\Nm(t_g^{-1})) \\
&= \Tr((g,1)^* ; H_c^i(X_h, \overline \QQ_\ell)[\theta]) \cdot \chi(\det(g)). \qedhere
\end{align*}
\end{proof}

Observe that by Lemma \ref{l:twist compat}, we have that $R_{T_h}^{G_h}(\theta)$ is (up to sign) irreducible if and only if $R_{T_h}^{G_h}(\theta \otimes (\chi \circ \Nm))$ is, where $\chi \from \bW_h(\FF_q)^\times \to \overline \QQ_\ell^\times$. Recall that by Proposition \ref{p:Wh fixed}, if $\theta$ is a character of $T_h$ that factors through the natural surjection $T_h \to T_{h'}$ for some $h' < h$, then $R_{T_h}^{G_h}(\theta) = R_{T_{h'}}^{G_{h'}}(\theta).$ Thus we can strengthen Theorem \ref{t:alt sum Xh} to obtain that $R_{T_h}^{G_h}(\theta \otimes (\chi \circ \Nm))$ is (up to sign) irreducible for any primitive $\theta \from T_{h'} \to \overline \QQ_\ell^\times$ and any $\chi \from \bW_h(\FF_q)^\times \to \overline \QQ_\ell^\times$. Such characters exactly correspond to \textit{minimal admissible} characters of $L^\times$ of level $h$ (see Part \ref{part:aut ind and JL}). This argument will be appear again in the proof of Theorem \ref{t:RTG irred}.


\subsection{Lusztig's theorem} \label{sec:lusztig_lemma}

This is a generalization of \cite{Lusztig_04, Stasinski_09} to non-reductive groups over $\cO$. The Iwahori case (which corresponds to the division algebra setting over $K$) was done in \cite{Lusztig_79} (see also \cite[Section 6.2]{Chan_siDL}) and is a simpler incarnation of these ideas. We keep our notation as close as possible to that of \cite{Lusztig_04, Stasinski_09} as most of the arguments are the same.

\subsubsection{Set-up}

Let $T, T'$ be two maximal $F$-stable tori of $J_b$, split over $\breve{K}$ and let $(U, U^-)$ and $(U', U'{}^-)$ be two pairs of (possibly not $F$-stable) unipotent radicals of opposite Borels containing $T$ and $T$, respectively. (Outside Section \ref{sec:lusztig_lemma} $T$ always denotes a maximal elliptic torus of $G$, but here we want the notation to coincide with \cite{Lusztig_04}). Consider the  intersections of $\breve K$-points of $T,T',U,U^-,U',U^{'-}$ with $\breve G_{\bx,0}$ (Section \ref{sec:definition of GGh}) and denote the corresponding  subgroup schemes in $\bG_h$ by $\bT_h, \bT_h', \bU_h, \bU_h^-, \bU_h', \bU_h'{}^-$. For $1 \leq a \leq h$, let $\bG_h^a \colonequals \ker(\bG_h \to \bG_a)$ be the kernel of the natural projection, and analogously define $\bT_h^a, \bU_h^a,$ and so forth. We set $\bG_h^{a,*} = \bG_h^a \smallsetminus \bG_h^{a+1}$, and analogously for $\bT_h^{a,*}, \bU_h^{a,*}$, and so forth. We use the shorthand $\cT \colonequals \bT_h^{h-1}$.

Let $N(\bT, \bT') = \{g \in \breve G_{\bx,0} : g^{-1} \bT g = \bT'\}$ and $N(\bT_h, \bT_h') = \{g \in \bG_h : g^{-1} \bT_h g = \bT_h'\}$, and define
\begin{equation*}
W(T, T') \colonequals \bT \backslash N(\bT, \bT') = \bT_h \backslash N(\bT_h, \bT_h').
\end{equation*}
Observe that $W(T, T')$ is a principal homogeneous space under the Weyl group of the torus $\bT_1$ in the reductive quotient $\bG_1$ of $G_{\cO}$.

\subsubsection{Roots and regularity} 

Let $\Phi = \Phi(T,J_b)$ denote the set of roots of $T$ in $J_b$. It carries a natural action of $F$. For $\alpha \in \Phi$, let $\bG_h^{\alpha}$ denote the subgroup of $\bG_h$ coming from the root subgroup of $J_b(\breve K) = \GL_n(\breve K)$ corresponding to $\alpha$. For $\alpha \in \Phi$, let $T^{\alpha} \subseteq T$ be the image of the coroot of $T$ in $\GL_n(\breve K)$ corresponding to $\alpha$. It is an one-dimensional subtorus of $T$. We denote by $\bT_h^{\alpha}$ the corresponding subgroup of $\bG_h$. We write $\mathcal{T}^{\alpha} \subseteq \mathcal{T}$ for the one-dimensional subgroup $(\bT_h^{\alpha})^{h-1}$ of $\bT_h^{\alpha}$. 

Following \cite[1.5]{Lusztig_04}, a character $\chi \colon \mathcal{T}^F \rightarrow \overline{\mathbb{Q}}_{\ell}^{\times}$ is called \emph{regular} if for any $\alpha \in \Phi$ and any $m\geq 1$ such that $F^m(\mathcal{T}^{\alpha}) = \mathcal{T}^{\alpha}$, the restriction of $\chi \circ N_F^{F^m} \colon \mathcal{T}^{F^m} \to \overline{\mathbb{Q}}_{\ell}^{\times}$ to $(\mathcal{T}^{\alpha})^{F^m}$ is non-trivial. Here, $N_F^{F^m} \from \cT^{F^m} \to \cT^F$ is the map $t \mapsto t F(t) \cdots F^{m-1}(t)$. A character $\chi$ of $\bT_h^F$ is called \emph{regular} if its restriction $\chi|_{\mathcal{T}^F}$ is regular.

\begin{rem} \label{r:prim reg}
In our situation, when $b$ is a Coxeter-type representative and $T$ is the elliptic diagonal torus of $J_b$, let $\chi$ be a character of $T(K) \cong L^{\times}$ of level $h$. Then the restriction of $\chi$ to $\cO_L^\times$ can be viewed as a character $\chi_h$ of $\bT_h^F \cong (\cO_L/\varpi^h)^{\times}$. A straightforward computation shows: $\chi_h$ is regular in the above sense if and only if it is \textit{primitive}, i.e.\ the restriction of $\chi_h$ to $\mathcal{T}^F \cong \bW_h^{h-1}(\FF_{q^n})$ does not factor through any of the norm maps $\bW_h^{h-1}(\FF_{q^n}) \rightarrow \bW_h^{h-1}(\FF_{q^r})$ for $r \mid n$, $r < n$. We use this in the proof of Theorem \ref{t:alt sum Xh}. \hfill $\Diamond$
\end{rem}

\subsubsection{Bruhat decomposition} For each $w \in W(T,T')$ choose a representative $\dot{w} \in N(T,T')$. We have the Bruhat decomposition $\bG_1 = \bigsqcup_{w\in W(T,T')} \bG_{1,w}$ of the reductive quotient, where $\bG_{1,w} = \bU_1\dot{w}\bT_1'\bU_1'$. Define $\bG_{h,w}$ to be the pullback of $\bG_{1,w}$ along the natural projection $\bG_h \twoheadrightarrow \bG_1$. Thus $\bG_h = \bigsqcup_{w \in W(T,T')} \bG_{h,w}$. Let $\bK_h \colonequals \bU_h^- \cap \dot w \bU_h^{\prime -} \dot w^{-1}$ and $\bK_h^1 \colonequals \bK_h \cap \bG_h^1$.  

\begin{lm}\label{lm:simplification_for_Bruhat_cell}
$\bG_{h,w} = \bU_h \bK_h^1 \dot{w} \bT_h'\bU_h'$.
\end{lm}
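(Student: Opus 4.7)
The inclusion $\bU_h \bK_h^1 \dot{w} \bT_h' \bU_h' \subseteq \bG_{h,w}$ is immediate from reduction modulo $\bG_h^1$: the factor $\bK_h^1$ collapses to the identity and the rest maps onto $\bU_1 \dot{w} \bT_1' \bU_1' = \bG_{1,w}$. For the opposite inclusion, I would take any $g \in \bG_{h,w}$ and lift a Bruhat factorization of its image in $\bG_1$ to write
$$g = u \dot{w} t' u' z, \qquad u \in \bU_h,\ t' \in \bT_h',\ u' \in \bU_h',\ z \in \bG_h^1.$$
The task is then to absorb the stray factor $z$ into the four prescribed positions.

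The main ingredients are: the Iwahori factorizations of the pro-$p$ normal subgroup $\bG_h^1 = \breve{G}_{\mathbf{x},0+}/\breve{G}_{\mathbf{x},(h-1)+}$ with respect to either torus,
$$\bG_h^1 = \bU_h^{\prime -,1} \cdot \bT_h^{\prime,1} \cdot \bU_h^{\prime,1} = \bU_h^{1} \cdot \bT_h^{1} \cdot \bU_h^{-,1},$$
which follow from the product decomposition of the Moy--Prasad filtration into root subgroups; the normality of $\bG_h^1$ in $\bG_h$; the normalization relations $\bT_h \bU_h^{\pm} = \bU_h^{\pm} \bT_h$ (and analogously for $T'$) together with $\dot{w}^{-1} \bT_h \dot{w} = \bT_h'$; and the crucial partition
$$\bU_h^{-,1} = \bK_h^1 \cdot \bigl(\bU_h^{-,1} \cap \dot{w} \bU_h' \dot{w}^{-1}\bigr),$$
obtained by writing $\bU_h^{-,1}$ as an ordered product of root subgroups $\bU_h^{\alpha,1}$ for $\alpha \in \Phi(T,U^-)$ and partitioning the $\alpha$'s according to whether $\dot{w}^{-1}\alpha$ lies in $\Phi(T',U'^-)$ (the first factor) or in $\Phi(T',U')$ (the second).

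The argument then proceeds in three steps. First, apply the $(T',U',U'^-)$-Iwahori factorization to $z$ and push the $\bU_h^{\prime,1}$- and $\bT_h^{\prime,1}$-components through $u'$ and $t'$ via the normalization relations; this rewrites $t'u'z$ in the form $\hat{z}\cdot \hat{t}'\cdot \hat{u}'$ with $\hat{z}\in \bG_h^1$, $\hat{t}'\in \bT_h'$, $\hat{u}'\in \bU_h'$, and normality of $\bG_h^1$ then lets us conjugate $\hat{z}$ through $\dot{w}$, giving $g = u\cdot \tilde{z}\cdot \dot{w}\cdot \hat{t}'\cdot \hat{u}'$ with $\tilde{z}=\dot{w}\hat{z}\dot{w}^{-1}\in \bG_h^1$. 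Second, Iwahori-factorize $\tilde{z} = \tilde{z}^+ \tilde{z}^0 \tilde{z}^-$ with respect to $(T,U,U^-)$: the factor $\tilde{z}^+\in \bU_h^1$ is absorbed into $u$, and $\tilde{z}^0\in \bT_h^1$ is moved past $\dot{w}$ to land in $\bT_h^{\prime,1}\subseteq \bT_h'$ and merge with $\hat{t}'$. This reduces the problem to placing a stray $\tilde{z}^-\in \bU_h^{-,1}$ in the $\bK_h^1$ slot.

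The heart of the argument is the third step, which invokes the partition above: write $\tilde{z}^- = k\cdot k'$ with $k\in \bK_h^1$ and $k'\in \bU_h^{-,1}\cap \dot{w}\bU_h'\dot{w}^{-1}$. The factor $k$ is exactly the desired $\bK_h^1$-entry, while $k'\dot{w} = \dot{w}\cdot(\dot{w}^{-1}k'\dot{w})$ with $\dot{w}^{-1}k'\dot{w}\in \bU_h^{\prime,1}\subseteq \bU_h'$, and this last unipotent piece is pushed past $\hat{t}'$ and absorbed into $\hat{u}'$ on the far right. The main obstacle is purely bookkeeping: one must verify at each commutation that the auxiliary factors produced lie in $\bG_h^1$ (so they remain absorbable at the next step) and that the final output really lies in $\bU_h \bK_h^1 \dot{w}\bT_h'\bU_h'$. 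Normality of $\bG_h^1$ guarantees that the intermediate factors never escape the setting in which the Iwahori factorizations apply, so the computation, while tedious, is routine.
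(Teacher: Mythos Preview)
Your argument is correct and uses the same ingredients as the paper's proof (Iwahori factorization of $\bG_h^1$, normality, and a root-sign splitting of the leftover opposite-unipotent piece into $\bK_h^1$ and a part that passes through $\dot w$). The paper's version is shorter: it works at the level of set equalities, places the $\bG_h^1$-factor between $\bT_h'$ and $\bU_h'$ from the outset so that a single Iwahori factorization with respect to $T'$ immediately absorbs the $\bT_h^{\prime 1}$- and $\bU_h^{\prime 1}$-pieces, and then conjugates the remaining $\bU_h^{\prime -,1}$ by $\dot w$ and splits it by $T$-sign in one line---this makes your Step~1 redundant (normality alone moves $z$) and merges your Steps~2--3 into a single equality.
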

\begin{proof} Indeed, we compute 
 \begin{align*}
\bG_{h,w} &= \bU_h \dot w \bT_h' \bG_h^1 \bU_h'  = \bU_h \dot w \bT_h' \left((\bG_h^1 \cap \bT_h') (\bG_h^1 \cap \bU_h^{\prime-}) (\bG_h^1 \cap \bU_h')\right) \bU_h' \\
&= \bU_h \dot w \bT_h' (\bG_h^1 \cap \bU_h^{\prime-}) \bU_h' = \bU_h \left(\dot w (\bG_h^1 \cap \bU_h^{\prime-}) \dot w^{-1}\right) \dot{w} \bT_h' \bU_h' \\
&= \bU_h \left(\bU_h^- \cap \dot w (\bG_h^1 \cap \bU_h^{\prime -}) \dot w^{-1}\right) \dot w \bT_h' \bU_h' = \bU_h \bK_h^1 \dot w \bT_h' \bU_h'. \qedhere
\end{align*}
\end{proof}

\subsubsection{The scheme $\Sigma$}\label{sec:scheme_Sigma}
Define 
\begin{align*}
\Sigma &= \{ (x,x',y) \in F(\bU_h) \times F(\bU_h') \times \bG_h \colon xF(y) = yx' \} \\
\Sigma_w &= \{ (x,x',y) \in F(\bU_h) \times F(\bU_h') \times \bG_h \colon xF(y) = yx', y \in \bG_{h,w} \} \subseteq \Sigma,
\end{align*}
for $w \in W(T,T')$. Set-theoretically, $\Sigma$ is the disjoint union of the locally closed subschemes $\Sigma_w$. The group $\bT_h^F \times \bT_h^{\prime F}$ acts on $\Sigma$ by $(t,t') \colon (x,x',y) \mapsto (txt^{-1}, t'x't^{\prime -1}, tyt^{\prime -1})$ and $\Sigma_w$ is stable under this action for any $w\in W(T,T')$.

\begin{prop}\label{prop:alternating_sum_for_Sigma}
Let $\theta$ and $\theta'$ be characters of $\bT_h^F$ and $\bT_h^{\prime F}$ respectively and assume that $\theta$ is regular. Then 
\[ 
\sum_{i\in \bZ} (-1)^i \dim H_c^i(\Sigma, \overline{\mathbb{Q}}_{\ell})_{\theta^{-1},\theta'} = \# \{ w \in W(T,T')^F \colon \theta \circ Ad(\dot w) = \theta' \}.
\]
\end{prop}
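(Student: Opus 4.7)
The plan is to adapt the strategy of \cite{Lusztig_04, Stasinski_09} (and in its Iwahori incarnation \cite{Lusztig_79}, cf.\ \cite[Section 6.2]{Chan_siDL}) to our setting where $\bG_h$ is only a (smooth) subgroup scheme of the Moy--Prasad quotient attached to $\breve G_{\bx,0}$, not a reductive group over $\cO/\varpi^h$. The key point will be to verify that all of the fibrations used in the reductive setting can be run at the level of the ``deepest'' filtration piece $\cT = \bT_h^{h-1}$, where root subgroups behave as in the reductive quotient $\bG_1$, and where the notion of regularity in the sense of \cite[1.5]{Lusztig_04} makes sense.

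First, using the stratification $\Sigma = \bigsqcup_{w \in W(T,T')} \Sigma_w$ and the compatibility with the $\bT_h^F \times \bT_h^{\prime F}$-action from Section \ref{sec:scheme_Sigma}, it suffices to prove that
\[
\sum_{i\in \bZ} (-1)^i \dim H_c^i(\Sigma_w, \overline{\mathbb{Q}}_{\ell})_{\theta^{-1},\theta'} = \begin{cases} 1 & \text{if } w \in W(T,T')^F \text{ and } \theta \circ \Ad(\dot w) = \theta', \\ 0 & \text{otherwise.} \end{cases}
\]
Using Lemma \ref{lm:simplification_for_Bruhat_cell}, I would parametrize $\Sigma_w$ by writing every $y \in \bG_{h,w}$ uniquely as $y = u k \dot w t v$ with $u \in \bU_h$, $k \in \bK_h^1$, $t \in \bT_h'$, $v \in \bU_h'$. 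The equation $xF(y) = yx'$ then imposes $F$-translation conditions which, after the substitution $y \mapsto u\,k\,\dot w\,t\,v$, allow $x, x'$ to be expressed in terms of the other variables. This will present $\Sigma_w$ as a tower of fibrations whose base is controlled by $\bT_h' \times \bK_h^1$ (modulo some explicit $F$-twist) and whose fibers are affine spaces coming from $\bU_h, \bU_h', F(\bU_h), F(\bU_h')$; these affine fibrations do not affect the alternating sum of $\theta^{-1} \otimes \theta'$-isotypic components.

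The core geometric input is then to analyze the remaining $\bK_h^1$-direction. For this, I would filter $\bK_h^1$ by the subgroups $\bK_h^a \colonequals \bK_h \cap \bG_h^a$ for $1 \le a \le h-1$. At each step the successive quotient $\bK_h^a / \bK_h^{a+1}$ decomposes into products of one-dimensional pieces $\bG_h^{\alpha,a}/\bG_h^{\alpha,a+1}$ indexed by roots $\alpha \in \Phi$ appearing in $\bU_h^- \cap \dot w \bU_h'{}^- \dot w^{-1}$; these quotients admit a commuting action of $\cT^\alpha$ (resp.\ $\cT'^\alpha$) by conjugation coming from the cocharacter of $\alpha$. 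The regularity hypothesis on $\theta$, applied to each such $\alpha$ and the Lang-type torsor obtained at the corresponding step, kills the $\theta^{-1} \otimes \theta'$-isotypic component unless $\dot w$ conjugates every root subgroup of $\bT$ in $\bU_h^-$ to a root subgroup of $\bT'$ in $F(\bU_h'{}^-)$, i.e.\ unless $w \in W(T,T')^F$. Implementing this vanishing is the main obstacle: one must ensure that the $\cT^\alpha$-action descends compatibly through the tower and that Lang's theorem / a Mackey-style argument applies in each Witt-vector level. This is where the hypothesis $b$ Coxeter-type (so that $F$ acts transitively on the relevant orbits of roots in the reductive quotient $\bT_1 \subset \bG_1$) is crucial, and is precisely the translation of \emph{primitive} $=$ \emph{regular} recorded in Remark \ref{r:prim reg}.

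Finally, assume $w \in W(T,T')^F$. After all the affine fibrations have been stripped away, $\Sigma_w$ retracts to a torsor over $\bT_h'^F$ under the Lang map $t \mapsto t^{-1}F(t)$ twisted by $\dot w$. The $\theta^{-1} \otimes \theta'$-isotypic component is then one-dimensional exactly when the identity $\theta \circ \Ad(\dot w) = \theta'$ holds on $\bT_h^F$, and zero otherwise. Combining the contributions from all $w$ yields the required formula. The technical overhead compared to \cite{Lusztig_04, Stasinski_09} lies entirely in the (straightforward but bookkeeping-heavy) verification that each step of the Lusztig filtration argument respects the non-reductive integral model $\bG_h$ attached to $b$; none of the final computations change.
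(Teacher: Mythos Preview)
Your overall plan follows the paper's strategy (which is Lusztig's), but the sketch misidentifies where the two key mechanisms enter, and underestimates the new technical content needed in the non-reductive case.

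\medskip

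\textbf{Logical structure.} After passing to $\widehat\Sigma_w$ via Lemma \ref{lm:simplification_for_Bruhat_cell}, the paper splits according to whether the $\bK_h^1$-coordinate $z$ equals $1$ or not: $\widehat\Sigma_w = \widehat\Sigma_w'' \sqcup \widehat\Sigma_w'$. These two pieces are handled by \emph{different} arguments. On $\widehat\Sigma_w''$ (where $z=1$) one uses a torus fixed-point computation with an auxiliary group $\widetilde H \supset \bT_h^F \times \bT_h'^F$; this is where the condition $w \in W(T,T')^F$ and $\theta\circ\Ad(\dot w)=\theta'$ emerges, and regularity of $\theta$ plays no role. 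On $\widehat\Sigma_w'$ (where $z\neq 1$), regularity is used to show the isotypic piece vanishes for \emph{every} $w$, including the $F$-fixed ones. Your sketch conflates these: you write that regularity ``kills the $\theta^{-1}\otimes\theta'$-isotypic component unless $\dot w$ conjugates every root subgroup \ldots\ i.e.\ unless $w\in W(T,T')^F$.'' That is not how the argument runs, and the stated root-subgroup condition is not equivalent to $F$-fixedness of $w$ (it would say $\bK_h=\{1\}$).

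\medskip

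\textbf{The genuinely new input.} You describe the non-reductive modifications as ``straightforward but bookkeeping-heavy.'' In fact the heart of the extension is the commutator analysis in Lemma \ref{lm:commutator_decomposition} and the stratification of $\bK_h^1\smallsetminus\{1\}$ in \eqref{eq:stratification_of_Kh1}. Because $\bG_h$ is not reductive over $\cO/\varpi^h$, root subgroups for \emph{reductive} roots ($\langle\alpha,\bx\rangle\in\bZ$) and \emph{non-reductive} roots (of type $1$ or $2$) have different depths, cf.\ \eqref{eq:expl_description_of_root_filtration}. The isomorphism $\lambda_z\colon (\bG_h^\alpha)^{h-a}/(\bG_h^\alpha)^{h-a+1}\stackrel{\sim}{\to}\cT^\alpha$ (resp.\ with $h-a$ shifted by one in the reductive case) needed to build the extended action of $\cH'$ on each stratum $\widehat\Sigma_w^{\prime,a,I}$ requires a case-by-case commutator computation (Lemmas \ref{lm:commutator_trivial}, \ref{lm:indep_of_order}, \ref{lm:commutator_decomposition}), including a delicate case when $z\in\bN_h^1\smallsetminus(\bN_h^1)^{\leq 0}$ where the relevant quotient is not abelian. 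Only after $\lambda_z$ is in hand does the connected-group argument (triviality of $\cH'^0$ on cohomology, then the norm $N_F^{F^m}$ to contradict regularity) go through as in \cite{Lusztig_04}. Your filtration of $\bK_h^1$ by the $\bK_h^a$ alone is too coarse to produce $\lambda_z$; one must refine by the root-type data encoded in the pair $(a_z,I_z)$.
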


\begin{proof} Using $\Sigma = \bigcup_w \Sigma_w$, it is enough to show that $\sum_{i\in \bZ} (-1)^i \dim H_c^i(\Sigma, \overline{\mathbb{Q}}_{\ell})_{\theta^{-1},\theta'}$ is $1$ if $w \in W(T,T')^F$ and $\theta \circ Ad(\dot w) = \theta'$, and is $0$ otherwise. Fix a $w \in W(T,T')$. Let 
\begin{align*}
\widehat \Sigma_w = \{ (x,x',u,u',z,\tau') \in F(\bU_h) \times F(\bU_h') \times \bU_h \times &\bU_h' \times \bK_h^1 \times \bT_h' \colon \\
&xF(uz\dot{w} \tau' u') = uz\dot{w} \tau' u' x' \}.
\end{align*}
We have the morphism $\widehat \Sigma_w \rightarrow \Sigma_w$, $(x,x',u,u',z,\tau') \mapsto x,x',uz\dot{w}\tau'u'$, which by Lemma \ref{lm:simplification_for_Bruhat_cell} is surjective. Moreover, this map is $\bT_h^F \times \bT_h^{\prime F}$-equivariant, when we endow $\widehat \Sigma_w$ with the $\bT_h^F \times \bT_h^{\prime F}$-action
\begin{equation}\label{eq:torus_action_on_widehatSigma}
(t,t') \colon (x,x',u,u',z,\tau') \mapsto (txt^{-1}, t'x't^{\prime -1}, tut^{-1}, t'u't^{\prime -1}, tzt^{-1}, \dot{w}^{-1}t\dot{w}\tau' t^{\prime -1}).
\end{equation}
As the projection $\widehat \Sigma_w \rightarrow \Sigma_w$ is locally trivial fibration, the cohomology does not change if we pass from $\Sigma_w$ to $\widehat \Sigma_w$. Thus to finish the proof the proposition it is enough to show that
\begin{equation}\label{eq:claim_for_coh_of_widehatSigma}
\sum_{i\in \bZ} (-1)^i \dim H_c^i(\widehat \Sigma, \overline{\mathbb{Q}}_{\ell})_{\theta^{-1},\theta'} = \begin{cases} 1 & \text{if $w \in W(T,T')^F$ and $\theta \circ Ad(\dot w) = \theta'$,} \\ 0 & \text{otherwise.} \end{cases}
\end{equation}
We make the change of variables replacing $xF(u)$ by $x$ and $x'F(u')^{-1}$ by $x'$, and rewrite $\widehat \Sigma_w$ as
\begin{equation*}
\widehat \Sigma_w = \{ (x,x',u,u',z,\tau') \in F(\bU_h) \times F(\bU_h') \times \bU_h \times \bU_h' \times \bK_h^1 \times \bT_h' \colon xF(z\dot{w} \tau') = uz\dot{w} \tau' u' x' \},
\end{equation*}
and the torus action is still given by \eqref{eq:torus_action_on_widehatSigma}.
Define a partition $\widehat \Sigma_w = \widehat \Sigma_w' \sqcup \widehat \Sigma_w^{\prime\prime}$ by
\begin{align*}
\widehat \Sigma_w' &= \{ (x,x',u,u',z,\tau') \in \widehat \Sigma_w \colon z \neq 1 \}, \\
\widehat \Sigma_w^{\prime\prime} &= \{ (x,x',u,u',z,\tau') \in \widehat \Sigma_w \colon z = 1 \}. 
\end{align*}
Both subsets are stable under the $\bT_h^F \times \bT_h^{\prime F}$-action. By Section \ref{sec:widehatSigmadoubleprime},
\begin{equation}\label{e:double prime}
\sum_{i \in \bZ} (-1)^i \dim H_c^i(\widehat \Sigma_w'', \overline{\mathbb{Q}}_{\ell})_{\theta^{-1}, \theta'} = \begin{cases}
1 & \text{if $w \in W(T, T')^F$ and $\theta \circ Ad(\dot w) = \theta'$,} \\
0 & \text{otherwise,}
\end{cases}
\end{equation}
and by Section \ref{sec:widehatSigmaprime}, under the assumption that $\theta$ is regular, 
\begin{equation}\label{e:prime}
\sum_{i \in \bZ} (-1)^i \dim H_c^i(\widehat \Sigma_w', \overline{\mathbb{Q}}_{\ell})_{\theta^{-1}, \theta'} = 0,
\end{equation}
so \eqref{eq:claim_for_coh_of_widehatSigma} holds.
\end{proof}


\subsubsection{Cohomology of $\widehat \Sigma_w^{\prime\prime}$}\label{sec:widehatSigmadoubleprime}
We prove \eqref{e:double prime}. This works exactly as in \cite{Lusztig_04} (see the proof of Lemma 1.9, specifically the proof of claim (b) in \textit{op.\ cit.\ }beginning on page 8). For convenience of the reader, we recall the arguments. Consider the closed subgroup 
\[ 
\widetilde{H} = \{ (t,t') \in \bT_h \times \bT_h' \colon tF(t)^{-1} = F(\dot{w}) t'F(t')^{-1} F(\dot{w}^{-1}) \} \subseteq \bT_h \times \bT_h'.
\]
Note that $\widetilde H$ contains $\bT_h^F \times \bT_h'{}^F$ and \eqref{eq:torus_action_on_widehatSigma}
containing $\bT_h^F \times \bT_h^{\prime F}$. The action of $\bT_h^F \times \bT_h^{\prime F}$ on $\widehat \Sigma_w^{\prime\prime}$ extends to an action of $\widetilde H$, still given by \eqref{eq:torus_action_on_widehatSigma}. Let $\bT_{h,\ast}$ and $\bT_{h,\ast}'$ be the reductive part of $\bT_h$ and $\bT_h'$ respectively. 
Set $\widetilde{H}_{\ast} \colonequals \widetilde{H} \cap (\bT_{h,\ast} \times \bT_{h,\ast}')$ and let $\widetilde{H}_{\ast}^0$ be the connected component of $\widetilde{H}_{\ast}$. Then $\widetilde{H}_{\ast}^0$ is a torus acting on $\widehat \Sigma_w^{\prime\prime}$. By \cite[4.5 (and 11.2) and 10.15]{DigneM_91} (compare the similar computation in the proof of \cite[Theorem 3.1]{Stasinski_09}), we have
\[ 
\sum_{i\in \bZ} (-1)^i \dim H_c^i(\widehat \Sigma_w^{\prime\prime}, \overline{\mathbb{Q}}_{\ell})_{\theta^{-1},\theta'} = \sum_{i\in \bZ} (-1)^i \dim H_c^i\left( (\widehat\Sigma_w^{\prime\prime})^{\widetilde H_{\ast}^0}, \overline{\mathbb{Q}}_{\ell} \right)_{\theta^{-1},\theta'}.
\]
Let $(x,x',u,u',1,\tau') \in (\widehat\Sigma_w^{\prime\prime})^{\widetilde H_{\ast}^0}$. By Lang's theorem, $\widetilde H_{\ast} \rightarrow \bT_{h,\ast}$ is surjective and hence (as $\bT_{h,\ast}$ is connected) also $\widetilde{H}_{\ast}^0 \rightarrow \bT_{h,\ast}$ is surjective. Similarly, $\widetilde{H}_{\ast}^0 \rightarrow \bT_{h,\ast}'$ is surjective. Thus for any $t \in \bT_{h,\ast}$, $t' \in \bT_{h,\ast}'$, we have
\[ txt^{-1} = x, \,\, t'x't^{\prime -1} = x', \,\, tut^{-1} = u, \,\, t'u't^{\prime -1} = u'. \]
This implies $x = x' = u = u' = 1$ since $\bT_{h,\ast}$ acts non-trivially on all affine roots subgroups contained in $\bU_h$ (and similarly for $\bT'_h$, $\bU'_h$). Thus 
\begin{equation*}
(\widehat\Sigma_w^{\prime\prime})^{\widetilde H_{\ast}^0} \subseteq \{(1,1,1,1,1,\tau') \colon \tau' \in \bT_h', \, F(\dot w \tau') = \dot w \tau' \},
\end{equation*}
and we deduce
\[
\sum_{i\in \bZ} (-1)^i \dim H_c^i\left( (\widehat\Sigma_w^{\prime\prime})^{\widetilde H_{\ast}^0}, \overline{\mathbb{Q}}_{\ell} \right)_{\theta^{-1},\theta'} = \begin{cases} 1  & \text{if $F(w) = w$ and $\theta \circ Ad(\dot w) = \theta'$}, \\ 
0 & \text{otherwise.} \end{cases}
\]


\subsubsection{Some preparations} \label{sec:some_preparations_for_Sigma_prime}

In the next two sections, we make the necessary preparations in order to carry out Lusztig's argument for \eqref{e:prime} in Section \ref{sec:widehatSigmaprime}. Let $N, N^-$ be unipotent radicals of opposite Borel subgroups of $J_b(\breve K) = \GL_n(\breve K)$ containing $T$, and for $h \geq 1$, let $\bN_h$, $\bN_h^-$ be the corresponding subgroups of $\bG_h$. Let $\Phi^+ = \{ \alpha \in \Phi \colon \bG_h^{\alpha} \subseteq \bN_h\}$ and $\Phi^- = \Phi \sm \Phi^+ = \{\alpha \in \Phi \colon \bG_h^{\alpha} \subseteq \bN_h^-\}$. For $\alpha \in \Phi^+$ let ${\rm ht}(\alpha)$ denote the largest integer $m \geq 1$, such that $\alpha = \sum_{i=1}^m \alpha_i$ with $\alpha_i \in \Phi^+$.

We call the roots $\alpha \in \Phi$ for which $\bG_1^{\alpha} \neq 1$ \emph{reductive} and the other roots \emph{non-reductive}. Equivalently, a root $\alpha \in \Phi$ is reductive if and only if $\langle \alpha, \bx \rangle \in \bZ$, where $\bx$ is as in Section \ref{sec:definition of GGh}.

To make explicit calculations, we may assume that $T$ is the diagonal torus in $\GL_n(\breve K)$. For $1\leq i \neq j \leq n$, let $\alpha_{i,j}$ denote the root corresponding to the $(i,j)$th entry of an $n\times n$ matrix. For $1\leq i \leq n$, let $1\leq [i]_{n_0} \leq n_0$ denote its residue modulo $n_0$. Define ${\rm ht}_{n_0}(\alpha_{i,j}) \colonequals [i]_{n_0} - [j]_{n_0}$. Then $\alpha \in \Phi$ is reductive if and only if ${\rm ht}_{n_0}(\alpha) = 0$. If ${\rm ht}_{n_0}(\alpha) > 0$ (resp.\ ${\rm ht}_{n_0}(\alpha) < 0$), we call $\alpha$ non-reductive \emph{of type $1$} (resp.\ \emph{of type $2$}). For any $\alpha = \alpha_{i,j} \in \Phi$ and $1\leq a\leq h$, we have
\begin{equation}\label{eq:expl_description_of_root_filtration}
\left(\bG_h^{\alpha}\right)^a \cong 
\begin{cases} 
\mathfrak{p}^{a-1}/\mathfrak{p}^{h-1} & \text{if ${\rm ht}_{n_0}(\alpha) > 0$,} \\ 
\mathfrak{p}^a/\mathfrak{p}^h & \text{if ${\rm ht}_{n_0}(\alpha) \leq 0$,}  
\end{cases}
\end{equation}
in the sense that $\left(\bG_h^{\alpha}\right)^a$ consists of $n\times n$ matrices with $1$'s on the main diagonal, an element of the subgroup $\mathfrak{p}^{a-1}/\mathfrak{p}^{h-1}$ (resp.\ $\mathfrak{p}^a/\mathfrak{p}^h$) sitting in the $(i,j)$th entry, and $0$'s everywhere else. 

\begin{ex} Let $n= 4$, $\kappa = 2$. Then if $\cA$ is the apartment of $\cB^{\rm red}(\GL_4, \breve K)$ corresponding to the diagonal torus, then $\bx$ is the unique fixed point under the action of $b = b_0\cdot \diag(1,\varpi,1,\varpi) = \left(\begin{smallmatrix} & & & \varpi \\ 1 & & & \\ & \varpi & & \\ & & 1 & \end{smallmatrix}\right)$. Computing, the matrix of inner products for $\alpha_{i,j} \in \Phi$ is
\begin{equation*}
(\langle \alpha_{i,j}, \bx \rangle)_{1 \leq i,j \leq 4} = 
\left(\begin{smallmatrix}
* & -\frac{1}{2} & 0 & -\frac{1}{2} \\
\frac{1}{2} & * & \frac{1}{2} & 0 \\
0 & -\frac{1}{2} & * & -\frac{1}{2} \\
\frac{1}{2} & 0 & \frac{1}{2} & *
\end{smallmatrix}\right).
\end{equation*}
Hence for $h \geq 1$, we have 
\[
\breve G_{\bx,0} = \left(\begin{smallmatrix} \cO & \mathfrak{p} & \cO & \mathfrak{p} \\ \cO & \cO & \cO & \cO \\ \cO & \mathfrak{p} & \cO & \mathfrak{p} \\ \cO & \cO & \cO & \cO \end{smallmatrix}\right)^{\times} \twoheadrightarrow \bG_h(\overline \FF_q) = \left(\begin{smallmatrix} \cO/\mathfrak{p}^h & \mathfrak{p}/\mathfrak{p}^h & \cO/\mathfrak{p}^h  & \mathfrak{p}/\mathfrak{p}^h \\ \cO/\mathfrak{p}^{h-1} & \cO/\mathfrak{p}^h & \cO/\mathfrak{p}^{h-1} & \cO/\mathfrak{p}^h \\ \cO/\mathfrak{p}^h & \mathfrak{p}/\mathfrak{p}^h & \cO/\mathfrak{p}^h  & \mathfrak{p}/\mathfrak{p}^h \\ \cO/\mathfrak{p}^{h-1} & \cO/\mathfrak{p}^h & \cO/\mathfrak{p}^{h-1} & \cO/\mathfrak{p}^h \end{smallmatrix}\right)^{\times},
\]
where the $\times$ superscript means the group of invertible matrices, and for $1 \leq a \leq h$,
\[
\bG_h^a(\overline \FF_q) 
= \left(\begin{smallmatrix} 1 + \mathfrak{p}^a/\mathfrak{p}^h & \mathfrak{p}^a/\mathfrak{p}^h & \mathfrak{p}^a/\mathfrak{p}^h & \mathfrak{p}^a/\mathfrak{p}^h \\ \mathfrak{p}^{a-1}/\mathfrak{p}^{h-1} & 1 + \mathfrak{p}^a/\mathfrak{p}^h & \mathfrak{p}^{a-1}/\mathfrak{p}^{h-1} & \mathfrak{p}^a/\mathfrak{p}^h \\ \mathfrak{p}^a/\mathfrak{p}^h & \mathfrak{p}^a/\mathfrak{p}^h & 1 + \mathfrak{p}^a/\mathfrak{p}^h & \mathfrak{p}^a/\mathfrak{p}^h \\ \mathfrak{p}^{a-1}/\mathfrak{p}^{h-1} & \mathfrak{p}^a/\mathfrak{p}^h & \mathfrak{p}^{a-1}/\mathfrak{p}^{h-1} & 1 + \mathfrak{p}^a/\mathfrak{p}^h \end{smallmatrix}\right).
\]
\end{ex}

For two elements $z,\xi \in \bG_h$, we write $[\xi,z] = \xi^{-1}z^{-1}\xi z$.

\begin{lm}\label{lm:commutator_trivial} Let $\alpha \in \Phi$. Let $1 \leq a \leq h-1$.
\begin{enumerate}[label=(\roman*)]
\item If $\alpha$ is non-reductive, then $[\bG_h^{a+1}, (\bG_h^{\alpha})^{h-a}] = 1$.
\item If $\alpha$ is reductive, then $[\bG_h^a, (\bG_h^{\alpha})^{h-a}] = 1$.
\end{enumerate}
\end{lm}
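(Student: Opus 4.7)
The plan is to verify the claim by direct matrix computation in the explicit model of $\bG_h$ as an invertible matrix group with entries in various quotients $\cO/\mathfrak{p}^{h}$ or $\cO/\mathfrak{p}^{h-1}$, as displayed in the example preceding the lemma. For any root $\alpha_{i,j}\in\Phi$, introduce the shorthand $\epsilon(i,j)\in\{0,1\}$ with $\epsilon(i,j)=1$ iff ${\rm ht}_{n_0}(\alpha_{i,j})>0$. Then the $(i,j)$-entry of an arbitrary element of $\bG_h$ lies in $\cO/\mathfrak{p}^{h-\epsilon(i,j)}$, the $(i,j)$-entry of an element of $\bG_h^a$ lies in $\mathfrak{p}^{a-\epsilon(i,j)}/\mathfrak{p}^{h-\epsilon(i,j)}$ (and similarly for $1+\mathfrak{p}^a/\mathfrak{p}^h$ on the diagonal), and by \eqref{eq:expl_description_of_root_filtration} the nontrivial entry of an element of $(\bG_h^{\alpha})^{h-a}$ lies in $\mathfrak{p}^{(h-a)-\epsilon(i_0,j_0)}$, where $\alpha=\alpha_{i_0,j_0}$.

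Write $x=1+A\in\bG_h^{a+1}$ (respectively $\bG_h^a$) and $y=1+B\in(\bG_h^{\alpha})^{h-a}$, where $B=t\cdot E_{i_0,j_0}$ is concentrated in a single off-diagonal entry. Since $i_0\neq j_0$ we have $B^2=0$, so a direct expansion gives
\[
[x,y]=xyx^{-1}y^{-1}=1+(AB-BA)+R,
\]
where $R$ is a sum of terms of the form $A^{n_1}BA^{n_2}B\cdots$ in which every summand involves either two factors of $B$ or at least two factors of $A$ beyond $AB-BA$. A routine valuation estimate shows that each entry of $R$ has strictly greater $\mathfrak{p}$-valuation than the corresponding entry of $AB-BA$, so it suffices to show that $AB-BA$ maps to zero in $\bG_h$. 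Because $B$ is supported in the single entry $(i_0,j_0)$, the entry $(AB-BA)_{k,l}$ is nonzero only when $l=j_0$, or $k=i_0$, or both; in the off-diagonal cases it equals $A_{k,i_0}\cdot t$ or $-t\cdot A_{j_0,l}$, and in the diagonal case $(k,l)=(i_0,j_0)$ it equals $(A_{i_0,i_0}-A_{j_0,j_0})\cdot t$.

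For case (ii), $\alpha$ is reductive, so $\epsilon(i_0,j_0)=0$ and $[i_0]_{n_0}=[j_0]_{n_0}$, which forces $\epsilon(k,i_0)=\epsilon(k,j_0)$ and $\epsilon(i_0,l)=\epsilon(j_0,l)$. Summing the exponents yields $A_{k,i_0}\cdot t\in\mathfrak{p}^{(a-\epsilon(k,i_0))+(h-a)}=\mathfrak{p}^{h-\epsilon(k,j_0)}$, which vanishes in the $(k,j_0)$-entry of $\bG_h$; the $i_0$-th row and the diagonal case are handled identically, using $A_{ii}\in\mathfrak{p}^a$. For case (i), $\alpha$ is non-reductive and we use $x\in\bG_h^{a+1}$. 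The key combinatorial input is the inequality
\[
\epsilon(k,i_0)+\epsilon(i_0,j_0)\ \leq\ 1+\epsilon(k,j_0),
\]
valid for all triples $k,i_0,j_0$: indeed if both summands on the left are $1$ then $[k]_{n_0}>[i_0]_{n_0}>[j_0]_{n_0}$ forces $\epsilon(k,j_0)=1$. Using this, the sum of valuations of $A_{k,i_0}$ and $t$ is $\geq(a+1-\epsilon(k,i_0))+(h-a-\epsilon(i_0,j_0))\geq h-\epsilon(k,j_0)$, so once again the product vanishes in the $(k,j_0)$-entry of $\bG_h$. The $(i_0,l)$ entries are treated by the symmetric inequality applied to the triple $i_0,j_0,l$, and the diagonal $(i_0,j_0)$ entry is immediate from $A_{ii}\in\mathfrak{p}^{a+1}$ since $(a+1)+(h-a-1)=h\geq h-1\geq h-\epsilon(i_0,j_0)$.

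The main technical nuisance, rather than any deep obstacle, is the bookkeeping: one must separately verify the three cases for the position $(k,l)$ (off-diagonal in the $j_0$-column, off-diagonal in the $i_0$-row, and the single diagonal case $(i_0,j_0)$) together with the two subtypes of non-reductive root (${\rm ht}_{n_0}(\alpha)$ positive or negative). The inequality above collapses all these subcases into a single uniform estimate, which is precisely what the extra $+1$ shift in $\bG_h^{a+1}$ in part (i) is designed to absorb.
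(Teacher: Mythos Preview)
Your argument is correct and is essentially the paper's proof unpacked into explicit matrix entries: the paper reduces to commutators $[(\bG_h^\beta)^{a+1},(\bG_h^\alpha)^{h-a}]$ of individual root subgroups (plus the torus) and observes that the only critical case is when $\alpha,\beta$ are both non-reductive of type~1, in which case $\alpha+\beta$ (if a root) is again of type~1 --- precisely your inequality $\epsilon(k,i_0)+\epsilon(i_0,j_0)\leq 1+\epsilon(k,j_0)$. One minor simplification: since $[x,y]=1\Leftrightarrow xy=yx$ and $xy-yx=AB-BA$ for lifts in $\breve G_{\bx,0}$, you can dispense with the expansion of $xyx^{-1}y^{-1}$ and the higher-order remainder $R$ entirely.
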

\begin{proof}
The computation to show (i) and (ii) is nearly the same. We prove (i). It suffices to check that $[\bT_h^{a+1},(\bG_h^{\alpha})^{h-a}] = 1$ and that $[(\bG_h^{\beta})^{a+1},(\bG_h^{\alpha})^{h-a}] = 1$ for any $\beta \in \Phi$. This is an immediate computation using the explicit description of $\bG_h$ and \eqref{eq:expl_description_of_root_filtration}. The only critical case is when $\alpha$, $\beta$ are both non-reductive of type $1$. Here, it suffices to observe that if $\alpha + \beta$ is again a root, then it is again non-reductive of type $1$.
\end{proof}

Let $(\bN_h^1)^{\leq 0}$ denote the subgroup of $\bN_h^1$ generated by $\bN_h^2$ and all $(\bG_h^\beta)^1$ with $\beta \in \Phi^+$ satisfying ${\rm ht}_{n_0}(\beta) \leq 0$. Obviously $\bN_h^2 \subseteq (\bN_h^1)^{\leq 0} \subseteq \bN_h^1$.

\begin{lm}\label{lm:indep_of_order}
Let $1 \leq a \leq h-1$ and $z \in \bN_h^{a,*}$. Write  $z = \prod_{\beta \in \Phi^+} x_{\beta}^z$ with $x_{\beta}^z \in (\bG_h^{\beta})^a$ for a fixed (but arbitrary) order on $\Phi^+$. For $\beta\in\Phi^+$, let $a \leq a(\beta,z) \leq h$ be the integer such that $x_{\beta}^z \in (\bG_h^{\beta})^{a(\beta,z),\ast}$.
\begin{enumerate}[label=(\roman*)]
\item
If $z \in \bN_h^{a,*} \cap (\bN_h^1)^{\leq 0}$, then the set 
\begin{equation*}
A_z \colonequals \{\beta \in \Phi^+ \colon a(\beta,z) = a\}
\end{equation*}
is independent of the chosen order on $\Phi^+$. 
\item
If $z \in \bN_h^1 \smallsetminus (\bN_h^1)^{\leq 0}$, then the set
\begin{equation*}
A_z \colonequals \{\beta \in \Phi^+ : \text{${\rm ht}_{n_0}(\beta)$ is minimal among those with ${\rm ht}_{n_0} > 0$ and $a(\beta, z) = 1$}\}
\end{equation*}
does not depend on the chosen order on $\Phi^+$. 
\end{enumerate}
\end{lm}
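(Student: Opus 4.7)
The plan is to observe that any two orderings of $\Phi^+$ are connected by a finite sequence of transpositions of adjacent elements, so it suffices to show $A_z$ is invariant under a single such swap. Exchanging adjacent factors $x_\alpha$ and $x_\gamma$ replaces $x_\alpha x_\gamma$ by $x_\gamma x_\alpha \cdot c$ where $c \in \bG_h^{\alpha+\gamma}$ is the relevant commutator; commuting $c$ past the remaining factors produces only iterated brackets at strictly deeper filtration, so the analysis reduces to computing the filtration level of the single commutator $c$ using \eqref{eq:expl_description_of_root_filtration}.

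For part (i), I would prove the key containment $[(\bG_h^\alpha)^{a_1}, (\bG_h^\gamma)^{a_2}] \subseteq \bN_h^{a_1+a_2-\epsilon}$, where $\epsilon \in \{0,1\}$ reflects the shift on type-1 non-reductive root subgroups in \eqref{eq:expl_description_of_root_filtration}. Under the hypotheses, in any decomposition one has $a(\beta,z) \geq a$ for all $\beta$, and $a(\beta,z) \geq 2$ for all $\beta$ with $\text{ht}_{n_0}(\beta) > 0$ (automatic when $a \geq 2$; forced by $z \in (\bN_h^1)^{\leq 0}$ when $a = 1$). A case analysis according to whether each of $\alpha, \gamma$ is reductive, type-1 non-reductive, or type-2 non-reductive then shows $c \in \bN_h^{a+1}$. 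A transposition therefore perturbs the other factors only by elements of level $\geq a+1$, leaving the set $\{\beta : a(\beta,z) = a\}$ intact.

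For part (ii) I focus on transpositions of two level-$1$ factors, since other transpositions produce commutators in $\bN_h^2$ which cannot affect level-$1$ contributions. The same case analysis shows that the commutator can appear at level $1$ only when at least one of $\alpha, \gamma$ is type-1 non-reductive, and in that situation either (a) it lies in a type-1 non-reductive root subgroup with $\text{ht}_{n_0}(\alpha+\gamma) = \text{ht}_{n_0}(\alpha)+\text{ht}_{n_0}(\gamma)$ strictly greater than $\min(\text{ht}_{n_0}(\alpha), \text{ht}_{n_0}(\gamma))$ (using additivity of $\text{ht}_{n_0}$ on pairs of positive roots whose sum is a root, and the bound $|\text{ht}_{n_0}| \leq n_0-1$), or (b) it lies in a level-$1$ reductive or type-2 root subgroup, which does not contribute to $A_z$ by definition. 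In either case a transposition cannot introduce a level-$1$ type-$1$ contribution of strictly smaller $\text{ht}_{n_0}$, so the minimum $h_+$ and the set $A_z$ of factors achieving it are both order-independent.

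The main technical obstacle is the commutator bookkeeping in the truncated Witt-vector setting, particularly for the mixed-type cases where the filtration shift in \eqref{eq:expl_description_of_root_filtration} interacts nontrivially with the classification of $\alpha+\gamma$ as reductive, type-1, or type-2 non-reductive; one must verify that these shifts conspire to place each relevant bracket at the claimed level. I expect these checks to reduce to the same kind of explicit $n_0 \times n_0$ block matrix calculations as in the proof of Lemma \ref{lm:commutator_trivial}.
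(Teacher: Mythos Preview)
Your approach is correct and rests on the same commutator estimate the paper uses, but the packaging is different and worth contrasting. The paper's proof is shorter because it bypasses the transposition bookkeeping entirely: for (i) it observes that $\bN_h^a/\bN_h^{a+1}$ (for $a\geq 2$) and $(\bN_h^1)^{\leq 0}/\bN_h^2$ are \emph{abelian}, so the image of $z$ in the relevant quotient is an intrinsic tuple $(\bar x_\beta^z)_\beta$ and $A_z$ is simply the support of this tuple; for (ii) it introduces the filtration $H_m\subseteq \bN_h^1/(\bN_h^1)^{\leq 0}$ by $\mathrm{ht}_{n_0}\geq m$, notes that each $H_m/H_{m+1}$ is again abelian, and reads off $A_z$ from the first nontrivial graded piece. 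Your case analysis is exactly what verifies that these quotients are abelian, so you are effectively reproving that fact and then tracking it through adjacent swaps rather than invoking the quotient map once. The abelian-quotient viewpoint also cleanly handles the point you gloss over in (ii): once you know $H_m/H_{m+1}\cong\prod_{\mathrm{ht}_{n_0}(\beta)=m}(\bG_h^\beta)^1/(\bG_h^\beta)^2$, there is nothing further to check about iterated brackets, whereas your transposition argument still owes an explanation of why pushing the commutator $c$ past the remaining factors never deposits a level-$1$ type-$1$ contribution at $\mathrm{ht}_{n_0}=h_+$ (it doesn't, but you should say so). One small point: your assertion that ``$a(\beta,z)\geq 2$ for all $\beta$ with $\mathrm{ht}_{n_0}(\beta)>0$ is forced by $z\in(\bN_h^1)^{\leq 0}$'' is true but not immediate from the definition of $(\bN_h^1)^{\leq 0}$ as a generated subgroup; it follows from uniqueness of the root-factor decomposition in a fixed order, which you should cite or note.
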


\begin{proof}
(i): First let $2 \leq a \leq h-1$. From the explicit description of the root subgroups it follows that the quotient $\bN_h^a/\bN_h^{a+1}$ is abelian (for $a=2$ one needs to use that the sum of two non-reductive roots of type $1$ is again of type $1$ if it is a root), thus its elements are simply tuples $(x_{\beta})_{\beta \in \Phi^+}$ with $x_{\beta} \in (\bG_h^{\beta})^a/(\bG_h^{\beta})^{a+1}$ with entry-wise multiplication. If $\bar{z}= (\bar{x}_{\beta}^z)$ is the image of $z$ in this quotient, then $A_z$ identifies with the set of those $\beta$ for which $\bar{x}_{\beta}^z \neq 1$ (which is obviously independent of the order). Now let $a = 1$. Then $z \in (\bN_h^1)^{\leq 0} \sm \bN_h^2$ and the same arguments apply to the abelian quotient $(\bN_h^1)^{\leq 0}/\bN_h^2$. 

(ii): The group $\bN_h^1/(\bN_h^1)^{\leq 0}$ is not abelian, but is generated by its subgroups $(\bG_h^{\beta})^1/(\bG_h^{\beta})^2$ for $\beta \in \Phi^+$ non-reductive of type $1$. For $m\geq 1$, let $H_m$ be the subgroup generated by all $(\bG_h^{\beta})^1/(\bG_h^{\beta})^2$ with ${\rm ht}_{n_0}(\beta) \geq m$. Since the function ${\rm ht}_{n_0}$ is additive on $\Phi$, the $H_m$ form a filtration of $\bN_h^1/(\bN_h^1)^{\leq 0} = H_1$ with abelian quotients $H_m/H_{m+1} \cong \prod\limits_{\substack{\beta \text{ non-red.\ type $1$} \\ {\rm ht}_{n_0}(\beta) = m}} (\bG_h^{\beta})^1/(\bG_h^{\beta})^2$. Since $z \not\in (\bN_h^1)^{\leq 0}$, there is an $m \geq 1$ such that the image of $z$ in $\bN_h^1/(\bN_h^1)^{\leq 0}$ lies in $H_m \sm H_{m+1}$. Denote by $\bar{z} = (\bar{x}_{\beta}^z)_{\substack{\beta \text{ non-red.\ type $1$} \\ {\rm ht}_{n_0}(\beta) = m}}$ the image of $z$ in $H_m/H_{m+1}$. Now $A_z$ is the set of all $\beta \in \Phi^+$ non-reductive of type $1$ with ${\rm ht}_{n_0}(\beta) = m$ such that $\bar{x}_{\beta}^z \neq 1$. This does not depend on the chosen order. 
\end{proof}


\subsubsection{Stratification of $\bK_h^1$} \label{sec:stratification_of_Kh1}

\begin{lm}\label{lm:commutator_decomposition}
Let $1 \leq a \leq h-1$, $z \in \bN_h^{a,\ast}$ and $A_z$ as in Lemma \ref{lm:indep_of_order}.
\begin{enumerate}[label=(\roman*)]
\item If $A_z$ contains a non-reductive root, let $-\alpha \in A_z$ be a non-reductive root  of maximal height and $\alpha \in \Phi^-$ its opposite. Then for any $\xi \in (\bG_h^{\alpha})^{h-a}$, we have $[\xi,z] \in \mathcal{T}^{\alpha}(\bN_h^-)^{h-1}$. Moreover, projecting $[\xi,z]$ into $\mathcal{T}^{\alpha}$ induces an isomorphism
\[
\lambda_z \colon (\bG_h^{\alpha})^{h-a}/(\bG_h^{\alpha})^{h-a+1} \stackrel{\sim}{\rightarrow} \mathcal{T}^{\alpha}
\]
\item If $A_z$ contains only reductive roots, let $-\alpha \in A_z$ be a root of maximal height and $\alpha \in \Phi^-$ its opposite. Then for any $\xi \in (\bG_h^{\alpha})^{h-a-1}$, we have $[\xi,z] \in \mathcal{T}^{\alpha}(\bN_h^-)^{h-1}$. Moreover, projecting $[\xi,z]$ into $\mathcal{T}^{\alpha}$ induces an isomorphism
\[
\lambda_z \colon (\bG_h^{\alpha})^{h-a-1}/(\bG_h^{\alpha})^{h-a} \stackrel{\sim}{\rightarrow} \mathcal{T}^{\alpha}
\]
\end{enumerate}
\end{lm}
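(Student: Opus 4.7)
The plan is to compute the commutator $[\xi, z]$ directly via matrix identities in $\GL_n$, exploiting both the explicit Moy--Prasad filtration \eqref{eq:expl_description_of_root_filtration} and the simple form of the Chevalley commutator formula for $\GL_n$: $[I + sE_{j,i}, I + tE_{k,l}]$ is trivial unless $j = k$, $i = l$, or $(j,i) = (l,k)$, and in the non-trivial cases reduces to an explicit matrix expression whose leading term is either the coroot piece $st\,(E_{j,j} - E_{i,i})$ (when $(j,i) = (l,k)$) or $\pm st \, E_{?,?}$ in a single root subgroup (otherwise). Writing $z = \prod_{\beta \in \Phi^+} x_\beta^z$, I verify via \eqref{eq:expl_description_of_root_filtration} that $[\xi, z] \in \bG_h^{h-1}$ (with $\xi$ at depth $h{-}a$ in case (i) and $h{-}a{-}1$ in case (ii)), so that the decomposition of $[\xi, z]$ into root and torus components is governed by the leading-order behavior of each pairwise commutator $[\xi, x_\beta^z]$.

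By Lemma \ref{lm:commutator_trivial} --- applied directly in case (i), or with shifted index $a+1$ in case (ii) (where $\xi$ is taken at depth $h{-}a{-}1$ precisely to activate the shifted lemma) --- each single commutator $[\xi, x_\beta^z]$ vanishes whenever $a(\beta,z) \geq a+1$. Hence only $\beta \in A_z$ contribute. For the diagonal contribution $\beta = -\alpha$, an explicit matrix computation with $\xi = I + s E_{j,i}$ and $x_{-\alpha}^z = I + t E_{i,j}$ (writing $\alpha = \alpha_{j,i}$) produces a torus piece $st\,(E_{j,j} - E_{i,i}) \in \mathcal{T}^\alpha$; a valuation count via \eqref{eq:expl_description_of_root_filtration}, case-split on whether $\alpha$ is reductive, non-reductive of type $1$, or non-reductive of type $2$, shows that $st$ has valuation exactly $h-1$, and since $t$ has unit leading coefficient (as $x_{-\alpha}^z$ has depth exactly $a$), the induced map $\lambda_z\colon s \mapsto st$ is multiplication by a unit and hence an isomorphism. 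The off-diagonal matrix contributions $\pm st^2 E_{i,j}$ and $\pm s^2 t E_{j,i}$ coming from the same computation lie at valuations pushed beyond the relevant filtration steps, and so vanish in $\bG_h$.

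For the remaining contributions $\beta \in A_z \setminus \{-\alpha\}$, either $\alpha + \beta \notin \Phi$ (so the commutator vanishes) or $\alpha + \beta \in \Phi$, in which case $[\xi, x_\beta^z]$ lies in $(\bG_h^{\alpha+\beta})^{h-1}$. The maximality of $\mathrm{ht}(-\alpha)$ in $A_z$ (respectively, among the reductive roots of $A_z$ in case (ii)) prevents $\mathrm{ht}(\alpha+\beta) = 0$ with $\alpha + \beta \neq 0$ --- impossible in the $\GL_n$ root system --- forcing $\alpha + \beta \in \Phi^-$ and placing the contribution in $(\bN_h^-)^{h-1}$. This gives $[\xi, z] \in \mathcal{T}^\alpha (\bN_h^-)^{h-1}$ as required. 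The main obstacle is the case-by-case bookkeeping across reductive, non-reductive type $1$, and non-reductive type $2$ roots against the filtration \eqref{eq:expl_description_of_root_filtration}. In particular, in case (ii) the shift from depth $h{-}a$ to depth $h{-}a{-}1$ for $\xi$ --- essential for producing a nonzero $\mathcal{T}^\alpha$-component --- must not spoil the vanishing of Lemma \ref{lm:commutator_trivial} for non-reductive $\beta$ with $a(\beta,z) = a$; this is ensured by the hypothesis of (ii) that $A_z$ contains only reductive roots, so all non-reductive $\beta$ automatically satisfy $a(\beta,z) \geq a + 1$ and are killed by Lemma \ref{lm:commutator_trivial}.
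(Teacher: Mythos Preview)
Your approach and the paper's are the same in outline --- decompose $z$ into root pieces and compute pairwise commutators with $\xi$ --- and for most cases your argument works. But you have glossed over precisely the subcase where the paper's proof does the most work: part (i) with $a=1$ and $z \in \bN_h^1 \smallsetminus (\bN_h^1)^{\leq 0}$. There $A_z$ is defined via Lemma~\ref{lm:indep_of_order}(ii), not (i): it consists only of the non-reductive type-1 roots $\beta$ with $a(\beta,z)=1$ of \emph{minimal} $\mathrm{ht}_{n_0}$. Hence there exist $\beta$ with $a(\beta,z)=1=a$ not lying in $A_z$ --- for instance type-1 roots $\gamma$ with $\mathrm{ht}_{n_0}(\gamma) > \mathrm{ht}_{n_0}(-\alpha)$, or reductive and type-2 roots at level~$1$ --- and your appeal to Lemma~\ref{lm:commutator_trivial} (which only kills $a(\beta,z)\ge a+1$) does not dispose of them. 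Moreover the relevant quotient $\bN_h^1/(\bN_h^1)^{\leq 0}$ is not abelian, so the passage from $[\xi,z]$ to a product $\prod_\beta [\xi,x_\beta^z]$ is not automatic. The paper handles this subcase separately: one first observes that $(\bN_h^1)^{\leq 0}$ commutes with $\xi$ (a filtration check using that $\alpha$ is type~2), then shows that each $x_\gamma^z$ with $\mathrm{ht}_{n_0}(\gamma) > \mathrm{ht}_{n_0}(-\alpha)$ commutes with $\xi$ (since $\alpha+\gamma$, if a root, is necessarily type~1 and the commutator lands above the top of that filtration), and finally manipulates $\xi z$ step by step to obtain $[\xi,z] = [\xi,x_{-\alpha}^z]\cdot s$ with $s \in (\bN_h^-)^{h-1}$.

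There is also a smaller gap in the easier subcase of (i). Your claim that maximality of $\mathrm{ht}(-\alpha)$ in $A_z$ forces $\alpha+\beta \in \Phi^-$ only applies when $\beta$ is non-reductive: the lemma chooses $-\alpha$ of maximal height \emph{among the non-reductive roots} of $A_z$, so a reductive $\beta \in A_z$ may well have $\mathrm{ht}(\beta) > \mathrm{ht}(-\alpha)$. For such $\beta$ the commutator $[\xi,x_\beta^z]$ in fact vanishes outright --- because $\alpha+\beta$ inherits the non-reductive type of $\alpha$ and the valuation of the commutator reaches the top of the corresponding filtration step --- but this is a separate filtration argument, not the height argument you gave.
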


\begin{proof}
We first prove (i) when $z \in (\bN_h^1)^{\leq 0}$. Assume first that $A_z$ contains a non-reductive root and let $-\alpha$ be such a root of maximal height and $\alpha \in \Phi^-$ its opposite. By Lemma \ref{lm:commutator_trivial} (applied three times), the commutator map $\bN_h^a \times (\bG_h^{\alpha})^{h-a} \rightarrow \bG_h$ induces a pairing of abelian groups,
\[
\bN_h^a/\bN_h^{a+1} \times (\bG_h^{\alpha})^{h-a}/(\bG_h^{\alpha})^{h-a+1} \rightarrow \bG_h^{h-1}, \quad \bar{x},\bar{\xi} \mapsto [\bar{\xi},\bar{x}].
\]
(If $a=1$, one has to replace $\bN_h^a/\bN_h^{a+1}$ by $(\bN_h^1)^{\leq 0}/\bN_h^2$.) This is bilinear in $\bar{x}$: if $x_1,x_2 \in \bN_h^a$, then
\begin{align*}
[\xi,x_1x_2] &= \xi^{-1} x_2^{-1}x_1^{-1}\xi x_1 x_2 = \xi^{-1} x_1^{-1} x_2^{-1} \xi x_2 x_1 \\
&= \xi^{-1} x_1^{-1} \xi [\xi,x_2] x_1 = [\xi,x_1] [\xi,x_2],
\end{align*}
where the second equality follows from Lemma \ref{lm:commutator_trivial} and $\bN_h^a/\bN_h^{a+1}$ (resp.\ $(\bN_h^1)^{\leq 0}/\bN_h^2$ if $a=1$) being abelian, and the fourth follows from Lemma \ref{lm:commutator_trivial} as $[\xi,x_2] \in \bG_h^{h-1}$. 

Now let $\bar{\xi} \in (\bG_h^{\alpha})^{h-a}/(\bG_h^{\alpha})^{h-a+1}$ and $\bar z \in \bN_h^a/\bN_h^{a+1}$ be the images of $\xi$ and $z$ respectively. Write 
\[ 
\bar{z} = \bar{x}_{-\alpha}^z \prod_{\beta \in \Phi^+ \text{ red.}} \bar{x}_{\beta}^z \cdot  \prod_{\substack{\beta \in \Phi^+ \text{ non-red., } \beta \neq -\alpha \\ {\rm ht}(\beta) \leq {\rm ht}(-\alpha) }} \bar{x}_{\beta}^z.
\]
Then $[\xi,z]$ is the product of $[\bar{\xi},\bar{x}_{-\alpha}]$ with all the $[\bar{\xi},\bar{x}_{\beta}^z]$ in any order. Let $x_{\beta}^z$ be any lift of $\bar{x}_{\beta}^z$ to $(\bG_h^{\beta})^a$. If $\beta$ is reductive and $\alpha$ is (non-reductive) of type $1$, then either $\xi$, $x_{\beta}^z$ commute anyway or $\alpha + \beta$ is again a root (necessarily non-reductive of type $1$) and \eqref{eq:expl_description_of_root_filtration} shows that $[\xi, x_{\beta}^z] = 1$. If $\beta$ is reductive and $\alpha$ is (non-reductive) of type $2$, then \eqref{eq:expl_description_of_root_filtration} shows that $\xi,x_{\beta}^z$ commute. If $\beta \neq -\alpha$ is non-reductive, then by assumption ${\rm ht}(\beta) \leq {\rm ht}(-\alpha)$. Then $[\xi,x_\beta^z] = 1$ unless $\alpha + \beta$ is a root, in which case $[\xi,x_{\beta}^z]\in (\bG_h^{\alpha + \beta})^{h-1}$ by \eqref{eq:expl_description_of_root_filtration}. But the height condition implies that $\alpha + \beta \in \Phi^{-}$. Following this case-by-case examination, the claim about $\lambda_z$ in (i) when $z \in (\bN_h^1)^{\leq 0}$ is then established once we make the following observation: If $\xi$ has $[y] \varpi^{h-a}$ (resp.\ $[y] \varpi^{h-a-1}$, if $a = 1$) and $x_{-\alpha}^z$ has $[u]\varpi^{a-1}$ (resp.\ $[u] \varpi^a$) in their only non-trivial entries, then $[\xi, x_{-\alpha}^z]$ is a diagonal matrix with only two nontrivial entries: $1 \pm [uy] \varpi^{h-1}$. 

In (ii), it is automatic that $z \in (\bN_h^1)^{\leq 0}$, and this case can be proven in exactly the same way as above (and is slightly easier) and we omit the details.

It remains to prove (i) in the case that $z \in \bN_h^1 \sm (\bN_h^1)^{\leq 0}$. In particular, $\xi \in (\bG_h^\alpha)^{h-1}$ since $a = 1$. By construction, $A_z$ consists of non-reductive roots of type $1$, so $\alpha$ must be non-reductive of type $2$. Modulo $(\bN_h^{1})^{\leq 0}$ (which commutes with $\xi$) we may write
\[
z = \Big(\prod_{\substack{\gamma \in \Phi^+ \\ {\rm ht}_{n_0}(\gamma) > {\rm ht}_{n_0}(-\alpha)}} x_{\gamma}^z \Big)  \Big(\prod_{\beta \in A_z \sm \{-\alpha\}} x_{\beta}^z \Big) x_{-\alpha}^z. 
\]
Recall that $A_z \sm\{-\alpha\}$ consists of (necessarily non-reductive, type 1) roots with ${\rm ht}_{n_0}(\beta) = {\rm ht}_{n_0}(-\alpha)$. By construction ${\rm ht}(\gamma), {\rm ht}(\beta) \leq {\rm ht}(-\alpha)$, and so in particular,
\begin{equation*}
s \colonequals \prod_{\beta} [\xi^{-1},(x_{\beta}^z)^{-1}] \in (\bN_h^-)^{h-1}.
\end{equation*}
We claim:
\begin{align}\nonumber
\xi z 
&= \xi \Big(\prod_{\gamma}x_{\gamma}^z\Big) \Big(\prod_{\beta} x_{\beta}^z\Big)  x_{-\alpha}^z \\ \label{e:2}
&= \Big(\prod_{\gamma}x_{\gamma}^z\Big) \xi \Big(\prod_{\beta} x_{\beta}^z\Big)  x_{-\alpha}^z \\\nonumber
&= \Big(\prod_{\gamma}x_{\gamma}^z\Big) \Big(\prod_{\beta} [\xi^{-1},(x_{\beta}^z)^{-1}] x_{\beta}^z \Big) x_{-\alpha}^z \xi [\xi,x_{-\alpha}^z] \\ \label{e:4}
&= \Big(\prod_{\gamma}x_{\gamma}^z\Big) \Big(\prod_{\beta} x_{\beta}^z \Big) x_{-\alpha}^z s \xi [\xi,x_{-\alpha}^z] \\ \nonumber
&= z s \xi [\xi,x_{-\alpha}^z] \\ \label{e:6}
&= z \xi [\xi, x_{-\alpha}^z] s
\end{align}
Here \eqref{e:2} holds as $\alpha + \gamma$ (if it is a root) must be non-reductive of type $1$, and hence $\xi$ and $x_{\gamma}^z$ commute by \eqref{eq:expl_description_of_root_filtration}. To justify \eqref{e:4}, let $\beta \in A_z \sm \{-\alpha\}$. If $\alpha + \beta$ is not a root, then $[\xi,x_{\beta}^z] = 1$. If $\alpha + \beta$ is a root, then $\alpha + \beta$ is reductive (since ${\rm ht}_{n_0}(\beta) = {\rm ht}_{n_0}(-\alpha)$) and $[\xi,x_{\beta}^z] \in (\bG_h^{\alpha + \beta})^{h-1} \subseteq (\bN_h^-)^{h-1}$ (since ${\rm ht}(\beta) \leq {\rm ht}(-\alpha)$ by definition of $\alpha$). But every $\beta' \in A_z$ is non-reductive of type $1$, so we must also have ${\rm ht}_{n_0}(\beta' + (\alpha + \beta)) > 0$, and \eqref{eq:expl_description_of_root_filtration} shows that $[\xi,x_{\beta}^z]$ commute with $x_{\beta'}^z$ for all $\beta' \in A_z$. Finally, \eqref{e:6} follows from the fact that $s \in (\bN_h^-)^{h-1}$ commutes with $\xi \in (\bN_h^-)^{h-1}$ and with $[\xi, x_{-\alpha}^z] \in \mathcal{T}^{\alpha}$. But now we have shown $[\xi,z] = [\xi,x_{-\alpha}^z] s \in \mathcal{T}^{\alpha}(\bN_h^-)^{h-1}$, which finishes the proof of the last remaining assertion of the lemma.
\end{proof}

Let $\bK_h = \bU_h^- \cap \bN_h$. Let $\Phi' = \{\beta \in \Phi^+ \colon \bG_h^{\beta} \in \bK_h\}$. Let $\mathcal{X}$ denote the set of all non-empty subsets $I \subseteq \Phi'$, on which ${\rm ht} \colon \Phi^+ \rightarrow \bZ_{>0}$ is constant.  To $z \in \bK_h^1 \sm \{1\}$ we attach a pair $(a_z,I_z)$ with  $1 \leq a_z \leq h-1$ and $I_z \in \mathcal{X}$. Define $a_z$ by $z \in \bK_h^{a_z,\ast}$. If $A_z$ contains a non-reductive root, let $I_z \subseteq A_z$ be the subset of all non-reductive roots of maximal height. (Note that if $a=1$, then $I_z$ contains only roots of type $1$ if $z \notin (\bN_h^1)^{\leq 0}$ and only contains roots of type $2$ if $z \in (\bN_h^1)^{\leq 0}$.) If $A_z$ contains only reductive roots, let $I_z \subseteq A_z$ be the subset of all roots of maximal height. We have a stratification into locally closed subsets
\begin{equation}\label{eq:stratification_of_Kh1}
\bK_h^1 \sm \{1\} = \bigsqcup_{a,I} \bK_h^{a,\ast,I} \qquad \text{where $\bK_h^{a,\ast,I} = \{z \in \bK_h^1 \sm \{1\} \colon (a_z,I_z) = (a,I)\}$}.
\end{equation}


\subsubsection{Cohomology of $\widehat \Sigma_w'$}\label{sec:widehatSigmaprime}

We are now ready to prove \eqref{e:prime} using the same arguments as in the proof of \cite[Lemma 1.9]{Lusztig_04}. To do this, it is enough to show that $H^j_c(\widehat \Sigma'_w)_{\theta,\theta'} = 0$ for all $j \geq 0$. For a $\mathcal{T}^{\prime F}$-module $M$ and a character $\chi$ of $\mathcal{T}^{\prime F}$, write $M_{(\chi)}$ for the $\chi$-isotypic component of $M$. Note that $\mathcal{T}^{\prime F}$ acts on $\widehat\Sigma'_w$ by
\[
t' \colon (x,x',u,u',z,\tau') \mapsto (x, t'x't^{\prime-1}, u, t'u't^{\prime -1}, z, \tau't^{\prime -1}).
\]
Hence $H^j_c(\widehat \Sigma'_w)$ is a $\mathcal{T}^{\prime F}$-module. It is enough to show that $H^j_c(\widehat \Sigma'_w)_{(\chi)} = 0$ for any regular character $\chi$ of $\mathcal{T}^{\prime F}$. Fix such a $\chi$. Define $N = \dot w U^{\prime -} \dot w^{-1}$, $N^- = \dot w U^{\prime} \dot w^{-1}$. Then with notation as in Sections \ref{sec:some_preparations_for_Sigma_prime} and \ref{sec:stratification_of_Kh1}, the stratification of $\bK_h^1 \smallsetminus \{1\}$ given in \eqref{eq:stratification_of_Kh1} induces a stratification of $\widehat \Sigma'_w$ indexed by $1\leq a\leq h-1$ and $I \in \mathcal{X}$:
\begin{equation*}
\widehat\Sigma'_w = \bigsqcup_{a,I} \widehat\Sigma_w^{\prime,a,I} \qquad \text{where $\widehat\Sigma_w^{\prime,a,I} = \{(x,x',u,u',z,\tau') \in \widehat\Sigma'_w \colon z\in \bK_h^{a,\ast,I} \}$}.
\end{equation*}
Note that each $\widehat \Sigma_w^{\prime, a, I}$ is stable under $\mathcal{T}^{\prime F}$. Thus to show \eqref{e:prime}, it is enough to show
\begin{equation}\label{e:Sigma w a I}
H_c^j(\widehat\Sigma_w^{\prime,a,I}, \overline{\mathbb{Q}}_{\ell})_{(\chi)} = 0 \qquad \text{for any fixed $a,I$.}
\end{equation}

Choose a root $\alpha$ such that $-\alpha \in I$. Then $\bG_h^{\alpha} \subseteq \bU_h \cap \dot{w} \bU_h' \dot{w}^{-1}$. For any $z \in \bK_h^{a,\ast,I}$, Lemma \ref{lm:commutator_decomposition} grants us an isomorphism 
\begin{align*}
\lambda_z &\from (\bG_h^\alpha)^{h-a}/(\bG_h^\alpha)^{h-a+1} \stackrel{\sim}{\longrightarrow} \cT^\alpha, && \text{if $\alpha$ is non-reductive,} \\
\lambda_z &\from (\bG_h^\alpha)^{h-a-1}/(\bG_h^\alpha)^{h-a} \stackrel{\sim}{\longrightarrow} \cT^\alpha, && \text{if $\alpha$ is reductive.}
\end{align*}
Let $\pi$ denote the natural projection $(\bG_h^{\alpha})^{h-a} \rightarrow (\bG_h^{\alpha})^{h-a}/(\bG_h^{\alpha})^{h-a+1}$ if $\alpha$ is non-reductive and the natural projection $(\bG_h^{\alpha})^{h-a-1} \rightarrow (\bG_h^{\alpha})^{h-a-1}/(\bG_h^{\alpha})^{h-a}$ if $\alpha$ is reductive. Let $\psi$ be a section to $\pi$ such that $\pi\psi = 1$ and $\psi(1) = 1$. Let
\[
\mathcal{H}' \colonequals \{t' \in \mathcal{T}' \colon t^{\prime -1} F(t') \in \dot{w}^{-1} \mathcal{T}^{\alpha} \dot{w} \}.
\]
This is a closed subgroup of $\mathcal{T}'$. For any $t' \in \mathcal{H}'$ define $f_{t'} \colon \widehat\Sigma_w^{\prime,a,I} \rightarrow \widehat\Sigma_w^{\prime,a,I}$ by
\[f_{t'}(x,x',u,u',z,\tau') = (xF(\xi), \hat{x}^{\prime}, u, F(t')^{-1} u' F(t'), z, \tau'F(t')), \]
where
\[
\xi = \psi\lambda_z^{-1}(\dot{w}F(t')^{-1}t'\dot{w}^{-1}) \in (\bG_h^{\alpha})^{h-a-1} \subseteq \bU_h \cap \dot{w}\bU_h'\dot{w}^{-1}
\]
($(\bG_h^\alpha)^{h-a-1}$ should be replaced by $(\bG_h^\alpha)^{h-a}$ if $\alpha$ is non-reductive), and $\hat{x}' \in \bG_h$ is defined by the condition that
\[
xF(\xi z \dot{w} \tau' F(t')) \in u z \dot{w} \tau' F(t')F(t')^{-1}u'F(t')\hat{x}'. 
\]
To check that this is well-defined one needs to show $\hat{x}' \in F(\bU_h')$. This is done with exactly the same computation as in the proof of \cite[Lemma 1.9]{Lusztig_04}, and we omit this. It is clear that $f_{t'} \colon \widehat\Sigma_w^{\prime,a,I} \rightarrow \widehat\Sigma_w^{\prime,a,I}$ is an isomorphism for any $t' \in \mathcal{H}'$. Moreover, since $\mathcal{T}^{\prime F} \subseteq \mathcal{H}'$ and since for any $t' \in \mathcal{T}^{\prime F}$ the map $f_{t'}$ coincides with the action of $t'$ in the $\mathcal{T}^{\prime F}$-action on $\widehat\Sigma_w^{\prime,a,I}$ (we use $\psi(1) = 1$ here), it follows that we have constructed an action $f$ of $\mathcal{H}'$ on $\widehat\Sigma_w^{\prime,a,I}$ extending the $\mathcal{T}^{\prime F}$-action.

If a connected group acts on a scheme, the induced action in the cohomology is constant. Thus for any $t' \in \mathcal{H}^{\prime 0}$, the induced map $f_{t'}^{\ast} \colon H_c^j(\widehat\Sigma_w^{\prime,a,I}, \overline{\mathbb{Q}}_{\ell}) \rightarrow H_c^j(\widehat\Sigma_w^{\prime,a,I}, \overline{\mathbb{Q}}_{\ell})$ is constant when $t'$ varies in $\mathcal{H}^{\prime 0}$. Hence the restriction of the $\mathcal{T}^{\prime F}$-action on $H_c^j(\widehat\Sigma_w^{\prime,a,I}, \overline{\mathbb{Q}}_{\ell})$ to $\mathcal{T}^{\prime F} \cap \mathcal{H}^{\prime,0}$ is trivial. 

Now we can find some $m\geq 1$ such that $F^m(\dot{w}^{-1}\mathcal{T}^{\alpha}\dot{w}) = \dot{w}^{-1}\mathcal{T}^{\alpha} \dot{w}$. Then 
\[
t' \mapsto t'F(t')F^2(t') \cdots F^{m-1}(t') 
\]
defines a morphism $\dot{w}^{-1}\mathcal{T}^{\alpha}\dot{w} \rightarrow \mathcal{H}'$. Since $\mathcal{T}^{\alpha}$ is connected, its image is also connected and hence contained in $\mathcal{H}^{\prime 0}$. If $t' \in (\dot{w}^{-1}\mathcal{T}^{\alpha}\dot{w})^{F^m}$, then $N_F^{F^m}(t') \in \mathcal{T}^{\prime F}$ and hence also $N_F^{F^m}(t') \in \mathcal{T}^{\prime F} \cap \mathcal{H}^{\prime 0}$. Thus the action of  $N_F^{F^m}(t') \in \mathcal{T}^{\prime F}$ on $H_c^j(\widehat\Sigma_w^{\prime,a,I})$ is trivial for any $t' \in (\dot{w}^{-1}\mathcal{T}^{\alpha}\dot{w})^{F^m}$. 

Finally, observe that if $H_c^j(\widehat\Sigma_w^{\prime,a,I}, \overline{\mathbb{Q}}_{\ell})_{(\chi)} \neq 0$, then the above shows that $t' \mapsto \chi(N_F^{F^m}(t'))$ is the trivial character, which contradicts the regularity assumption on $\chi$. This establishes \eqref{e:Sigma w a I}, which establishes \eqref{e:prime}, which was the last outstanding claim in the proof of Proposition \ref{prop:alternating_sum_for_Sigma}.

\section{Cuspidality} \label{s:cuspidality}

The next theorem (proved in Section \ref{s:Gh cuspidal}) concerns the ``cuspidality'' of the representation $R_{T_h}^{G_h}(\theta)$ for primitive $\theta$. This is the higher-level analogue of Deligne--Lusztig's theorem \cite[Theorem 8.3]{DeligneL_76} required to prove that the induced representation $\cInd_{Z \cdot G_{\cO}}^G\left(|R_{T_h}^{G_h}(\theta)|\right)$ is irreducible and supercuspidal (Theorem \ref{t:RTG irred}). A proof that this induced representation is irreducible supercuspidal when $h = 1$ can be found in \cite[Proposition 6.6]{MoyP_96}, and when $G = \GL_2(K)$ and $h$ arbitrary it was done by the first author in \cite{Ivanov_15_ADLV_GL2_unram}. 

We work with a special representative $b$ as in Section \ref{sec:special_representatives}. 

Let $N'$ be the unipotent radical of any standard parabolic subgroup of $\GL_{n'}$ and let $\breve N$ denote the subgroup of $\GL_n(\breve K)$ consisting of unipotent matrices such that any $(n_0 \times n_0)$-block consists of a diagonal matrix and the $(i,j)$th block can have nonzero entries if and only if the $(i,j)$th entry of an element of $N'$ is nonzero. For each $h \geq 1$, let $\breve N_h$ denote the image of $\breve N \cap \breve G_{\bx,0}$ in $\bG_h(\overline \FF_q)$. Define $N_h \colonequals \breve N_h^F$ and $N_h^{h-1} \colonequals \ker(N_h \to N_{h-1})$.

\begin{theorem}\label{t:Gh cuspidal}
Assume $\theta \from T_h \to \overline \QQ_\ell^\times$ is primitive. Then the restriction of $|R_{T_h}^{G_h}(\theta)|$ to $N_h^{h-1}$ does not contain the trivial representation.
\end{theorem}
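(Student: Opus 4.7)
The plan is to show $\dim|R_{T_h}^{G_h}(\theta)|^{N_h^{h-1}} = 0$. By Theorem \ref{t:alt sum Xh}, $|R_{T_h}^{G_h}(\theta)| = \pm R_{T_h}^{G_h}(\theta)$, and since $N_h^{h-1}$ and the deepest piece $T_h^{h-1} \colonequals \ker(T_h \to T_{h-1})$ commute inside $G_h \times T_h$, the $N_h^{h-1}$-averaging projector commutes with the projection onto the $\theta|_{T_h^{h-1}}$-isotypic component. It therefore suffices to show that the $(\mathrm{triv}, \theta|_{T_h^{h-1}})$-isotypic component of the $(N_h^{h-1} \times T_h^{h-1})$-action on $\sum_i (-1)^i H_c^i(X_h, \overline{\QQ}_\ell)$ vanishes. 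As flagged in the introduction to this part, I would work throughout with the special representative $b$ of Section \ref{sec:special_representatives}, which makes the $F$-stable parahoric subgroups and the fibration of Section \ref{sec:fibers_as_hypersurface} well-adapted to the action.

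The geometric input is Proposition \ref{prop:polynomial_P_describing_the_fiber}, which presents $X_h$ as the Artin--Schreier hypersurface
\[ \sigma^{n_0}(P_1(\widetilde x, y)) - P_1(\widetilde x, y) = c(\widetilde x) - \sigma(c(\widetilde x)), \qquad P_1 = \sum_{1 \le i,j \le n'} m_{ji}\, y_i^{q^{(j-1)n_0}}, \]
inside $X_{h-1}^+ \times \bA^{n'}$, in the fiber coordinates $y_i = x_{1 + n_0(i-1), h-1}$. Unpacking the embedding $\lambda$ from Proposition-Definition \ref{prop:lambda}, both $N_h^{h-1}$ and $T_h^{h-1}$ fix $\widetilde x$: left multiplication by $u = 1 + \varpi^{h-1}\nu \in N_h^{h-1}$ translates $y$ by an $\FF_{q^n}$-linear function of the reduction $\bar x \in X_1$ whose coefficients are the entries of $\nu$, and right multiplication by $t = 1 + [s]\varpi^{h-1} \in T_h^{h-1}$ translates $y$ by an $s$-dependent $\FF_{q^n}$-linear expression. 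In particular, both actions preserve the fibers of the smooth morphism $X_h / T_h^{h-1} \to X_{h-1}$ of Proposition \ref{p:Wh fixed}, so smooth base change along this morphism, together with $G_h$-equivariance, reduces the trace computation to an integral over $X_{h-1}$ of explicit character sums on fibers.

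The Artin--Schreier structure of the defining equation turns each fiber sum into the additive character $\psi_\theta \from \FF_{q^n} \to \overline{\QQ}_\ell^\times$ corresponding to $\theta|_{T_h^{h-1}}$ evaluated at an $\FF_{q^n}$-linear form $L(\nu, \bar x)$; averaging over $\nu \in N_h^{h-1}$ then imposes, via orthogonality of characters, the condition that $\psi_\theta$ vanish on the $\FF_{q^n}$-span of $\{L(\nu, \bar x)\}_\nu$ for each $\bar x \in X_1$. A Vandermonde-type argument in the spirit of Lemma \ref{lm:nonvanishing_minors} should show that this span is all of $\FF_{q^n}$ unless $\psi_\theta$ factors through some trace $\FF_{q^n} \to \FF_{q^r}$ with $r \mid n$, $r < n$; by Remark \ref{r:prim reg}, primitivity of $\theta$ forbids exactly this, forcing the desired vanishing. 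The main obstacle is executing the Fourier computation cleanly: producing $L(\nu, \bar x)$ explicitly from the adjugate polynomial $P_1$, checking its nondegeneracy in $\nu$ for generic $\bar x$, and confirming the surjectivity conclusion on every connected component of $X_{h-1}$. Here $\nu$ and $\bar x$ are entangled through the adjugate entries $m_{ji}$ of $\overline{g_b}(\bar x)$, and the nondegeneracy of $\overline{g_b}$ on $X_1$ established in Lemma \ref{lm:nonvanishing_minors} will be essential.
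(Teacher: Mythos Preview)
Your overall strategy matches the paper's: reduce to a character identity involving $\psi_\theta=\theta|_{T_h^{h-1}}$, exploit the fibration $X_h\to X_{h-1}^+$ of Proposition~\ref{prop:polynomial_P_describing_the_fiber}, and invoke primitivity. But two essential steps are missing, and as written the argument cannot be completed.

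First, ``smooth base change reduces the trace computation to an integral over $X_{h-1}$ of character sums on fibers'' does not work. The expression $\psi_\theta(L(\nu,\bar x))$ makes sense only when $L(\nu,\bar x)\in\FF_{q^n}$, which fails for generic $\bar x\in X_{h-1}(\overline\FF_q)$; more structurally, the $N_h^{h-1}$-action on the $\psi_\theta$-isotypic sheaf on $X_{h-1}$ is not by a locally constant scalar, so there is no ``character sum on fibers'' to average. The paper handles this by a Frobenius-twisted fixed-point formula (Lemma~\ref{l:char fixed point}): since $|R_{T_h}^{G_h}(\theta)|$ is irreducible, $\Fr_{q^n}$ acts on it by a scalar, and the trace of each $g\in N_h^{h-1}$ becomes a point count over the finite set $S_{g,t}=\{x:g\cdot\Fr_{q^n}(x)=x\cdot t\}$. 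Lemma~\ref{lm:congruence_of_g_and_t} then forces $\sigma^n(\widetilde x)=(-1)^{n'-1}\widetilde x$, so the ``integral over $X_{h-1}$'' collapses to a finite sum over the discrete set $S_{h-1}\subset X_{h-1}^{+,\det\equiv1}$, on which all the adjugate quantities are $\FF_{q^n}$-rational and $\psi_\theta$ can actually be evaluated. Your proposal contains no mechanism producing this finite locus.

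Second, the span $W(\widetilde x)=\langle m_{1,i}\bar x_j:(i,j)\in J\rangle$ is over $\FF_{q^{n_0}}$ (the entries of $N_h^{h-1}$ lie there), not over $\FF_{q^n}$, and it is \emph{never} all of $\FF_{q^n}$: Lemma~\ref{lm:which_W_can_occur} shows $W(\widetilde x)=\ker\big(\Tr_{\FF_{q^n}/\FF_{q^{n_0r}}}\big)$ for some $r\mid n'$, $r<n'$. Proving this is the crux, and it is not a Vandermonde argument: one shows that $W^\perp$ under the $\FF_{q^{n_0}}$-trace pairing is closed under multiplication (hence a subfield) by reducing to the case of a \emph{maximal} parabolic and combining the adjugate identity $m\cdot\overline{g_b}(\bar x)=1$ with the rationality condition $\sigma^n(\bar x)=(-1)^{n'-1}\bar x$ coming from the Frobenius step. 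Only then does primitivity give $\psi_\theta|_{W(\widetilde x)}\neq1$ for every $\widetilde x\in S_{h-1}$, yielding the vanishing.
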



\subsection{Proof of Theorem \ref{t:Gh cuspidal}} \label{s:Gh cuspidal}

\subsubsection{}\label{ss:1}

We retain notation as in the statement of the Theorem and the set-up directly proceeding it. Let $J \colonequals \{\alpha = (i,j) : U_\alpha \subset N'\}$ be the set of roots of the diagonal torus in $\GL_{n'}$ occurring in $N'$. Let $l$ be the inverse of $k_0$ modulo $n_0$ and let $[a]_{n_0}$ denote the residue of $a \in \bZ$ in $1 \leq [a] \leq n_0$. The elements of $N_h^{h-1}$ consist of $n\times n$-matrices, whose $(i,j)$th $(n_0 \times n_0)$-block is the identity matrix if $i = j$, is zero if $i\neq j$ and $(i,j) \not\in J$, and is of the form $\diag(\varpi^{h-1}u, \varpi^{h-1}\sigma^{[l]_{n_0}}(u), \varpi^{h-1}\sigma^{[2l]_{n_0}}(u), \dots, \varpi^{h-1}\sigma^{[(n_0-1)l]_{n_0}}(u))$ for some $u \in \FF_{q^{n_0}}$ if $(i,j) \in J$.  Observe that it is sufficient to show that the theorem holds under the assumption that $N'$ is the unipotent radical of a maximal proper parabolic; that is, 
\begin{equation*}
J = \{(i,j)\colon 1\leq i \leq n' -\ell, \, n'-\ell+1 \leq j \leq n, \} \qquad \text{for some $1\leq \ell \leq n'.$} 
\end{equation*}
(This will be used only in the proof of Lemma \ref{lm:which_W_can_occur}.)


\subsubsection{}

Our main tool will be a close variant of \cite[Lemma 2.12]{Boyarchenko_12}. The set-up and proof of Lemma \ref{l:char fixed point} is nearly the same as the proof of \textit{op.\ cit.}\ verbatim. Assume that $X$ is a separated scheme of finite type over $\FF_q$ and we are given an automorphism $\varphi$ of $X$ and a right action of a finite group $A$ on $X$ that commute with $\varphi$. For each character $\chi \from A \to \overline \QQ_\ell^\times$, we write $H_c^i(X, \overline \QQ_\ell)[\chi]$ for the subspace of $H_c^i(X, \overline \QQ_\ell)$ on which $A$ acts by $\chi$. Note that this subspace is invariant under the action of $\varphi^* \from H_c^i(X, \overline \QQ_\ell)[\chi] \stackrel{\cong}{\to} H_c^i(X, \overline \QQ_\ell)[\chi]$.

\begin{lemma}\label{l:char fixed point}
Let $\chi \from A \to \overline \QQ_\ell^\times$ be a character. Assume that $\Fr_q$ acts on $\sum_i (-1)^i H_c^i(X, \overline \QQ_\ell)[\chi]$ by a scalar $\lambda$. Then
\begin{equation*}
\Tr\left(\varphi^*; \textstyle \sum\limits_i (-1)^i(X, \overline \QQ_\ell)[\chi]\right) = \frac{1}{\lambda \cdot \#A} \cdot \sum_{a \in A} \chi(a) \cdot \#\{x \in X(\overline \FF_q) : \varphi(\Fr_q(x))= x \cdot a\}.
\end{equation*}
\end{lemma}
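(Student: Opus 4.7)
The plan is to reduce the statement to the Grothendieck--Lefschetz trace formula applied to a family of twisted maps. For each $a \in A$, observe that the map
\[
\psi_a \colon X \to X, \qquad x \mapsto \varphi(\mathrm{Fr}_q(x)) \cdot a^{-1}
\]
is a composition of the Frobenius with automorphisms, and its set of fixed $\overline{\FF}_q$-points is exactly $\{x \in X(\overline{\FF}_q) : \varphi(\mathrm{Fr}_q(x)) = x \cdot a\}$. Since $\mathrm{Fr}_q$ contributes a purely inseparable factor, the differential of $\psi_a$ vanishes at each fixed point, so the fixed locus is finite and the trace formula applies:
\[
\#\{x : \varphi(\mathrm{Fr}_q(x)) = x \cdot a\} = \sum_i (-1)^i \mathrm{Tr}\bigl(\psi_a^*; H_c^i(X, \overline{\QQ}_\ell)\bigr).
\]

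Next, I would multiply by $\chi(a)$ and sum over $a \in A$. Writing $\psi_a^* = (a^{-1})^* \circ \mathrm{Fr}_q^* \circ \varphi^*$ (all three factors commute with the $A$-action and hence preserve each isotypic decomposition $H_c^i = \bigoplus_{\chi'} H_c^i[\chi']$), and noting that $(a^{-1})^*$ acts on $H_c^i[\chi']$ by the scalar $\chi'(a)^{-1}$, the expression becomes
\[
\sum_{a \in A} \chi(a) \cdot \#\{x : \varphi(\mathrm{Fr}_q(x)) = x \cdot a\}
= \sum_i (-1)^i \sum_{\chi'} \Bigl(\sum_{a \in A} \chi(a) \chi'(a)^{-1}\Bigr) \mathrm{Tr}\bigl(\mathrm{Fr}_q^* \varphi^*; H_c^i[\chi']\bigr).
\]
Orthogonality of characters collapses the inner sum to $\#A$ times $\delta_{\chi,\chi'}$, leaving
\[
\sum_{a \in A} \chi(a) \cdot \#\{x : \varphi(\mathrm{Fr}_q(x)) = x \cdot a\}
= \#A \cdot \mathrm{Tr}\Bigl(\mathrm{Fr}_q^* \varphi^*; \textstyle\sum\limits_i (-1)^i H_c^i(X, \overline{\QQ}_\ell)[\chi]\Bigr).
\]

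Finally, the irreducibility hypothesis forces $\mathrm{Fr}_q^*$ to act by the scalar $\lambda$ on the virtual representation $\sum_i (-1)^i H_c^i(X,\overline{\QQ}_\ell)[\chi]$. Since this is a scalar, it pulls out of the trace, giving
\[
\mathrm{Tr}\bigl(\mathrm{Fr}_q^* \varphi^*; \textstyle\sum\limits_i (-1)^i H_c^i[\chi]\bigr) = \lambda \cdot \mathrm{Tr}\bigl(\varphi^*; \textstyle\sum\limits_i (-1)^i H_c^i[\chi]\bigr),
\]
and rearranging yields the claimed formula.

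The only real subtlety — and what I would verify carefully — is the applicability of the Lefschetz trace formula to the maps $\psi_a$. The automorphism $\varphi$ is not assumed to be defined over $\FF_q$, so one cannot simply quote a Frobenius trace formula; instead, one needs to appeal to the general version (as in \cite{DeligneL_76}, or the version used throughout Boyarchenko's work) that applies to any endomorphism of a separated finite-type $\overline{\FF}_q$-scheme whose graph meets the diagonal transversally, the transversality being supplied here by the vanishing of $d\mathrm{Fr}_q$. Everything else is bookkeeping with character orthogonality.
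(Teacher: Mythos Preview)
Your proposal is correct and follows essentially the same approach as the paper: apply the Deligne--Lusztig fixed-point formula (\cite[Proposition~3.3]{DeligneL_76}) to the maps $\psi_a$, average against $\chi$ to project onto the isotypic component, and then pull out the Frobenius scalar $\lambda$. The paper's proof is organized around $\varphi_a = \varphi \circ \rho_a$ and sums against $\chi^{-1}(a)$ with $a^{-1}$ in the fixed-point condition, but this differs from yours only by the change of variable $a \mapsto a^{-1}$.
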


\begin{proof}
For each $a \in A$, let $\rho_a \from X \to X$ denote the automorphism $x \mapsto x \cdot a$ and write $\varphi_a = \varphi \circ \rho_a$. Then $\rho_a$ is a finite-order automorphism of $X$ and (as in the proof of \cite[Proposition 3.3]{DeligneL_76})
\begin{equation*}
\textstyle\sum\limits_i (-1)^i \Tr(\Fr_q \circ \, \varphi_a^*; H_c^i(X, \overline \QQ_\ell))  = \#\{x \in X(\overline \FF_q) : \varphi(\Fr_q(x)) = x \cdot a^{-1}\}.
\end{equation*}
Hence averaging over $\chi^{-1}(a)$, we have
\begin{align*}
\frac{1}{\# A} \cdot \sum_{a \in A} \chi^{-1}(a) \cdot \#\{x \in X(\overline \FF_q) : \varphi(\Fr_q(x)) = x \cdot a^{-1}\} 
&= \textstyle\sum\limits_i (-1)^i \Tr\left(\Fr_q \circ \, \varphi^* ; H_c^i(X, \overline \QQ_\ell)[\chi]\right) \\
&= \lambda \Tr\left(\varphi^* ; \textstyle\sum\limits_i (-1)^i H_c^i(X, \overline \QQ_\ell)[\chi]\right). \qedhere
\end{align*}
\end{proof}


\subsubsection{}

Now fix a character $\theta \from T_h \to \overline \QQ_\ell^\times$ as in the theorem. Recall from \eqref{eq:dec_of_Xh_conn_cpts} that 
\begin{equation*}
X_h = \bigsqcup_{a \in (\mathcal{O}_K/\varpi^h)^{\times}} g_a.X_h^{\det \equiv 1}, \qquad \text{where $X_h^{\det \equiv 1} = \{x \in X_h : \det g_b^{\rm red}(x) \equiv 1 \!\!\!\! \pmod{\varpi^h}\}.$}
\end{equation*}
Note that $T_h$ transitively permutes the components $g_a.X_h^{\det \equiv 1}$ ($a \in (\cO_K/\varpi^h)^{\times}$) and let $T_h^\circ \subseteq T_h$ denote the stabilizer of a (any) component. Since the composition $H_c^i(X_h^{\det \equiv 1})[\theta|_{T_h^\circ}] \hookrightarrow H_c^i(X_h)[\theta|_{T_h^\circ}] \twoheadrightarrow H_c^i(X_h)[\theta]$ is bijective, it must be an isomorphism of $N_h^{h-1}$-representations (see also \eqref{eq:Gamma h}). Hence to show the theorem, it is enough to show that the trivial character of $N_h^{h-1}$ does not occur in $\sum_i (-1)^i H_c^i(X_h^{\det \equiv 1})[\theta|_{T_h^\circ}]$; that is,
\begin{align}
\nonumber
\Big\langle {\rm triv}, &\textstyle\sum_i (-1)^i H_c^i(X_h^{\det \equiv 1}, \overline \QQ_\ell)[\theta|_{T_h^\circ}] \Big\rangle_{N_h^{h-1}} \\
\label{e:triv inner prod}
&= \frac{1}{\# N_h^{h-1}}\sum_{g \in N_h^{h-1}} \Tr\left(g ; \textstyle \sum_i (-1)^i H_c^i(X_h^{\det \equiv 1}, \overline \QQ_\ell)[\theta|_{T_h^\circ}]\right) = 0.
\end{align}
We now apply Lemma \ref{l:char fixed point} to the $\FF_{q^n}$-scheme $X_h^{\det \equiv 1}$ with $A = T_h^\circ$ and $\varphi \from X_h^{\det \equiv 1} \to X_h^{\det \equiv 1}$ given by $x \mapsto g \cdot x$ for some $g \in N_h^{h-1}$. We see that to show \eqref{e:triv inner prod}, we must show
\begin{equation*}\label{eq:sum_of_traces_vanishes}
\sum_{g \in N_h^{h-1}} \sum_{t \in T_h^\circ} \theta(t) \cdot \# S_{g,t} = 0,  \qquad \text{where $S_{g,t} \colonequals \{x \in X_h^{\det \equiv 1}(\overline \FF_q) : g \cdot \Fr_{q^n}(x) = x \cdot t\}$.}
\end{equation*}

\begin{lm}\label{lm:congruence_of_g_and_t}
Let $g \in N_h^{h-1}$ and $t \in T_h^\circ$ such that $S_{g,t} \neq \varnothing$. Then $t \equiv (-1)^{n'-1} \pmod{\varpi^{h-1}}$ and $\sigma^n(x) \equiv (-1)^{n'-1}x \pmod{\varpi^{h-1}}$ for all $x \in S_{g,t}$.
\end{lm}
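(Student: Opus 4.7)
Since $g \in N_h^{h-1}$, we have $g \equiv 1 \pmod{\varpi^{h-1}}$, so the defining relation $g \cdot \Fr_{q^n}(x) = x \cdot t$ of $S_{g,t}$ reduces modulo $\varpi^{h-1}$ to
\[
\Fr_{q^n}(x) \equiv x \cdot t \pmod{\varpi^{h-1}}.
\]
My plan is to first identify $t$ modulo $\varpi$ by projecting to the reductive quotient $X_1$, and then to bootstrap to obtain the congruence modulo $\varpi^{h-1}$ using that $\sigma^n$ acts trivially on $T \subset L^\times$.

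For the mod-$\varpi$ step, reduction gives $\sigma^n(\bar x) = \bar x \cdot \bar t_0$ in $\overline V$, with $\bar t_0 \in T_1 \cong \FF_{q^n}^\times$. By Lemma \ref{lm:defin_of_V}, for the special representative $\overline V$ carries a natural structure of a one-dimensional $\FF_{q^n}$-vector space (via the $\FF_{q^{n_0}}$-structure from $\overline{\sigma_b}$ together with the embedding $\FF_{q^n} \hookrightarrow \End_{\FF_{q^{n_0}}}(\overline V)$ underlying the $T_1$-action). Writing $\bar x = a v_0$ with $a \in \overline \FF_q^\times$ and $v_0 = \sum_i c_i e_{n_0(i-1)+1}$ an $\FF_{q^n}$-generator, the relation becomes $\sigma^n(a) = a \bar t_0$, so $\bar t_0 = a^{q^n - 1}$. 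A direct computation yields
\[
\det \overline{g_b}(\bar x) = \Big(\prod_{j=0}^{n'-1} a^{q^{jn_0}}\Big) \cdot C, \qquad C \colonequals \det\big(c_i^{q^{(j-1)n_0}}\big)_{i,j} \in \FF_{q^n}^\times,
\]
and a cyclic permutation of columns shows $\sigma^{n_0}(C) = (-1)^{n'-1} C$. The condition $\det \overline{g_b}(\bar x) \in \FF_{q^{n_0}}^\times$ thus translates into $a^{q^n - 1} \cdot \sigma^{n_0}(C)/C = 1$, forcing $a^{q^n-1} = (-1)^{n'-1}$, i.e.\ $\bar t_0 = (-1)^{n'-1}$.

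To upgrade this to a congruence modulo $\varpi^{h-1}$, I would exploit that $\sigma^n$ acts trivially on $T \subseteq L^\times$ (as $\sigma^n|_L = \mathrm{id}$), so iterating the reduced equation gives
\[
\Fr_{q^n}^j(x) \equiv x \cdot t^j \pmod{\varpi^{h-1}} \qquad \text{for every } j \ge 0.
\]
Taking $j = N$ large enough that $\Fr_{q^n}^N$ fixes the (finitely many) coordinates of $x$ yields $x(t^N - 1) \equiv 0 \pmod{\varpi^{h-1}}$, and the admissibility condition $x \in X_h^{\det \equiv 1}$ (which ensures $x$ generates the isocrystal $V$, hence in particular is a generic element of $V/\varpi^{h-1}\sL_0$) forces $t^N \equiv 1 \pmod{\varpi^{h-1}}$. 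Writing $t = (-1)^{n'-1} s$ with $s \in 1 + \varpi\mathcal{O}_L/U_L^h$, the $p$-power order of $s$ combined with the freedom to vary $N$ over $\sigma^n$-periods of the coordinates of $x$ should allow one to extract $s \equiv 1 \pmod{\varpi^{h-1}}$. The companion assertion $\sigma^n(x) \equiv (-1)^{n'-1} x \pmod{\varpi^{h-1}}$ is then immediate from the reduced equation.

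The main obstacle is this bootstrapping step: cleanly deducing the full congruence from the level-$1$ identification together with $t^N \equiv 1$, since $N$ depends on $x$ and need not be coprime to the order of $s$. A parallel approach I would also pursue is a level-by-level linearization: expanding $\Fr_{q^n}(x) \equiv xt \pmod{\varpi^{h-1}}$ in Witt coordinates (using Lemma \ref{l:witt} to handle minor contributions), one obtains at each level $\ell \geq 1$ an identity of the form $\sigma^n(x_\ell) - (-1)^{n'-1} x_\ell \equiv x_0 \cdot t_\ell + (\text{terms in lower levels}) \pmod \varpi$; the linearization of the determinant constraint $\det g_b^{\rd}(x) \equiv 1 \pmod{\varpi^h}$ at the same level should then force $t_\ell = 0$ inductively, completing the proof.
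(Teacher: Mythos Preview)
Your mod-$\varpi$ computation is correct (and is in fact the level-$1$ shadow of the paper's argument), but the bootstrap step is a genuine gap that cannot be closed by iteration alone. The relation $\sigma^{jn}(x)\equiv t^j x\pmod{\varpi^{h-1}}$ together with periodicity only yields $t^{N_0}\equiv 1$ for the \emph{minimal} period $N_0$ of $x$, and there is no ``freedom to vary $N$'': every admissible $N$ is a multiple of $N_0$. This single relation does not pin down $t$ modulo $\varpi^{h-1}$. Indeed, by Lang's theorem for the connected group $\bW_{h-1}^\times$ with Frobenius $\sigma^n$, for \emph{every} $1$-unit $s\in\bW_{h-1}(\FF_{q^n})^\times$ there exists $x_1\in\bW_{h-1}(\overline\FF_q)^\times$ with $\sigma^n(x_1)=s x_1$; so the equation $\sigma^n(x)\equiv tx$ by itself places no constraint on $t$ beyond level $1$. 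The determinant condition must be used at every level, not only at level $1$.

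The paper bypasses the bootstrap by running your column-shift argument once on a lift $y\in\sL_{0,b}^{\adm,\rat}$ of $x$. Using $b^n=\varpi^\kappa$ and $\det g_b(y)\in K^\times$ one obtains the exact identity
\[
(-1)^{n'-1}\det\big(\sigma^n(y)\mid b\sigma(y)\mid\cdots\mid (b\sigma)^{n-1}(y)\big)=\det g_b(y)\in\varpi^c\cO_K^\times.
\]
Substituting $\sigma^n(y)=ty+\varpi^{h-1}(\ast)$ and using linearity of the determinant in the first column (the remaining columns already carry the factor $\varpi^c$) gives $(-1)^{n'-1}t\equiv 1\pmod{\varpi^{h-1}}$ in one step. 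Your ``parallel approach'' of linearizing the determinant level by level is aimed at the same mechanism, but working with the lift makes the induction unnecessary and avoids the Witt-vector bookkeeping you anticipate.
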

\begin{proof}
An element $y \in \sL_0$ lies in $\sL_{0,b}^{\rm adm,rat}$ if and only if $\det g_b^{\red}(y) \in \cO_K^{\times}$, or equivalently, $\ord \det g_b(y) = \ord\det(D_{\kappa,n}) =: c$ and $\sigma(\det g_b(y)) = \det g_b(y)$.
Multiplying by $b$ on both sides, we see that these conditions are equivalent to
\[
\det(b\sigma(y) \,|\, (b\sigma)^2(y) \,|\, \dots \,|\, (b\sigma)^n(y)) = \det(b) \det(g_b(y))\in \varpi^{c + \kappa} \cO^{\times}.
\]
As $b$ is the special representative, $\det(b) = ((-1)^{(n_0 - 1)k_0} \varpi^{k_0})^{n'} = (-1)^{\kappa(n_0 - 1)} \varpi^{\kappa}$, and moreover, $b^n = \varpi^{\kappa}$. Thus the above is equivalent to
\[
\varpi^{\kappa} \det(b\sigma(y) \,|\, (b\sigma)^2(y) \,|\, \dots \,|\, \sigma^n(y)) = (-1)^{\kappa(n_0 - 1)} \varpi^{\kappa} \det(g_b(y)) \in \varpi^{c + \kappa} \cO^{\times}, 
\]
An elementary computation shows $(-1)^{n-1 - \kappa(n_0 - 1)} = (-1)^{n' - 1}$, thus the above is equivalent to
\begin{equation}\label{eq:reformulation_y_in_Ladmrat}
(-1)^{n'-1} \det(\sigma^n(y) \,|\, b\sigma(y) \,|\, (b\sigma)^2(y) \,|\, \dots \,|\, (b\sigma)^{n-1}(y)) =  \det(g_b(y)) \in \varpi^c \cO^{\times}. 
\end{equation}
Let now $x \in S_{g,t} \subseteq X_h$. Denote by $y \in \sL_0^{\rm adm,rat}$ a lift of $x$. As $g \equiv 1 \pmod{\varpi^{h-1}}$, we by assumption have $\sigma^n(y) \equiv yt \pmod{\varpi^{h-1}}$. Thus replacing in \eqref{eq:reformulation_y_in_Ladmrat} $\sigma^n(y)$ by $ty + \varpi^{h-1}\ast$ for some $\ast \in \sL_0$, using the linearity of the determinant in the first column, and the fact that each entry of the $i$th column ($2 \leq i \leq n$) of the matrix on the left hand side of \eqref{eq:reformulation_y_in_Ladmrat} is in $\cO$ divisible by $\varpi^{\left\lfloor \frac{(i-1)\kappa_0}{n_0} \right\rfloor}$ (and $\sum_{i=2}^n \left\lfloor \frac{(i-1)\kappa_0}{n_0} \right\rfloor = c$), we deduce that
\[
(-1)^{n' - 1}t \equiv 1 \pmod{\varpi^{h-1}}.
\]
If $\tilde x \in X_{h-1}^+$ denotes the image of $x$ modulo $\varpi^{h-1}$, we obtain $\sigma^n(\tilde x) = (-1)^{n'-1}\tilde x$.
\end{proof}

Thus for $g,t$ as in the lemma, $S_{g,t} \neq \varnothing$ implies
\[ 
t \in (-1)^{n'-1}T_h^{h-1} \cap T_h^\circ = \{(-1)^{n'-1}(1 + \varpi^{h-1}[a]) \colon a \in \FF_{q^n}, {\rm tr}_{\FF_{q^n/\FF_q}}(a) = 0 \}.
\]
so that, after factoring out the constant $\theta(-1)^{n'-1}$, it remains to show:
\begin{equation}\label{e:fixed point goal}
\sum_{g \in N_h^{h-1}} \sum_{a \in \ker(\FF_{q^n} \rightarrow \FF_q)} \theta(1 + \varpi^{h-1}[a]) \cdot \# S_{g,(-1)^{n'-1}(1 + \varpi^{h-1} [a])} = 0.
\end{equation}


\subsubsection{}\label{ss:t condition}

Before we can prove \eqref{e:fixed point goal}, we need some preparations. Recall from Section \ref{sec:fibers_as_hypersurface} that one has an intermediate scheme $X_h \twoheadrightarrow X_{h-1}^+ \twoheadrightarrow X_{h-1}$. Define $X_{h-1}^{+,\det \equiv 1}$ to be the subscheme of $X_{h-1}^+$ consisting of $x \in X_{h-1}^+$ with $\det(g_b^{\red}(x)) \equiv 1$ modulo $\varpi^{h-1}$. Then we have a surjection
\begin{equation*}
f \from X_h^{\det \equiv 1} \to X_{h-1}^{+, \det \equiv 1}
\end{equation*}
and by Proposition \ref{prop:polynomial_P_describing_the_fiber}, $X_h^{\det \equiv 1} \hookrightarrow X_{h-1}^{+, \det \equiv 1} \times \bA^{n'}$ is the (relative) hypersurface given by 
\begin{equation*}
\sum_{i=0}^{n_0-1} \sigma^i(P_1) + c = 0,
\end{equation*}
where $c \from X_h \to \bA^1$ factors through $f$ and $P_1$ is a polynomial over $X_{h-1}^+$ in the variables $x_{i,h-1}$ for $i \equiv 1 \pmod{n_0}$.


\subsubsection{}\label{ss:x'}

By Lemma \ref{lm:congruence_of_g_and_t}, for $g \in N_h^{h-1}$ and $t \in (-1)^{n'-1}(T_h^{h-1} \cap T_h^\circ)$ with $S_{g,t} \neq \varnothing$, we have $S_{g,t} \subseteq f^{-1}(S_{h-1})$, where 
\begin{equation*}
S_{h-1} \colonequals \left\{\widetilde x \in X_{h-1}^{+, \det\equiv1}: \sigma^n(\widetilde x) = (-1)^{n'-1} \widetilde x\right\} \subset X_{h-1}^{+, \det=1}
\end{equation*}
is a finite set of points. Regard $S_{h-1}$ as a (zero-dimensional, reduced) subscheme of $X_{h-1}^{+,\det \equiv 1}$. Consider the $S_{h-1}$-morphism $S_{h-1} \times \bA^{n'} \stackrel{\sim}{\rightarrow} S_{h-1} \times \bA^{n'}$, which is the linear change of variables defined by
\begin{equation*}
(x_{1,h-1}, x_{n_0+1,h-1}, \dots, x_{n_0(n'-1)+1,h-1})^{\trans} = \overline{g_b}(\bar x) (z_1, z_2, \ldots, z_{n'})^{\trans},\end{equation*}
where $\bar{x}$ is the image of $\tilde{x} \in S_{h-1}$ in $X_1$ and $\overline{g_b}(\bar x)$ is as in Section \ref{sec:projection_to_reductive_quotient}. 

\begin{claim*}
$X_h^{\det \equiv 1}$ is a (relative) hypersurface over $X_{h-1}^{+,\det\equiv1}$ defined by an equation $\sum_{i=0}^{n-1} z_1^{q^i} = c$.
\end{claim*}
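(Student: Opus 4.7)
The strategy is to substitute the linear change of variables directly into the defining equation of $X_h^{\det \equiv 1}$ and exploit two ingredients: the adjoint identity $m \cdot M = (\det M) \cdot I$ where $M = \overline{g_b}(\bar x)$, and the periodicity $\sigma^n(\bar x) = (-1)^{n'-1}\bar x$ valid on $S_{h-1}$ by Lemma \ref{lm:congruence_of_g_and_t}. By Section \ref{ss:t condition}, $X_h^{\det \equiv 1}$ is cut out inside $X_{h-1}^{+,\det \equiv 1} \times \bA^{n'}$ by the single equation $P_0 = c + \sum_{\ell=0}^{n_0-1}\sigma^\ell(P_1) = 0$, with $P_1$ as in Proposition \ref{prop:polynomial_P_describing_the_fiber}(iii). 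Note $M_{ji} = \sigma^{(i-1)n_0}(\bar x_j)$ since $\overline{\sigma_b}$ is $\sigma^{n_0}$-linear.

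The first step is to compute $P_1$ in the new coordinates. Substituting $x_{1+n_0(i-1),h-1} = \sum_k M_{ik} z_k$ into $P_1 = \sum_{i,j} m_{ji}\, x_{1+n_0(i-1),h-1}^{q^{(j-1)n_0}}$ and using $M_{ik}^{q^{(j-1)n_0}} = \sigma^{(j+k-2)n_0}(\bar x_i)$, one gets
\[
P_1 = \sum_{j,k=1}^{n'} z_k^{q^{(j-1)n_0}} \sum_{i=1}^{n'} m_{ji}\, \sigma^{(j+k-2)n_0}(\bar x_i).
\]
If $j+k-1 \le n'$, the inner sum equals $\sum_i m_{ji} M_{i,j+k-1} = \delta_{k,1}\det M$ by the adjoint identity. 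If $j+k-1 > n'$, the periodicity gives $\sigma^{(j+k-2)n_0}(\bar x_i) = (-1)^{n'-1} M_{i,j+k-1-n'}$, and the inner sum is $(-1)^{n'-1}\delta_{j,\,j+k-1-n'}\det M$, which vanishes since it would require $k = n'+1$. Hence only $k=1$ contributes and
\[
P_1 \;=\; \det M \cdot \sum_{j=1}^{n'} z_1^{q^{(j-1)n_0}},
\]
which remarkably depends only on $z_1$.

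Next, apply $\sigma^\ell$ (acting on coefficients and variables) and sum over $0 \le \ell \le n_0-1$. Re-indexing via $s = (j-1)n_0 + \ell$ with $0 \le s \le n-1$ yields
\[
\sum_{\ell=0}^{n_0-1} \sigma^\ell(P_1) \;=\; \sum_{s=0}^{n-1} \sigma^{s \bmod n_0}(\det M)\, z_1^{q^s}.
\]
Since $\det M = \det\overline{g_b}(\bar x) \in \FF_{q^{n_0}}^{\times}$ by Section \ref{sec:projection_to_reductive_quotient}, we have $\sigma^{n_0}(\det M) = \det M$ and therefore $\sigma^{s \bmod n_0}(\det M) = \sigma^s(\det M) = (\det M)^{q^s}$. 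Hence the right hand side collapses to $\sum_{s=0}^{n-1}(\det M \cdot z_1)^{q^s}$, and the defining equation of $X_h^{\det \equiv 1}$ becomes $\sum_{s=0}^{n-1}(\det M \cdot z_1)^{q^s} + c = 0$. A further $S_{h-1}$-automorphism $z_1 \mapsto z_1/\det M$ (an isomorphism, since $\det M$ is a nonvanishing function on $S_{h-1}$) transforms this into $\sum_{s=0}^{n-1} z_1^{q^s} = -c$, establishing the claim.

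The main technical obstacle lies in the computation of $P_1$: one must see that the adjoint identity together with the wrap-around provided by the periodicity $\sigma^n(\bar x) = (-1)^{n'-1}\bar x$ conspire to kill the coefficients of $z_k^{q^{(j-1)n_0}}$ for all $k \ge 2$. Neither ingredient alone suffices; the adjoint identity handles the "diagonal" case $j+k-1 \le n'$, while the periodicity is essential to show that the apparently problematic wrap-around terms $j+k-1 > n'$ also vanish (the shift by $n'$ prevents the Kronecker $\delta$ from being triggered). Once $P_1$ is reduced to a polynomial in $z_1$ alone, the rest is bookkeeping together with the observation that $\det M \in \FF_{q^{n_0}}^{\times}$ makes $\sigma^{s \bmod n_0}$ and $\sigma^s$ agree on $\det M$.
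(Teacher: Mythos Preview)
Your proof is correct and follows essentially the same approach as the paper: both substitute the change of variables into $P_1$, use the adjoint identity $m\cdot M = (\det M)\cdot I$ together with the periodicity $\overline{\sigma_b}^{\,n'}(\bar x) = (-1)^{n'-1}\bar x$ (equivalently $\sigma^n(\bar x) = (-1)^{n'-1}\bar x$ on $S_{h-1}$) to see that only $z_1$ survives. The one small difference is that the paper uses $\det M = \det\overline{g_b}(\bar x) = 1$ from the outset---this holds because we are working over $X_{h-1}^{+,\det\equiv 1}$, where $\det g_b^{\rm red}(x)\equiv 1\pmod{\varpi}$ forces $\det\overline{g_b}(\bar x)=1$---so your final rescaling $z_1\mapsto z_1/\det M$ and the appeal to $\det M\in\FF_{q^{n_0}}^\times$ are unnecessary; with $\det M = 1$ your formula $P_1 = \det M\cdot\sum_j z_1^{q^{(j-1)n_0}}$ already reads $P_1 = \sum_{j=1}^{n'} z_1^{q^{(j-1)n_0}}$, which is exactly what the paper proves, and the passage to $\sum_{s=0}^{n-1} z_1^{q^s}$ via $\sum_\ell\sigma^\ell(P_1)$ (which you carry out explicitly and the paper leaves implicit) is then immediate.
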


It is enough to show that in the new coordinates $z_1, \ldots, z_{n'}$, the polynomial $P_1$ as in Proposition \ref{prop:polynomial_P_describing_the_fiber}  takes the form $P_1 = \sum_{i=0}^{n'-1} z_1^{q^{n_0i}}$. We prove this now.

Recall the $n'$-dimensional $\FF_{q^{n_0}}$-vector space $\overline{V}$ with its distinguished basis $\{\bar e_{n_0(i-1)+1}\}_{1\leq i \leq n'}$ and the $\FF_{q^{n_0}}$-linear morphism $\overline{\sigma_b}$ of $\overline{V}$ from Section \ref{lm:defin_of_V}. To simplify notation, we write $\bar{x}_j$ instead of $x_{n_0(j-1)+1,0}$ for $x \in X_h$ and $1\leq j \leq n'-1$ in what follows (i.e., the image of $x \in X_h$ in $\overline{V}$ is $\bar{x} = (\bar{x}_i)_{i=1}^{n'}$). Recall from Section \ref{sec:projection_to_reductive_quotient} that for $\bar x \in \overline{V}$ the $i$th column of the $(n' \times n')$-matrix $\overline{g_b}(\bar{x})$ is  $\overline{\sigma_b}^{i-1}(\bar x)$. Let $m_i$ denote the $i$th row of the adjoint matrix $(m_{ij})$ of $\overline{g_b}(\bar x)$. Then the above change of variables gives
\begin{align*} \label{eq:poly_P_basechanged_in_general}
P_1 &= (m_1 \cdot \bar{x})z_1  + (m_2 \cdot \bar{\sigma}_b(\bar x))z_1^{q^{n_0}}  +  (m_3 \cdot \bar\sigma_b^2(\bar x)) z_1^{q^{2n_0}} + \dots +  (m_{n'} \cdot \bar\sigma_b^{n'-1}(\bar x)) z_1^{q^{n_0(n'-1)}}  \\
&+  (m_1 \cdot \overline\sigma_b(\bar x)) z_2 +  (m_2 \cdot \overline{\sigma_b}^2(\bar x)) z_2^{q^{n_0}} +  (m_3 \cdot \bar\sigma_b^3(\bar x)) z_2^{q^{2n_0}} + \dots +  (m_{n'} \cdot \bar\sigma_b^{n'}(\bar x)) z_2^{q^{n_0(n'-1)}}  \\
  &+ \cdots +  (m_1 \cdot \overline{\sigma_b}^{n'-1}(\bar x)) z_{n'} +  (m_2 \cdot \overline{\sigma_b}^{n'}(\bar x)) z_{n'}^{q^{n_0}} +   \dots +  (m_n \cdot \overline{\sigma_b}^{2n'-2}(\bar x)) z_{n'}^{q^{n_0(n'-1)}}.
\end{align*}
(Here $\cdot$ denotes the matrix product.) But $\sigma^n(\bar x) = (-1)^{n'-1} \bar x$ (where $\sigma^n$ is applied entry-wise), and hence from the explicit form of $\overline{\sigma_b}$ we deduce that $\overline{\sigma_b}^{n'}(\bar x) = (-1)^{n'-1}\bar{x}$. As $(m_{ij})$ is adjoint to $\overline{g_b}(\bar x)$ and $\det(\overline{g_b}(\bar x)) = 1 \in \FF_q^{\times}$, we have 
\begin{equation*}
m_i \cdot \overline{\sigma_b}^j(\bar x) = \begin{cases}
1 & \text{if $j = i-1$,} \\
0 & \text{otherwise.}
\end{cases}
\end{equation*}
This shows that all coefficients are equal to $1$ in the first line of the above expression and vanish in lines $2, \dots, n'$. This completes the proof of the claim.


\subsubsection{}\label{sec:actions_unipotent_traces}

Note that $N_h^{h-1} \times (T_h^{h-1} \cap T_h^\circ)$ stabilizes $S_{h-1}$ and acts trivially on it.
We describe the action of $N_h^{h-1} \times (T_h^{h-1} \cap T_h^\circ)$ on the new coordinates $z_1, \ldots, z_{n'}$. 

Let $g \in N_h^{h-1}$ and for $(i,j) \in J$ as in Section \ref{ss:1}, let $[u_{i,j}]\varpi^{h-1}$ denote the upper left entry of the $(i,j)$th $n_0\times n_0$-block of $g$. Recall that the action of $g$ on $f^{-1}(S_{h-1})$ in the old coordinates $x_{1,h-1}, \ldots, x_{n_0(n'-1) + 1, h-1}$ is given by
\begin{equation*}
g.(x_{n_0(i-1),h-1})_{i=1}^{n'} = \Big( x_{n_0(i-1),h-1} + \textstyle\sum\limits_{\substack{ 1\leq j \leq n' \\ (i,j) \in J}} u_{i,j} \bar x_j \Big)_{i=1}^{n'}.
\end{equation*}
Since $\det(\overline{g_b}(\overline x)) = 1$ by assumption, the adjoint matrix $(m_{ij})_{ij}$ of $\overline{g_b}(\overline x)$ is in fact the inverse, so that $z_i = \sum_{j=1}^{n'} m_{ij}x_{n_0(j-1) + 1,h-1}$. Thus the action of $g$ on the new coordinates is given by
\begin{equation*}
z_i \mapsto z_i + \textstyle\sum\limits_{\substack{1\leq k,j\leq n' \\ (k,j) \in J}} m_{ik} u_{kj}  \bar x_j.
\end{equation*}

We now describe the action of $(T_h^{h-1} \cap T_h^\circ) = \{1 + [a] \varpi^{h-1} \colon \Tr_{\FF_{q^n}/\FF_q}(a) = 0\}$. For $a \in \ker(\Tr \from \FF_{q^n} \to \FF_q)$, the action of $1 + [a] \varpi^{h-1}$ on the old coordinates is given by 
\begin{equation*}
x_{n_0(i-1) + 1,h-1} \mapsto 
x_{n_0(i-1) + 1,h-1} + a \bar x_i.
\end{equation*}
Since $\sum_{k=1}^n m_{ik} \bar x_k$ is equal to $1$ when $i = 1$ and equal to $0$ when $i > 1$, the action of $1 + [a] \varpi^{h-1}$ on the new coordinates is given by
\begin{equation*}
z_i \mapsto
\begin{cases}
z_1 + a & \text{if $i = 1$,} \\
z_i & \text{if $i = 2, \ldots, n'$.}
\end{cases}
\end{equation*}
Moreover, $x \mapsto \sigma^n(x) \cdot (-1)^{n'-1}$ defines an isomorphism of each fiber $f^{-1}(\tilde x)$, and one computes that in coordinates $z_i$ it is given by $z_i \mapsto \sigma^n(z_i)$. Thus for $t = (-1)^{n'-1}(1+\varpi^{h-1}[a]) \in (-1)^{n'-1}(T_h^{h-1} \cap T_h^\circ)$, the assignment $x \mapsto \sigma^n(x)\cdot t$ defines an isomorphism of $f^{-1}(\tilde x)$ which in the coordinates $z_i$ is given by $z_1 \mapsto \sigma^n(z_1) + a$, $z_i \mapsto \sigma^n(z_i)$ for $2 \leq i \leq n'$.


\subsubsection{}\label{ss:c}

We next claim that $c(\widetilde x) \in \FF_q$ for $\tilde{x} \in S_{h-1}$. Consider the ``extension by zero'' morphism $\bW_{h-1} \to \bW$ given by $\sum_{i=0}^{h-2} [a_i] \varpi^i \mapsto \sum_{i=0}^{h-2} [a_i] \varpi^i$. It defines a map $\sL_0/\varpi^{h-1} \sL_0 \to \sL_0$, $y \mapsto [y,0]$. To show the claim it is sufficient to show that $[\tilde x, 0]$ lies in $\sL_{0,b}^{\rm adm, rat}$. Obviously, $\det g_b^{\red}([\tilde x, 0]) \in \cO$. Now, note that as $\tilde x \in X_{h-1}^+$, there exists some lift $z \in \sL_{0,b}^{\rm adm, rat}$ of $\tilde x$. This gives in particular $\det g_b^{\red}(z) \equiv \det g_b^{\red}([\tilde x, 0]) \pmod{\varpi^{h-1}}$. We deduce $\det g_b^{\red}([\tilde x, 0]) \in \cO^{\times}$. It remains to show that $\det g_b^{\red}([\tilde x, 0]) \in K$. To do this, it suffices to prove that $\det g_b([\tilde x, 0]) \in K$, as $\det g_b^{\red}(\cdot)$ and $\det g_b(\cdot)$ differ only  by a power of $\varpi$. But as $\sigma^n(\widetilde x) = (-1)^{n'-1} (\widetilde x)$, we have $\sigma^n([\widetilde x,0]) = (-1)^{n'-1} [\widetilde x,0]$. Using this and $\det(b) = (-1)^{\kappa(n_0-1)}\varpi^k$ we compute:
\begin{align*}
(-1)^{\kappa(n_0-1)}\varpi^k \sigma(\det g_b([\tilde x, 0])) &= \det b\sigma(g_b([\tilde x, 0])) \\
&= \det \left( b\sigma([\tilde x, 0]) \, | \, (b\sigma)^2([\tilde x, 0]) \, | \, \dots \, | \, \varpi^k \sigma^n([\tilde x, 0])\right) \\
&= (-1)^{(n-1) + (n'-1)} \varpi^{\kappa} \det g_b([\tilde x, 0]).
\end{align*}
But as in the proof of Lemma \ref{lm:congruence_of_g_and_t}, we have $(-1)^{\kappa(n_0-1)} = (-1)^{(n-1) + (n'-1)}$. This shows the claim.


\subsubsection{}\label{ss:nonempty condition}

Fix $\tilde x \in S_{h-1}$ and $t = (-1)^{n'-1}(1+ \varpi^{h-1} [a])$ with $\Tr_{\FF_{q^n}/\FF_q}(a) = 0$ (as in Equation \eqref{e:fixed point goal}). We see that a point $x \in f^{-1}(\tilde x)$ with coordinates $(z_i)_{i=1}^{n'}$ as in Section \ref{ss:x'} lies in $S_{g,t} \cap f^{-1}(\widetilde x)$ if and only if 
\begin{equation*}
g \cdot \sigma^n(x) \cdot t^{-1} = x \qquad \text{and} \qquad
z_1 + z_1^q + \cdots + z_1^{q^{n-1}} = c(\widetilde x).
\end{equation*} 
By Section \ref{sec:actions_unipotent_traces}, the first equation is equivalent to (use that $\sigma^n(m_{i,k}) = -m_{i,k}$, $\sigma^n(\bar{x}_j) = (-1)^{n'-1}\bar{x}_j$)
\begin{equation*}
z_1^{q^n} + \sum_{(k,j) \in J} m_{1k}u_{kj} \bar x_j = z_1 + a,
\end{equation*}
along with similar equations for the $(z_i)_{i=2}^{n'}$ (of the form $z_i^{q^n} + \text{(sum of terms)} = z_i$). Since $c(\widetilde x) \in \FF_q$ by Section \ref{ss:c}, the second equation implies $z_1 = z_1^{q^n}$, which eliminates $z_1$ from the first equation. Therefore $S_{g,t} \cap f^{-1}(\widetilde x) \neq \varnothing$ if and only if
\begin{equation}\label{e:nonempty condition}
\psi(\widetilde x,g) = a, \qquad \text{where 
$\psi(\widetilde x,g) \colonequals \textstyle\sum_{(k,j) \in J} m_{1k}u_{kj} \bar x_j.$}
\end{equation}
Moreover, since the $n'-1$ equations for $(z_i)_{i=2}^{n'}$ is a separable polynomial in $z_i$, each gives precisely $q^n$ choices for $z_i$, $2 \leq i \leq n'$, with no further conditions. Thus
\begin{align*}
S_{g,t} \cap f^{-1}(\widetilde x) \neq 0 \quad &\Longleftrightarrow \quad \text{\eqref{e:nonempty condition} holds} \\
&\Longleftrightarrow \quad \#(S_{g,t} \cap f^{-1}(\widetilde x)) = \underbrace{q^{n-1}}_{\text{for $z_1$}}(\underbrace{q^n}_{\text{for $z_i$, $2 \leq i \leq n'$}})^{n'-1} = q^{nn'-1}.
\end{align*}
This shows the following lemma.

\begin{lemma}\label{l:claim i}
For $g \in N_h^{h-1}$, $t = (-1)^{n'-1} (1 + \varpi^{h-1}[a])$ with $\Tr_{\FF_{q^n}/\FF_q}(a) = 0$, and $\tilde{x} \in S_{h-1}$, 
\[
\# S_{g,t} \cap f^{-1}(\tilde x) = \begin{cases} q^{nn' - 1} & \text{if $\psi(g, \tilde x) = a$,} \\ 0 & \text{otherwise.} \end{cases}
\]
\end{lemma}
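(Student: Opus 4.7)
The plan is to extract the lemma from the detailed coordinate analysis already set up in Sections \ref{ss:x'}--\ref{ss:nonempty condition}, which puts the set $S_{g,t}\cap f^{-1}(\tilde x)$ into a form where a direct count becomes possible. I would parametrize the fiber $f^{-1}(\tilde x)$ by the coordinates $(z_1,\dots,z_{n'})$ introduced in Section \ref{ss:x'}. In these coordinates the Claim in Section \ref{ss:x'} cuts out $X_h^{\det\equiv 1}$ over $X_{h-1}^{+,\det\equiv 1}$ by the single equation $\sum_{i=0}^{n-1} z_1^{q^i}=c(\tilde x)$, giving the first constraint defining $f^{-1}(\tilde x)$ inside $\{\tilde x\}\times\bA^{n'}$.

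Next, I would translate the condition $g\cdot\sigma^n(x)\cdot t^{-1}=x$ into coordinates using the explicit action formulas of Section \ref{sec:actions_unipotent_traces} (together with the computation of $x\mapsto \sigma^n(x)\cdot t$ in the coordinates $z_i$). For $z_1$ this yields
\[
z_1^{q^n}+\psi(g,\tilde x)=z_1+a,
\]
while for $i=2,\dots,n'$ it yields a separable equation of the shape $z_i^{q^n}-z_i=R_i$, where $R_i$ depends only on $\tilde x$ and $g$ (and not on the $z_j$'s).

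The key step is to invoke Section \ref{ss:c}, where it is shown that $c(\tilde x)\in\FF_q$. Applying $\Fr_q$ to the hypersurface equation $\sum_{i=0}^{n-1}z_1^{q^i}=c(\tilde x)$ and subtracting the original equation yields $z_1^{q^n}=z_1$ for every point of the fiber, so $z_1\in\FF_{q^n}$. Substituting this into the $z_1$-equation above collapses it to $\psi(g,\tilde x)=a$. Thus if $\psi(g,\tilde x)\neq a$ the intersection $S_{g,t}\cap f^{-1}(\tilde x)$ is empty, while if $\psi(g,\tilde x)=a$ the $z_1$-equation becomes redundant (it is implied by the hypersurface equation).

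Finally, I would count: under $\psi(g,\tilde x)=a$, admissible values of $z_1$ are those $z_1\in\FF_{q^n}$ with $\Tr_{\FF_{q^n}/\FF_q}(z_1)=c(\tilde x)\in\FF_q$, which yields $q^{n-1}$ solutions (the trace is a surjective $\FF_q$-linear map with kernel of dimension $n-1$). For each $i=2,\dots,n'$ the separable equation $z_i^{q^n}-z_i=R_i$ has exactly $q^n$ solutions in $\overline\FF_q$. Multiplying gives $q^{n-1}\cdot(q^n)^{n'-1}=q^{nn'-1}$, as claimed. There is no real obstacle here: once the coordinates of Section \ref{ss:x'} and the rigidity $c(\tilde x)\in\FF_q$ of Section \ref{ss:c} are in hand, the proof is pure bookkeeping.
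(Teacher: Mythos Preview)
Your proposal is correct and follows essentially the same approach as the paper: both parametrize $f^{-1}(\tilde x)$ by the coordinates $(z_1,\dots,z_{n'})$ of Section~\ref{ss:x'}, use $c(\tilde x)\in\FF_q$ from Section~\ref{ss:c} to force $z_1^{q^n}=z_1$, reduce the $z_1$-equation to the condition $\psi(\tilde x,g)=a$, and then count $q^{n-1}$ solutions for $z_1$ and $q^n$ for each remaining $z_i$. Your deduction of $z_1^{q^n}=z_1$ by subtracting the $q$th power of the trace equation is a slightly more explicit version of what the paper asserts in one line.
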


For $a \in \ker(\Tr \colon \FF_{q^n} \rightarrow \FF_q)$, put
\[
B_{g,a} \colonequals \{\tilde x \colon \psi(g,\tilde x) = a \} \subseteq S_{h-1}.
\]
As $S_{g,t} = \bigsqcup_{\widetilde x \in S_{h-1}} S_{g,t} \cap f^{-1}(\widetilde x)$, Lemma \ref{l:claim i} implies that 
\[ \# S_{g,(-1)^{n'-1}(1 + \varpi^{h-1}[a])} = q^{nn' - 1} \cdot \# B_{g,a}.\]
Thus the left hand side of \eqref{e:fixed point goal} is 
\begin{equation}\label{eq:cusp_goal_2}
q^{nn'-1} \cdot \sum_{a \in \ker(\FF_{q^n} \rightarrow \FF_q)} \sum_{g \in N_h^{h-1}} \theta(1 + \varpi^{h-1}[a]) \cdot \# B_{g,a}.
\end{equation}


\subsubsection{}\label{ss:Sgt_vanishes_for_t_not_kernn0}
We have the following lemma.

\begin{lm}\label{lm:Sgt_empty_for_t_not_in_kernn0}
Let $g \in N_h^{h-1}$ and let $t = (-1)^{n'-1}(1 + \varpi^{h-1}[a])$ with $\Tr_{\FF_{q^n}/\FF_q}(a) = 0$. We have $S_{g,t} = \varnothing$, unless $\Tr_{\FF_{q^n}/\FF_{q^{n_0}}}(a) = 0$.
\end{lm}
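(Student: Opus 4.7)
The plan is to exploit Lemma~\ref{l:claim i}: the assumption $S_{g,t} \neq \varnothing$ combined with Lemma~\ref{lm:congruence_of_g_and_t} produces some $\tilde x \in S_{h-1}$ with $\psi(g,\tilde x) = a$. Hence it suffices to show that for every $g \in N_h^{h-1}$ and every $\tilde x \in S_{h-1}$, the element $\psi(g,\tilde x) \in \overline{\FF}_q$ satisfies $\Tr_{\FF_{q^n}/\FF_{q^{n_0}}}(\psi(g,\tilde x)) = 0$, as this forces the corresponding $a$ to lie in $\ker(\Tr_{\FF_{q^n}/\FF_{q^{n_0}}})$.

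The leverage comes from two structural observations. First, by the explicit description of $N_h^{h-1}$ in Section~\ref{ss:1}, the upper-left block entries $u_{kj}$ of $g \in N_h^{h-1}$ belong to $\FF_{q^{n_0}}$ (this is precisely what distinguishes the deepest layer of the block-diagonal parahoric from a naively-defined unipotent). Consequently, each Galois twist $\sigma^{n_0 i}$ leaves $u_{kj}$ fixed, and
\[
\Tr_{\FF_{q^n}/\FF_{q^{n_0}}}(\psi(g,\tilde x)) = \sum_{(k,j) \in J} u_{kj} \sum_{i=0}^{n'-1} \sigma^{n_0 i}(m_{1k})\,\sigma^{n_0 i}(\bar x_j).
\]
Second, the matrix $\overline{g_b}(\bar x)$ has $a$-th column $\overline\sigma_b^{a-1}(\bar x)$, and because $b$ is the special representative, $\overline\sigma_b$ acts on coordinates by $\sigma^{n_0}$ (Lemma~\ref{lm:defin_of_V}). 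Combined with the relation $\sigma^n(\bar x) = (-1)^{n'-1}\bar x$ valid on $S_{h-1}$, one obtains $\sigma^{n_0 i}(\overline{g_b}(\bar x)) = \overline{g_b}(\bar x)\,P_i$ for a signed cyclic column-shift matrix $P_i$. Taking inverses gives $\sigma^{n_0 i}(m) = P_i^{-1}\, m$.

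To finish, I would observe that the wrap-around sign $(-1)^{n'-1}$ in $P_i^{-1}$ only affects rows $b > n'-i$; in particular, the first row of $P_i^{-1}$ is unaffected for $i \in \{0,1,\dots,n'-1\}$, giving $\sigma^{n_0 i}(m_{1k}) = m_{i+1,k}$ with no sign, and similarly $\sigma^{n_0 i}(\bar x_j) = \overline{g_b}(\bar x)_{j,\,i+1}$. The inner sum then collapses to $\sum_{a=1}^{n'} m_{a,k}\,\overline{g_b}(\bar x)_{j,a} = (\overline{g_b}(\bar x)\cdot m)_{jk} = \delta_{jk}$. Since we have reduced in Section~\ref{ss:1} to the case where $J$ comes from a maximal parabolic, any $(k,j) \in J$ satisfies $k \leq n'-\ell < n'-\ell+1 \leq j$, so $\delta_{jk} = 0$, yielding the desired vanishing.

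The conceptual point is that the $\FF_{q^{n_0}}$-rationality of the scalars $u_{kj}$ inherited from the block structure of $N_h^{h-1}$ is exactly what aligns the $\sigma^{n_0}$-Galois action with the cyclic symmetry of the columns of $\overline{g_b}(\bar x)$; the main technical wrinkle to handle carefully is the bookkeeping of the signs arising from $\overline\sigma_b^{n'}(\bar x) = (-1)^{n'-1}\bar x$, but these sit outside the first row of $P_i^{-1}$ for the relevant range of $i$ and hence never enter the final identity.
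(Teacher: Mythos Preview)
Your proof is correct and follows essentially the same route as the paper: both reduce via Lemma~\ref{l:claim i} to showing $\Tr_{\FF_{q^n}/\FF_{q^{n_0}}}(m_{1k}\bar x_j)=0$ for $k\neq j$, pull the $\FF_{q^{n_0}}$-rational scalars $u_{kj}$ through the trace, and then identify the resulting sum with the $(j,k)$-entry of $\overline{g_b}(\bar x)\cdot m = I$. Your cyclic-shift-matrix formalism with $P_i$ and the sign bookkeeping is a more explicit packaging of what the paper records as ``one computes $m_{k,l}=m_{k-1,l}^{q^{n_0}}$''; note also that invoking the maximal-parabolic reduction is unnecessary here, since $k\neq j$ already holds for every $(k,j)\in J$ coming from any proper parabolic.
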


\begin{proof}
It is enough to show that if $\Tr_{\FF_{q^n}/\FF_{q^{n_0}}}(a) \neq 0$, then $S_{g,t} \cap f^{-1}(\tilde x) = \varnothing$ for all $\tilde x \in S_{h-1}$. By Lemma \ref{l:claim i}, it is enough to show that for all $g \in N_h^{h-1}$ and $\tilde x \in S_{h-1}$, we have $\psi(\tilde x, g) \in \ker(\Tr_{\FF_{q^n}/\FF_{q^{n_0}}})$. Fix such $g$ and $\tilde x$ and let $\varpi^{h-1} u_{k,j}$ ($(k,j) \in J$) denote the entries of $g$ (as in beginning of Section \ref{sec:actions_unipotent_traces}). As $u_{k,j} \in \FF_{q^{n_0}}$, and as $k\neq j$ holds for all pairs $(k,j) \in J$, it suffices to show that $m_{1,k}\bar x_j \in \ker(\Tr_{\FF_{q^n}/\FF_{q^{n_0}}})$ if $k \neq j$. Since $\tilde{x} \in S_{h-1}$, one computes $m_{k,l} = m_{k-1,l}^{q^{n_0}}$. Thus $\Tr(m_{1,k}\bar x_j)$ is precisely the $(j,k)$th entry of the matrix $\overline{g_b}(\bar{x}) \cdot m$, which is equal $0$. 
\end{proof}

By Lemma \ref{lm:Sgt_empty_for_t_not_in_kernn0} and \eqref{eq:cusp_goal_2}, we have reduced showing \eqref{e:fixed point goal} to showing 
\begin{equation}\label{e:fixed point goal_3}
\sum_{a \in \ker(\FF_{q^n} \rightarrow \FF_{q^{n_0}})} \sum_{g \in N_h^{h-1}}  \theta(1 + \varpi^{h-1}[a]) \cdot \# B_{g,a} = 0
\end{equation}


\subsubsection{}\label{ss:claim ii}

For $\tilde x \in S_{h-1}$, consider the $\FF_{q^{n_0}}$-vector subspace $W(\tilde{x}) \colonequals \langle m_{1,i} \bar{x}_j | (i,j) \in J \rangle \subseteq \ker(\FF_{q^n} \rightarrow \FF_{q^{n_0}})$. The left hand side of \eqref{e:fixed point goal_3} is
\begin{align}
\nonumber &\sum_{a \in \ker(\FF_{q^n} \rightarrow \FF_{q^{n_0}})} \sum_{g \in N_h^{h-1}}  \theta(1 + \varpi^{h-1}[a]) \cdot \# B_{g,a}   \\
\label{eq:trace_comp_unipotents_interm} &\qquad= \sum_{a\in \ker(\FF_{q^n} \rightarrow \FF_{q^{n_0}})} \sum_{\tilde x \in S_{h-1}} \theta(1 + \varpi^{h-1}[a]) \cdot \# \{ g \in N_h^{h-1} \colon \psi(g,\tilde x) = a \} \\
\nonumber &\qquad= \sum_{a\in \ker(\FF_{q^n} \rightarrow \FF_{q^{n_0}})} \sum_{ W \subseteq \ker(\FF_{q^n} \rightarrow \FF_{q^{n_0}})} \sum_{\tilde x \colon W(\tilde{x}) = W} \theta(1 + \varpi^{h-1}[a]) \cdot \# \{ g \in N_h^{h-1} \colon \psi(g,\tilde x) = a \}. 
\end{align}
Now fix some $W$ and $\tilde{x} \in S_{h-1}$ such that $W(\tilde x) = W$. Then $\{m_{1,i}\bar{x}_{j}\}_{(i,j) \in J}$ span the $\FF_{q^{n_0}}$-vector space $W$, and from the explicit form \eqref{e:nonempty condition} of $\psi(g,a)$, it is clear that 
\[
\#\{g \in N_h^{h-1} \colon \psi(g,\tilde{x}) = a \} = \begin{cases} q^{n_0(\# J - \dim W)} & \text{if $a \in W$,}\\ 0 & \text{otherwise.} \end{cases}
\]
Note that $\#\{g \in N_h^{h-1} : \psi(g, \tilde x) = a\}$ depends only on $W(\tilde x)$ and not on $\tilde x$ itself. Thus, if we set $S_{h-1,W} \colonequals \{\tilde x \in S_{h-1} \colon W(\tilde{x}) = W\}$, then \eqref{eq:trace_comp_unipotents_interm} is equal to 
\begin{align*}
\sum_W \#S_{h-1,W} \cdot q^{n_0(\#J - \dim W)} \cdot \sum_{a \in W} \theta(1 + \varpi^{h-1}[a]).
\end{align*}
But as $\theta$ is assumed to be primitive in the theorem, this expression is equal to $0$ once we show the following lemma:

\begin{lm}\label{lm:which_W_can_occur}
Let $\tilde x \in S_{h-1}$. Then there is some $r \mid n'$, $r < n'$ such that $W(\tilde x) = \ker(\Tr_{\FF_{q^n}/\FF_{q^{n_0r}}})$.
\end{lm}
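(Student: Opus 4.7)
My approach is to pass to the orthogonal complement under the non-degenerate $\FF_{q^{n_0}}$-bilinear trace pairing $\langle y,z\rangle \colonequals \Tr_{\FF_{q^n}/\FF_{q^{n_0}}}(yz)$ on $\FF_{q^n}$, and reduce the lemma to showing that $W(\tilde x)^\perp$ is a subfield of $\FF_{q^n}$ strictly contained in $\FF_{q^n}$. Using the factorization $\Tr_{\FF_{q^n}/\FF_{q^{n_0}}} = \Tr_{\FF_{q^{n_0 r}}/\FF_{q^{n_0}}} \circ \Tr_{\FF_{q^n}/\FF_{q^{n_0 r}}}$ combined with a dimension count, one checks that for any divisor $r \mid n'$ the perpendicular of the intermediate subfield $\FF_{q^{n_0 r}}$ equals $\ker \Tr_{\FF_{q^n}/\FF_{q^{n_0 r}}}$. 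Hence once $W(\tilde x)^\perp = \FF_{q^{n_0 r}}$ is known to be a proper subfield (so $r < n'$), biduality of $\perp$ gives $W(\tilde x) = \ker \Tr_{\FF_{q^n}/\FF_{q^{n_0 r}}}$.

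The containment $W(\tilde x) \subseteq K_0 \colonequals \ker \Tr_{\FF_{q^n}/\FF_{q^{n_0}}}$ established in Lemma \ref{lm:Sgt_empty_for_t_not_in_kernn0} gives $\FF_{q^{n_0}} \subseteq W(\tilde x)^\perp$, while $W(\tilde x) \neq 0$ (which one must verify separately from the admissibility of $\tilde x$ and the maximal-parabolic structure of $J$) yields $W(\tilde x)^\perp \subsetneq \FF_{q^n}$. The remaining task is the closure of $W(\tilde x)^\perp$ under multiplication. For this I would compare it to the multiplicative stabilizer $F \colonequals \{\lambda \in \FF_{q^n} : \lambda W(\tilde x) \subseteq W(\tilde x)\}$, which is automatically an $\FF_{q^{n_0}}$-subalgebra of the field $\FF_{q^n}$, hence a subfield. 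The inclusion $F \subseteq W(\tilde x)^\perp$ is immediate from $\lambda W(\tilde x) \subseteq K_0$ for $\lambda \in F$, so multiplicative closure of $W(\tilde x)^\perp$ is equivalent to $W(\tilde x)^\perp = F$, equivalently to the identification $W(\tilde x) = \ker\Tr_{\FF_{q^n}/F}$.

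One inclusion, $W(\tilde x) \subseteq \ker\Tr_{\FF_{q^n}/F}$, follows directly: $\Tr_{\FF_{q^n}/F}(W(\tilde x))$ is an $F$-subspace of $F$ by $F$-linearity of the trace and $F$-stability of $W(\tilde x)$, hence equals either $\{0\}$ or $F$; the chain rule $\Tr_{\FF_{q^n}/\FF_{q^{n_0}}} = \Tr_{F/\FF_{q^{n_0}}} \circ \Tr_{\FF_{q^n}/F}$ together with $W(\tilde x) \subseteq K_0$ confines the image inside the proper $\FF_{q^{n_0}}$-subspace $\ker\Tr_{F/\FF_{q^{n_0}}}$ of $F$, forcing it to be $\{0\}$. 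The reverse inclusion $W(\tilde x) \supseteq \ker\Tr_{\FF_{q^n}/F}$ is the technical heart of the argument. Here I would exploit the maximal parabolic decomposition $J = I_1 \times I_2$ with $I_1 \sqcup I_2 = \{1,\ldots,n'\}$ to write $W(\tilde x) = V_1 \cdot V_2$ for $V_1 \colonequals \langle m_{1,i} : i \in I_1\rangle_{\FF_{q^{n_0}}}$ and $V_2 \colonequals \langle \bar x_j : j \in I_2\rangle_{\FF_{q^{n_0}}}$, and combine the adjoint relations $\sum_{j=1}^{n'} m_{k,j}\bar x_j^{q^{n_0(l-1)}} = \delta_{k,l}$ with the Frobenius identity $m_{k,j} = m_{k-1,j}^{q^{n_0}}$ (for $k \geq 2$) from the proof of Lemma \ref{lm:Sgt_empty_for_t_not_in_kernn0} to produce enough $\FF_{q^{n_0}}$-linear combinations of products $m_{1,i}\bar x_j$ to span the $F$-hyperplane $\ker\Tr_{\FF_{q^n}/F}$ of $\FF_{q^n}$.

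The hard part will be this final lower-bound step: showing $V_1 \cdot V_2$ actually fills out $\ker\Tr_{\FF_{q^n}/F}$. I expect the key mechanism to be that any proper $F$-subspace of $\ker\Tr_{\FF_{q^n}/F}$ containing all products $m_{1,i}\bar x_j$ for $(i,j) \in J$ would, via the adjoint identities and the cyclic Frobenius structure on the columns of $m$, force an additional Galois symmetry on the tuple $(\bar x_1, \ldots, \bar x_{n'})$ incompatible with the maximality of $F$ as the stabilizer of $W(\tilde x)$. Carrying this out precisely is likely to require a careful dimension comparison among $\dim_{\FF_{q^{n_0}}} W(\tilde x)$, $|J| = (n'-\ell)\ell$, and the candidate value $n' - r$, together with a Galois-descent argument that genuinely exploits the rectangular structure $J = I_1 \times I_2$; without the maximal parabolic hypothesis the resulting $W$ has no reason to be multiplicatively rigid in the required sense.
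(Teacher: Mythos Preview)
Your overall strategy matches the paper's: pass to the trace pairing and reduce to showing that $W^\perp$ is a proper subfield of $\FF_{q^n}$. The containment $\FF_{q^{n_0}} \subseteq W^\perp$ and the properness (from $W \neq 0$) are fine. But there is a genuine gap at exactly the point you flag as the ``hard part'': you do not prove $W \supseteq \ker\Tr_{\FF_{q^n}/F}$, and your sketch does not contain a mechanism that would close it. In fact the detour through the stabilizer $F$ buys nothing. The inclusion $F \subseteq W^\perp$ is immediate, and your argument for $W \subseteq \ker\Tr_{\FF_{q^n}/F}$ is, via biduality, just a second proof of $F \subseteq W^\perp$. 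So the equivalence you state --- multiplicative closure of $W^\perp$ $\Leftrightarrow$ $W^\perp = F$ --- is correct but does not advance the argument: you still have to prove multiplicative closure, and reformulating it as $W^\perp \subseteq F$ (i.e.\ every element of $W^\perp$ stabilizes $W$) is no easier.

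The paper proves multiplicative closure of $W^\perp$ directly, and the concrete input your plan lacks is an alternative description of the perp of each ``column'' of $W$. For fixed $j$ with $n'-\ell+1 \leq j \leq n'$, set $L_j \colonequals \langle m_{1,i}\bar x_j : 1 \leq i \leq n'-\ell\rangle^\perp$, so that $W^\perp = \bigcap_j L_j$. The adjoint and Frobenius identities you already list combine to give $\Tr_{\FF_{q^n}/\FF_{q^{n_0}}}(m_{1,i}\bar x_{i'}) = 0$ for $i \neq i'$, hence $\bar x_{i'}/\bar x_j \in L_j$ for every $i'$ in the range $n'-\ell+1 \leq i' \leq n'$; a dimension count then yields
\[
L_j = \mathrm{span}_{\FF_{q^{n_0}}}\big\langle \bar x_{i'}/\bar x_j : n'-\ell+1 \leq i' \leq n'\big\rangle.
\]
With this description the multiplicative closure is a two-line check: given $v, w \in W^\perp$ and $(i_0, j_0) \in J$, expand $v \in L_{j_0}$ as $v = \sum_a v_a\, \bar x_a/\bar x_{j_0}$ with $v_a \in \FF_{q^{n_0}}$; then $\Tr(m_{1,i_0}\bar x_{j_0}\, vw) = \sum_a v_a \Tr(m_{1,i_0}\bar x_a\, w)$, and each summand vanishes because $w \in L_a$. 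Your plan to ``produce enough linear combinations to span $\ker\Tr_{\FF_{q^n}/F}$'' is aiming at the same conclusion, but without first identifying the $L_j$ as spans of the ratios $\bar x_{i'}/\bar x_j$ there is no handle on products $vw$, and no apparent way to make the Galois-descent heuristic you describe precise.
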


\begin{proof}
Write $W = W(\tilde x)$. Consider the perfect symmetric $\FF_{q^{n_0}}$-bilinear trace pairing 
\begin{equation*}
\FF_{q^n} \times \FF_{q^n} \to \FF_{q^{n_0}}, \qquad (x,y) \mapsto \Tr_{\FF_{q^n}/\FF_{q^{n_0}}}(xy).
\end{equation*}
It is an immediate computation that $\ker(\Tr_{\FF_{q^n}/\FF_{q^{n_0r}}})^\perp = \FF_{q^{n_0r}}$ for any divisor $r$ of $n'$, so we need to show that $W^{\perp}$ is of the form $\FF_{q^{n_0r}}$ for some $r<n'$. For this, it suffices to show that $W^{\perp}$ is an $\FF_{q^{n_0}}$-algebra, which is properly contained in $\FF_{q^n}$. 

First of all note that $W^{\perp}$ contains $1$ since for all $(i,j) \in J$, $m_{1,i} \bar{x}_j \in \ker(\Tr_{\FF_{q^n}/\FF_{q^{n_0}}})$ (as in the proof of Lemma \ref{lm:Sgt_empty_for_t_not_in_kernn0}). Since $W^\perp$ is an $\FF_{q^{n_0}}$-vector space and contains $1$, it must contain $\FF_{q^{n_0}}$. It remains to show that $W^{\perp}$ is closed under multiplication. We now use that $J$ is of the form
\[ 
J = \{(i,j)\colon 1\leq i \leq n' -\ell, \, n'-\ell+1 \leq j \leq n, \} 
\]
for some $1\leq \ell \leq n'$ (see Section \ref{ss:1}). For a fixed $n'-\ell +1 \leq j \leq n'$, let 
\[
L_j \colonequals {\rm span}_{\FF_{q^{n_0}}} \left\langle m_{1,i} \bar{x}_j \colon 1\leq i \leq n'-\ell \right\rangle^{\perp}. 
\]
Observe that the $m_{1,i}$ are all $\FF_{q^{n_0}}$-linearly independent (since $b$ is the special representative) and hence $L_j$ has dimension $n'-\ell$. For $1 \leq i \leq n' - \ell$ and $n' - \ell + 1 \leq i' \leq n'$ we have
\[
\Tr_{\FF_{q^n}/\FF_{q^{n_0}}} \left( m_{1,i}\bar x_j \cdot \frac{\bar x_{i'}}{\bar x_j} \right) = \Tr_{\FF_{q^n}/\FF_{q^{n_0}}}( m_{1,i}\bar x_{i'}) = 0,
\]
as in the proof of Lemma \ref{lm:Sgt_empty_for_t_not_in_kernn0}. This implies the inclusion ``$\supseteq$'' in the formula
\[
L_j = {\rm span}_{\FF_{q^{n_0}}} \left\langle \frac{\bar{x}_{i'}}{\bar{x}_j} \colon n' -\ell + 1 \leq i' \leq n' \right\rangle.
\]
The other inclusion follows by dimension reasons. As $W$ is generated by all $L_j^\perp$ ($n' - \ell + 1\leq j \leq n'$), we have $W^{\perp} = \bigcap_{j=n-\ell + 1}^n L_j$. Let $v,w \in W^{\perp}$. We need to show that $vw \in W^{\perp}$, i.e., that for all $(i_0,j_0) \in J$ we have $\Tr_{\FF_{q^n}/\FF_{q^{n_0}}}(m_{1,i_0}\bar{x}_{j_0} v w) = 0$. As $v\in L_{j_0}$, we may write $v = \sum_{(a,j_0) \in J} v_a \cdot \frac{\bar x_i}{\bar x_{j_0}}$ with $v_a \in \FF_{q^{n_0}}$. Then
\begin{align*} 
\Tr_{\FF_{q^n}/\FF_{q^{n_0}}}(m_{1,i_0}\bar{x}_{j_0} v w) &= 
\Tr_{\FF_{q^n}/\FF_{q^{n_0}}}\Big(m_{1,i_0}\bar{x}_{j_0} \Big(\textstyle\sum\limits_{(a,j_0) \in J} v_a \cdot \frac{\bar x_a}{\bar x_{j_0}}\Big) w\Big) \\ 
&= \textstyle\sum\limits_{(a,j_0) \in J} v_a \cdot \Tr_{\FF_{q^n}/\FF_{q^{n_0}}}\left(m_{1,i_0} \bar x_a w \right) = 0,
\end{align*}
where the last equality holds since $w \in W^{\perp}$ is orthogonal to each $L_a^{\perp}.$
\end{proof}


\newpage

\part{Automorphic induction and the Jacquet--Langlands correspondence}\label{part:aut ind and JL}

In this part, we use the results of Parts \ref{part:GLn DL} and \ref{part:cohomology} to study the $\ell$-adic homology groups of the semi-infinite Deligne--Lusztig variety $\dot X_{\dot w}^{DL}(b)$, which by Theorem \ref{cor:infty_level_adlv} along with Corollary \ref{cor:scheme_structure_on the_inverse_limit} is isomorphic to the affine Deligne--Lusztig variety at infinite level $\dot X_w^\infty(b)$ constructed in Section \ref{sec:comparison_DL_ADLV_isocrystal}. In Section \ref{s:henniart}, we recall methods of Henniart characterizing certain representations by considering the action of very regular elements. In Section \ref{s:inf ADLV}, we define the homology of $\dot X_w^\infty(b) \cong X_w^{DL}$ and give a representation-theoretic description of 
\begin{equation*}
R_T^G(\theta) \colonequals \sum_{i \geq 0} (-1)^i H_i(\dot X_w^\infty(b), \overline \QQ_\ell)[\theta] \qquad \text{for $\theta \from T = L^\times \to \overline \QQ_\ell^\times$ smooth}
\end{equation*}
in terms of the cohomology of the finite-type variety $X_h$ studied in the previous two parts of the paper. Using methods of Henniart as reviewed in Section \ref{s:henniart}, we prove Theorem \ref{t:LLC JLC gen}: if $|R_T^G(\theta)|$ is irreducible supercuspidal, then the assignment $\theta \mapsto |R_T^G(\theta)|$ realizes automorphic induction. To finish, we prove in Section \ref{s:minimal} that when $\theta \from L^\times \to \overline \QQ_\ell^\times$ is \textit{minimal admissible}, then $|R_T^G(\theta)|$ is irreducible supercuspidal.

\mbox{}

We now give some basic definitions which we will use throughout the next few sections. Recall that for any smooth character $\theta \from L^\times \to \overline \QQ_\ell^\times$, there exists an integer $h \geq 1$ such that $\theta$ is trivial on $U_L^h = 1 + \varpi^h \cO_L$. We call the smallest such $h$ the \textit{level of $\theta$}. We say that $\theta$ is \textit{in general position} if its stabilizer in $\Gal(L/K)$ is trivial. Let $\sX$ denote the set of such characters.

We say that an element $x$ of $L^\times$ is \textit{very regular} if $x \in \cO_L^\times$ and its image in the residue field $\FF_{q^n}$ generates its multiplicative group $\FF_{q^n}^{\times}$.

We say that a virtual representation is a \textit{genuine representation} if it is a nonnegative linear combination of irreducible representations. If $R$ is a virtual representation that is $\pm \pi$, where $\pi$ is a genuine representation, we write $|R| = \pi$.

\section{Results of Henniart on the Local Langlands Correspondence} \label{s:henniart}

In this section, we review the methods of Henniart \cite{Henniart_92,Henniart_93} characterizing certain cases of automorphic induction by considering the action of very regular elements. We give a generalization of the discussions of \cite{BoyarchenkoW_13} to all inner forms of $\GL_n(K)$. There are no technical difficulties in doing this, but we provide it for completeness of our paper.

Fix a character $\epsilon$ of $K^\times$ with $\ker(\epsilon) = \Nm_{L/K}(L^\times)$, and let $\cG_K^\epsilon(n)$ denote the set of irreducible $n$-dimensional representations $\sigma$ of the Weil group $\cW_K$ such that $\sigma \cong \sigma \otimes (\epsilon \circ {\rm rec}_K^{-1})$, where ${\rm rec}_K \colon K^{\times} \rightarrow \cW_K^{ab}$ is the reciprocity isomorphism from local class field theory.  It is known that every element of $\cG_K^\epsilon(n)$ is of the form $\Ind_{\cW_L}^{\cW_K}(\theta)$ for some character $\theta \in \sX$. However, it is also known that automorphic induction is not compatible with induction on Weil groups in the sense that the Langlands parameter may have a twist by a rectifying character. Hence the approach we take is via the $\chi$-datum of Langlands--Sheldstad \cite[Section 2.5]{LanglandsS_87}. Because $L/K$ is unramified, there is a canonical choice of $\chi$-datum, and this gives rise to a bijection
\begin{equation*}
\sX/\Gal(L/K) \to \cG_K^\epsilon(n), \qquad \theta \mapsto \sigma_\theta.
\end{equation*}
See \cite[Section 7.2]{Chan_DLII} for an exposition and an explicit discussion of the unramified setting. Note that $\sigma_\theta$ differs from the notation of \cite{BoyarchenkoW_13} by a rectifying character.

Let $\cA_K^\epsilon(\GL_n)$ denote the set of isomorphism classes of irreducible supercuspidal representations $\pi$ of $\GL_n(K)$ such that $\pi \cong \pi \otimes (\epsilon \circ \det)$. There is a canonical bijection
\begin{equation*}
\cG_K^\epsilon(n) \stackrel{\rm LLC}{\longrightarrow} \cA_K^\epsilon(\GL_n), \qquad \sigma_\theta \mapsto \pi_\theta
\end{equation*}
satisfying certain properties. By work of Henniart, the character of $\pi_\theta$ is very nicely behaved on certain elements of $\GL_n(K)$. 

Now let $G$ be an inner form of $\GL_n(K)$ so that $G \cong \GL_{n'}(D_{k_0/n_0})$, where $D_{k_0/n_0}$ is the division algebra of dimension $n_0^2$ over $K$ with Hasse invariant $k_0/n_0$. Let $\cA_K^\epsilon(G)$ denote the set of isomorphism classes of irreducible supercuspidal representations $\pi'$ of $G$ such that $\pi' \cong \pi' \otimes (\epsilon \circ \det)$. By the Jacquet--Langlands correspondence, there is a canonical bijection
\begin{equation*}
\cA_K^\epsilon(\GL_n) \stackrel{\rm JLC}{\longrightarrow} \cA_K^\epsilon(G), \qquad \pi \mapsto \pi' \colonequals \JL(\pi)
\end{equation*}
such that the central characters of $\pi$ and $\pi'$ match and such that their characters on regular semisimple elements differs by $(-1)^{n-n'}$.

\begin{remark}
We remark that the notation $\pi_\theta$ agrees with the $\pi(\theta)$ of \cite{Henniart_93}, but with the $\pi'(\theta)$ (rather than the $\pi(\theta)$) of \cite{Henniart_92}. When $n$ is odd, there is no discrepancy, but when $n$ is even, our $\pi_\theta$ is the representation $\pi_{\theta \omega} = \pi'(\theta)$ in \cite{Henniart_92}, where $\omega$ is the unique unramified character of $L^\times$ of order $2$. \hfill $\Diamond$
\end{remark}

The following theorem can be found in \cite[Section 3.14]{Henniart_92}.

\begin{theorem}[Henniart]\label{t:henniart without c}
For each $\theta \in \sX$, there exists a constant $c_\theta = \pm 1$ such that
\begin{equation*}
\Tr \JL(\pi_\theta)(x) = c_\theta \cdot \sum_{\gamma \in \Gal(L/K)} \theta^\gamma(x)
\end{equation*}
for every very regular element $x \in L^\times \subset \GL_n(K)$.
\end{theorem}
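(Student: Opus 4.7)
The plan is to deduce this from two known inputs: Henniart's explicit character formula for automorphic induction on $\GL_n(K)$ applied to very regular elements, and the defining character identity for the Jacquet--Langlands correspondence. First I would establish the analogous statement on the quasi-split side, namely
\[ \Tr \pi_\theta(x) = \epsilon_\theta \cdot \sum_{\gamma \in \Gal(L/K)} \theta^\gamma(x) \]
for all very regular $x \in L^\times$ (viewed in $\GL_n(K)$ via the embedding of the unramified elliptic torus), where $\epsilon_\theta \in \{\pm 1\}$ is a sign independent of $x$. This is exactly Henniart's character formula for the automorphically induced representation in the unramified cyclic setting; the key points are that for $x$ very regular the only Weyl conjugates of $x$ contributing to a Frobenius formula come from the $n$-element $\Gal(L/K)$-orbit, and that the rectifying character implicit in the bijection $\theta \mapsto \sigma_\theta \mapsto \pi_\theta$ evaluates on $\cO_L^\times$ to a sign depending only on $\theta$ (because $L/K$ is unramified, the corresponding $\chi$-datum is canonical and trivial on units).

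Second, I would transfer this identity to $G = J_b(K)$. The very regular element $x \in L^\times$ is regular semisimple both in $\GL_n(K)$ and, via the unramified elliptic torus $L^\times \subseteq G$, in $G$; it has an irreducible characteristic polynomial of degree $n$ and hence matches across the inner forms. The defining character relation for $\JL$ then gives
\[ \Tr \JL(\pi_\theta)(x) = (-1)^{n-n'} \Tr \pi_\theta(x) = (-1)^{n-n'}\epsilon_\theta \cdot \sum_{\gamma \in \Gal(L/K)} \theta^\gamma(x), \]
so the desired identity holds with $c_\theta \colonequals (-1)^{n-n'}\epsilon_\theta \in \{\pm 1\}$, which is manifestly independent of $x$.

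The main obstacle is Step 1: pinning down $\epsilon_\theta$ as a genuine $\pm 1$ constant rather than a character of $\cO_L^\times$. This is where Henniart's work \cite{Henniart_92,Henniart_93} is essential, since the naive induction formula for $\Ind_{\cW_L}^{\cW_K}\theta$ must be corrected by a rectifier built out of local epsilon factors and the chosen $\chi$-datum. In the unramified case that rectifier is itself unramified, so its restriction to very regular elements is constant, giving the sign $\epsilon_\theta$; in other words, verifying that no nontrivial character of $\FF_{q^n}^\times$ appears is precisely the content of Henniart's theorem. Once this is granted, the rest of the argument is a direct application of the character identity defining $\JL$, with no further subtlety since very regular elements are already strongly regular semisimple in $G$.
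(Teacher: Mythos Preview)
The paper does not give its own proof of this theorem: it is stated as a result of Henniart, with the single sentence ``The following theorem can be found in \cite[Section 3.14]{Henniart_92}'' preceding the statement. So there is nothing to compare against on the paper's side beyond that citation.

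Your sketch is a correct outline and in fact unpacks what the citation encodes. The decomposition into (1) Henniart's character formula on $\GL_n(K)$ giving $\Tr \pi_\theta(x) = \epsilon_\theta \sum_\gamma \theta^\gamma(x)$ on very regular $x$, followed by (2) the defining character identity $\Tr \JL(\pi_\theta)(x) = (-1)^{n-n'} \Tr \pi_\theta(x)$ on matching regular semisimple elements, is exactly how one arrives at the stated formula, and your observation that the rectifier is unramified (hence constant on $\cO_L^\times$) is the right reason the constant is a genuine sign rather than a character. You are also right that the substantive content lies entirely in Step~1 and is precisely what is proved in \cite{Henniart_92,Henniart_93}; your proposal, like the paper, ultimately defers to Henniart there. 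The only thing to add is that the paper's subsequent Theorem~\ref{t:henniart c} makes the sign $c_\theta$ explicit, confirming the shape $c_\theta = (-1)^{n-n'}\epsilon_\theta$ you predict.
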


As we will see momentarily, one can even go the other direction: the trace of $\pi \in \cA_K^\epsilon(\GL_n)$ on very regular elements of $L^\times$ characterizes $\pi$.  Furthermore, $c_\theta$ can be pinpointed for $\GL_n(K)$ by \cite[Theorem 3.14]{Henniart_92} and extended to any inner form of $\GL_n(K)$ via the character condition of the Jacquet--Langlands correspondence.  For each positive integer $r$ and each $\theta \in \sX$, consider the subgroup of $\Gal(L/K)$ given by $\sG_{\theta,r} \colonequals \{\gamma \in \Gal(L/K) : a(\gamma) \leq r\}$, where $a(\gamma)$ is the level of $\theta/\theta^\gamma$. 

\begin{theorem}[Henniart]\label{t:henniart c}
The constant $c_\theta$ of Theorem \ref{t:henniart without c} satisfies
\begin{equation*}
(-1)^{n-n'} c_\theta = \begin{cases}
+ 1 & \text{if $n$ is odd,} \\
+ 1 & \text{if $n$ is even and $s$ is even,} \\
- 1 & \text{if $n$ is even and $s$ is odd,}
\end{cases}
\end{equation*}
where $s$ is such that $\sG_{\theta,s} \smallsetminus \sG_{\theta,s-1}$ contains the unique element of order $2$ in $\Gal(L/K)$.
\end{theorem}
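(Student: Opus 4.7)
The plan is to reduce Theorem \ref{t:henniart c} to the $\GL_n(K)$ case, which is precisely \cite[Theorem 3.14]{Henniart_92}, by transferring via the character identity defining the Jacquet--Langlands correspondence.

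First I would treat the split case $G = \GL_n(K)$, where $n' = n$, $\JL$ is the identity, and $(-1)^{n-n'} = 1$. Then the statement to be proved is simply that $c_\theta$ (the constant appearing in Theorem \ref{t:henniart without c} for $\pi_\theta$ itself) equals $+1$ when $n$ is odd, $+1$ when $n$ is even and $s$ is even, and $-1$ when $n$ is even and $s$ is odd. This is exactly Henniart's theorem \cite[Theorem 3.14]{Henniart_92} once one accounts for the $\chi$-data normalization: by the discussion preceding Theorem \ref{t:henniart without c}, our parametrization $\theta \mapsto \pi_\theta$ is the canonical one attached to the unramified $\chi$-datum of Langlands--Shelstad, which matches Henniart's normalization $\pi(\theta)$ (resp.\ $\pi'(\theta)$) in the odd (resp.\ even) case. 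Hence in the split case, $c_\theta = c_\theta^{\GL_n}$ takes exactly the claimed value.

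For a general inner form $G = \GL_{n'}(D_{k_0/n_0})$ with $n = n'n_0$, I would invoke the defining property of the Jacquet--Langlands correspondence: for every regular semisimple element $x \in G$ that transfers to a regular semisimple element $x \in \GL_n(K)$, the character identity
\begin{equation*}
\Tr \JL(\pi_\theta)(x) \;=\; (-1)^{n-n'}\,\Tr \pi_\theta(x)
\end{equation*}
holds. A very regular element $x \in L^\times$ is strongly regular semisimple and elliptic both in $\GL_n(K)$ and in $G$ (viewing $L^\times \subset G$ through the unramified elliptic torus $T$), so the identity applies. Combining this with Theorem \ref{t:henniart without c} applied on both sides, we get
\begin{equation*}
c_\theta \cdot \textstyle\sum\limits_{\gamma \in \Gal(L/K)} \theta^\gamma(x) \;=\; (-1)^{n-n'}\,c_\theta^{\GL_n}\cdot \textstyle\sum\limits_{\gamma \in \Gal(L/K)} \theta^\gamma(x),
\end{equation*}
for every very regular $x$. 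Since $\theta \in \sX$ is in general position, one can choose very regular $x$ making $\sum_\gamma \theta^\gamma(x) \neq 0$ (indeed $\theta|_{\cO_L^\times}$ restricted to $(q^n - 1)$th roots of unity is a character of $\FF_{q^n}^\times$ with trivial $\Gal(L/K)$-stabilizer, hence the sum is nonzero on a generator). We conclude $c_\theta = (-1)^{n-n'}\,c_\theta^{\GL_n}$, i.e.\ $(-1)^{n-n'} c_\theta = c_\theta^{\GL_n}$, and the right-hand side is determined by the split case already handled.

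The substantive content is therefore entirely contained in Henniart's $\GL_n$ result; no new computation is required on the inner-form side. The only mild subtlety, and the step I would be most careful about, is verifying that our normalization $\theta \mapsto \pi_\theta$ (via the unramified Langlands--Shelstad $\chi$-datum, as recalled in Section \ref{s:henniart}) is the same as Henniart's, so that the sign dichotomy $s$ even/odd transports correctly; this is the comparison already indicated in the remark following Theorem \ref{t:henniart without c} distinguishing $\pi(\theta)$ and $\pi'(\theta)$ in the even-$n$ case, and once this bookkeeping is done the theorem follows immediately from \cite[Theorem 3.14]{Henniart_92} and the JL character identity.
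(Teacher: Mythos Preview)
Your approach is exactly the one the paper indicates: the paper does not give a proof of this theorem at all, but simply attributes the $\GL_n(K)$ case to \cite[Theorem 3.14]{Henniart_92} and remarks that it ``extends to any inner form of $\GL_n(K)$ via the character condition of the Jacquet--Langlands correspondence.'' Your proposal spells out precisely this two-step reduction and handles the normalization bookkeeping the paper flags in its remark distinguishing $\pi(\theta)$ from $\pi'(\theta)$.

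One small point: your justification that $\sum_\gamma \theta^\gamma(x)\neq 0$ for some very regular $x$ (namely, that $\theta$ restricted to the Teichm\"uller $\FF_{q^n}^\times$ has trivial $\Gal(L/K)$-stabilizer) is not quite right as stated, since $\theta\in\sX$ only guarantees trivial stabilizer on $L^\times$, not on $\FF_{q^n}^\times$. But this is easily repaired: the $\theta^\gamma$ are distinct on $\cO_L^\times$ (as $\varpi$ is Galois-fixed), and since very regular elements form a union of full $U_L^1$-cosets covering every generator of the residue field, linear independence of characters on $\cO_L^\times$ forces the sum to be nonzero somewhere on this set. Alternatively, the very existence of a well-defined sign $c_\theta=\pm1$ in Theorem~\ref{t:henniart without c} already presupposes this non-vanishing.
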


\begin{lemma}[Henniart]
\label{l:theta theta' gen}
Let $\theta \in \sX$ and suppose that there exists a character $\theta'$ of $L^\times$ (a priori, not necessarily in $\sX$) such that $\theta(\varpi) = \theta'(\varpi)$ and 
\begin{equation}\label{e:galois average}
c \cdot \sum_{\gamma \in \Gal(L/K)} \theta^\gamma(x) = c' \cdot \sum_{\gamma \in \Gal(L/K)} \theta'{}^\gamma(x)
\end{equation}
for all very regular elements of $x \in L^\times$. Assume in addition that $c = c'$ in the special case $n = 2$, $q = 3$, and $\theta|_{U_L^1}$ factors through the norm $U_L^1 \to U_K^1$ (i.e. $\theta \in \sX^0$ with notation from Section \ref{s:minimal}). Then 
\begin{equation*}
\theta' = \theta^\gamma \qquad \text{for some $\gamma \in \Gal(L/K)$.}
\end{equation*}
\end{lemma}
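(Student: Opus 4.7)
The plan is to upgrade \eqref{e:galois average} from very regular elements to an identity of virtual characters on all of $L^\times$, and then apply linear independence of characters of $L^\times$ to deduce the required multiset equality. First, since every character appearing has level at most some $h$, both sides of \eqref{e:galois average} factor through the finite quotient $L^\times/U_L^h$. The uniformizer $\varpi \in K^\times$ is fixed by every $\gamma \in \Gal(L/K)$, so $\theta^\gamma(\varpi) = \theta(\varpi)$ and $\theta'{}^{\gamma'}(\varpi) = \theta'(\varpi)$; combined with $\theta(\varpi) = \theta'(\varpi)$, this gives $\Phi(\varpi^a u) = \theta(\varpi)^a \Phi(u)$ for the virtual character
\[
\Phi := c \sum_{\gamma \in \Gal(L/K)} \theta^\gamma \;-\; c' \sum_{\gamma' \in \Gal(L/K)} \theta'{}^{\gamma'},
\]
so the problem reduces to showing that $\Phi$ vanishes on all of $\cO_L^\times$.

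To extend the vanishing of $\Phi$ from the set $V$ of very regular elements to all of $\cO_L^\times$, I would use the Teichm\"uller-style decomposition $\cO_L^\times \cong \FF_{q^n}^\times \times U_L^1$. Every $v \in V$ has the form $v = \tilde\zeta \cdot u$ with $\zeta$ in the set $\Omega \subseteq \FF_{q^n}^\times$ of multiplicative generators and $u \in U_L^1$. Fixing $\zeta$ and varying $u$ modulo $U_L^h$, the identity $\Phi|_{\tilde\zeta U_L^1} \equiv 0$ is a linear relation among characters of the finite abelian group $U_L^1/U_L^h$; grouping the $\theta^\gamma$ and $\theta'{}^{\gamma'}$ by their restrictions to $U_L^1$ and applying linear independence of characters of $U_L^1/U_L^h$, one obtains, for each class of restrictions, an identity of functions on $\Omega$ among finite linear combinations of distinct characters of $\FF_{q^n}^\times$. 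A density argument on $\FF_{q^n}^\times$---exploiting that $|\Omega| = \phi(q^n-1)$ is large relative to the (at most $2n$) characters of $\FF_{q^n}^\times$ appearing---then forces each such identity to hold on all of $\FF_{q^n}^\times$. Hence $\Phi$ vanishes on $\cO_L^\times$, and by the reduction above, on all of $L^\times$.

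Once $\Phi \equiv 0$ on $L^\times$, linear independence of characters of $L^\times$ upgrades this to an equality of formal sums $c \sum_\gamma \theta^\gamma = c' \sum_{\gamma'} \theta'{}^{\gamma'}$. Since $\theta \in \sX$ has trivial Galois stabilizer, the $\theta^\gamma$ are pairwise distinct; matching the $\pm 1$-coefficients on each distinct character then forces $c = c'$ together with a multiset equality $\{\theta^\gamma\}_\gamma = \{\theta'{}^{\gamma'}\}_{\gamma'}$, giving in particular $\theta' = \theta^{\gamma_0}$ for some $\gamma_0 \in \Gal(L/K)$, as required. Note that this last step automatically forces $\theta'$ to have trivial Galois stabilizer as well, even though this is not assumed in the hypothesis.

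The hard part will be the density step on $\FF_{q^n}^\times$: one needs $\Omega$ to be large enough to detect linear combinations of those specific characters of $\FF_{q^n}^\times$ that arise. A careful case analysis shows that the only residual situation in which this fails is $n=2$, $q=3$ with $\theta|_{U_L^1}$ factoring through the norm $U_L^1 \to U_K^1$, in which case several $\theta^\gamma|_{U_L^1}$ collapse to a common restriction and the relevant evaluation map on the four generators of $\FF_9^\times$ can become deficient. In that case I would handle the small finite calculation on $\FF_9^\times$ directly, using the extra hypothesis $c = c'$ (imposed precisely here) to force the required matching by an elementary counting argument.
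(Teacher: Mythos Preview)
Your overall strategy---reduce to $\cO_L^\times$, use the Teichm\"uller splitting, separate by $U_L^1$-restriction, then extend from generators of $\FF_{q^n}^\times$ to all of $\FF_{q^n}^\times$---is natural, but the density step as you state it is not valid. The claim that a linear combination of at most $2n$ characters of $\FF_{q^n}^\times$ vanishing on the set $\Omega$ of generators must vanish everywhere is false in general: whenever $q$ is odd, $q^n-1$ is even, and the sum of the trivial character and the unique order-$2$ character of $\FF_{q^n}^\times$ is a two-term combination that vanishes on every generator (the order-$2$ character takes the value $-1$ there) yet equals $2$ at the identity. So ``$\phi(q^n-1)$ large relative to $2n$'' is not the right invariant; the argument must use the specific Galois structure of the characters $\theta^\gamma|_{\mu_{q^n-1}}$, not just their number. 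Your assertion that the only failure is $(n,q)=(2,3)$ is therefore unsupported at this level of generality.

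The paper takes a different and cleaner route in the case it actually writes out (namely when $\theta|_{U_L^1}$ already has trivial $\Gal(L/K)$-stabilizer, referring to Henniart for the remaining cases). Rather than trying to extend the identity to all of $\cO_L^\times$, it fixes a single very regular $x$, notes that $xU_L^1$ consists entirely of very regular elements, and reads \eqref{e:galois average} on this coset as an identity of characters of the finite group $U_L^1/U_L^h$. Taking the inner product with a fixed $\theta^{\gamma'}|_{U_L^1}$ and using that the $\theta^\gamma|_{U_L^1}$ are pairwise distinct forces either $\theta'{}^\gamma=\theta^{\gamma'}$ on $U_L^1$ for some $\gamma\neq 1$, or $\theta'=\theta^{\gamma'}$ on $U_L^1$ together with $c'\theta'(x)=c\theta^{\gamma'}(x)$. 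Either way one gets $\theta'=\theta^\gamma$ on $U_L^1$ for some $\gamma$, then on $\cO_L^\times$ (since very regular elements together with $U_L^1$ generate), and finally on $L^\times$ via $\theta(\varpi)=\theta'(\varpi)$. This bypasses any extension argument on $\FF_{q^n}^\times$ entirely, at the cost of only covering the case where $\theta|_{U_L^1}$ has trivial stabilizer.
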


\begin{proof}
We provide the proof in the case that $\theta|_{U_L^1}$ has trivial $\Gal(L/K)$-stabilizer, following \cite[Section 5.3]{Henniart_93} (see also \cite[Lemma 1.7]{BoyarchenkoW_13}). This is the simplest setting. In \cite[Section 5.3]{Henniart_93}, Henniart proves the lemma for $\theta \in \sX$ in the case $[L:K]$ is prime by essentially the arguments presented here. A significantly more involved incarnation of these arguments is used in \cite[Identity (2.5), Sections 2.6--2.12]{Henniart_92} to prove the lemma in full generality as stated.

We first show that the conclusion holds on $U_L^1$. Fix a very regular element $x \in L^\times$. Since every element of $x U_L^1 \subset L^\times$ is a very regular element, the assumption implies that we have an equation of linear dependence between the $2n$ characters of $U_L^1$ given by the restrictions of the $\Gal(L/K)$-translates of $\theta$ and $\theta'$. Explicitly: on $U_L^1$, we have
\begin{equation*}
\theta' = c'{}^{-1} \theta'(x)^{-1} \cdot \left(\sum_{\gamma \in \Gal(L/K)} c\theta^\gamma(x) \cdot \theta^\gamma -  \sum_{1 \neq \gamma \in \Gal(L/K)} \theta'(x)^{-1} \theta'{}^\gamma(x) \cdot \theta'{}^\gamma\right).
\end{equation*}
Considering the character inner product of $\theta'$ with $\theta^{\gamma'}$ on $U_L^1$ for some fixed $\gamma' \in \Gal(L/K)$, we have:
\begin{equation*}
\langle \theta', \theta^{\gamma'} \rangle = \frac{c \cdot \theta^{\gamma'}(x)}{c' \cdot \theta'(x)} - c' \cdot \sum_{1 \neq \gamma \in \Gal(L/K)} \theta'{}^\gamma(x) \langle \theta'{}^\gamma, \theta^{\gamma'} \rangle.
\end{equation*}
If $\langle \theta'{}^\gamma, \theta^{\gamma'} \rangle = 1$ for some $1 \neq \gamma \in \Gal(L/K)$, then we are done. Otherwise, we must have $c' \theta'(x) = c \theta^{\gamma'}(x)$ and $\theta' = \theta^{\gamma'}$ on $U_L^1$ since $\theta, \theta'$ agree on $K^\times$. 

We have now shown that there exists a $\gamma \in \Gal(L/K)$ such that $\theta'(x) = \theta^\gamma(x)$ for any very regular element $x \in L^\times$. But now it follows that $\theta' = \theta^\gamma$ on $\cO_L^\times$ since any very regular element together with $U_L^1$ generate $\cO_L^\times$. The desired conclusion now follows by the assumption $\theta(\pi) = \theta'(\pi)$ since $\langle \varpi \rangle \cdot \cO_L^\times = L^\times$.
\end{proof}

From Lemma \ref{l:theta theta' gen}, we obtain the following result:

\begin{proposition}[Henniart, Boyarchenko--Weinstein]\label{p:very regular determines}
Let $\theta \in \sX$ and let $G$ be any inner form of $\GL_n(K)$. Assume that $\pi$ is an irreducible supercuspidal representation of $G$ with central character $\theta|_{K^\times}$ satisfying:
\begin{enumerate}[label=(\roman*)]
\item
$\pi \cong \pi \otimes (\epsilon \circ \det)$, 
\item
there exists a constant $c \neq 0$ satisfying $\Tr \pi(x) = c \cdot \sum_{\gamma \in \Gal(L/K)} \theta^\gamma(x)$ for each very regular element $x \in L^\times$.
\end{enumerate}
If $n = 2$, $q = 3$, and $\theta|_{U_L^1}$ factors through the norm $U_L^1 \to U_K^1$ (i.e. $\theta \in \sX^0$ with notation from Section \ref{s:minimal}), assume in addition that $c = \begin{cases} -1 & \text{if $G \cong \GL_2(K)$} \\ +1 & \text{if $G \cong D_{1/2}^\times$} \end{cases}$. Then $\pi$ corresponds to $\theta$ under automorphic induction and the Jacquet--Langlands correspondence:
\begin{equation*}
\pi \cong \JL(\pi_\theta).
\end{equation*}
\end{proposition}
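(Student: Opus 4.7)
The plan is to follow the approach in \cite[Section 2]{Henniart_92} and \cite[Lemma 1.7]{BoyarchenkoW_13}: use assumption (i) together with the Jacquet--Langlands correspondence to write $\pi$ as $\JL(\pi_{\theta'})$ for some $\theta' \in \sX$, then use assumption (ii) together with Henniart's trace formula (Theorems \ref{t:henniart without c}, \ref{t:henniart c}) to reduce the problem to an identity of the form \eqref{e:galois average}, and finally apply Lemma \ref{l:theta theta' gen} to conclude $\theta' = \theta^\gamma$ for some $\gamma \in \Gal(L/K)$.

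More precisely: by assumption (i), $\pi \in \cA_K^\epsilon(G)$, and hence $\JL^{-1}(\pi) \in \cA_K^\epsilon(\GL_n)$. Since the parametrization recalled at the start of Section \ref{s:henniart} gives a bijection $\sX/\Gal(L/K) \xrightarrow{\sim} \cG_K^\epsilon(n) \xrightarrow{\rm LLC} \cA_K^\epsilon(\GL_n)$, there exists $\theta' \in \sX$, unique up to $\Gal(L/K)$, such that $\JL^{-1}(\pi) \cong \pi_{\theta'}$. Applying Theorem \ref{t:henniart without c} to $\pi_{\theta'}$ and combining with assumption (ii) gives the identity
\[
c \cdot \sum_{\gamma \in \Gal(L/K)} \theta^\gamma(x) = \Tr \pi(x) = \Tr \JL(\pi_{\theta'})(x) = c_{\theta'} \cdot \sum_{\gamma \in \Gal(L/K)} \theta'{}^\gamma(x)
\]
for every very regular $x \in L^\times$. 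Also, comparing central characters (which are preserved by LLC and JL) gives $\theta(\varpi) = \theta'(\varpi)$, so the two hypotheses of Lemma \ref{l:theta theta' gen} are satisfied on $L^\times$.

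For the main case we now invoke Lemma \ref{l:theta theta' gen} directly: there exists $\gamma \in \Gal(L/K)$ with $\theta' = \theta^\gamma$, whence $\pi_{\theta'} \cong \pi_\theta$ and $\pi \cong \JL(\pi_\theta)$, as desired. The only place where more is needed is the exceptional case $n=2$, $q=3$ with $\theta|_{U_L^1}$ factoring through the norm: here Lemma \ref{l:theta theta' gen} requires the additional matching of the constants $c$ and $c_{\theta'}$. In this exceptional case, $\sG_{\theta,s} \sm \sG_{\theta,s-1}$ contains the nontrivial element of $\Gal(L/K)$ for $s=1$ (an odd integer), so Theorem \ref{t:henniart c} computes $c_{\theta'} = -1$ when $G \cong \GL_2(K)$ and $c_{\theta'} = +1$ when $G \cong D_{1/2}^\times$ (using $(-1)^{n-n'}$). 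This is precisely the value imposed on $c$ in the hypothesis, so $c = c_{\theta'}$ and the lemma applies.

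The main obstacle is locating the correct value of the sign $c_{\theta'}$ when the hypotheses of Lemma \ref{l:theta theta' gen} are not automatic (the exceptional $n=2, q=3$ case): one must chase through Henniart's formula \ref{t:henniart c} and correctly match the sign $(-1)^{n-n'}$ (which distinguishes $\GL_2(K)$ from $D_{1/2}^\times$) with the sign coming from the parity of $s$. Once this bookkeeping is done, the rest of the proof is a direct assembly of the cited results.
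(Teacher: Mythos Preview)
Your overall strategy is correct and is exactly the approach the paper has in mind (the paper's own proof simply cites \cite[Proposition 1.5]{BoyarchenkoW_13} and says the argument extends to arbitrary inner forms). The reduction via condition (i) to $\pi \cong \JL(\pi_{\theta'})$ for some $\theta' \in \sX$, followed by the trace comparison and Lemma~\ref{l:theta theta' gen}, is the right line.

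There is, however, a small gap in your treatment of the exceptional case $n=2$, $q=3$, $\theta \in \sX^0$. You compute the jump index $s$ using $\sG_{\theta,s}$, i.e.\ using $\theta$, and then conclude a value for $c_{\theta'}$. But Theorem~\ref{t:henniart c} determines $c_{\theta'}$ from the jump data of $\theta'$, not of $\theta$; a priori you do not yet know that $\theta'\in\sX^0$, so you cannot assert $s'=1$ for $\theta'$. The fix is short: if $\theta'\notin\sX^0$, then apply Lemma~\ref{l:theta theta' gen} with the roles of $\theta$ and $\theta'$ interchanged (both lie in $\sX$, and the special-case hypothesis is now on $\theta'$, which fails), so no sign matching is needed and you conclude $\theta=\theta'^{\gamma}$ directly --- but this forces $\theta'\in\sX^0$ since $\sX^0$ is Galois-stable, a contradiction. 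Hence $\theta'\in\sX^0$, whence $\theta'/\theta'^{\gamma}$ is trivial on $U_L^1$ but nontrivial (as $\theta'\in\sX$), so $s'=1$ and your sign computation for $c_{\theta'}$ goes through. With this adjustment, your argument is complete.
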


\begin{proof}
This is \cite[Proposition 1.5]{BoyarchenkoW_13} (combined with the remarks of Section 1.4 of \textit{op.\ cit.}) when $G \cong \GL_n(K)$ or $G \cong D_{1/n}^\times$. The proof extends to the general situation with no complications. 
\end{proof}

\section{Homology of affine Deligne--Lusztig varieties at infinite level}\label{s:inf ADLV}

We explain how the results of Part \ref{part:cohomology} on the cohomology of the finite-type of $\FF_{q^n}$-schemes $X_h$ for $h \geq 1$ (Proposition-Definition \ref{prop:lambda}, Proposition \ref{p:Xh Ah-}) allows one to define and determine homology groups of the schemes $\dot X_w^\infty(b)$. 

\subsection{Definition of the homology groups}\label{s:def H_i}

Following \cite{Lusztig_79}, for any smooth $\overline \FF_q$-scheme $S$ of pure dimension $d$, we set
\begin{equation*}
H_i(S, \overline \QQ_\ell)\colonequals H_c^{2d-i}(S, \overline \QQ_\ell)(d),
\end{equation*}
where $(d)$ denotes the $d$th Tate twist. Recall from Proposition \ref{p:dim Xh} that for any $h \geq 1$, the $\FF_{q^n}$-scheme $X_h$ is smooth of pure dimension $(n-1)(h-1) + (n'-1)$.

By Proposition \ref{p:conn comps}, Corollary \ref{cor:scheme_structure_on the_inverse_limit} and \eqref{eq:two_prolimit_presentations}, we have
\begin{equation*}
\dot X_w^\infty(b) = \bigsqcup_{g \in G/G_\cO} \varprojlim_{r > m \geq 0} g \cdot \dot X_{\dot w_r}^m(b)_{\sL_0} = \bigsqcup_{g \in G/G_\cO} \varprojlim_h X_h.
\end{equation*}
By Proposition \ref{p:Wh fixed}, we have the natural inclusion 
\begin{equation*}
H_i(X_{h-1}, \overline \QQ_\ell) = H_i(X_h, \overline \QQ_\ell)^{\bW_h^{h-1}(\FF_{q^n})} \subseteq H_i(X_h, \overline \QQ_\ell).
\end{equation*}
We may therefore define
\begin{align*}
H_i\Big(\dot X_w^\infty(b)_{\sL_0}, \overline \QQ_\ell\Big) &= H_i\Big(\varprojlim_h X_h, \overline \QQ_\ell\Big)  \colonequals \varinjlim_{h} H_i\left(X_h, \overline \QQ_\ell\right), \\
H_i\left(\dot X_w^\infty(b), \overline \QQ_\ell\right) &= \bigoplus_{G/G_{\cO}} H_i\left(g \cdot \dot X_w^\infty(b)_{\sL_0}, \overline \QQ_\ell\right).
\end{align*}

Recall that in Theorem \ref{cor:infty_level_adlv} we extended the action of $\cO_L^{\times}$ on $\dot X_w^\infty(b)$ to an action of $T=L^{\times}$.
\begin{definition}\label{d:homology ADLV}
For any (smooth) character $\theta \from T \to \overline \QQ_\ell^\times$, define the virtual $G$-representation
\begin{equation*}
R_T^G(\theta) \colonequals \sum_{i \geq 0} (-1)^i H_i(\dot X_w^\infty(b), \overline \QQ_\ell)[\theta],
\end{equation*}
where $[\theta]$ denotes the subspace where $T$ acts by $\theta$. 
\end{definition}

Let $Z$ denote the center of $G$.  
\begin{theorem}\label{t:RTG desc}
Let $\theta \from T = L^\times \to \overline \QQ_\ell^\times$ be a character of level $h \geq 1$. Then as $G$-representations,
\begin{equation}\label{e:ind from Gh}
R_T^G(\theta) \cong \cInd_{Z \cdot G_\cO}^G\left(R_{T_h}^{G_h}(\theta)\right),
\end{equation}
where we view the (virtual) $G_h$-representation $R_{T_h}^{G_h}(\theta)$ as a $G_\cO$-representation by pulling back along the natural surjection $G_\cO \to G_h$ and then extend to $Z$ by letting $\varpi$ act by $\theta(\varpi)$. Furthermore, for any very regular element $x \in L^\times$,
\begin{equation*}
\Tr\left(x^* ; R_T^G(\theta)\right) = \sum_{\gamma \in \Gal(L/K)} \theta^\gamma(x).
\end{equation*}
\end{theorem}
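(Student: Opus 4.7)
The plan is to establish first the compact-induction identity and then deduce the trace formula as a consequence.

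\textbf{Induced structure.} Combining Proposition \ref{p:conn comps} with the identification \eqref{eq:two_prolimit_presentations}, I would start from the scheme-theoretic decomposition $\dot X_w^\infty(b) = \bigsqcup_{g \in G/G_\cO} g \cdot \dot X_w^\infty(b)_{\sL_0}$, where each component is acted on by its corresponding parahoric from the left and by $T_\cO = \cO_L^\times$ from the right. Using the level-compatibility isomorphism of Corollary \ref{cor:level_compatibility_of_cohomology} (modulo Poincar\'e duality and Tate twists), the $\theta|_{T_\cO}$-isotypic part of $\varinjlim_h H_i(X_h,\overline \QQ_\ell)$ on the identity component becomes $H_i(X_h,\overline \QQ_\ell)[\theta]$ once $h$ is at least the level of $\theta$. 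Summing over $G/G_\cO$ gives
\[
H_i(\dot X_w^\infty(b),\overline \QQ_\ell)[\theta|_{T_\cO}] \;\cong\; \cInd_{G_\cO}^G\, H_i(X_h,\overline \QQ_\ell)[\theta].
\]
To reduce this to $\cInd_{Z\cdot G_\cO}^G$, I would analyze the uniformizer action: under the equivariant isomorphism of Theorem \ref{cor:infty_level_adlv}, $\varpi \in T = L^\times$ acts on $\dot X_w^\infty(b) = V_b^{\adm,\rat}$ by scaling, which coincides with left-multiplication by the scalar matrix $\varpi I \in Z$. Hence $\varpi$ permutes the components by left-multiplication by $\varpi I$, and taking the $\theta(\varpi)$-eigenspace identifies all components in a single $\varpi^{\bZ}$-orbit into one copy. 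Since $L/K$ is unramified, $Z \cdot G_\cO = \varpi^{\bZ} G_\cO$, so this collapses the induction from $G_\cO$ to $Z\cdot G_\cO$ with $Z$ acting by $\theta|_Z$; taking alternating sums yields \eqref{e:ind from Gh}.

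\textbf{Trace on very regular elements.} Applying the character formula for compact induction, noting that $Z_G(x) = T \subseteq Z\cdot G_\cO$ for any very regular $x$,
\[
\Tr(x^*;R_T^G(\theta)) \;=\; \sum_{g(ZG_\cO)\colon g^{-1}xg \in ZG_\cO} \Tr((g^{-1}xg)^*; R_{T_h}^{G_h}(\theta)).
\]
By Lemma \ref{l:lift of Gal}, every $\varphi \in \Gal(L/K)$ lifts to $g_\varphi \in N_G(G_\cO)$ realizing the action of $\varphi$ on $\cO_L$, with $g_\varphi \in G_\cO$ precisely for $\varphi \in \Gal(L/K)[n']$; together with the identification $N_G(G_\cO)/(ZG_\cO \cap N_G(G_\cO)) \cong \bZ/n_0\bZ$ from Section \ref{sec:cartan}, this gives exactly $n_0$ distinct nonzero-contributing cosets, indexed by $\Gal(L/K)/\Gal(L/K)[n']$. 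A short argument using the $\bG_1(\FF_q)$-conjugacy classes of very regular semisimple elements (whose only conjugates in $\GL_{n'}(\FF_{q^{n_0}})$ are the Galois conjugates, so the only regular elements conjugate to $x$ in $ZG_\cO$ are the full Galois orbit) shows that no further cosets arise. Plugging in Proposition \ref{p:trace alt sum Xh},
\[
\Tr((g_\varphi^{-1}xg_\varphi)^*;R_{T_h}^{G_h}(\theta)) \;=\; \sum_{\gamma \in \Gal(L/K)[n']} \theta^\gamma(\varphi^{-1}(x)),
\]
and summing over coset representatives $[\varphi] \in \Gal(L/K)/\Gal(L/K)[n']$, the products $\gamma\varphi^{-1}$ run bijectively over $\Gal(L/K)$, producing the desired formula $\sum_{\delta \in \Gal(L/K)} \theta^\delta(x)$.

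\textbf{Main obstacle.} The technically delicate step will be pinning down the interplay between the $\varpi$-action and the component decomposition: one must verify that taking the $\theta(\varpi)$-eigenspace precisely collapses the direct sum over $G/G_\cO$ into one over $G/(ZG_\cO)$ with the correct central character, compatibly with the $G$-module structure. The parallel subtlety on the trace side is showing that the only cosets contributing to the character sum come from $N_G(G_\cO)$-lifts of $\Gal(L/K)$, which amounts to controlling the $ZG_\cO$-conjugacy classes inside the single $G$-conjugacy class of a very regular element.
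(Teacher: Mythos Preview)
Your proposal is correct and follows essentially the same route as the paper. The only packaging difference is in the first step: the paper identifies the stabilizer of $\dot X_w^\infty(b)_{\sL_0}$ inside $G\times T$ as the subgroup $\Gamma$ generated by $G_\cO\times T_\cO$ and $(\varpi,\varpi^{-1})$, then inducts from $\Gamma$ to $G\times T$ in one step, whereas you first induct from $G_\cO$ and then collapse along $\varpi^{\bZ}$-orbits---these are equivalent formulations of the same observation that $\varpi\in T$ acts as the central element $\varpi I\in Z$. For the trace formula the paper invokes exactly the ingredients you name (the induced-character formula, Proposition~\ref{p:trace alt sum Xh}, Lemma~\ref{l:lift of Gal}, and $N_G(G_\cO)/G_\cO\cong\bZ/n_0\bZ$ from Section~\ref{sec:cartan}); your ``main obstacle'' about which cosets contribute is handled implicitly in the paper in the same way you sketch.
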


\begin{proof}
The stabilizer of $\dot X_w^{\infty}(b)_{\sL_0} \subseteq \dot X_w^{\infty}(b)$ in $G$ is $G_{\cO}$. Let $T_{\cO}$ be the preimage of $\cO_L^{\times}$ under $T \cong L^{\times}$. It is easy to see that the stabilizer of $\dot X_w^{\infty}(b)_{\sL_0} \subseteq \dot X_w^{\infty}(b)$ in $G\times T$ is the subgroup $\Gamma$ generated by $G_\cO \times T_\cO$ and $(\varpi, \varpi^{-1})$.
Hence as representations of $G \times T$, we have
\begin{equation*}
\sum (-1)^i H_i(\dot X_w(b), \overline \QQ_\ell) \cong \cInd_{\Gamma}^{T \times G}\left(\sum (-1)^i H_i(\dot X_w(b)_{\sL_0}, \overline \QQ_\ell)\right).
\end{equation*}
Now let $\widetilde \Gamma$ be the subgroup of $G \times T$ generated by $\Gamma$ and $\{1\} \times T$. Note that $\widetilde \Gamma \cong Z G_\cO \times T$. The isomorphism \eqref{e:ind from Gh} follows from the above together with the definition of the homology groups of $\dot X_w(b)_{\sL_0}$ in terms of the cohomology of $X_h$ (remembering that $\theta$ has level $h$ by assumption).

It remains to determine the character on very regular elements of $L^\times.$ We use \eqref{e:ind from Gh} together with the corresponding character formula result for $X_h$ (Proposition \ref{p:trace alt sum Xh}). By Lemma \ref{lm:descr_normalizer}, we know that for each $\varphi \in \Gal(L/K)$, there exists an element $g_\varphi \in N_G(G_\cO)$ satisfying $g_\varphi x g_\varphi^{-1} = \varphi(x)$ for all $x \in L^\times$ and that if $\varphi \in \Gal(L/K)[n']$, one can choose $g_\varphi \in G_\cO$. By Section \ref{sec:cartan}, we know that $N_G(G_\cO)/G_\cO \cong \bZ/n_0 \bZ$, and therefore using the fact that
\begin{equation*}
\Tr\left(x^* ; R_{T_h}^{G_h}(\theta)\right) = \sum_{\varphi \in \Gal(L/K)[n']} \theta(g_\varphi x g_\varphi^{-1})
\end{equation*}
by Proposition \ref{p:trace alt sum Xh}, we have:
\begin{equation*}
\Tr\left(x^* ; R_T^G(\theta)\right) = \sum_{\substack{g \in G/Z G_\cO \\ g x g^{-1} \in Z G_\cO}} \Tr\left(x^* ; R_{T_h}^{G_h}(\theta)\right) = \sum_{\varphi \in \Gal(L/K)} \theta(g_\varphi x g_\varphi^{-1}). \qedhere
\end{equation*}
\end{proof}

\begin{theorem}\label{t:LLC JLC gen}
Let $\theta \in \sX$. If $|R_T^G(\theta)|$ is irreducible supercuspidal, then the assignment $\theta \mapsto |R_T^G(\theta)|$ is a geometric realization of automorphic induction and the Jacquet--Langlands correspondence. That is,
\begin{equation*}
|R_T^G(\theta)| \cong \JL(\pi_\theta),
\end{equation*}
where $\JL$ denotes the Jacquet--Langlands transfer of the $\GL_n(K)$-representation $\pi_\theta$ to the (possibly split) inner form $G$ of $\GL_n(K)$. Moreover, writing $|R_T^G(\theta)| = c_\theta' R_T^G(\theta)$ for $c_\theta' \in \{\pm 1\}$, we have $c_\theta' = c_\theta$.
\end{theorem}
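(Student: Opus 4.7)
The plan is to apply the Henniart--Boyarchenko--Weinstein characterization, Proposition \ref{p:very regular determines}, to $\pi \colonequals |R_T^G(\theta)|$; once $\pi \cong \JL(\pi_\theta)$ is established, the sign equality $c_\theta' = c_\theta$ drops out of comparing Theorem \ref{t:RTG desc} with Theorem \ref{t:henniart without c} on a very regular element. The three hypotheses of Proposition \ref{p:very regular determines} are verified as follows. First, Theorem \ref{t:RTG desc} presents $R_T^G(\theta)$ as $\cInd_{Z G_\cO}^G R_{T_h}^{G_h}(\theta)$ with $\varpi \in Z$ acting by $\theta(\varpi)$, so $\pi$ has central character $\theta|_{K^\times}$. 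Second, Lemma \ref{l:twist compat} gives $R_{T_h}^{G_h}(\theta) \otimes (\chi \circ \det) \cong R_{T_h}^{G_h}(\theta \otimes (\chi \circ \Nm))$ for any character $\chi$ of $\bW_h^\times(\FF_q)$; taking $\chi = \epsilon|_{\cO_K^\times}$ and using $\ker \epsilon = \Nm_{L/K}(L^\times)$ makes $\theta \otimes (\epsilon \circ \Nm) = \theta$, which transports through $\cInd$ to $\pi \cong \pi \otimes (\epsilon \circ \det)$. Third, Theorem \ref{t:RTG desc} supplies the trace identity $\Tr \pi(x) = c_\theta' \sum_{\gamma \in \Gal(L/K)} \theta^\gamma(x)$ for every very regular $x$.

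\textbf{Generic conclusion.} Outside the exceptional configuration $(n=2,\, q=3,\, \theta|_{U_L^1}\text{ through }\Nm)$ of Proposition \ref{p:very regular determines}, the three verifications above suffice to conclude $\pi \cong \JL(\pi_\theta)$. Because $\theta \in \sX$ is in general position, the restrictions of $\{\theta^\gamma\}_{\gamma \in \Gal(L/K)}$ to the cyclic group $\mu_{q^n-1} \subset \cO_L^\times$ are pairwise distinct multiplicative characters; by linear independence of characters one produces a very regular $x$ with $\sum_\gamma \theta^\gamma(x) \neq 0$. Evaluating both Theorem \ref{t:RTG desc} and Theorem \ref{t:henniart without c} at this $x$ yields $c_\theta' \sum_\gamma \theta^\gamma(x) = c_\theta \sum_\gamma \theta^\gamma(x)$ and hence $c_\theta' = c_\theta$.

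\textbf{The exceptional case: the main obstacle.} In the remaining case ($n=2$, $q=3$, $\theta|_{U_L^1}$ through $\Nm$), direct application of Theorem \ref{t:henniart c} --- noting that since $\theta|_{U_L^1}$ is Galois-invariant one has $a(\sigma) = 1$ for the nontrivial element $\sigma \in \Gal(L/K)$, whence $s = 1$ is odd --- gives $c_\theta = -1$ for $G = \GL_2(K)$ and $c_\theta = +1$ for $G = D_{1/2}^\times$, which is precisely the additional constraint Proposition \ref{p:very regular determines} imposes on $c = c_\theta'$. To avoid circularity one must determine $c_\theta'$ independently of Henniart's theory. My plan is to read $c_\theta'$ off the geometry of $X_h$: this sign records the parity of the cohomological degree in which the $\theta$-isotypic component of $H^\ast_c(X_h, \overline \QQ_\ell)$ concentrates, and it is in principle computable from Proposition \ref{p:Xh Ah-} (identifying $X_h$ with a Deligne--Lusztig-type construction for $\bG_h$), the dimension formula $\dim X_h = (n-1)(h-1) + (n'-1)$ of Proposition \ref{p:dim Xh}, and the level-compatibility isomorphism of Corollary \ref{cor:level_compatibility_of_cohomology} relating $X_h$ and $X_{h-1}$. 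Executing this sign computation carefully in the $n=2$, $q=3$ special case --- and verifying that the output matches the required values $\mp 1$ for $(\GL_2(K),\, D_{1/2}^\times)$ across the relevant levels $h$ --- is the principal technical obstacle.
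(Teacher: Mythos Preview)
Your approach is essentially the paper's. Two small remarks: for hypothesis (i), your appeal to Lemma \ref{l:twist compat} is a detour since $\epsilon|_{\cO_K^\times}$ is already trivial (so the lemma says nothing); the paper instead observes directly that $\det(Z G_\cO) \subseteq \langle \varpi^n \rangle \cO_K^\times = \Nm_{L/K}(L^\times) = \ker \epsilon$, whence $\epsilon \circ \det$ is trivial on the inducing subgroup and the twist-invariance of $\pi$ follows from Theorem \ref{t:RTG desc}. For the exceptional case $n=2$, $q=3$, $\theta \in \sX^0$, the paper executes precisely your plan: writing $\theta = \theta_0 \cdot (\chi \circ \Nm_{L/K})$ with $\theta_0$ of level $1$, Lemma \ref{l:twist compat} and Corollary \ref{cor:level_compatibility_of_cohomology} reduce the question to $X_1$, and the classical Deligne--Lusztig result \cite[Corollary 9.9]{DeligneL_76} gives concentration of $H_c^\ast(X_1,\overline\QQ_\ell)[\theta_0]$ in degree $n'-1$, so $c_\theta' = (-1)^{n'-1}$ as required (this is packaged as Theorem \ref{t:depth zero}).
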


\begin{remark}\label{rem:geom sign}
If $|R_T^G(\theta)|$ is irreducible supercuspidal, then  $R_T^G(\theta) = (-1)^{r_\theta} \pi$, where $\pi$ is an irreducible supercuspidal representation occurring in $H_{r_\theta}(\dot X_w^\infty(b), \overline \QQ_\ell)[\theta]$ for some $r_\theta \in \bZ$. (There may be other degrees where $\pi$ contributes, but they all cancel out. In particular, there may be more than one choice of $r_\theta$, but the parity of $r_\theta$ is invariant.) Then by Theorem \ref{t:LLC JLC gen} implies that $c_\theta = (-1)^{r_\theta}$, which gives a \textit{geometric} interpretation of Henniart's sign $c_\theta$ in terms of the surviving cohomological degree in the alternating sum $R_T^G(\theta)$. \hfill $\Diamond$
\end{remark}

\begin{proof}
Write $|R_T^G(\theta)| = c_\theta' R_T^G(\theta)$ for some $c_\theta' = \pm 1$. If we can show that $|R_T^G(\theta)|$ satisfies the hypotheses of Proposition \ref{p:very regular determines}, then we are done. By assumption, $|R_T^G(\theta)|$ is an irreducible cuspidal representation and by definition of the $G \times T$ action on $R_T^G(\theta)$, the central character of $|R_T^G(\theta)|$ must be $\theta|_{K^\times}$.

To see that (i) of Proposition \ref{p:very regular determines} holds, note that since $L/K$ is unramified we have $\langle \varpi^n \rangle \cO_K^\times = \Nm_{L/K}(L^\times) = \ker(\epsilon)$. In particular, we see that $\epsilon \circ \det$ is trivial on $Z G_\cO$ and so by Theorem \ref{t:RTG desc}, we have $|R_T^G(\theta)| \cong |R_T^G(\theta)| \otimes (\epsilon \circ \det)$.

We now establish (ii) of Proposition \ref{p:very regular determines} and the additional assumption in the special case $n=2$ and $q=3$. By Theorem \ref{t:RTG desc}, we have that for any very regular element $x \in L^\times$,
\begin{equation*}
\Tr(x^* ; |R_T^G(\theta)|) = c_\theta' \cdot \sum_{\gamma \in \Gal(L/K)} \theta^\gamma(x).
\end{equation*}
If $n=2$, $q = 3$, and $\theta \in \sX^0$, then by Theorem \ref{t:depth zero}, we know in addition that
\begin{equation*}
c_\theta' = (-1)^{n'-1} = \begin{cases} -1 & \text{if $G \cong \GL_2(K)$,} \\ +1 & \text{if $G \cong D_{1/2}^\times$.} \end{cases}
\end{equation*}
We have now established all the conditions required by Proposition \ref{p:very regular determines} to conclude that $c_\theta' = c_\theta$ and $|R_T^G(\theta)| \cong \JL(\pi_\theta).$
\end{proof}

\section{   A geometric realization of automorphic induction and Jacquet--Langlands}\label{s:minimal}

In this section, we write down the cases in which we can prove Theorem \ref{t:LLC JLC gen} unconditionally. To this end, we consider the following two subsets of $\sX$:
\begin{align*}
\sX^0 &\colonequals \{\theta \in \sX : \text{$\theta|_{U_L^1}$ factors through the norm map $U_L^1 \to U_K^1$}\} \\
\sX^{\rm min} &\colonequals \{\theta \in \sX : \text{$\theta$ is minimal admissible}\} \\
&= \{\theta \in \sX : \text{the $\theta/\theta^\gamma$ have the same level for any $1 \neq \gamma \in \Gal(L/K)$}\}
\end{align*}
Note that $\sX^0 \subseteq \sX^{\rm min}$ is the ``depth zero'' part of $\sX^{\rm min}$.

\begin{remark}
Let $\theta \in L^\times \to \overline \QQ_\ell^\times$ be a smooth character with trivial $\Gal(L/K)$-stabilizer. Then its restriction to $\cO_L^\times$ must have trivial $\Gal(L/K)$-stabilizer. For the reader's convenience, we summarize the relation between minimal admissibility and similar notions in the literature:
\begin{enumerate}[label=$\cdot$,leftmargin=*]
\item
$\theta$ is minimal admissible if and only if $(L/K, \theta)$ form a minimal admissible pair, which happens if and only if $\theta$ has only one ``jump'' in the sense of Bushnell--Henniart \cite[Section 1.1]{BushnellH_05}.
\item
$\theta$ is minimal admissible if and only if it can be written in the form $\theta_{\rm prim} \cdot (\chi\circ\Nm_{L/K})$ for some smooth $\chi \from K^\times \to \overline \QQ_\ell^\times$, where $\theta_{\rm prim}$ is \textit{primitive} in the sense of Boyarchenko--Weinstein \cite[Section 7.1]{BoyarchenkoW_16} (see also Section \ref{s:finite rings} of the present paper).
\item
Let $h$ be such that $\theta|_{U_L^h} = 1$ and $\theta|_{U_L^{h-1}} \neq 1$. Then $\theta$ is primitive if and only if $\theta$ is \textit{regular} as a character of $\cO_L^\times/U_L^h$ in the sense of Lusztig \cite[Section 1.5]{Lusztig_04}, when $\cO_L^\times/U_L^h$ is the $F$-fixed points of a maximal torus (see Remark \ref{r:prim reg}). \hfill $\Diamond$
\end{enumerate}
\end{remark}

\subsection{Depth zero representations}\label{s:depth zero}

In this section we only consider characters $\theta \in \sX^0$ and give a nonvanishing result for the individual cohomology groups $H_i(\dot X_w^\infty(b), \overline \QQ_\ell)[\theta]$. Since each $\theta \in \sX^0$ is of the form $\theta_0 \cdot (\chi \circ \Nm_{L/K})$, where $\theta_0 \in \sX^0$ and $\theta_0|_{U_L^1} = 1$, determining when $H_i(\dot X_w^\infty(b), \overline \QQ_\ell)[\theta] \neq 0$ can be reduced to the corresponding question for the cohomology of classical Deligne--Lusztig varieties. Recall from Proposition \ref{p:dim Xh} that $\dim X_h = (n-1)(h-1)+(n'-1)$.

\begin{theorem}\label{t:depth zero}
Fix $\theta \in \sX^0$ of level $h$ and write $\theta = \theta_0 \cdot (\chi \circ \Nm_{L/K})$ for some $\theta \in \sX^0$ of level $1$ and some character $\chi$ of $K^\times$ of level $h$. Then:
\begin{enumerate}[label=(\roman*)]
\item
the cohomology groups $H_c^i(X_h, \overline \QQ_\ell)[\theta]$ are concentrated in a single degree and
\begin{equation*}
|R_{T_h}^{G_h}(\theta)| = H_c^{2(n-1)(h-1) + n'-1}(X_h, \overline \QQ_\ell)[\theta] \cong H_c^{n'-1}(X_1, \overline \QQ_\ell)[\theta_0] \otimes (\chi \circ \det)
\end{equation*}
\item
the homology groups $H_i(\dot X_w^\infty(b), \overline \QQ_\ell)[\theta]$ are concentrated in a single degree and
\begin{equation}\label{e:depth zero ind}
|R_T^G(\theta)| = H_{n'-1}(\dot X_w^\infty(b), \overline \QQ_\ell)[\theta] \cong \cInd_{Z \cdot \GL_{n'}(\cO_{D_{k_0/n_0}})}^{\GL_{n'}(D_{k_0/n_0})}\left(\rho_\theta\right)
\end{equation}
is an irreducible supercuspidal representation of $G$. Here $\rho_\theta$ is the extension of the $\GL_{n'}(\cO_{D_{k_0/n_0}})$-representation $H_c^{n'-1}(X_1, \overline \QQ_\ell)[\theta_0] \otimes (\chi \circ \det)$ obtained by letting $\varpi \in Z = K^\times$ act by $\theta(\varpi)$.
\end{enumerate}
Moreover, $|R_{T_h}^{G_h}(\theta)| = (-1)^{n'-1} R_{T_h}^{G_h}(\theta)$ and $|R_T^G(\theta)| = (-1)^{n'-1} R_T^G(\theta).$
\end{theorem}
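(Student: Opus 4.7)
The plan is to reduce both (i) and (ii) to classical Deligne--Lusztig theory applied to the level-one variety $X_1$, which by Section \ref{sec:projection_to_reductive_quotient} is the Deligne--Lusztig variety attached to the Coxeter (maximal nonsplit) torus $\FF_{q^n}^{\times}$ inside $G_1 = \GL_{n'}(\FF_{q^{n_0}})$. First I would peel off the $\chi$-twist. Because $\chi\circ\Nm_{L/K}$ is $\Gal(L/K)$-invariant, the hypothesis $\theta\in\sX$ forces $\theta_0$ also to have trivial Galois stabilizer, so Lemma \ref{l:twist compat} supplies a $G_h$-equivariant isomorphism
\begin{equation*}
H_c^i(X_h,\overline\QQ_\ell)[\theta] \cong H_c^i(X_h,\overline\QQ_\ell)[\theta_0]\otimes(\chi\circ\det)
\end{equation*}
in every degree, reducing (i) to the case $\theta=\theta_0$ of level one. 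Next, since $\theta_0$ factors through $T_h\twoheadrightarrow T_1$, its isotypic component lies inside the $\bW_h^{h-1}(\FF_{q^n})$-invariants of $H_c^\ast(X_h)$; Proposition \ref{p:Wh fixed} realizes $X_h/\bW_h^{h-1}(\FF_{q^n})$ as an $\bA^{n-1}$-bundle over $X_{h-1}$, and iterating Corollary \ref{cor:level_compatibility_of_cohomology} identifies $H_c^\ast(X_h)[\theta_0]$ with $H_c^\ast(X_1)[\theta_0]$ up to a shift in degree by $2(n-1)(h-1)$ together with an appropriate Tate twist.

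I would then invoke classical Deligne--Lusztig theory \cite{DeligneL_76}: since $\theta_0$ has trivial $\Gal(\FF_{q^n}/\FF_q)$-stabilizer, it is in general position relative to the Coxeter torus, so the cohomology $H_c^\ast(X_1,\overline\QQ_\ell)[\theta_0]$ is concentrated in the middle degree $\dim X_1 = n'-1$ and realizes, up to the sign $(-1)^{n'-1}$, an irreducible cuspidal representation of $G_1$; cuspidality is granted by ellipticity of the Coxeter torus. Combined with the two reductions above, this proves (i), with unique nonvanishing degree $2(n-1)(h-1)+(n'-1)$ and overall sign $(-1)^{n'-1}$. Translating through the homology convention $H_i=H_c^{2d-i}(d)$ for $d=\dim X_h=(n-1)(h-1)+(n'-1)$, a direct check shows that the homology of $X_h$ is supported in degree $n'-1$ uniformly in $h$, so that the direct limit computing $H_\ast(\dot X_w^\infty(b),\overline\QQ_\ell)[\theta]$ likewise concentrates in degree $n'-1$, with sign $(-1)^{n'-1}$.

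For (ii), Theorem \ref{t:RTG desc} provides
\begin{equation*}
R_T^G(\theta) \cong \cInd_{Z\cdot G_\cO}^{G}\bigl(R_{T_h}^{G_h}(\theta)\bigr),
\end{equation*}
so in view of (i) the remaining task is to show that the compact induction of $\rho_\theta$ is irreducible and supercuspidal. Since $\rho_\theta$ is the inflation to $Z\cdot G_\cO$ (with $\varpi$ acting by $\theta(\varpi)$) of an irreducible cuspidal representation of the reductive quotient $G_1$ of $G_\cO$, twisted by the character $\chi\circ\det$, this is precisely the depth-zero criterion of Moy--Prasad \cite[Proposition 6.6]{MoyP_96}. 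The hard part will be the bookkeeping of degrees and Tate twists across the tower $X_1 \leftarrow X_2 \leftarrow \cdots \leftarrow X_h$ so that the single surviving homological degree is visibly $n'-1$ independent of $h$ and matches the sign $(-1)^{n'-1}$ claimed in the theorem; once this accounting is carried out, the rest of the argument is essentially a translation between our Deligne--Lusztig constructions at depth zero and the classical setting.
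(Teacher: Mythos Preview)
Your proposal is correct and follows essentially the same approach as the paper: peel off the twist via Lemma \ref{l:twist compat}, descend to $X_1$ via Proposition \ref{p:Wh fixed} and Corollary \ref{cor:level_compatibility_of_cohomology}, invoke classical Deligne--Lusztig theory (the paper cites \cite[Corollary 9.9]{DeligneL_76}) for concentration in degree $n'-1$, and conclude (ii) from Theorem \ref{t:RTG desc} together with the Moy--Prasad criterion. The only point the paper makes slightly more explicit is that applying \cite[Proposition 6.6]{MoyP_96} requires first checking that the induction of $\rho_\theta$ to the normalizer $Z\cdot N_G(G_\cO)$ remains irreducible, which it handles by the Galois-conjugation argument at the start of the proof of Theorem \ref{t:RTG irred}.
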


\begin{proof}
By Lemma \ref{l:twist compat} and Proposition \ref{p:Wh fixed}, we have, as $G_h$-representations
\begin{equation*}
H_c^i(X_h, \overline \QQ_\ell)[\theta_0 \circ (\chi \circ \Nm_{L/K})] \cong H_c^i(X_h, \overline \QQ_\ell)[\theta_0] \otimes (\chi \circ \det) \cong H_c^{i-2(n-1)(h-1)}(X_1, \overline \QQ_\ell)[\theta_0]
\end{equation*}
for all $i \geq 2(n-1)(h-1)$. This reduces the cohomology calculation to a statement about $X_1$, which is a classical Deligne--Lusztig variety attached to the maximal torus $\FF_{q^n}^\times$ in $\GL_{n'}(\FF_{q^{n_0}})$. By \cite[Corollary 9.9]{DeligneL_76}, 
\begin{equation*}
H_c^i(X_1, \overline \QQ_\ell)[\theta] \neq 0 \qquad \Longleftrightarrow \qquad i = n' - 1.
\end{equation*}
This proves (i). Since $\dim X_h = (n-1)(h-1) + n'-1$ by Proposition \ref{p:dim Xh}, we now also see the nonvanishing assertion of (ii) and $H_{n'-1}(\dot X_w^\infty(b), \overline \QQ_\ell)[\theta]$ has the form \ref{e:depth zero ind} by Theorem \ref{t:RTG desc}. It is well known that this representation is irreducible and supercuspidal (see also Theorem \ref{t:RTG irred}). For example, one can show by hand (by the first part of the proof of Theorem \ref{t:RTG irred}) that the induction to the normalizer of $Z G_\cO$ is irreducible, and then the conclusion follows from \cite[Proposition 6.6]{MoyP_96}.
\end{proof}

\begin{theorem}\label{t:depth zero LLC}
For $\theta \in \sX^0$, the assignment $\theta \mapsto H_{n'-1}(\dot X_w^\infty(b), \overline \QQ_\ell)[\theta]$ is a geometric realization of automorphic induction and the Jacquet--Langlands correspondence. That is,
\begin{equation*}
H_{n'-1}(\dot X_w^\infty(b), \overline \QQ_\ell)[\theta] = (-1)^{n'-1} R_T^G(\theta) = |R_T^G(\theta)| \cong \JL(\pi_\theta).
\end{equation*}
\end{theorem}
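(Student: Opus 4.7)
The plan is to combine Theorem \ref{t:depth zero} with the conditional Theorem \ref{t:LLC JLC gen}. First, Theorem \ref{t:depth zero}(ii) already provides exactly what Theorem \ref{t:LLC JLC gen} requires as input: for $\theta \in \sX^0$, the virtual representation $|R_T^G(\theta)|$ is an honest, irreducible, supercuspidal representation of $G$, realized as the compact induction from $Z\cdot G_\cO$ of the extension $\rho_\theta$ of a tensor product of a Deligne--Lusztig representation of $\GL_{n'}(\FF_{q^{n_0}})$ with a character $\chi\circ\det$. In particular, Theorem \ref{t:depth zero}(ii) also tells us that only the degree $i = n'-1$ contributes, so $R_T^G(\theta) = (-1)^{n'-1} |R_T^G(\theta)|$, which pins down the sign $c_\theta' = (-1)^{n'-1}$ in the notation of Theorem \ref{t:LLC JLC gen}.

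With the irreducibility and supercuspidality of $|R_T^G(\theta)|$ in hand, Theorem \ref{t:LLC JLC gen} directly yields the isomorphism $|R_T^G(\theta)| \cong \JL(\pi_\theta)$. The only subtle point is the additional hypothesis in Theorem \ref{t:LLC JLC gen} that $c_\theta' = -1$ if $G \cong \GL_2(K)$ and $c_\theta' = +1$ if $G \cong D_{1/2}^\times$ in the special case $n=2$, $q=3$, $\theta \in \sX^0$. But in that case $n' = 1$ if $G \cong D_{1/2}^\times$ and $n' = 2$ if $G \cong \GL_2(K)$, so the sign $(-1)^{n'-1}$ furnished by Theorem \ref{t:depth zero} is precisely $+1$ and $-1$ respectively --- exactly the values demanded in the special case. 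So the hypotheses of Theorem \ref{t:LLC JLC gen} are verified unconditionally for $\theta \in \sX^0$.

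Thus the proof is essentially a two-line citation: apply Theorem \ref{t:depth zero}, then Theorem \ref{t:LLC JLC gen}. The only thing requiring care is to make sure the sign matches in the degenerate $n=2,\, q=3$ case, but this is automatic from the $(-1)^{n'-1}$ computation in Theorem \ref{t:depth zero}. No step is a genuine obstacle here since all the work has been done earlier: the irreducibility and cuspidality of $|R_{T_h}^{G_h}(\theta)|$ reduces to the classical Deligne--Lusztig statement for $X_1$ by Lemma \ref{l:twist compat} and Proposition \ref{p:Wh fixed}, and the global irreducibility then follows by the standard Moy--Prasad argument \cite[Proposition 6.6]{MoyP_96}, as already indicated in the proof of Theorem \ref{t:depth zero}.
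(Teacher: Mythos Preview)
Your proposal is correct and follows essentially the same route as the paper: use Theorem \ref{t:depth zero} to get that $|R_T^G(\theta)|$ is irreducible supercuspidal with sign $(-1)^{n'-1}$, then invoke the machinery leading to $\JL(\pi_\theta)$. The paper's proof applies Proposition \ref{p:very regular determines} directly (re-verifying the $\epsilon$-twist stability and the trace formula on very regular elements via Theorem \ref{t:RTG desc}), whereas you cite the already-packaged Theorem \ref{t:LLC JLC gen}; since Theorem \ref{t:LLC JLC gen} is precisely the wrapper around Proposition \ref{p:very regular determines}, the two arguments are the same modulo which label you point to.

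One small inaccuracy: Theorem \ref{t:LLC JLC gen} as stated has \emph{no} extra sign hypothesis in the $n=2$, $q=3$ case --- that condition lives in Proposition \ref{p:very regular determines} and is discharged inside the proof of Theorem \ref{t:LLC JLC gen} (using Theorem \ref{t:depth zero}). So your verification of the sign in that special case is redundant, though harmless and correct.
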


\begin{proof}
By Theorem \ref{t:depth zero}, we know that $|R_T^G(\theta)| = (-1)^{n'-1} R_T^G(\theta)$ is an irreducible supercuspidal representation, and by Theorem \ref{t:RTG desc}, we know that for any very regular element $x \in L^\times$,
\begin{equation*}
\Tr(x^* ; |R_T^G(\theta)|) = (-1)^{n'-1} \cdot \sum_{\gamma \in \Gal(L/K)} \theta^\gamma(x).
\end{equation*}
By definition $\epsilon$ is a finite-order character of $K^\times$ with $\ker(\epsilon) = \Nm_{L/K}(K^\times)$. Since $L/K$ is unramified, $\ker(\epsilon)$ contains $\cO_K^\times$, and therefore $\epsilon \circ \det$ is trivial on $Z \cdot \GL_{n'}(\cO_{D_{k_0/n_0}})$. Hence $|R_T^G(\theta)| \otimes (\epsilon \circ \det) \cong |R_T^G(\theta)|$. We can now apply Proposition \ref{p:very regular determines}, noting that in the case $n=2,$ $q=3$, we have the correct sign $c_\theta$ (compare with Theorem \ref{t:henniart c}) as required by the proposition.
\end{proof}

\begin{remark}
Observe that as in Remark \ref{rem:geom sign}, the nonvanishing degree $n'-1$ of the homology of $\dot X_w^\infty(b)$ gives a geometric interpretation of Henniart's sign $c_\theta$ from Theorem \ref{t:henniart c}. \hfill $\Diamond$
\end{remark}

\subsection{Representations corresponding to minimal admissible characters}

We now prove the supercuspidality of $|R_T^G(\theta)|$ for $\theta \in \sX^{\rm min}$. The main technical inputs are the irreducibility of $|R_{T_h}^{G_h}(\theta)|$ (Section \ref{s:finite rings}) and a ``cuspidality'' result for $|R_{T_h}^{G_h}(\theta)|$ (Theorem \ref{t:Gh cuspidal}).

\begin{theorem}\label{t:RTG irred}
If $\theta \in \sX^{\rm min}$, then $|R_T^G(\theta)|$ is irreducible supercuspidal.
\end{theorem}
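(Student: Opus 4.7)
The plan is to combine three main inputs: the irreducibility result from Theorem \ref{t:alt sum Xh}, the cuspidality result from Theorem \ref{t:Gh cuspidal}, and the compact induction description from Theorem \ref{t:RTG desc}, and then emulate the standard Moy--Prasad argument showing that the compact induction of a cuspidal type produces an irreducible supercuspidal representation.

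Let $h$ denote the level of $\theta$. Since $\theta \in \sX^{\rm min}$, I can factor $\theta = \theta_{\rm prim} \cdot (\chi \circ \Nm_{L/K})$ for some primitive character $\theta_{\rm prim}$ (in the sense of Section \ref{s:finite rings}; compare Remark \ref{r:prim reg}) and some character $\chi$ of $K^\times$. By Lemma \ref{l:twist compat} this yields
\[
R_{T_h}^{G_h}(\theta) \cong R_{T_h}^{G_h}(\theta_{\rm prim}) \otimes (\chi \circ \det)
\]
as virtual $G_h$-representations, so by Theorem \ref{t:alt sum Xh} applied to the primitive character $\theta_{\rm prim}$, both $R_{T_h}^{G_h}(\theta_{\rm prim})$ and $R_{T_h}^{G_h}(\theta)$ are irreducible up to sign. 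Let $\rho$ be the $Z\cdot G_\cO$-representation obtained by inflating $|R_{T_h}^{G_h}(\theta)|$ along $G_\cO \twoheadrightarrow G_h$ and letting $\varpi \in Z = K^\times$ act by $\theta(\varpi)$. By Theorem \ref{t:RTG desc},
\[
|R_T^G(\theta)| \;=\; \pm\,\cInd_{Z \cdot G_\cO}^{G}(\rho),
\]
so it suffices to prove that $\cInd_{Z\cdot G_\cO}^{G}(\rho)$ is irreducible and supercuspidal.

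For irreducibility, I will use Mackey's criterion for compact induction: one must verify that
\[
\Hom_{(ZG_\cO)\cap g(ZG_\cO)g^{-1}}(\rho, {}^g\rho) = 0 \qquad \text{for every } g \in G \setminus Z\cdot G_\cO.
\]
By the Cartan decomposition recalled in Section \ref{sec:cartan}, the non-trivial double cosets in $(ZG_\cO)\backslash G/(ZG_\cO)$ have representatives $\Pi_0^\nu$ with $\nu \in X_\ast(\breve T_{\rm diag})_{\rm dom}^F$ whose components are not all equal. For each such $g$, the intersection $G_\cO \cap gG_\cO g^{-1}$ reduces modulo a suitable congruence level to a subgroup of $\bG_h$ that contains a unipotent radical $\breve N$ of the shape treated in Section \ref{s:cuspidality}, whose deepest layer is precisely an $N_h^{h-1}$ as in Theorem \ref{t:Gh cuspidal}. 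Any intertwiner would produce a nonzero $N_h^{h-1}$-invariant vector in $|R_{T_h}^{G_h}(\theta)|$, which is forbidden by Theorem \ref{t:Gh cuspidal}. This is the positive-depth analogue of \cite[Proposition 6.6]{MoyP_96}, and has been executed in the rank-one case in \cite[Propositions 4.10, 4.22]{Ivanov_15_ADLV_GL2_unram}.

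Once irreducibility is established, supercuspidality is automatic: the matrix coefficients of $\cInd_{ZG_\cO}^G(\rho)$ are compactly supported modulo $Z$ (because $ZG_\cO$ is compact modulo $Z$), so by Jacquet's criterion all proper Jacquet modules vanish. The main obstacle will be the intertwining calculation in the irreducibility step: one must carefully identify, for each dominant cocharacter $\nu$, which parahoric-level unipotent $N_h^{h-1}$ appears in the reduction of $G_\cO \cap \Pi_0^\nu G_\cO \Pi_0^{-\nu}$, and then invoke Theorem \ref{t:Gh cuspidal} with respect to that particular unipotent radical. The statement of Theorem \ref{t:Gh cuspidal} was arranged (by allowing $N'$ to be the unipotent radical of any standard parabolic of $\GL_{n'}$) precisely to cover every $\nu$ that arises this way.
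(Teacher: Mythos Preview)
Your overall strategy is the same as the paper's, but there is a genuine gap in the Mackey step. You claim that the nontrivial double cosets in $(ZG_\cO)\backslash G/(ZG_\cO)$ are all represented by $\Pi_0^\nu$ with the $\nu_i$ not all equal. This is false when $n_0>1$: by Section~\ref{sec:cartan}, $\Pi_0^\nu$ normalizes $G_\cO$ if and only if all $\nu_i$ are equal, and since $\Pi_0^{n_0}=\varpi$, one has $N_G(G_\cO)/ZG_\cO\cong\bZ/n_0\bZ$. So for $1\leq c\leq n_0-1$ the element $\Pi_0^{(c,\ldots,c)}$ gives a nontrivial double coset that \emph{normalizes} $G_\cO$. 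For such $g$ the intersection $gG_\cO g^{-1}\cap G_\cO$ is all of $G_\cO$; there is no unipotent contraction, and Theorem~\ref{t:Gh cuspidal} says nothing. The paper handles exactly these cosets separately: using Lemma~\ref{l:lift of Gal} the conjugation by $g_{\varphi}$ on $T_h$ is a nontrivial Galois automorphism $\varphi\notin\Gal(L/K)[n']$, and then Proposition~\ref{p:trace alt sum Xh} together with Lemma~\ref{l:theta theta' gen} (applied over the subfield fixed by $\Gal(L/K)[n']$) shows that $|R_{T_h}^{G_h}(\theta)|$ and ${}^{g_\varphi}|R_{T_h}^{G_h}(\theta)|$ have different characters on very regular elements, hence are non-isomorphic irreducibles of $G_\cO$. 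Without this step your argument does not establish irreducibility whenever $G$ is a genuine inner form (i.e.\ $n_0>1$).

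There is also a smaller level-bookkeeping issue. You invoke Theorem~\ref{t:Gh cuspidal} for $\theta$ at level $h$, but that theorem requires $\theta$ to be primitive. If $\theta_{\rm prim}$ has level $h'<h$, then $|R_{T_h}^{G_h}(\theta)|\cong|R_{T_{h'}}^{G_{h'}}(\theta_{\rm prim})|\otimes(\chi\circ\det)$ is inflated from $G_{h'}$ up to the one-dimensional twist, and its restriction to $N_h^{h-1}$ is actually trivial (since $\det$ is trivial on unipotents and $N_h^{h-1}\subset\ker(G_h\to G_{h'})$); so the cuspidality input at level $h$ gives no information. The paper avoids this by first passing to $R_T^G(\theta)\cong R_T^G(\theta_{\rm prim})\otimes(\chi\circ\det)$ and then running the entire Mackey argument for the primitive $\theta_{\rm prim}$ at its own level $h'$, where Theorem~\ref{t:Gh cuspidal} applies directly.
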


\begin{proof}
We first establish some notation. If $\pi \from H \to \GL(V)$ is a representation of a subgroup $H \subset G$, then for any $\gamma \in G$, we define ${}^\gamma \pi \from \gamma H\gamma^{-1} \to \GL(V)$ by ${}^\gamma \pi(g) \colonequals \pi(\gamma^{-1} g \gamma)$. Assume that $\theta$ is minimal admissible of level $h$. By definition, we can write $\theta = \theta' \otimes (\chi \circ \Nm)$, where $\theta'$ is a primitive character of $L^\times$ of level $h' \leq h$, $\chi$ is any character of $K^\times$ of level $h$, and $\Nm \from L^\times \to K^\times$ is the usual norm. Denoting by $\theta, \theta', \chi$ the corresponding restrictions to the unit groups, by Proposition \ref{p:Wh fixed} and Lemma \ref{l:twist compat}, we have
\begin{equation*}
R_{T_h}^{G_h}(\theta) = R_{T_h}^{G_h}(\theta' \otimes (\chi \circ \Nm)) \cong R_{T_h}^{G_h}(\theta') \otimes (\chi \circ \det) \cong R_{T_{h'}}^{G_{h'}}(\theta') \otimes (\chi \circ \det).
\end{equation*}
In particular, by Theorem \ref{t:RTG desc}, we see that
\begin{equation*}
R_T^G(\theta) \cong R_T^G(\theta') \otimes (\chi \circ \det).
\end{equation*}
Since twists of irreducible supercuspidal representations are again irreducible supercuspidal, it suffices to prove the theorem for primitive characters $\theta$.

Assume now that $\theta$ is a primitive character of level $h$. By Theorem \ref{t:alt sum Xh}, $|R_{T_h}^{G_h}(\theta)|$ is irreducible. Recall that there is a natural surjection $G_\cO \to G_h$ so that we may view $|R_{T_h}^{G_h}(\theta)|$ as a representation of $G_\cO$. We extend this to a representation of $Z \cdot G_\cO = \langle \varpi \rangle \cdot G_\cO$ by letting $\varpi$ act on $|R_{T_h}^{G_h}(\theta)|$ by $\theta(\varpi)$. We first claim that
\begin{equation*}
\rho_\theta \colonequals \cInd_{Z \cdot G_{\cO}}^{Z \cdot N_{G}(G_{\cO})}\left(|R_{T_h}^{G_h}(\theta)|\right)
\end{equation*}
is irreducible. Recall from Section \ref{sec:cartan} that $\# N_G(G_{\cO})/G_{\cO} = n_0$ and let $\{1, \varphi_2, \ldots, \varphi_{n_0}\}$ denote a complete set of coset representatives of $\Gal(L/K)/\Gal(L/K)[n']$.  By Lemma \ref{lm:descr_normalizer}, there exists $g_{\varphi_i} \in N_{G}(G_{\cO})$ such that $g_{\varphi_i}^{-1} x g_{\varphi_i} = \varphi_i(x)$ for all $x \in \cO_L^\times$. By Mackey's irreducibility criterion, it suffices to show that
\begin{equation}\label{e:normal hom 0}
\Hom_{G_{\cO}}\left(|R_{T_h}^{G_h}(\theta)|, {}^{g_{\varphi_i}}|R_{T_h}^{G_h}(\theta)|\right) = 0, \qquad \text{for $i = 2, \ldots, n_0$.}
\end{equation}
Fix some $i$ with $2 \leq i \leq n_0$. By Proposition \ref{p:trace alt sum Xh}, for any very regular element $x \in \cO_L^\times$, 
\begin{equation*}
\Tr\left(x^* ; {}^{g_{\varphi_i}} R_{T_h}^{G_h}(\theta)\right) = \Tr\left((g_{\varphi_i}^{-1} x g_{\varphi_i})^* ; R_{T_h}^{G_h}(\theta)\right) = \sum_{\gamma \in \Gal(L/K)[n']} \theta^\gamma(\varphi_i(x)).
\end{equation*}
Applying Lemma \ref{l:theta theta' gen} to the case when $\theta' = \theta^{\varphi_i}$ and the base field $K$ is replaced by the unique subfield of index $n'$ in $L$ containing $K$, we see that
\begin{equation*}
\Tr\left(x^* ; {}^{g_{\varphi_i}}R_{T_h}^{G_h}(\theta)\right) \neq \Tr\left(x^* ; R_{T_h}^{G_h}(\theta)\right).
\end{equation*}
But now ${}^{g_{\varphi_i}}R_{T_h}^{G_h}(\theta)$ and $R_{T_h}^{G_h}(\theta)$ are irreducible representations of $G_{\cO}$ whose characters differ from each other, and so necessarily \eqref{e:normal hom 0} holds and $\rho_\theta$ is irreducible.

We now fix $\gamma \in G \smallsetminus N_G(Z \cdot G_{\cO})$. Once again by Mackey's criterion, to complete the proof we must show that
\begin{equation}\label{e:intertwiner}
\Hom_{\gamma Z G_{\cO} \gamma^{-1} \cap Z G_{\cO}} \left(|R_{T_h}^{G_h}(\theta)|, {}^\gamma|R_{T_h}^{G_h}(\theta)|\right) = 0.
\end{equation}
At this point, let $b$ be a special representative. By Section \ref{sec:cartan}, we may assume that $\gamma = \Pi_0^\nu$, where $\nu = (\nu_1, \ldots, \nu_1, \nu_2, \ldots, \nu_2, \ldots, \nu_{n'}, \ldots, \nu_{n'})$ (each $\nu_i$ repeated $n_0$ times) for $0 = \nu_1 \leq \nu_2 \leq \cdots \leq \nu_{n'}$, and $\Pi_0^\nu$ is the block-diagonal matrix whose $i$th $n_0 \times n_0$ block is given by $\left(\begin{smallmatrix} 0 & \varpi \\ 1_{n_0 - 1} & 0 \end{smallmatrix}\right)^{\nu_i}$. Observe that if $(A_{i,j})_{1 \leq i, j \leq n'} \in \GL_n(\breve K)$, where each $A_{i,j}$ is a $(n_0 \times n_0)$-matrix, then
\begin{equation}\label{e:Pi nu conj}
\Pi_0^{-\nu} \cdot (A_{i,j})_{1 \leq i, j \leq n'} \cdot \Pi_0^\nu = (\Pi_0^{-\nu_i} A_{i,j} \Pi_0^{\nu_j})_{1 \leq i, j \leq n'}.
\end{equation}

For a parabolic subgroup $P'$ of $\GL_{n'}$ containing the upper triangular matrices, let $\breve N_{P'}$ be its unipotent radical. Let $\breve N_P$ denote the subgroup of $\GL_n(\breve K)$ such that each $(n_0 \times n_0)$-block consists of a diagonal matrix and the $(i, j)$th block is nonzero if and only if the $(i, j)$th entry of an element of $\breve N_{P'}$ is nonzero. Write $N_P = \breve N_P^F \cap G_\cO$. For $h\geq 1$ let $N_P^h = N_P \cap \ker(G_\cO \to G_h)$. 

We claim that there exists a parabolic $P' \subseteq \GL_{n'}$ as above, such that $\Pi_0^{-\nu} N_P^{h-1} \Pi_0^\nu \subset \ker(G_\cO \to G_h)$. Let $1 \leq i_0 \leq n'$ be the last $\nu_{i_0} = 0$ so that $\nu_{i_0} < \nu_{i_0+1}$, and let $P'$ be the minimal parabolic corresponding to the partition $i_0 + (n'-i_0)$. Let $(A_{i,j})_{1 \leq i,j \leq n'} \in N_P^{h-1}$ so that each $A_{i,j}$ is a diagonal $n_0 \times n_0$ matrix whose entries all lie in $\bW_h^{h-1}(\overline \FF_q)$. By \eqref{e:Pi nu conj}, we see that the $(i,j)$th block of $\Pi_0^{-\nu} \cdot (A_{i,j})_{1 \leq i,j \leq n'} \Pi_0^{\nu}$ is $\Pi_0^{-\nu_i} A_{i,j} \Pi_0^{\nu_j}$, so that in particular, if $1 \leq i \leq i_0$ and $i_0 + 1 \leq j \leq n'$, then $\nu_j - \nu_i > 0$. By definition of $G_h$ (Section \ref{sec:definition of GGh}), we now have that $\Pi_0^{-\nu} \cdot (A_{i,j})_{1 \leq i,j \leq n'} \cdot \Pi_0^{\nu} \in \ker(G_\cO \to G_h)$.

The above implies that the restriction of ${}^{\Pi_0^{\nu}} |R_{T_h}^{G_h}(\theta)|$ to $N_P^{h-1}$ is trivial. On the other hand, by Theorem \ref{t:Gh cuspidal}, the restriction of $|R_{T_h}^{G_h}(\theta)|$ to $N_P^{h-1}$ does not contain the trivial representation. Therefore:
\begin{align*}
\dim \Hom_{\gamma Z G_{\cO} \gamma^{-1} \cap Z G_{\cO}} &(|R_{T_h}^{G_h}(\theta)|, {}^\gamma|R_{T_h}^{G_h}(\theta)|) \\
&\leq \dim \Hom_{N_P^{h-1}}(|R_{T_h}^{G_h}(\theta)|, {}^\gamma|R_{T_h}^{G_h}(\theta)|) \\
&\leq \dim \Hom_{N_P^{h-1}}(|R_{T_h}^{G_h}(\theta)|, \text{triv}) = 0. \qedhere
\end{align*}
\end{proof}

Combining Theorems \ref{t:LLC JLC gen} and \ref{t:RTG irred} proves:

\begin{theorem}\label{t:LLC JLC min}
If $\theta \in \sX^{\rm min}$, then the assignment $\theta \mapsto |R_T^G(\theta)|$ is a geometric realization of automorphic induction and the Jacquet--Langlands correspondence.
\end{theorem}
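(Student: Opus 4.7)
The proof plan is essentially immediate: it combines the two preceding theorems in the section. First I would observe that $\sX^{\rm min} \subseteq \sX$, which amounts to noting that minimal admissibility forces $\theta$ to be in general position: if $\theta \in \sX^{\rm min}$ had a nontrivial stabilizer $\gamma \in \Gal(L/K)$, then $\theta/\theta^\gamma$ would be trivial, contradicting the requirement that all $\theta/\theta^{\gamma'}$ for $\gamma' \neq 1$ share the same (positive) level.

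Given this containment, for any $\theta \in \sX^{\rm min}$ I would invoke Theorem \ref{t:RTG irred} to conclude that $|R_T^G(\theta)|$ is an irreducible supercuspidal representation of $G$. This is precisely the hypothesis of Theorem \ref{t:LLC JLC gen}, which then produces the isomorphism
\[
|R_T^G(\theta)| \cong \JL(\pi_\theta),
\]
realizing both automorphic induction and the Jacquet--Langlands correspondence as claimed.

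There is no significant obstacle at the level of this final assembly; the substantive work was already carried out in the proofs of the input theorems. The heavy lifting is (a) the irreducibility result Theorem \ref{t:alt sum Xh} (via Proposition \ref{prop:alternating_sum_for_Sigma} extending Lusztig's method to Moy--Prasad quotients), combined with the twisting and level-compatibility results of Section \ref{s:finite rings} to pass from primitive to minimal admissible characters; (b) the cuspidality Theorem \ref{t:Gh cuspidal}, needed to ensure the compact induction from $Z \cdot G_\cO$ to $G$ remains irreducible in the proof of Theorem \ref{t:RTG irred}; and (c) the character identity on very regular elements in Theorem \ref{t:RTG desc}, which allows Henniart's rigidity result (Proposition \ref{p:very regular determines}) to identify the constructed representation with $\JL(\pi_\theta)$ inside the proof of Theorem \ref{t:LLC JLC gen}. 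Once these are in hand, Theorem \ref{t:LLC JLC min} requires no further argument beyond citing them together.
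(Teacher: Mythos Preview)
Your proposal is correct and matches the paper's proof, which consists of the single line ``Combining Theorems \ref{t:LLC JLC gen} and \ref{t:RTG irred} proves:'' before the statement. Your justification of $\sX^{\rm min} \subseteq \sX$ is actually unnecessary since in the paper $\sX^{\rm min}$ is \emph{defined} as a subset of $\sX$, but otherwise your assembly of the two cited theorems is exactly what the authors do.
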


\bibliography{bib_ADLV_CC}{}
\bibliographystyle{alpha}

\end{document}